\newenvironment{proof}[1][Proof]{\noindent\textbf{#1.} }{\ \rule{0.5em}{0.5em}}
\newtheorem{definition}{Definition}[section]
\newtheorem{theorem}{Theorem}[section]
\newtheorem{lemma}{Lemma}[section]
\newtheorem{remark}{Remark}[section]
\newtheorem{corollary}{Corollary}[section]
\numberwithin{equation}{section}
\begin{document}

\begin{center}
{\Large %{New applications of the Egorychev method of coeffficients //}
{New applications of the Egorychev method of coefficients of integral
representation and calculation of combinatorial sums} }
\end{center}

\bigskip
\bigskip

%\bigskip New applications of the Egorychev method of coeffficients
\begin{center}
Maksim Davletshin\footnote{davmaks@gmail.com},
Georgy Egorychev\footnote{gegorych@mail.ru} and
Vyacheslav Krivokolesko\footnote{krivokolesko@gmail.com}
\end{center}

%E. Zima \footnote{%
%ezima@wlu.ca}
\begin{center}
\textbf{Abstract}
\end{center}

%{\small
Here we present the new applications of the Egorychev method of
coef-ficients of integral representations and computation of combinatorial
sums developed by the author at the end of 1970's and its recent
applications to the algebra and the theory of holomorphic functions in
$\mathbb{C}^{n}$ and others.
%}

\renewcommand{\contentsname}{Сontents}
\tableofcontents

\addcontentsline{toc}{section}{Introduction}
\section*{Introduction}

%\part{\S\ 1. Introduction}
\noindent

At the end of the 1970's G.P. Egorychev developed the method of
coefficients, which was successfully applied to many combinatorial problems %sums
\cite {Egor13, Egor23, Egor1,Egor34} and \cite {Leont}. %
Here we present the (see section 2.1)
short description of the Egorychev method and its recent applications to
several problems of enumeration and summation in various fields of mathematics: algebra, the
theory of integral
%representations in\textit{\ }$\mathbb{C}^{n}$and the theory of approximation.
representations in $\mathbb{C}^{n}$ and the theory of approximation.

Enumerative combinatorial problems in algebra have been considered for a
long time
starting from the well-known estimates for the number of Sylow subgroups, the
number of fixed-order subgroups in a finite p-group (G. Frobenius, Ph. Hall,
etc.).
For example, the ranks of the $n$-factors $M_{q}(n)$ for the
lower central series of
the free group $\Phi $ with $q$ generators
\begin{equation*}
M_{q}(n)=\frac{1}{n}\sum \eta (d) q^{n/d}
\end{equation*}
were computed by M.Jr \ Hall (\cite{Hall59}, Theorem 11.2.2).

%In Part 1 we have solved two enumerative combinatorial problems in the ring theory.
%In section 1 we have solved two enumerative combinatorial problems in the ring theory.
%In \S\ 2 we have obtained two simple formulas for the number of quadrics and symmetric
In section 2.2 we obtained two simple formulas for the number of quadrics and symmetric
forms of modules over local rings \cite{Star2004, EgorZima(2008)}.

In section 3 we solved
the interesting enumeration problem of the number of $D$-ideals of ring
$R_{n}(K,J) $ in lattices by means of
the inclusion-conclusion method \cite{Egor(2009), Davl(2011), DavEgLev}.
Thus it was possible to us with the help of the Egorychev method to
calculate the difficult
6-multiple (!) combinatorial sum in the closed form.

In section 4 we solved several summation problems. In sections 4.1, 4.2 and section 4.3 we
found the simple new computation %proof 
and generalization of the several multiple
combinatorial sums, which originally arose in the theory of holomorphic functions in
$\mathbb{C}^{n} $ \cite{th, th1, EDK2011}.

In section 5 we found the new short computation of multiple sum to the
theory of cubature
formulas \cite{XeoXu(2000), Egor(2012)}.

During the work on the article we added a number of corrections and the additions
improving
earlier known results.%\bigskip
%\section{Part 1. Two enumerative combinatorial problems in algebra\protect \medskip}
\section{The method of coefficients and its algebraic applications}

\noindent

Hans Rademacher \cite{Radem} noted, that the applications of the method
of generating functions is usually connected  with the use of operations over the
Laurent series and the Dirichlet series. Earlier G.P Egorchev had 
developed the method of integral representations and calculation of
combinatorial sums of various type (the inference rules and
the completeness Lemma; see, \cite{Egor1, Egor13, Egor16, Egor23, Egor34}, and \cite{Leont})
connected with the use of the theory of analytic
functions, the theory of multiple residues in $\mathbf{C}^{n}$ and the
formal power Laurent series over $\mathbb{C}.$
%\subsection{The Egorychev method of coefficients\protect\medskip}

\subsection{The Egorychev method of coefficients}

%\subsubsection{The computational scheme of the method of coeffficients\protect\medskip}
\subsubsection{The computational scheme of the method of coefficients}

\noindent
The general scheme of the Egorychev method of integral representation and
computation of sums can be followed up by the following steps \cite{Egor1}.

\textbf{1.} \textit{Assignment of a table of integral representations of
combinatorial numbers.}

\textit{For example}, \textit{the binomial coefficients} $\binom{n}{k},\,$
$n,\,k=0,1,\ldots ,$%
\begin{equation}
\binom{n}{k}=\mbox{\bf res}_{w}(1+w)^{n}w^{-k-1}=\frac{1}{2\pi i}%
\int\limits_{0<|w|=\varrho }(1+w)^{n}w^{-k-1}dw,
\label{M1}
\end{equation}%
\begin{equation}
\binom{n+k-1}{k}=\mbox{\bf res}_{w}(1-w)^{-n}w^{-k-1}=
\frac{1}{2\pi i}\int\limits_{0<|w|=\varrho <1}\frac{(1-w)^{-n}}{w^{k+1}}dw.
\label{M3}
\end{equation}

{\it{The Stirling numbers $ S_{2}(n, k) $ of second kind,  $n, k=0, 1,\ldots $
(\cite{Egor1}, p. 273)$:S_{2}(0,0):=1 $ and}}
\begin{equation}
S_{2}(n,k)=\mbox{\bf res}_{w}(-1+\exp w)^{n}w^{-k-1}=
\frac{1}{2\pi i}\int\limits_{0<|w|=\varrho }\frac{(-1+\exp w)^{n}}{w^{k+1}}dw.
\label{M4}
\end{equation}

{\it{The Kronecker symbol $\delta (n,k),\, n,k=0,\,1,\,\ldots,$}}
\begin{equation}
\delta (n,k)=\mbox{\bf res}_{w}w^{-n+k-1}.
\label{M5}
\end{equation}

\textbf{2.} {\it{Representation of the summand $a_{k}$ of the
original sum $\sum_{k}a_{k}$ by a sum of product of combinatorial numbers.}}

\textbf{3.} {\it{Replacement of the combinatorial numbers by their integrals.}}

\textbf{4.} {\it{Reduction of products of integrals to multiple integral.}}

\textbf{5.} {\it{Interchange of the order of summation and integration. This
gives the integral representation of original sum with the kernel
represented by a series. The use of this transformation requires to
deform the domain of integration in such a way as to obtain the series under
the integral which converges uniformly on this domain saving the value of
the integral.}}

\textbf{6.} {\it{Summation of the series under the integral sign. As a
rule, this series turns out to be a geometric progression \cite{Hardy49}.
 This gives the integral representation of the original sum with
the kernel in closed form.}}

\textbf{7.} {\it{The computation of the resulting integral by means of
tables of integrals, iterated integration, the theory of one and
multidimensional residues, or new methods.}}

%\subsubsection{Operations with formal power series and the inference rules}

%\noindent
%Hans Rademacher \cite{Radem} has noted, that the applications of the method
%of generating functions is connected usually with use of operations over the
%Laurent series and the Dirichlet series. Earlier the author had been
%developed the method of integral representations and calculation of
%combinatorial sums of various type (see, \cite{Egor1,Egor13,Egor16,Egor23},
%and also \cite{Leont'ev2006}) connected with use the theory of analytic
%functions, the theory of multiple residues in $\mathbf{C}^{n}$ and the
%formal power Laurent series over $\mathbb{C}.$

\subsubsection{Laurent power series and the inference rules: definition  and properties of $\textbf{res}$
operator}
\noindent
%\paragraph{Laurent power series: definition\ and\ properties\ of\ the
%residue\ operator\textbf{\protect\large \protect\smallskip }}

%\noindent
Using the $\mbox{\bf res}$ concept and its properties the idea of integral
representations can be extended on sums that allow computation with the help
of formal Laurent power series of one and several variables over $\mathbb{C}.$
The $\mbox{\bf res}$ concept is directly connected with the classic
concept of residue in the theory of analytic functions and which may be used
with series of various types. This connection enabled us to express
properties of $\mbox{\bf res}$ operator analogous to properties of residue
in the theory of analytic functions. This in turn allows us to unify the
scheme of the method of integral representations of sums independently of
what kind of series -- convergent or formal -- is used (separately, or
jointly) in the process of computation of a particular sum.

In this section we shall restrict ourselves explaining only one-dimensional
case, although in further computations the $\mbox{\bf res}$ concept will
also be used for multiple series. Besides, the one-dimensional case is of
interest both in itself and in the computation of multiple integrals ($%
\mbox{\bf res}$) in terms of repeated integrals.%{\Large \medskip }

%\subsubsection{Definition and properties of the $\mbox{\bf res}$ operator}
\noindent
%\noindent
Let $L$ be the set of formal Laurent power series over $\mathbb{C}$
containing only finitely many terms with negative degrees. The {\it{order}}
of the monomial $c_{k}w^{k}$ is $k$. The {\it{order}} of the series
$C(w)=\sum_{k}c_{k}w^{k}$ from $L$ is minimal order of monomials with nonzero
coefficient.

Let $L_{k}$ denote the set of series of order
$k,\,L=U_{k=-\infty }^{\infty }L_{k}.$

Two series $A(w)=\sum_{k}a_{k}w^{k}$ and
$B(w)=\sum_{k}b_{k}w^{k}$ from $L$ are equal if $a_{k}=b_{k}$ for all $k.$ We
can introduce in $L$ operations of addition, multiplication, substitution,
inversion and differentiation \cite{Cartan61, Evgr2, Henrici}.

The ring $L$
is a field \cite{Post}.

Let $f(w),\,\psi (w) \in L_{0}.$ Below we shall use
the following notations:
$$h(w)=wf(w)\in L_{1},\, l(w)=\frac{w}{\psi (w)}\in L_{1},\,
z'(w)=\frac{d}{dw}z(w),\,\overline{h}=\overline{h}(z)\in L_{1}$$
-- the inverse series of the series $z=h(w) \in L_{1}.$

{\it{ For $C(w)\in L$ define the formal residue as}}
\begin{equation}
\mbox{\bf res}_{w}C(w)=c_{-1}.
\label{M11}
\end{equation}
Let $A(w)=\sum_{k}a_{k}w^{k}$ be the {it{generating function}}
for the sequence $\{a_{k}\}.$ Then%
\begin{equation}
a_{k}=\mbox{\bf res}_{w}A(w)w^{-k-1},\ k=0,1,....  \label{M12}
\end{equation}%
For example, one of the possible representations of the binomial coefficient
is%
\begin{equation}
\binom{n}{k}=\mbox{\bf res}_{w} (1+w)^{n}w^{-k-1},\ k=0,1,...,n.
\label{M13}
\end{equation}
There are several properties, ({\it{inference rules}}) for the
$\mbox{\bf res}$ operator which immediately follow from its definition and
properties of operations in formal Laurent power series over $\mathbb{C}$.

We list only a few of them which will be used in this article. Let
$A(w)=\sum_{k}a_{k}w^{k}$ and $B(w)=\sum_{k}b_{k}w^{k}$ be the generating
functions from L.

\textbf{Rule 1} (Removal).%
\begin{equation}
\mbox{\bf res}_{w}A(w)w^{-k-1}=\mbox{\bf res}_{w}B(w)w^{-k-1}\text{for\thinspace\ all }k\text{ iff }A(w)=B(w).
\label{M14}
\end{equation}

\textbf{Rule 2} (Linearity). {\it{For any $\alpha,\,\beta $ from} $\mathbb{C}$}
\begin{equation*}
\alpha \mbox{\bf res}_{w}A(w)w^{-k-1}+\beta \mbox{\bf res}_{w}B(w)w^{-k-1}=
\end{equation*}
\begin{equation}
=\mbox{\bf res}_{w}\left((\alpha A(w)+\beta B(w))w^{-k-1}\right).
\label{M15}
\end{equation}
{\it{By induction from (\ref{M15}) follows, that operators $ \sum $ and $\mbox{\bf res} $ are commutative}}.

\textbf{Rule 3} (Substitution).

a)\, {\it{For $w \in L_{k}\,(k\geq 1)$ and $A(w)$ any element of $L,$ }}

b)\, {\it{for $A(w)$ polynomial and $w$ any element of $ L $ including a constant}}%
\begin{equation}
\sum_{k}w^{k}\mbox{\bf res}_{z}\left( A(z)z^{-k-1}\right) =[A(z)]_{z=w}=A(w).
\label{M16}
\end{equation}

\textbf{Rule 4} (Inversion).

{\it{For $f(w)$ from $L_{0}$}}
\begin{equation}
\sum_{k}z^{k}\mbox{\bf res}_{w}\left( A(w)f(w)^{k}w^{-k-1}\right) =
\left[A(w)/f(w)h^{\prime }(w)\right]_{w=\overline{h(z)}},
\label{M17}
\end{equation}%
{\it {where $z=h(w)=wf(w)\in L_{1}.$}}

\textbf{Rule 5} (Change of variables).

{\it{If $f(w)\in L_{0},$ then}}%
\begin{equation}
\mbox{\bf res}_{w}\left( A(w)f(w)^{k}w^{-k-1}\right)=
\mbox{\bf res}_{z}\left( \left[ A(w)/f(w)h^{\prime }(w)\right]_{w=\overline{h(z)}}z^{-k-1}\right) ,
\label{M18}
\end{equation}%
{\it {where $z=h(w)=wf(w)\in L_{1}.$}}

{\textbf{Rule 6} (Differentiation).%
\begin{equation}
k\,\mbox{\bf res}_{w}A(w)w^{-k-1}=\mbox{\bf res}_{w}A^{\prime }(w)\,w^{-k}.
\label{M19}
\end{equation}}%

\subsubsection{Connection with  the theory of analytic functions}

\noindent

If a formal power series $A(w)\in L$ converges in a punctured neighborhood
of zero, then the definition of $\mbox{\bf res}_{w}A(w)$ coincides with the
usual definition of $\mbox{\bf res}_{w=0}A(w)$, used in the theory of
analytic functions.

The formula (\ref{M12}) is an analog of the well known integral Cauchy formula
\begin{equation*}
a_{k}=\frac{1}{2\pi i}\int_{|w|=\rho} A(w)w^{-k-1}dw
\end{equation*}%
for the coefficients of the Taylor series in a punctured neighborhood of
zero. The substitution rule (\ref{M16}) of the $res$ operator is a direct
analog of the famous Cauchy theorem. Similarly, it is possible to introduce
the definition of formal residue at the point of infinity, the logarithmic
residue and the theorem of residues (all necessary concepts and results in
the theory of residues in one and several complex variables
\cite{Cartan61, Evgr2, AizenYuzh83, Egor1}).

 Moreover, it is easy to see that each rule of
the $\mbox{\bf res}$ operator can be simply proven by reduction to the known
formula in the theory of residues for corresponding rational
function.%\bigskip

\subsection {Simple formulas for the number of quadrics and
  symmetric forms of modules over local rings}

\noindent

Simple formulas for the number of quadrics and symmetric forms of modules
over local rings are found analytically. This simplification of known
formulas of Levchuk and Starikova is achieved using the method of coefficients
and leads to the number-theoretical identities of new type. The problem of
algebraic interpretation of new formulas is posed.%\medskip

\subsubsection{Thе algebraic statement of a problem}

\noindent

Let $R$ be a local ring with $2 \in R^{*}$. In \cite{Star2004} quadrics of a
projective space $RP_{n-1}$ associated to a free module of rank $n$ over $R$
were enumerated. Every invertible symmetric matrix over $R$ is congruent to
a diagonal matrix \cite{Mil1985}. Observe that the Mobius, Minkowski and
Laguerre's classical geometries have natural representations by projective
lines over algebras \cite{Benz73}. The fundamental theorem of projective
geometry over a field was extended to projective spaces over rings
\cite{O69}.

Consider the case of a local ring $R$ with main maximal ideal of nilpotent
step $s$, where $2\in R^{\ast }$ and $|R^{\ast }:R^{\ast2}|=2$.
The following enumerative formulas of classes of projectively congruence
quadric and symmetric forms of projective space $RP_{n-1}$ are given \cite{Lev2004,Star2004}.
Denote by $N(n,s)$ the number of all such classes. Then
\begin{equation*}
N(n,s)\!=\!\sum_{m=1}^{n}\!\sum_{q=1}^{\min (m,s)}\!
%\binom{s}{q}\!\left\{\binom{-1+\frac{m}{2}}{q-1}^{'}+
\binom{s}{q}\!\left\{\binom{-1+\frac{m}{2}}{q-1}'+
\!\sum_{(n_{1},\ldots ,n_{q})\in \Omega_{q}(m)}\left\lfloor \frac{1}{2}\prod_{j=1}^{q}(n_{j}+1)\right\rfloor \right\} ,
\end{equation*}%
\begin{equation}
{\ if}\,(1+R^{\ast 2})\subset R^{\ast 2},
\label{levs2}
\end{equation}%
\begin{equation*}
N(n,s)=\sum_{m=1}^{n}\sum_{q=1}^{\min (m,s)}\binom{s}{q}2^{q-1}
\left\{\binom{-1+\frac{m}{2}}{q-1}^{\prime }+\binom{m-1}{q-1}\right\} ,
\end{equation*}%
\begin{equation}
{\ if}\,R^{\ast }\cap (1+R^{2})\nsubseteq R^{\ast 2}.
 \label{levs1}
\end{equation}

Here $\binom{p}{q}^{\prime }$ is equal to $\binom{p}{q}$ for nonnegative
integers $p$ and $q$, and 0 otherwise; $\Omega _{q}(m)$ denotes the set of
all ordered partitions of the number $m$ in $q$ parts:
$n_{1}+\ldots+n_{q}=m $.

These formulas can be simplified using the integral
representation technique.%\medskip

\subsubsection{The analytic solution of a problem}

The main result of this paper is formulated in the following theorems.

\begin{theorem}.
\label{th1}
If $(1+R^{*2}) \subset R^{*2}$ then
\begin{equation}
\label{levs4}
N(n,s) = T(n,2s) + T(\left\lfloor \frac{n}{2} \right\rfloor,s),
\end{equation}
where
\begin{equation*}
T(n,p) = -\frac{1}{2} + \frac{1}{2}\binom{n+p}{p}.
\end{equation*}
\end{theorem}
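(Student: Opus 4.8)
The plan is to first strip away the floor function and the primed binomial, and only then attack the resulting clean double sum with the coefficient method. Writing $P=\prod_{j=1}^{q}(n_{j}+1)$, I would use the linearization $\left\lfloor\frac12 P\right\rfloor=\frac12 P-\frac12[\,P\text{ odd}\,]$, together with the remark that $P$ is odd precisely when every $n_{j}$ is even. The number of compositions $(n_{1},\ldots,n_{q})\in\Omega_{q}(m)$ whose parts are all even (substitute $n_{j}=2m_{j}$) equals $\binom{-1+\frac{m}{2}}{q-1}$ when $m$ is even and vanishes otherwise, which is exactly the value of the primed binomial $\binom{-1+\frac{m}{2}}{q-1}'$. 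Hence the parity correction $-\frac12[\,P\text{ odd}\,]$, summed over $\Omega_{q}(m)$, cancels one half of the primed binomial, and the entire inner brace collapses to $\frac12\bigl(A(m,q)+B(m,q)\bigr)$, where $A(m,q)=\sum_{\Omega_{q}(m)}\prod_{j}(n_{j}+1)$ and $B(m,q)=\binom{-1+\frac{m}{2}}{q-1}'$ counts the all-even compositions.

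Next I would introduce $\mbox{\bf res}$ representations for $A$ and $B$. Since a single positive part carries the generating function $\sum_{n\ge1}(n+1)x^{n}=x(2-x)/(1-x)^{2}$, one has $A(m,q)=\mbox{\bf res}_{x}\bigl(x(2-x)/(1-x)^{2}\bigr)^{q}x^{-m-1}$; likewise even positive parts give $\sum_{n\ge1}x^{2n}=x^{2}/(1-x^{2})$, so $B(m,q)=\mbox{\bf res}_{x}\bigl(x^{2}/(1-x^{2})\bigr)^{q}x^{-m-1}$. The sum over $q$ is the main payoff: by linearity (Rule 2) the $\mbox{\bf res}$ passes outside the $q$-summation and the binomial theorem applies. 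The key simplifications are $1+\frac{x(2-x)}{(1-x)^{2}}=\frac{1}{(1-x)^{2}}$ and $1+\frac{x^{2}}{1-x^{2}}=\frac{1}{1-x^{2}}$, which turn $\sum_{q=1}^{\min(m,s)}\binom{s}{q}A(m,q)$ into $\mbox{\bf res}_{x}(1-x)^{-2s}x^{-m-1}=\binom{2s+m-1}{m}$ for $m\ge1$ (the $q=0$ term drops out), and similarly $\sum_{q}\binom{s}{q}B(m,q)=\mbox{\bf res}_{x}(1-x^{2})^{-s}x^{-m-1}$.

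It then remains to sum over $m$ from $1$ to $n$, and both residual sums are hockey-stick sums. I would use $\sum_{m=1}^{n}\binom{2s+m-1}{m}=\binom{2s+n}{n}-1$, and, since $(1-x^{2})^{-s}$ carries only even powers, $\sum_{m=1}^{n}\mbox{\bf res}_{x}(1-x^{2})^{-s}x^{-m-1}=\sum_{t=1}^{\lfloor n/2\rfloor}\binom{s+t-1}{t}=\binom{s+\lfloor n/2\rfloor}{\lfloor n/2\rfloor}-1$. Assembling the two halves and invoking $\binom{n+2s}{n}=\binom{n+2s}{2s}$ and $\binom{s+\lfloor n/2\rfloor}{\lfloor n/2\rfloor}=\binom{s+\lfloor n/2\rfloor}{s}$ gives $N(n,s)=-\frac12+\frac12\binom{n+2s}{2s}-\frac12+\frac12\binom{\lfloor n/2\rfloor+s}{s}$, which is exactly $T(n,2s)+T(\lfloor n/2\rfloor,s)$.

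I expect the main obstacle to be the careful bookkeeping around the floor and the parity, rather than the analytic part. One must verify that the all-even compositions are counted by precisely the primed binomial, so that its range (namely $m$ even with $\frac{m}{2}-1\ge0$) coincides with the support of the parity correction, and one must track the boundary terms (the vanishing $q=0$ contribution and the starting index $m=1$) so that the two stray $-\frac12$ summands land correctly. Once the inner brace has been linearized, the generating-function identities and the hockey-stick collapses are short.
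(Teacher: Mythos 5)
Your proof is correct, and its analytic core coincides with the paper's: both linearize the floor by parity, identify the all-even compositions with the primed binomial, collapse the $q$-sum by the binomial theorem under the $\mbox{\bf res}$ sign (via $1+x(2-x)(1-x)^{-2}=(1-x)^{-2}$ and $1+x^{2}(1-x^{2})^{-1}=(1-x^{2})^{-1}$), and finish with a hockey-stick (geometric-progression) summation over $m$. The difference is in the decomposition. The paper splits $N(n,s)$ into three sums: $S_{1}$ (the primed-binomial part, evaluated by a separate two-variable $\mbox{\bf res}$/Vandermonde computation to $-1+\binom{s+\lfloor n/2\rfloor}{s}$), $S_{4}$ (one half of the unrestricted product sum, giving $-\frac12+\frac12\binom{2s+n}{n}$), and $S_{5}$ (minus one half of the all-even count, giving $\frac12-\frac12\binom{s+\lfloor n/2\rfloor}{s}$). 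Since the summands of $S_{1}$ and $S_{5}$ are proportional --- both equal $\binom{s}{q}$ times the all-even composition count $\binom{-1+m/2}{q-1}^{\prime}$ --- the paper in effect evaluates the same sum twice with different coefficients. Your observation that the primed binomial and the parity correction merge, $B-\frac12B=\frac12B$, eliminates that redundancy: only your $A$-sum and $B$-sum remain (these coincide, up to normalization, with the paper's evaluations of $S_{4}$ and $S_{5}$), and the shape of the answer --- the two terms $T(n,2s)$ and $T(\lfloor n/2\rfloor,s)$ entering with equal weight $\frac12$ --- becomes visible before any residue is computed. The only thing the paper's longer route buys is the standalone evaluation of $S_{1}$ as a separate identity; as a proof of the theorem, your version is shorter and equally complete.
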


\begin{theorem}.
\label{th2} If $R^{*} \cap (1+R^2) \nsubseteq R^{*2}$ then
\begin{equation}
\label{levs3}
N(n,s) = S(n,s) + S\left({\left\lfloor \frac{n}{2} \right\rfloor,\,s}\right),
\end{equation}
where
$$ S(n,s) = -\frac{1}{2}+\sum_{q=0}^{s} 2^{q-1}\binom{s}{q}\binom{n}{q}, $$
 and
for fixed $s$ and $n \rightarrow \infty$
\begin{equation}
\label{assym1}
S(n,s) \propto -\frac{1}{2} + {2}^{s-1} \frac{n^s}{s!},
\end{equation}
\begin{equation}
\label{assym2}
N(n, s) \propto -1 + \frac{1}{2}(2^s+1)\frac {n^s}{s!}
\end{equation}
\end{theorem}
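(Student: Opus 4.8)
The plan is to reduce the double sum (\ref{levs1}) to the closed forms on the right of (\ref{levs3}) by splitting it along the two summands inside the braces. Write $N(n,s)=A+B$, where
\begin{equation*}
A=\sum_{m=1}^{n}\sum_{q=1}^{\min(m,s)}\binom{s}{q}2^{q-1}\binom{-1+\frac{m}{2}}{q-1}^{\prime},\qquad B=\sum_{m=1}^{n}\sum_{q=1}^{\min(m,s)}\binom{s}{q}2^{q-1}\binom{m-1}{q-1}.
\end{equation*}
I would show that $B=S(n,s)$ and $A=S(\lfloor n/2\rfloor,s)$, which together give (\ref{levs3}).

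For $B$, interchange the order of summation: fix $q$, for which $m$ runs from $q$ to $n$, and evaluate $\sum_{m=q}^{n}\binom{m-1}{q-1}$. In keeping with the method of coefficients, write $\binom{m-1}{q-1}=\mbox{\bf res}_{w}(1+w)^{m-1}w^{-q}$ using (\ref{M1}), pull the $\mbox{\bf res}$ outside the finite sum by linearity (Rule 2), and recognize the remaining sum in $m$ as a geometric progression with ratio $1+w$; summing it yields $[(1+w)^{n}-(1+w)^{q-1}]/w$, so that $\sum_{m=q}^{n}\binom{m-1}{q-1}=\binom{n}{q}-\binom{q-1}{q}=\binom{n}{q}$ (the second term vanishing since $q>q-1$). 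Hence $B=\sum_{q=1}^{s}2^{q-1}\binom{s}{q}\binom{n}{q}$; since the $q=0$ term of $S(n,s)$ equals $\tfrac12$ and cancels the additive $-\tfrac12$, this is exactly $S(n,s)$.

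For $A$, the crucial remark is that the primed coefficient $\binom{-1+m/2}{q-1}^{\prime}$ is nonzero only when $-1+m/2$ is a nonnegative integer, i.e. only for even $m$. Setting $m=2t$ with $1\le t\le\lfloor n/2\rfloor$ turns it into $\binom{t-1}{q-1}$, and one checks that the upper limit $\min(2t,s)$ may be lowered to $\min(t,s)$ without loss, because $\binom{t-1}{q-1}=0$ whenever $q>t$. The sum $A$ then becomes literally the defining sum of $B$ with $n$ replaced by $\lfloor n/2\rfloor$, so $A=S(\lfloor n/2\rfloor,s)$, completing (\ref{levs3}).

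The asymptotics follow by isolating the top term $q=s$. For fixed $s$, $S(n,s)$ has leading term $2^{s-1}\binom{n}{s}\sim 2^{s-1}n^{s}/s!$, all lower-order $q$ contributing $O(n^{s-1})$, which gives (\ref{assym1}). Substituting $\lfloor n/2\rfloor\sim n/2$ into this leading term gives $2^{s-1}(n/2)^{s}/s!=\tfrac12\,n^{s}/s!$ for $S(\lfloor n/2\rfloor,s)$, and adding the two leading terms together with the two constants $-\tfrac12$ produces (\ref{assym2}). I expect the only real difficulty to be bookkeeping rather than conceptual: one must keep track of the parity constraint and the floor function, and confirm that lowering $\min(2t,s)$ to $\min(t,s)$ and the boundary cases $s>n$ (where $\binom{n}{q}=0$ keeps the hockey-stick identity valid for empty sums) neither drop nor create nonzero terms.
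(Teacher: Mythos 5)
Your proof is correct, and it uses the same decomposition as the paper: your $B$ and $A$ are exactly the sums $S_{6}(n,s)$ and $S_{7}(n,s)$ of (\ref{sp11}), and you prove exactly (\ref{ssum6}) and (\ref{ssum7}). The differences are in execution. For $B=S_{6}$, the paper's Lemma \ref{lem6a} runs the method of coefficients in the opposite order: it first collapses the $q$-sum by the substitution rule with the change $x=2y$, then sums the geometric progression in $m$, arriving at the closed integral representation $-\tfrac12+\tfrac12\,\mbox{\bf res}_{y}(1+2y)^{s}(1+y)^{n}y^{-n-1}$ of (\ref{intrep}), and only afterwards re-expands in powers of $y$ to obtain $S(n,s)$; you instead fix $q$, evaluate $\sum_{m=q}^{n}\binom{m-1}{q-1}=\binom{n}{q}$ (the hockey-stick identity, which you justify by the same residue/geometric-progression device), and never need the intermediate closed form. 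Your route is shorter; what it loses is the integral representation (\ref{intrep}) itself, which the paper records as a result in its own right (including the second form in the variable $z$). Your treatment of $A$ --- restriction to even $m$, the substitution $m=2t$, and lowering $\min(2t,s)$ to $\min(t,s)$ because $\binom{t-1}{q-1}=0$ for $q>t$ --- is precisely the step the paper leaves implicit by declaring the proof of (\ref{ssum7}) ``analogous'' (the corresponding parity argument appears only in Lemma \ref{lemma1} of the proof of Theorem \ref{th1}); spelling it out is a genuine improvement, since the reduction of $A$ to $B$ with $n$ replaced by $\lfloor n/2\rfloor$ is the crux of (\ref{levs3}). Finally, for (\ref{assym1}) the paper invokes a Hardy--Littlewood Tauberian theorem, whereas you simply note that $S(n,s)$ is a polynomial in $n$ of degree $s$ with leading coefficient $2^{s-1}/s!$, all other terms being $O(n^{s-1})$; your argument is more elementary and entirely adequate, since Tauberian machinery is unnecessary for a finite sum polynomial in $n$, and your derivation of (\ref{assym2}) by adding the two leading terms $2^{s-1}n^{s}/s!$ and $n^{s}/(2\,s!)$ matches the paper's.
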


The following section containing the proof of the theorem \ref{th1} in
particular solves difficult triple summation problem with the inner sum
taken by partitions and summand involving integer part function. The second
section gives a proof of theorem \ref{th2} including asymptotic solution of
the problem. Final remarks can be found in Conclusion.

{\textbf {Proof of the theorem }\,\ref{th1}}

%\begin{proof}{Theorem \ref{th1}}

Proof of formula (\ref{levs4}) for the sum (\ref{levs2}) splits into the
sequence of intermediate simpler statements of similar type, formulated in
lemmas \ref{lemma1}--\ref{lemma4}. In each of them we obtain with the help
of the method of coefficients and combinatorial techniques integral
representations for intermediate combinatorial expressions, that define sums
(\ref{levs2}). Write $N(n,s)$ in (\ref{levs2}) as
\begin{equation}
 \label{sp1}
N(n,s) = S_1(n,s) + S_2(n,s),
\end{equation}
where
\begin{equation}
\label{S1}
S_{1}(n,s) =\sum_{m=1}^{n}\sum_{q=1}^{\min ( m,s)}\binom{s }{q}\binom{-1+\frac{m}{2}}{q-1}^{\prime },
\end{equation}
\begin{equation}
\label{S2}
S_{2}(n,s) = \sum_{m=1}^{n} \sum_{q=1}^{\min(m,s)} \binom{s}{q} S_3(n,s,m,q),
\end{equation}
and
\begin{equation}
\label{S3}
S_{3}(n,s,m,q) =\frac{1}{2}\sum_{( n_{1},...,n_{q}) \in \Omega_{q}(m)}\left[\frac{1}{2}\prod_{j=1}^{q}(n_{j}+1)\right],
\end{equation}
where $\Omega_{q}(m)$ is a set of ordered sequences of integers
$n_1,\ldots,n_q$, such that $n_1+\ldots+n_q=m,\, n_{i}\geq 1$ for $i=1,\ldots,q$.

\begin{lemma}.
\label{lemma1}
\begin{equation}
\label{S4}
S_{1}(n, s) =-1+\binom{s+[\frac{n}{2}]}{s}.
\end{equation}
\end{lemma}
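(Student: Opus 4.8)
The plan is to first strip away the primed binomial and the half-integer argument, then apply the method of coefficients twice: once to collapse the inner sum over $q$ and once for the outer sum over $t$. First I would observe that $\binom{-1+\frac{m}{2}}{q-1}'$ is nonzero only when $-1+\frac{m}{2}$ is a nonnegative integer, i.e. only when $m$ is even and $m\geq 2$. Writing $m=2t$ with $1\le t\le \lfloor n/2\rfloor$ converts the primed binomial into the ordinary $\binom{t-1}{q-1}$, so that
\[
S_1(n,s)=\sum_{t=1}^{\lfloor n/2\rfloor}\sum_{q=1}^{\min(t,s)}\binom{s}{q}\binom{t-1}{q-1},
\]
where I have used $\binom{t-1}{q-1}=0$ for $q>t$ to tighten the upper limit of the inner sum.

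For the inner sum I would represent $\binom{s}{q}=\mbox{\bf res}_{w}(1+w)^{s}w^{-q-1}$ by (\ref{M13}), pull the operator out of the sum over $q$ by the linearity Rule 2, and recognize the remaining series $\sum_{q}\binom{t-1}{q-1}w^{-q}=w^{-1}(1+1/w)^{t-1}$ as a binomial expansion. After simplifying the powers of $w$ this leaves $\mbox{\bf res}_{w}(1+w)^{s+t-1}w^{-t-1}=\binom{s+t-1}{t}$, which is just the Vandermonde convolution obtained analytically. Hence $S_1(n,s)=\sum_{t=1}^{\lfloor n/2\rfloor}\binom{s+t-1}{t}$.

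Finally, to evaluate the outer sum I would write each term as $\binom{s+t-1}{t}=\mbox{\bf res}_{w}(1+w)^{s+t-1}w^{-t-1}$, interchange $\sum$ and $\mbox{\bf res}$, and sum the geometric progression $\sum_{t=1}^{N}\bigl((1+w)/w\bigr)^{t}$ with $N=\lfloor n/2\rfloor$ in closed form (step 6 of the scheme). The resulting kernel splits into two pieces whose residues are $\binom{s+N}{N}=\binom{s+N}{s}$ and $\mbox{\bf res}_{w}(1+w)^{s}w^{-1}=\binom{s}{0}=1$, giving $S_1(n,s)=-1+\binom{s+\lfloor n/2\rfloor}{s}$. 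The one genuinely delicate point is the very first step: correctly accounting for the parity restriction and the floor function hidden inside the primed half-integer binomial, since everything afterward is a routine double application of the res-calculus (Vandermonde followed by a hockey-stick summation).
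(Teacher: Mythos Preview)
Your argument is correct and follows essentially the same route as the paper: reduce to even $m=2t$ via the parity constraint in the primed binomial, collapse the inner sum over $q$ by the method of coefficients, then sum the resulting expression over $t$ as a geometric progression. The only cosmetic difference is that the paper carries both binomials as a two-variable residue $\mbox{\bf res}_{x,y}$ and invokes the substitution rule (Rule~3) to eliminate $q$, whereas you keep a single variable and recognise the binomial expansion $(1+1/w)^{t-1}$ directly, thereby making the intermediate Vandermonde value $\binom{s+t-1}{t}$ explicit before applying the hockey-stick summation; this is arguably a little cleaner but not a different idea.
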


%\textbf{Proof.}
\begin{proof}
 We have
\begin{equation*}
S_{1}(n,s)=\sum_{k=1}^{[\frac{n}{2}]}\sum_{q=0}^{\min (2k,s)}\binom{s}{q}\binom{-1+k}{q-1}\!=
\!\sum_{k=0}^{[\frac{n}{2}]-1}\sum_{q=0}^{\min (2k+2,s)}\binom{s}{q}\binom{k}{k-q+1}=
\end{equation*}

\begin{equation*}
(using \,(\ref{M13})\, twice)
\end{equation*}

\begin{equation*}
=\sum_{k=0}^{[\frac{n}{2}]-1}\sum_{q=0}^{\min (2k+2,s)}\mbox{\bf res}_{x,y}\{(1+x)^{s}(1+y)^{k}/x^{q+1}y^{k-q+2}\}=
\end{equation*}%
\begin{equation*}
=\sum_{k=0}^{[\frac{n}{2}]-1}\mbox{\bf res}_{y}\left\{ \frac{(1+y)^{k}}{y^{k+2}}%
\left[ \sum_{q=0}^{\infty }y^{q}\mbox{\bf res}_{x}(1+x)^{s}/x^{q+1}\right]
\right\}.
\end{equation*}

In last expression using substitution rule within square brackets with the
change $x=y$ we have
\begin{equation*}
S_{1}(n,s)\!=\!\sum_{k=0}^{[\frac{n}{2}]-1}\mbox{\bf res}_{y}
\left\{ \left. \frac{(1+y)^{k}}{y^{k+2}}\cdot \left[ (1+x)^{s}\right] \right\vert _{x=y}\right\} =
\end{equation*}%
\begin{equation*}
=\sum_{k=0}^{[\frac{n}{2}]-1}\mbox{\bf res}_{y}\{(1+y)^{s}(1+y)^{k}/y^{k+2}\}=
\end{equation*}%
\begin{equation*}
=\mbox{\bf res}_{y}\left\{\sum_{k=0}^{[\frac{n}{2}]-1}(1+y)^{s}(1+y)^{k}/y^{k+2}\right\} =
\end{equation*}%
\begin{equation*}
\mbox{ (formula of the geometric progression in $k$)}
\end{equation*}%
\begin{equation*}
=\mbox{\bf res}_{y}\{(1+y)^{s}y^{-2}[1-(1+y)^{[\frac{n}{2}]}/y^{[\frac{n}{2}]}]/[1-(1+y)/y]\}=
\end{equation*}%
\begin{equation*}
=-\mbox{\bf res}_{y}(1+y)^{s}y^{-1}+\mbox{\bf res}_{y}(1+y)^{s+[\frac{n}{2}]}y^{[n/2]+1}=-1+\binom{s+[\frac{n}{2}]}{s}.
 \end{equation*}
\end{proof}
%$\hfill{\vrule height 3pt width 5pt depth 2pt}$
%\end{proof}
Now we will obtain an integral representation for sum $S_{3}(n, s, m, q)$
starting with its summand $\left[1/2\prod_{j=1}^{q}(n_{j}+1)\right]. $

Note that
$\Omega_{q}(m) =\Omega_{q}^{\prime}(m) \bigcup \Omega_{q}^{\prime\prime }(m)$,
 where
\begin{equation*}
\Omega_{q}^{\prime }(m)=\{(n_1,\ldots,n_q):
\end{equation*}
\begin{equation*}
n_1+...+n_q=m, n_{i} \geq 1, \,\text {for}\, i=1,\ldots,q, \text{and}%
\,\exists\, j\, \text {such that }\, n_j \text{is odd}\},
\end{equation*}
\begin{equation*}
\Omega_{q}^{\prime \prime }( m)=\{(n_1,\dots,n_q) : n_1+...+n_q=m,\, n_{i}\geq 1\,
 \text{for}\, i=1,\ldots,q, \text {and} n_{j} \text{ is even}\, \forall
\end{equation*}
and $\Omega_{q}^{\prime }(m) \bigcap \Omega_{q}^{\prime\prime}(m) =
\emptyset $. Then
\begin{equation}
\label{prod}
\left[\frac{1}{2}\prod_{j=1}^{q}(n_{j}+1)\right]=
\begin{cases}
\frac{1}{2}\prod_{j=1}^{q}(n_{j}+1), & \text{if }n\in \Omega_{q}^{\prime}(m), \\
\frac{1}{2}(-1+\prod_{j=1}^{q}(n_{j}+1)), & \text{ if }n\in \Omega_{q}^{\prime
\prime}(m).%
\end{cases}%
\end{equation}

\begin{lemma}.
\label{lemma1a}
The following equalities are valid
\begin{equation}
\label{A}
\sum_{n\in \Omega_{q}(m)}[1/2\prod_{j=1}^{q}(n_{j}+1)]=
\frac{1}{2}\sum_{n\in \Omega_{q}(m)}\prod_{j=1}^{q}(n_{j}+1)-\frac{1}{2}|\Omega_{q}^{\prime \prime}(m)|,
\end{equation}
\begin{equation}
\label{B}
\!\!\sum_{n\in \Omega_{q}(m)}\prod_{j=1}^{q}(n_{j}+1)=\mbox{\bf res}%
_{x}\{f^{q}(x) /x^{m+q+1}\}= \mbox{\bf res}_{x}\{\frac{(-1+(1-x)^{-2})^{q}}{%
x^{m+1}}\},
\end{equation}
\begin{equation}
\label{C}
|\Omega_{q}^{\prime \prime }( m) |=\mbox{\bf res}_{x}%
\{(1-x^{2})^{-q}/x^{m-2q+1}\}=
\begin{cases}
\binom{\frac{m}{2}-1 }{q-1}, & \text{ if }\,m \,\text{ is even,} \\
0, & \text{if}\,m \text{ is odd.}%
\end{cases}%
\end{equation}
\end{lemma}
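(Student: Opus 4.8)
The plan is to treat the three identities in turn: (\ref{A}) is an immediate consequence of the case analysis (\ref{prod}), while (\ref{B}) and (\ref{C}) are direct coefficient extractions realized through the $\mbox{\bf res}$ operator via single-part generating functions.

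For (\ref{A}) I would start from the disjoint decomposition $\Omega_q(m)=\Omega_q'(m)\cup\Omega_q''(m)$ (with empty intersection) and apply (\ref{prod}) termwise. On $\Omega_q'(m)$ at least one $n_j$ is odd, so the factor $n_j+1$ is even, the product $\prod_{j=1}^q(n_j+1)$ is even, and the floor equals $\frac12\prod_{j=1}^q(n_j+1)$ exactly; on $\Omega_q''(m)$ every $n_j$ is even, every factor $n_j+1$ is odd, the product is odd, and the floor is $\frac12(\prod_{j=1}^q(n_j+1)-1)$. Summing the two contributions and recombining $\sum_{\Omega_q'(m)}+\sum_{\Omega_q''(m)}=\sum_{\Omega_q(m)}$ reassembles the full product sum, while the accumulation of the constant $-\frac12$ over $\Omega_q''(m)$ produces exactly $-\frac12|\Omega_q''(m)|$, which is (\ref{A}).

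For (\ref{B}) the key observation is that the weighted sum is the coefficient of $x^m$ in a $q$-th power of a one-part generating function. Since $\sum_{n\ge 1}(n+1)x^{n}=(1-x)^{-2}-1$, each composition $(n_1,\dots,n_q)\in\Omega_q(m)$ contributes the weight $\prod_{j=1}^q(n_j+1)$ to $[x^m]\bigl((1-x)^{-2}-1\bigr)^q$. Reading this coefficient off with (\ref{M12}) gives $\mbox{\bf res}_x\{(-1+(1-x)^{-2})^q x^{-m-1}\}$, the explicit right-hand side; multiplying and dividing the kernel by $x^q$, i.e. writing $f(x)=x(-1+(1-x)^{-2})$, recovers the equivalent form $\mbox{\bf res}_x\{f^q(x)/x^{m+q+1}\}$. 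Likewise, for (\ref{C}) I would build the generating function for compositions into $q$ even positive parts: a single even part has generating function $\sum_{k\ge1}x^{2k}=x^2/(1-x^2)$, whence $|\Omega_q''(m)|=[x^m](x^2/(1-x^2))^q=[x^{m-2q}](1-x^2)^{-q}=\mbox{\bf res}_x\{(1-x^2)^{-q}x^{-(m-2q)-1}\}$, matching the stated residue. Expanding $(1-x^2)^{-q}=\sum_{j\ge0}\binom{q+j-1}{j}x^{2j}$ and matching $2j=m-2q$ gives $0$ when $m$ is odd, and with $j=\frac m2-q$ when $m$ is even the coefficient $\binom{q+j-1}{j}=\binom{\frac m2-1}{\frac m2-q}=\binom{\frac m2-1}{q-1}$ after the symmetry of binomial coefficients, which is the closed form.

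The computations are all routine; the only point that needs genuine care is the parity bookkeeping in (\ref{A}) — recognizing precisely that $\prod_{j=1}^q(n_j+1)$ is even exactly on $\Omega_q'(m)$ and odd exactly on $\Omega_q''(m)$ — together with the final reindexing and symmetry step for the binomial coefficient at the end of (\ref{C}).
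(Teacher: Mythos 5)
Your proof is correct and follows essentially the same route as the paper: identity (\ref{A}) by the parity case analysis built into (\ref{prod}), and identities (\ref{B}) and (\ref{C}) by raising a single-part generating function to the $q$-th power and extracting the coefficient with $\mbox{\bf res}$ (your normalization $\sum_{n\ge1}(n+1)x^n$ versus the paper's $\sum_{n\ge1}(n+1)x^{n+1}$ is a trivial shift absorbed by the factor $x^q$). The only difference is cosmetic: you carry out explicitly the final expansion of $(1-x^2)^{-q}$ and the binomial symmetry step, which the paper states as "equivalent to (\ref{C})" without detail.
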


%\textbf{Proof}.
\begin{proof}
Using (\ref{prod}) we have
\begin{equation*}
\sum_{n\in \Omega_{q}(m)}[\frac{1}{2}\prod_{j=1}^{q}(n_{j}+1)]=
\sum_{n\in\Omega_{q}^{\prime}(m)}[\frac{1}{2}\prod_{j=1}^{q}(n_{j}+1)] +
\sum_{n\in\Omega_{q}^{\prime \prime}(m) }[\frac{1}{2}\prod_{j=1}^{q}(n_{j}+1)]=
\end{equation*}
\begin{equation*}
=\sum_{n\in \Omega_{q}^{\prime}(m)} \frac{1}{2}\prod_{j=1}^{q}(n_{j}+1)+
\sum_{n\in\Omega_{q}^{\prime \prime}(m)}\frac{1}{2}(-1+\prod_{j=1}^{q}(n_{j}+1))=
\end{equation*}
\begin{equation*}
=\frac{1}{2}\sum_{n\in
\Omega_{q}(m)}\prod_{j=1}^{q}(n_{j}+1)-\frac{1}{2}|\Omega_{q}^{\prime\prime}(m) |,
\end{equation*}
which gives (\ref{A}).

Proof of (\ref{B})--(\ref{C}) uses combinatorial properties of generating
functions of partitions of the set $\Omega_{q}(m) $.
Denote $c_{n}=(n+1)\geq2 $. Since the generating function for sequence $\{c_n\}$ is
\begin{equation*}
f(x)=\sum\limits_{n=1}^{\infty }(n+1)x^{n+1}=
-x+\sum\limits_{n=0}^{\infty}(n+1)x^{n+1}=
\end{equation*}
\begin{equation*}
=-x+x[(1-x)]^{^{\prime }}=-x+x(1-x)^{-2}=x\left(-1+(1-x)^{-2}\right),
\end{equation*}%
then we get
\begin{equation*}
\sum_{n\in \Omega_{q}(m)}\prod_{j=1}^{q}(n_{j}+1)=
\mbox{\bf res}_{x}\{f^{q}(x)/x^{m+q+1}\}=
 \mbox{\bf res}_{x}\{(-1+(1-x)^{-2})^{q}/x^{m+1}\}.
\end{equation*}

It is easy to see that
\begin{equation*}
|\Omega_{q}^{\prime \prime}(m)|=\mbox{\bf res}_{x} \{ R^{q}(x) /x^{m+q+1} \},
\end{equation*}
where $R^{q}(x)=x^3+x^5+\ldots...=x^3(1-x^2)^{-1}. $ Thus
\begin{equation*}
|\Omega_{q}^{\prime \prime }(m)|=
\mbox{\bf res}_{x}\{[x^3(1-x^2)^{-1}]^{q}/x^{m+q+1}\}=
\mbox{\bf res}_{x}\{[(1-x^{2})^{-q}/x^{m-2q+1}\},
\end{equation*}%
which is equivalent to (\ref{C}).
\end{proof}
\begin{lemma}.
\label{lem5} Let
\begin{equation}
\label{412}
S_{4}(n, s) = \frac{1}{2}\sum_{m=1}^{n}\sum_{q=1}^{\min(m, s)}\binom{s }{q}
\sum_{(n_{1},\ldots,n_{q}) \in \Omega_{q}(m)}\prod_{j=1}^{q}(n_{j}+1).
\end{equation}
Then
\begin{equation}
\label{412a}
S_{4}(n, s)=-\frac{1}{2}+\frac{1}{2}\binom{2s+n}{n}.
\end{equation}
\end{lemma}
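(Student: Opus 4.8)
The plan is to insert the residue representation (\ref{B}) from Lemma \ref{lemma1a} directly into the definition (\ref{412}) of $S_4$, thereby replacing the innermost sum over partitions by a single $\mbox{\bf res}$, and then to collapse the two remaining sums by recognizing a binomial expansion followed by a hockey-stick summation. Writing $g(x) := -1 + (1-x)^{-2}$ and applying (\ref{B}), I would first obtain
\begin{equation*}
S_4(n,s) = \frac{1}{2}\sum_{m=1}^{n}\sum_{q=1}^{\min(m,s)}\binom{s}{q}\,\mbox{\bf res}_x\left\{\frac{g^q(x)}{x^{m+1}}\right\}.
\end{equation*}
Since $g(x) = x(2-x)(1-x)^{-2}$ has order $1$, the series $g^q(x)$ has order $q$, so $\mbox{\bf res}_x\{g^q(x)/x^{m+1}\}$ (the coefficient of $x^m$ in $g^q$) vanishes whenever $q > m$. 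This lets me raise the upper limit of the inner sum from $\min(m,s)$ to $s$ without changing its value.

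Next, by linearity (Rule 2) I would pull the $\mbox{\bf res}$ operator outside the sum over $q$ and apply the binomial theorem:
\begin{equation*}
\sum_{q=1}^{s}\binom{s}{q}g^q(x) = (1+g(x))^s - 1 = (1-x)^{-2s} - 1.
\end{equation*}
Because $\mbox{\bf res}_x\{1/x^{m+1}\} = 0$ for every $m \geq 1$, the constant term drops out, leaving
\begin{equation*}
S_4(n,s) = \frac{1}{2}\sum_{m=1}^{n}\mbox{\bf res}_x\left\{\frac{(1-x)^{-2s}}{x^{m+1}}\right\} = \frac{1}{2}\sum_{m=1}^{n}\binom{2s+m-1}{m},
\end{equation*}
where the last equality is exactly the representation (\ref{M3}) with parameters $2s$ and $m$.

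Finally I would evaluate the single sum over $m$ by the hockey-stick identity $\sum_{m=0}^{n}\binom{2s+m-1}{m} = \binom{2s+n}{n}$; subtracting the $m=0$ term (which equals $1$) gives $\sum_{m=1}^{n}\binom{2s+m-1}{m} = \binom{2s+n}{n} - 1$, and hence the claimed formula (\ref{412a}). I expect the only delicate point to be the justification for extending the range of $q$ from $\min(m,s)$ up to $s$: this rests on the order count for $g^q$, and it is precisely what makes the binomial collapse clean. Everything after that is routine manipulation of residues already licensed by the inference rules of Section 2.1.
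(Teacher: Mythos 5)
Your proof is correct, and its skeleton coincides with the paper's: both insert the representation (\ref{B}) into (\ref{412}), exploit the identity $1+g(x)=(1-x)^{-2}$ to collapse the sum over $q$, and arrive at the same intermediate expression $\tfrac12\sum_{m=1}^{n}\mbox{\bf res}_{x}\{(1-x)^{-2s}/x^{m+1}\}$. The implementations differ at both stages, though. To collapse the $q$-sum, the paper writes $\binom{s}{q}=\mbox{\bf res}_{z}\{(1+z)^{s}/z^{q+1}\}$ and invokes the substitution rule (Rule 3) with the change $z=g(x)$, tacitly extending the summation range over $q$ — a step justified by exactly the vanishing arguments you spell out — whereas you apply the finite binomial theorem directly, with the order count on $g^{q}(x)$ making the range extension from $\min(m,s)$ to $s$ explicit; your version is more elementary (no auxiliary variable, no Rule 3) and makes visible a point the paper passes over silently (the paper's parenthetical "change $z=x^{2}/(1-x^{2})$" is in fact a misprint for $z=-1+(1-x)^{-2}$). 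For the final summation over $m$, the paper sums the geometric progression under the residue sign and evaluates $-\tfrac12\mbox{\bf res}_{x}\{(1-x)^{-2s-1}x^{-1}(1-x^{-n})\}$ in one stroke, while you convert each residue into $\binom{2s+m-1}{m}$ via (\ref{M3}) and finish with the hockey-stick identity; the two computations are equivalent, the paper's staying entirely inside the res-calculus being showcased, yours trading that machinery for a classical binomial identity.
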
%
\begin{proof}

Since
\begin{equation*}
\binom{s}{q}=\mbox{\bf res}_{z}\{(1+z)^{s}/z^{q+1}\}
\end{equation*}%
from (\ref{412}) and (\ref{B})we have
\begin{equation*}
S_{4}(n,s)=\frac{1}{2}\sum_{m=1}^{n}\sum_{q=0}^{\min (m,s)}{\mbox{\bf res}}_{z}\left\{\frac{(1+z)^{s}}{z^{q+1}}\right\}
\mbox{\bf res}_{x}\left\{\frac{\left(-1+(1-x)^{-2}\right)^{q}}{x^{m+1}}\right\} =
\end{equation*}%
\begin{equation*}
(the \, substitution\, rule \, of \, the\, index\, q,\,the\, change\,z=x^{2}/(1-x^{2}))
\end{equation*}
\begin{equation*}
=\frac{1}{2}\sum_{m=1}^{n}{\mbox{\bf res}}_{x}\left\{\left(1+(1+(1-x)^{2})\right)^{s}/x^{m+1}\right\}=
\end{equation*}
\begin{equation*}
=\frac{1}{2}\sum_{m=1}^{n}\mbox{\bf res}_{x}\{(1-x)^{-2s}/x^{m+1}\}
=\frac{1}{2}\mbox{\bf res}_{x}\{\sum_{m=1}^{n}(1-x)^{-2s}/x^{m+1}\}=
\end{equation*}%
\begin{equation*}
(\mbox{summation by the index }m)=
\end{equation*}%
\begin{equation*}
=\frac{1}{2}{\mbox{\bf res}}_{x}\{(1-x)^{-2s}x^{-2}(1-x^{-n}/(1-x^{-1})\}=
\end{equation*}%
\begin{equation*}
=-\frac{1}{2}{\mbox{\bf res}}_{x}\{(1-x)^{-2s-1}x^{-1}(1-x^{-n})\}=-\frac{1}{2}+\frac{1}{2}\binom{2s+n}{n}.
\end{equation*}
\end{proof}
%$\hfill {\vrule height3ptwidth5ptdepth2pt}$

\begin{lemma}.
\label{lemma4}
Let
\begin{equation}
S_{5}(n,s):=-\frac{1}{2}\sum_{m=1}^{n}\sum_{q=1}^{\min (m,s)}\binom{s}{q}|\Omega_{q}^{\prime \prime }(m)|.
\label{414}
\end{equation}%
Then%
\begin{equation}
S_{5}(n,s)=\frac{1}{2}-\frac{1}{2}\binom{2s+[n/2]}{2s}.
\label{414a}
\end{equation}
\end{lemma}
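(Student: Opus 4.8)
The plan is to reuse, almost verbatim, the strategy that produced $S_4$ in Lemma~\ref{lem5}, now feeding in the closed form for $|\Omega_q^{\prime\prime}(m)|$ furnished by (\ref{C}). First I would insert the two residue representations $\binom{s}{q}=\mbox{\bf res}_z\{(1+z)^s/z^{q+1}\}$ and $|\Omega_q^{\prime\prime}(m)|=\mbox{\bf res}_x\{(1-x^2)^{-q}/x^{m-2q+1}\}$ into the definition (\ref{414}). Rewriting $(1-x^2)^{-q}/x^{m-2q+1}=[x^2/(1-x^2)]^{q}/x^{m+1}$ shows that the entire dependence on the summation index $q$ is concentrated in the single factor $[x^2/(1-x^2)]^{q}$, so $S_5$ becomes a double residue coupled only through a power of this bracket.

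Second, I would remove the index $q$ by the substitution rule (Rule~3), making the same change of variable $z=x^2/(1-x^2)$ that was used for $S_4$. Since $(1+z)^s$ is a polynomial, Rule~3(b) applies and gives
\[
\sum_{q}[x^2/(1-x^2)]^{q}\,\mbox{\bf res}_z\{(1+z)^{s}z^{-q-1}\}=\left(1+\tfrac{x^2}{1-x^2}\right)^{s}=(1-x^2)^{-s},
\]
which collapses $S_5$ to the single-index form
\[
S_5(n,s)=-\tfrac12\sum_{m=1}^{n}\mbox{\bf res}_x\{(1-x^2)^{-s}/x^{m+1}\}.
\]

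Third, I would interchange $\sum_m$ with $\mbox{\bf res}_x$ (Rule~2) and sum the finite geometric progression in $m$, exactly as at the end of the $S_4$ proof. This yields $-\tfrac12\,\mbox{\bf res}_x\{(1-x^2)^{-s}x^{-2}(1-x^{-n})/(1-x^{-1})\}$, and absorbing the $(1-x)$ factor turns the integrand into $(1-x)^{-s-1}(1+x)^{-s}$, so that
\[
S_5(n,s)=\tfrac12\,\mbox{\bf res}_x\{(1-x)^{-s-1}(1+x)^{-s}x^{-1}(1-x^{-n})\}=\tfrac12-\tfrac12\,[x^{n}](1-x)^{-s-1}(1+x)^{-s}.
\]
The final step is to evaluate the coefficient $[x^{n}](1-x)^{-s-1}(1+x)^{-s}$ in closed form by summing the even-indexed coefficients of $(1-x^2)^{-s}$ through the hockey-stick identity, which is what produces the binomial closed form claimed in (\ref{414a}).

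The main obstacle is precisely this last coefficient extraction: all the preceding steps are routine applications of Rules~2 and~3 and mirror the $S_4$ computation line for line, but the closed form is pinned down only at the very end. One must carry out the hockey-stick (and Vandermonde) summation carefully to determine exactly which binomial coefficient emerges and to match it against $\binom{2s+[n/2]}{2s}$ in (\ref{414a}); verifying this final identification of the parameters in the binomial coefficient is the delicate point of the whole argument.
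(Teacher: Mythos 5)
Your proposal follows the paper's own proof essentially line for line: the same residue insertions for $\binom{s}{q}$ and $|\Omega_q^{\prime\prime}(m)|$, the same elimination of the index $q$ by the substitution rule with $z=x^{2}/(1-x^{2})$ producing $(1-x^{2})^{-s}$, and the same geometric-progression summation over $m$. The only divergence is cosmetic and occurs at the very end: the paper keeps the integrand in the form $(1-x^{2})^{-s-1}(1+x)x^{-1}(1-x^{-n})$ and evaluates the two resulting residues separately according to the parity of $n$, while you factor it as $(1-x)^{-s-1}(1+x)^{-s}$ and extract $[x^{n}]$ by summing the even-indexed coefficients of $(1-x^{2})^{-s}$. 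Both endgames are valid and yield the same value.

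The genuine criticism is that you defer precisely the step you yourself call delicate, and had you executed it you would have discovered that the result does \emph{not} match the displayed formula (\ref{414a}). Completing your own computation,
\begin{equation*}
[x^{n}]\,(1-x)^{-s-1}(1+x)^{-s}=\sum_{j=0}^{[n/2]}\binom{s-1+j}{j}=\binom{s+[n/2]}{s},
\end{equation*}
so that $S_{5}(n,s)=\frac{1}{2}-\frac{1}{2}\binom{s+[n/2]}{s}$. The binomial $\binom{2s+[n/2]}{2s}$ printed in (\ref{414a}) is a misprint, not the true value: for $n=2$, $s=1$ the sum (\ref{414}) equals $-\frac{1}{2}$, whereas $\frac{1}{2}-\frac{1}{2}\binom{3}{2}=-1$. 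Note also that the paper's own proof of this lemma concludes with $S_{5}(n,s)=\frac{1}{2}-\frac{1}{2}\binom{s+[\frac{n}{2}]}{s}$, and only this value combines with Lemma \ref{lemma1} to give $S_{1}+S_{5}=-\frac{1}{2}+\frac{1}{2}\binom{s+[\frac{n}{2}]}{s}=T\left(\left[\frac{n}{2}\right],s\right)$, as formula (\ref{levs4}) of Theorem \ref{th1} requires. So your method is sound, but the final identification of parameters must actually be carried out rather than asserted; doing so exposes a typo in the statement instead of confirming it.
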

\begin{proof}
%\textbf{Proof.}
Since
$$\binom{s}{q}={\mbox{\bf res}}_{z}\{(1+z)^{s}/z^{q+1}\}$$
from (\ref{414}) and (\ref{C}) we have
\begin{equation*}
S_{5}(n,s)=-1/2\sum_{m=1}^{n}\sum_{q=0}^{\min (m,s)}
{\mbox{\bf res}}_{z}\{(1+z)^{s}/z^{q+1}\}\mbox{\bf res}_{x}\{(1-x^{2})^{-q}/x^{m-2q+1}\}=
\end{equation*}
\begin{equation*}
(\text{the\, substitution\,of\,the\,  rule\, of\, the\, index\, q})
%(the\, substitution\, rule\, of\, the\, index\, q\,)
\end{equation*}
\begin{equation*}
=-\frac{1}{2}\sum_{m=1}^{n}{\mbox{\bf res}}_{x}\{\left(1+(1-x^{2})^{-1}x^{2}\right)^{s}/x^{m+1}\}=
\end{equation*}%
\begin{equation*}
=-\frac{1}{2}\sum_{m=1}^{n}{\mbox{\bf res}}_{x}\{(1-x^{2})^{-s}/x^{m+1}\}=
\end{equation*}%
\begin{equation*}
=-\frac{1}{2}{\mbox{\bf res}}_{x}\{\sum_{m=1}^{n}(1-x^{2})^{-s}/x^{m+1}\}=
(\mbox{a\, summation by the index }m)=
\end{equation*}%
\begin{equation*}
=-\frac{1}{2}{\mbox{\bf res}}_{x}\{(1-x^{2})^{-s}x^{-2}(1-x^{-n})/(1-x^{-1})\}=
\end{equation*}%
\begin{equation*}
=\frac{1}{2}{\mbox{\bf res}}_{x}\{(1-x^{2})^{-s-1}(1-x^{-n})(1+x)x^{-1}\}=
\end{equation*}%
\begin{equation*}
=\frac{1}{2}-\frac{1}{2}{\mbox{\bf res}}_{x}(1-x^{2})^{-s-1}x^{-n-1}-\frac{1}{2}{\mbox{\bf res}}%
_{x}(1-x^{2})^{-s-1}x^{-n}.
\end{equation*}%
Since%
\begin{equation*}
-\frac{1}{2}{\mbox{\bf res}}_{x}(1-x^{2})^{-s-1}x^{-n-1}=%
\begin{cases}
-\frac{1}{2}\binom{s+[\frac{n}{2}]}{s}, & \text{{if} }n\text{ is even;} \\
0, & \text{if }\,n\text{ is odd,}%
\end{cases}%
\end{equation*}%
\begin{equation*}
-\frac{1}{2}{\mbox{\bf res}}_{x}(1-x^{2})^{-2s-1}x^{-n}=%
\begin{cases}
-\frac{1}{2}\binom{s+[\frac{(n-1)}{2}]}{s}, & \text{{if} }n\text{ is odd;} \\
0, & \text{{if} }n\text{ is even,}%
\end{cases}%
\end{equation*}%
we have%
\begin{equation*}
S_{5}(n,s)=\frac{1}{2}-\frac{1}{2}\binom{s+[\frac{n}{2}]}{s}.
\end{equation*}%
\end{proof}
%\medskip $\hfill {\vrule height3ptwidth5ptdepth2pt}$

\textbf{Proof of the theorem} \ref{th2}\medskip

Analogously to the theorem \ref{th1}, formula (\ref{levs3}) for the sum
(\ref{levs1}) is immediately corollary of formulas (\ref{ssum6}), (\ref{ssum7})
and (\ref{ssum}).

Formula (\ref{assym1}) for the sum $S(n, s)$ follows from
the classical tauberian theorem (see, for example, \cite{Hardy}): if the
series $A(z) = a_0 + \sum_{k=1}^{\infty} a_k z^k $ converges for $|z|<1$,
the limit $\lim_{z \rightarrow 1} (1-z)^{s+1} A(z) = B $ exists for some
$s\geq 0 $, and $k (a_k-a_{k-1}) >-c (k=1,2,\ldots)$ for some positive
constant $c $, then
$\lim_{k \rightarrow \infty} a_k k^{-s} = B/\Gamma(s+1).$

Formula (\ref{assym2}) follows from (\ref{assym1}) and (\ref{levs3}).

Write $N(n,s)$ in (\ref{levs1}) as%
\begin{equation}
N(n,s)=S_{6}(n,s)+S_{7}(n,s),
\label{sp11}
\end{equation}%
where%
\begin{equation*}
S_{6}(n,s)=\sum_{m=1}^{n}\sum_{q=1}^{\min (m,s)}\binom{s}{q}2^{q-1}\,
\binom{m-1}{q-1},
\end{equation*}%
and%
\begin{equation*}
S_{7}(n,s)=\sum_{m=1}^{n}\sum_{q=1}^{\min (m,s)}\binom{s}{q}2^{q-1}
\binom{-1+m/2}{q-1}^{\prime }.
\end{equation*}

\begin{lemma}.
 \label{lem6a} Let the sum
\begin{equation}
S(n,s)=-\frac{1}{2}+\sum_{q=0}^{s}2^{q-1}\binom{s}{q}\binom{n}{q}.
\label{ssum}
\end{equation}%
Then the sum $S_{6}(n,s)$ has the following integral representations%
\begin{equation*}
S_{6}(n,s)=-\frac{1}{2}+\frac{1}{2}\mbox{\bf res}_{y}(1+2y)^{s}(1+y)^{n}y^{-n-1}=
\end{equation*}%
\begin{equation}
=-\frac{1}{2}+\frac{1}{2}\mbox{\bf res}_{z}(1+z)^{s}(1-z)^{-s-1}z^{-n-1},
\label{intrep}
\end{equation}%
\begin{equation}
S_{6}(n,s)=S(n,s),
\label{ssum6}
\end{equation}%
and
\begin{equation}
\label{ssum7}
S_{7}(n,s)=S\left([\frac{n}{2}],s\right).
\end{equation}
\end{lemma}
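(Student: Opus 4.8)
The plan is to establish the two integral representations for $S_6(n,s)$ in (\ref{intrep}) first, read off the identity (\ref{ssum6}) from them, and then reduce $S_7$ to the already-settled case of $S_6$. Throughout I would use the residue representations $\binom{s}{q}=\mbox{\bf res}_z\{(1+z)^s z^{-q-1}\}$ and $\binom{m-1}{q-1}=\mbox{\bf res}_x\{(1+x)^{m-1}x^{-q}\}$ coming from (\ref{M13}); since these automatically encode $q\le s$ and $q\le m$ (the relevant coefficients vanish otherwise), the coupled upper limit $\min(m,s)$ may be dropped without changing the value.

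For the first integral representation in (\ref{intrep}) the cleanest route is to perform the $m$-summation before touching $q$. By the hockey-stick identity $\sum_{m=q}^{n}\binom{m-1}{q-1}=\binom{n}{q}$ (equivalently the finite geometric sum $\sum_{m=1}^{n}(1+x)^{m-1}=((1+x)^n-1)/x$ carried out under the residue sign) the double sum collapses to $\tfrac12\sum_{q=1}^{s}2^{q}\binom{s}{q}\binom{n}{q}$. Writing $\binom{n}{q}=\mbox{\bf res}_y\{(1+y)^n y^{q-n-1}\}$ and summing over $q$ by the substitution rule (Rule 3), equivalently the binomial theorem $\sum_q\binom{s}{q}(2y)^q=(1+2y)^s$, produces $\tfrac12\,\mbox{\bf res}_y\{(1+2y)^s(1+y)^n y^{-n-1}\}$. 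The additive $-\tfrac12$ in (\ref{intrep}) is precisely the $q=0$ contribution $2^{-1}\binom{s}{0}\binom{n}{0}=\tfrac12$ that is swept in when the $q$-range is extended to form the full binomial sum; subtracting it back gives the stated formula.

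To obtain the second form in (\ref{intrep}) I would apply the change-of-variables property of the residue (cf.\ Rule 5) with the order-one substitution $y=z/(1-z)$, i.e.\ $z=y/(1+y)$. Under it $1+2y=(1+z)/(1-z)$ and $1+y=(1-z)^{-1}$, and collecting the Jacobian together with $y^{-n-1}$ reorganizes the integrand into $(1+z)^s(1-z)^{-s-1}z^{-n-1}$; alternatively one checks directly that both residues equal the common coefficient $\sum_q 2^q\binom{s}{q}\binom{n}{q}$. Equation (\ref{ssum6}) is then immediate, since the right-hand side of (\ref{intrep}) equals $-\tfrac12+\tfrac12\sum_q 2^q\binom{s}{q}\binom{n}{q}$, which is exactly the definition of $S(n,s)$ in (\ref{ssum}).

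For (\ref{ssum7}) the decisive observation is that the primed binomial $\binom{-1+m/2}{q-1}'$ vanishes unless $-1+m/2$ is a nonnegative integer, i.e.\ unless $m$ is even. Setting $m=2k$ with $k=1,\dots,[\frac{n}{2}]$ turns it into $\binom{k-1}{q-1}$, and since this already vanishes for $q>k$ the upper limit $\min(2k,s)$ may be replaced by $\min(k,s)$; the surviving double sum is term-for-term identical with $S_6([\frac{n}{2}],s)$, so (\ref{ssum7}) follows from (\ref{ssum6}). I expect the only real difficulty to be bookkeeping rather than substance: keeping the $q$-range honest so that the spurious $q=0$ term is accounted for as the additive $-\tfrac12$, and verifying that the substitution $y=z/(1-z)$ is admissible (it is, being of order one) with the Jacobian contributions combining exactly as claimed. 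The parity reduction for $S_7$ is routine once the constraint imposed by the primed binomial is recognized.
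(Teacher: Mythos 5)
Your proof is correct, but it inverts the paper's order of operations, so the two arguments are genuinely different in structure. The paper follows its method-of-coefficients pipeline literally: both binomials are replaced by residues at the outset, the $q$-sum is collapsed first (substitution rule together with the change $x=2y$), and only then is the $m$-sum carried out as a geometric progression under the residue sign; this produces the integral representation $-\tfrac12+\tfrac12\,\mbox{\bf res}_{y}(1+2y)^{s}(1+y)^{n}y^{-n-1}$ as the primary object, from which (\ref{ssum6}) is afterwards extracted by expanding $(1+2y)^{s}$. You instead collapse the $m$-sum first, purely combinatorially, via the hockey-stick identity $\sum_{m=q}^{n}\binom{m-1}{q-1}=\binom{n}{q}$, which makes (\ref{ssum6}) essentially immediate, and you invoke residues only afterwards to package the closed form as (\ref{intrep}); your bookkeeping of the $q=0$ term as the additive $-\tfrac12$ is exactly right. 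The passage to the second form of (\ref{intrep}) is the same in both arguments, namely the substitution $z=y/(1+y)$ (the paper cites Rule 3, though it is really the change-of-variables Rule 5; your reading is the correct one, and your direct verification that both residues equal $\sum_{q}2^{q}\binom{s}{q}\binom{n}{q}$ is a sound alternative). Your treatment of (\ref{ssum7}) is also tidier than the paper's: where the paper merely remarks that the remaining proof is ``analogous'' and would redo the residue computation, you reduce $S_{7}$ term-by-term to $S_{6}\bigl(\bigl[\tfrac n2\bigr],s\bigr)$ using the parity constraint forced by the primed binomial and the vanishing of $\binom{k-1}{q-1}$ for $q>k$, so no recomputation is needed. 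What the paper's route buys is the integral representation as an intermediate deliverable of independent use; what yours buys is economy, since the residue machinery is confined to the single step where it is genuinely needed.
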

\begin{proof}
%\textbf{Proof.}
We have
\begin{equation*}
S_{6}(n,s)=\sum_{m=1}^{n}\sum_{q=1}^{\min (m,s)}\binom{s}{q}2^{q-1}
\binom{m-1}{q-1}=\sum_{m=1}^{n}\sum_{q=0}^{\min (m,s)}\binom{s}{q}2^{q-1}\binom{m-1}{m-q}=
\end{equation*}
\begin{equation*}
(using\text{ }(\ref{M13})\text{ }twice)
\end{equation*}
\begin{equation*}
=\sum_{m=1}^{n}\sum_{q=0}^{\min (m,s)}2^{q-1}\mbox{\bf res}_{x,y}\{(1+x)^{s}(1+y)^{m-1}/x^{q+1}y^{m-q+1}\}=
\end{equation*}%
\begin{equation*}
(the\text{ }summation\text{ }with\text{ }respect\, to\, the\text{ }index%
\text{ }q,\text{ }substitution\text{ }rule\text{ }and\text{ }the\text{ }%
\end{equation*}%
\begin{equation*}%
change\text{ }x=2y\in L_{1})
\end{equation*}%
\begin{equation*}
=\frac{1}{2}\sum_{m=1}^{n}\mbox{\bf res}_{y}\{(1+2y)^{s}(1+y)^{m-1}/y^{m+1}\}=
\end{equation*}%
\begin{equation*}
\frac{1}{2}\mbox{\bf res}_{y}\{\sum_{m=1}^{n}(1+2y)^{s}(1+y)^{m-1}/y^{m+1}\}=
\end{equation*}%
\begin{equation*}
(the\text{ }summation\text{ }by\text{ }the\text{ }index\text{ }m,\text{ }%
formula\text{ }of\text{ }the\text{ }geometric\text{ }progression)
\end{equation*}%
\begin{equation*}
=\frac{1}{2}\mbox{\bf res}_{y}\{(1+2y)^{s}y^{-2}[1-(1+y)^{n}/y^{n}]/[1-(1+y)/y]\}=
\end{equation*}%
\begin{equation*}
=-\frac{1}{2}\mbox{\bf res}_{y}(1+2y)^{s}y^{-1}+1/2\,\mbox{\bf res}%
_{y}(1+2y)^{s}(1+y)^{n}y^{-n-1}=
\end{equation*}%
\begin{equation*}
=-\frac{1}{2}+\frac{1}{2}\mbox{\bf res}_{y}(1+2y)^{s}(1+y)^{n}y^{-n-1}.
\end{equation*}%
Further%
\begin{equation*}
S_{6}(n,s)=-\frac{1}{2}+\frac{1}{2}\mbox{\bf res}_{y}(1+2y)^{s}(1+y)^{n}y^{-n-1}=
\end{equation*}%
\begin{equation*}
=-\frac{1}{2}+\sum_{q=0}^{s}2^{q-1}\binom{s}{q}\mbox{\bf res}_{y}%
\{(1+y)^{n}/y^{n-s+1}\}=
\end{equation*}%
\begin{equation*}
=-\frac{1}{2}+\sum_{q=0}^{s}2^{q-1}\binom{s}{q}\binom{n}{q},
\end{equation*}%
and%
\begin{equation*}
S_{6}(n,s)=-\frac{1}{2}+\frac{1}{2}\mbox{\bf res}_{y}(1+2y)^{s}(1+y)^{n}y^{-n-1}.
\end{equation*}

The second formula in (\ref{intrep}) is obtained from the first one with the
change of variables $z=y/(1+y)$ by \textbf{Rule 3}.

The proof of (\ref{ssum}) is analogous to the proof of (\ref{ssum6}).
\end{proof}

\begin{corollary}. In notions of (\ref{levs1}) the following combinatorial identity is valid%
\begin{equation*}
\sum_{k=1}^{[\frac{n}{2}]}\sum_{q=1}^{\min (2k,s)}\binom{s}{q}2^{q-1}
\binom{-1+k}{q-1}+\sum_{m=1}^{n}\sum_{q=1}^{\min (m,s)}2^{q-1}\binom{s}{q}\binom{m-1}{q-1}=
\end{equation*}%
\begin{equation*}
=-1+\sum_{q=0}^{s}2^{q-1}\binom{s}{q}\left( \binom{n}{q}+\binom{[\frac{n}{2}]}{q}\right).
\end{equation*}%
\medskip
\end{corollary}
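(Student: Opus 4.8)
The plan is to recognize that this identity is nothing more than Theorem~\ref{th2} (equivalently formula~(\ref{levs3})) written out in fully expanded form, so that the proof reduces to matching the two double sums on the left-hand side with the quantities $S_{6}(n,s)$ and $S_{7}(n,s)$ of the decomposition~(\ref{sp11}) and then invoking Lemma~\ref{lem6a}. No new summation or residue computation is required; everything is assembled from results already established above.

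First I would observe that the second double sum on the left, namely $\sum_{m=1}^{n}\sum_{q=1}^{\min(m,s)}2^{q-1}\binom{s}{q}\binom{m-1}{q-1}$, is literally $S_{6}(n,s)$ as defined immediately after~(\ref{sp11}). Next I would identify the first double sum with $S_{7}(n,s)$. The crucial point is the primed binomial coefficient $\binom{-1+m/2}{q-1}^{\prime}$ occurring in $S_{7}$: by the convention stated after~(\ref{levs1}), it equals the ordinary binomial coefficient only when $-1+m/2$ is a nonnegative integer, that is, only when $m$ is even, and it vanishes otherwise. Hence only even indices $m=2k$ contribute to $S_{7}(n,s)$, and the substitution $m=2k$ turns $\binom{-1+m/2}{q-1}^{\prime}$ into $\binom{k-1}{q-1}$, the outer range $1\le m\le n$ into $1\le k\le[n/2]$, and the inner bound $\min(m,s)$ into $\min(2k,s)$. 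This is exactly the first sum in the statement, so the left-hand side equals $S_{6}(n,s)+S_{7}(n,s)=N(n,s)$ by~(\ref{sp11}).

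Finally I would apply Lemma~\ref{lem6a}: equations~(\ref{ssum6}) and~(\ref{ssum7}) give $S_{6}(n,s)=S(n,s)$ and $S_{7}(n,s)=S([n/2],s)$. Adding these and using the definition $S(n,s)=-\tfrac{1}{2}+\sum_{q=0}^{s}2^{q-1}\binom{s}{q}\binom{n}{q}$ from~(\ref{ssum}) yields $-1+\sum_{q=0}^{s}2^{q-1}\binom{s}{q}\bigl(\binom{n}{q}+\binom{[n/2]}{q}\bigr)$, which is precisely the right-hand side of the asserted identity.

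The only place demanding any care, and the nearest thing to an obstacle here, is the bookkeeping in the second step: one must check that the vanishing of $\binom{-1+m/2}{q-1}^{\prime}$ for odd $m$ correctly collapses the sum over $m$ into a sum over $k=m/2$ with the stated limits, and that the summation index $q$ starting at $1$ (rather than $0$) is harmless because the $q=0$ term of $\binom{-1+k}{q-1}=\binom{k-1}{-1}$ vanishes anyway. Everything else is a direct citation of the decomposition~(\ref{sp11}) and of Lemma~\ref{lem6a}, so the corollary follows at once.
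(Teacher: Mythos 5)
Your proof is correct and is essentially the paper's own (implicit) argument: the corollary is stated there as an immediate consequence of the decomposition (\ref{sp11}) together with Lemma \ref{lem6a}, i.e.\ identifying the two double sums with $S_{7}(n,s)$ (collapsed to even $m=2k$ via the primed-binomial convention) and $S_{6}(n,s)$, then adding (\ref{ssum6}), (\ref{ssum7}) and using (\ref{ssum}). Your bookkeeping of the even-index collapse and the harmless $q=0$ term matches what the paper leaves to the reader.
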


\textbf{Conclusion}

A simplification of formulas usually brings new information on the structure
of objects of enumeration.

For example, simplification of known formulas for
$R_{q}^{3}(n) $ from Sokolov (1969) allowed to understand better
the structure of the enumerable regular words (commutators) on known
Shirshov bases of a free Lie algebra. This allows to solve the Kargapolov
problem of computing the ranks $R_{q}^{k}(n) $ of the factors
for the lower central series of a free solvable group of step $k$ with $q$
generators for arbitrary $k$ (Egorychev, 1972). This, in turn, allowed to
solve analogous problem for a free polynilpotent group (Gorchakov and
Egorychev, 1972) and for free groups in varieties (Egorychev, 1977). Another
answer to the same problem of Kargapolov was suggested in Petrogradsky in
1999. The reader can find more detailed statement of this question and
corresponding references to the literature in (\cite{Egor1}, pp. 129--132
and 207--222).

Simplicity found in formulae of Theorems 1 and 2 poses in
\cite{EgorZima(2008)}\ the following problem: \textit{Give an independent
algebraic proof and interpretation of formulas (\ref{levs4}) and
(\ref{levs3}) for the number of quadrics on the projective space }
$RP_{n-1}$\textit{, }$n>2$\textit{, over local ring }$R$
\textit{\ with the maximal ideal nilpotent of a class }$s$\textit{.}

The full answer to this problem was given O.V. Starikova and A.V.
Svistunova A.V. in the article (2011, \cite{StSv11}), in which the by algebraic
proof (interpretation) of identities \textit{(\ref{levs4}) and (\ref{levs3})
}is given in the theory of local rings of the specified type. This, in turn
O.V. Starikova tried understood better the structure of studied
objects and to solve more difficult enumeration problems for various classes
of projective equivalent quadrics over local rings (2013, \cite{St2013}).

%\section{\S 3 The enumeration of $\mathcal{D}$-invariant ideals of ring $ R_{n}(K,J)$
%on lattices}

\section{ The enumeration of $\mathcal{D}$-invariant ideals of ring
$R_{n}(K,J)$ on lattices}

\subsection {The combinatorial statement of a problem}

In the description of ring ideals $R_{n}(K,J)$ uses the definition of
$T$-border $A,\ A=A(T;\mathcal{L},\mathcal{L}^{\prime })$ (\cite{KuzLev2000}, Definition 2.1).
The $T$-border depends on the $J$-submodule $T$ in $K$ and
the pair of sets of matrix elements:%
\begin{equation*}
\mathcal{L}=\left\{(i_1, j_1),\,(i_2, j_2),...,(i_r, j_r) \right \},r\geq 1,
\end{equation*}
\begin{equation}
1 \leq j_{1}<j_{2}<...<j_{r}\leq n,\,\, 1\leq i_{1}<i_{2}<...<i_{r}\leq n;
\label{M8}
\end{equation}%
\begin{equation*}
\mathcal{L}^{\prime }=\left\{ (1,j_{r}),\, (k_{1}, m_{1}),\,
%\mathcal{L}'=\left\{ (1,j_{r}),\, (k_{1}, m_{1}),\,
(k_{2},m_{2}),\, ...\ ,(k_{q},m_{q}),\ (i_{1},n)\right\} ,\ q\geq 0,
\end{equation*}%
\begin{equation}
j_{r} \leq m_{1}<m_{2}<...<m_{q}\leq n,\, 1\leq k_{1}<k_{2}<...<k_{q}\leq i_{1}.
\label{M9}
\end{equation}%
We define the pair $(\mathcal{L},\mathcal{L}^{\prime })$ as "the set of degree angles $n$".

Let $\mathcal{L}(i,j),\, i,\,j\in \overline{1,n},$ be the set of all sequences
of type $\mathcal{L}$ of arbitrary length $r,\, r\geq 1,$ where
$i_{1}=i,\,j_{r}=j,$ and $\mathcal{L}^{\prime }(i,j)$, $i,j\in \overline{1,n},$ is
the set of all sequences of $\mathcal{L}^{\prime }$ type (including the
empty set) of any length $q,\,q\geq 0.$ It determined by the initial
conditions $i_{1}=i,\,j_{r}=j$.

In the article \cite{KuzLev2000} it is shown

\textbf{Theorem \cite{KuzLev2000} }

{\it{Let $ n\in N.$  Then the number
$N(R)$ of all ideals of the ring $R_{n}(K,\, J) $ is equal to
\[
{\Omega }(n)=\left| L\times L'\right|=
\sum_{i=1}^{n}\sum_{j=1}^{n}|\mathcal{L}(i,j)|\cdot |\mathcal{L}^{\prime }(i,j)|.
\]%
}}

Then we prove %Далее мы докажем
\begin{theorem}
\label{th3}
%\textbf{Theorem.}
The number of all ideals of the ring $R_{n}(K,\, J) $ is equal to
\begin{equation}
\label{M1a}
\Omega (n)=
%\sum_{i=1}^{n}\sum_{j=1}^{n}|\mathcal{L}(i,j)|\cdot |\mathcal{L}^{\prime }(i,j)|=
(2n-1)\binom{2n-2 }{n-1}.
\end{equation}%
\end{theorem}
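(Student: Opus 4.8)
The plan is to start from the cited theorem, which gives $\Omega(n)=\sum_{i=1}^{n}\sum_{j=1}^{n}|\mathcal{L}(i,j)|\cdot|\mathcal{L}'(i,j)|$, and first reduce the two cardinalities to single binomial coefficients. For fixed $i,j$, a sequence in $\mathcal{L}(i,j)$ of length $r$ is determined by choosing the increasing rows $i=i_1<\dots<i_r\le n$ and the increasing columns $j_1<\dots<j_r=j$ independently, so $|\mathcal{L}(i,j)|=\sum_{r\ge1}\binom{n-i}{r-1}\binom{j-1}{r-1}$, and the Vandermonde-type convolution (itself reproducible by the removal and substitution rules) gives $|\mathcal{L}(i,j)|=\binom{n+j-i-1}{j-1}$. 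The analogous count for $\mathcal{L}'(i,j)$, where the $q$ free inner nodes range over $n-j$ admissible columns and $i-1$ admissible rows, yields $|\mathcal{L}'(i,j)|=\sum_{q\ge0}\binom{n-j}{q}\binom{i-1}{q}=\binom{n+i-j-1}{i-1}$. The first obstacle is pinning down these admissible ranges exactly: an off-by-one changes the whole answer, so I would calibrate them against the small cases $n=1,2,3$.

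Next I would apply the method of coefficients to the resulting double sum. The substitution $a=i-1$, $b=j-1$ turns it into the symmetric sum $\Omega(n)=\sum_{a=0}^{n-1}\sum_{b=0}^{n-1}\binom{n-1-a+b}{b}\binom{n-1+a-b}{a}$. Representing each binomial by (\ref{M13}) in the variables $x$ and $y$, interchanging summation and $\mbox{\bf res}$ by \textbf{Rule 2}, and carrying out the two finite geometric progressions in $a$ and $b$ (the geometric-summation step of the scheme), the summand collapses to a single rational kernel:
\[
\Omega(n)=\mbox{\bf res}_x\mbox{\bf res}_y\frac{[(1+y)^{n}x^{n}-(1+x)^{n}][(1+x)^{n}y^{n}-(1+y)^{n}]}{x^{n}y^{n}(xy-1)^{2}}.
\]

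The crux is extracting this double residue. I would expand the coupling factor $(xy-1)^{-2}=\sum_{k\ge0}(k+1)(xy)^{k}$ (its Taylor expansion at the origin, which is the expansion compatible with taking residues in small $x$ and $y$), multiply out the four products in the numerator, and observe that three of them have only nonnegative powers of $x$ or of $y$ once the explicit $x^{-n},y^{-n}$ are accounted for, so they contribute nothing to the coefficient of $x^{-1}y^{-1}$. Only the term $(1+x)^{n}(1+y)^{n}x^{-n}y^{-n}(xy-1)^{-2}$ survives, and matching powers forces a single diagonal, leaving $\Omega(n)=\sum_{j=0}^{n-1}(n-j)\binom{n}{j}^{2}$. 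I expect this vanishing-of-three-terms step, together with justifying the chosen series expansion of the coupling factor, to be the main technical point.

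Finally I would evaluate the single sum. Splitting it as $n\sum_{j=0}^{n-1}\binom{n}{j}^{2}-\sum_{j=0}^{n-1}j\binom{n}{j}^{2}$ and using the standard $\sum_{j=0}^{n}\binom{n}{j}^{2}=\binom{2n}{n}$ and $\sum_{j=0}^{n}j\binom{n}{j}^{2}=n\binom{2n-1}{n-1}$ (both again obtainable by the same residue technique), the sum reduces to $n\binom{2n}{n}-n\binom{2n-1}{n-1}$. Since $\binom{2n}{n}=2\binom{2n-1}{n-1}$, this equals $n\binom{2n-1}{n-1}=(2n-1)\binom{2n-2}{n-1}$, which is (\ref{M1a}). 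A check at $n=1,2,3$ gives $1,6,30$, matching $(2n-1)\binom{2n-2}{n-1}$ and confirming the normalization.
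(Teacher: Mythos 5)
Your proof is correct, but it evaluates the key double sum by a genuinely different route than the paper. Both arguments share the same point of departure: the Kuzucuoglu--Levchuk formula $\Omega(n)=\sum_{i,j}|\mathcal{L}(i,j)|\,|\mathcal{L}'(i,j)|$ together with the Vandermonde evaluations $|\mathcal{L}(i,j)|=\binom{n-i+j-1}{j-1}$ and $|\mathcal{L}'(i,j)|=\binom{i-1+n-j}{i-1}$, which are exactly the binomials the paper works with. The paper, however, never writes out a self-contained proof of (\ref{M1a}): its computations are organized around $\Omega^{+}(n)$, and the ingredient closest to Theorem \ref{th3} is Lemma \ref{Lem2}, which applies the method of coefficients (a kernel with $1/(x-y)$ factors, the change of variables $X=x/(1-x)$, $Y=y/(1-y)$, and the differentiation rule) to evaluate only the triangular part $T_{1}=\sum_{i=2}^{n}\sum_{j=1}^{i-1}\binom{n-i+j-1}{j-1}\binom{n+i-j-1}{i-1}=(n-1)\binom{2n-2}{n-1}$; formula (\ref{M1a}) then follows --- though the paper leaves this assembly unstated --- from the symmetry of the summand under $(i,j)\mapsto(j,i)$ plus the diagonal contribution $\sum_{i=1}^{n}\binom{n-1}{i-1}^{2}=\binom{2n-2}{n-1}$, since $2(n-1)\binom{2n-2}{n-1}+\binom{2n-2}{n-1}=(2n-1)\binom{2n-2}{n-1}$. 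You instead evaluate the whole square at once: the two finite geometric progressions produce the coupling kernel $(xy-1)^{-2}$, whose expansion $\sum_{k\ge 0}(k+1)(xy)^{k}$ is legitimate precisely because the expression it multiplies is a Laurent polynomial; three of the four numerator terms then die for lack of negative powers, and the surviving diagonal term gives $\sum_{j=1}^{n}j\binom{n}{j}^{2}=n\binom{2n-1}{n-1}=(2n-1)\binom{2n-2}{n-1}$. What each buys: the paper's route recycles a computation ($T_{1}$) that it needs anyway for the much harder $\Omega^{+}(n)$ formula, while your route is shorter, avoids the case split $j\le i-1$ versus $j\ge i$ entirely, and supplies in full the final assembly that the paper only gestures at; your handling of the boundary terms in the last reduction (they cancel exactly) and your calibration at $n=1,2,3$ are both sound.
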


Then $\Omega (n)$ is the number of sets of angles
$(\mathcal{L},\mathcal{L}^{\prime })$ of degree $n$ , and
$\Omega ^{+}(n)$ is the number of all sets
angles $(\mathcal{L},\mathcal{L}^{\prime })$ of degree
$n$ with $i>j$ for all $(i,j)\in \mathcal{L}.$

Note that an each sequence$ \{(i_{1},j_{1}),\,(i_{2},j_{2})
,...(i_{r},j_{r})\}$ of  $\mathcal{L}$\ type (analoqously of
 $\mathcal{L}'$ type) of length $r,\,r\geq 1,$ is one-to-one correspondence
of \ the increasing path $(i_{1},j_{1}),\,( i_{2},j_{2})
,...,(i_{r},j_{r})$\ with the\ $(r-1)$-diagonal steps on a
rectangular lattice from the point $(i_{1}, j_{1}) $ to the
point $(i_{r},j_{r}) $, see Fig. 1.

\bigskip According to (\cite{KuzLev2000}, Theorem 2.2), for a strongly
maximal ideal $J$ of $K$ any ideal of the ring $R_{n}(K,J)(n\geq 2)$ is
generated by unique $T$-boundary $A$. Regard the $n\times n$ matrix as a
square array of $n^{2}$ points $(i,j)$ (matrix positions). Associating
"staircases" with $\mathcal{L}$ and $\mathcal{L}'$ we can compare
with the ideal generated by the $T$-boundary $A$ the matrix

\begin{center}
%\bc
\setlength{\unitlength}{5mm}
\begin{picture}(12,10)
\thicklines
\put(-.1,4){\makebox(0,0){$\left(\rule[-2cm]{0pt}{4.5cm}\right.$}}
\put(11.1,4){\makebox(0,0){$\left.\rule[-2cm]{0pt}{4.5cm}\right)$}}
\put(-1,4){\makebox(0,0){$i_1$}} \put(-1,3){\makebox(0,0){$i_2$}}
\put(-1,2){\makebox(0,0){$\vdots$}}
\put(-1,1){\makebox(0,0){$i_r$}} \put(1,-1){\makebox(0,0){$j_1$}}
\put(2,-1){\makebox(0,0){$j_2$}}
\put(3,-1){\makebox(0,0){$\cdots$}}
\put(5,-1){\makebox(0,0){$j_r$}} \put(6,9.5){\makebox(0,0){$m_1$}}
\put(7,9.5){\makebox(0,0){$m_2$}}
\put(8,9.5){\makebox(0,0){$\cdots$}}
\put(10,9.5){\makebox(0,0){$m_q$}}
\put(12,8){\makebox(0,0){$k_1$}} \put(12,7){\makebox(0,0){$k_2$}}
\put(12,6){\makebox(0,0){$\vdots$}}
\put(12,5){\makebox(0,0){$k_q$}} \put(0,4){\line(1,0){1}}
\put(1,4){\line(0,-1){1}} \put(1,3){\line(1,0){1}}
\put(2,3){\line(0,-1){1}} \put(5,0){\line(0,1){1}}
\put(5,1){\line(-1,0){1}} \put(5,9){\line(0,-1){1}}
\put(5,8){\line(1,0){1}} \put(6,8){\line(0,-1){1}}
\put(6,7){\line(1,0){1}} \put(7,7){\line(0,-1){1}}
\put(9,5){\line(1,0){1}} \put(10,5){\line(0,-1){1}}
\put(10,4){\line(1,0){1}} \put(1,1){\makebox(0,0){$(T)$}}
\put(.5,4.5){\makebox(0,0){$\cal L$}}
\put(8,2){\makebox(0,0){$(JT)$}}
\put(8.5,5.5){\makebox(0,0){$\cdots$}}
\put(9.5,5.5){\makebox(0,0){${\cal L}'$}}
\put(10,7){\makebox(0,0){$(J^2T)$}} \thinlines
\multiput(1.1,4)(.5,0){18}{\line(1,0){.3}}
\multiput(1.4,3.95)(.5,0){17}{.}
\multiput(5,1.1)(0,.5){14}{\line(0,1){.3}}
\multiput(4.9,1.45)(0,.5){13}{.}
\end{picture}
\\[1cm]
Figure 1.
\end{center}

We now distinguish $\mathcal{D}$-invariant ideals of the ring $R_{n}(K,J)$.

For any additive subgroup $F$ of $K$ we denote by $N_{ij}(F)$ (resp. by
$Q_{ij}(F)),$ the additive group of $R_{n}(K,J)$ generated by sets $Fe_{km}$
for all $(k,m)\geq (i,j)$ (resp. $(k,m)>(i,j))$ where $(i,j)<(k,m)$, if
$i\leq k,\;m\leq j$ and $(i,j)\neq (k,m)$.

\subsubsection{The enumeration of a number of ways in a rectangular lattice
and the method of inclusion-conclusion }

%The following theorem prooves \cite{DavEgLev}{Theorеm 3},
%[ \cite{DavEgLev}{Theorm 3}, Theorm 3

\begin{theorem}
\label{teo1}
For numbers $\Omega^{+}(n),$ $n=3,4,\ldots $ the following
combinatorial formula is valid:
\begin{equation*}
\Omega^{+}(n)=\sum_{i=2}^{n}\sum_{j=1}^{i-1}\left\{\sum_{r\geq1}\left\{\sum_{k_{1}=0}^{r-1}
\sum_{k_{2}=0}^{k_{1}}\sum_{s=r-k_{2}-1}^{2r-k_{1}-k_{2}-2} \binom{i-1}{k_{1}}\binom{j-i}{s}
\binom{n-j}{k_{2}} \times \right.\right.
\end{equation*}
\begin{equation*}
\times\binom{s}{2r-s-k_{1}-k_{2}-2}\cdot \frac{(k_{1}-k_{2}+1)}{(s-r+k_{1}+2)} +
\end{equation*}
\begin{equation*}
+\sum_{k_{1}=0}^{r-1}\sum_{k_{2}=0}^{k_{1}}%
\sum_{s=r-k_{2}-1}^{2r-k_{1}-k_{2}-2} \binom{i-1}{k_{1}}\binom{j-i}{s}
\binom{n-j}{k_{2}}\binom{s}{2r-s-k_{1}-k_{2}-2} \times
\end{equation*}
\begin{equation*}
\left.\left. \times \frac{(k_{2}-k_{1}+1)}{(s-r+k_{2}+2)}\right\}\binom{%
2s-2r+k_1+k_2+2}{s-r+k_2+1} \right\}\cdot\binom{i-1+n-j}{i-1}+
\end{equation*}
\begin{equation*}
+\sum_{i=2}^{n}\sum_{j=i}^{n-1}\left\{\sum_{r\geq1}\left\{%
\sum_{k_{1}=0}^{r-1} \sum_{k_{2}=0}^{k_{1}}\sum_{s=r-k_2-1}^{2r-k_1-k_2-2}
\binom{i-1}{k_{1}}\binom{j-i}{s}\binom{n-j}{k_{2}} \right.\right. \times
\end{equation*}
\begin{equation*}
\times\binom{s}{2r-s-k_{1}-k_{2}-2}\cdot\frac{(k_1-k_2+1)}{(s-r+k_1+2)}+
\end{equation*}

\begin{equation*}
+\sum_{k_{1}=0}^{r-1}\sum_{k_{2}=0}^{k_{1}}%
\sum_{s=r-k_{2}-1}^{2r-k_{1}-k_{2}-2} \binom{i-1}{k_{1}}\binom{j-i}{s}
\binom{n-j}{k_{2}}\binom{s}{2r-s-k_1-k_2-2}\times
\end{equation*}

\begin{equation}
\label{M10}
\left. \left. \times \frac{(k_2-k_1+1)}{(s-r+k_2+2)}\right\}
\cdot\binom{2s-2r+k_1+k_2+2}{s-r+k_1+1}\right\}\cdot \binom{i-1+n-j}{i-1}
\end{equation}
\end{theorem}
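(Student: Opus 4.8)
The plan is to reduce the count $\Omega^{+}(n)$ to the product form supplied by the Theorem of \cite{KuzLev2000} quoted above, namely $\Omega^{+}(n)=\sum_{i,j}|\mathcal{L}^{+}(i,j)|\cdot|\mathcal{L}'(i,j)|$, where $\mathcal{L}^{+}(i,j)$ denotes the sequences of type $\mathcal{L}$ with initial row $i_{1}=i$ and terminal column $j_{r}=j$ that satisfy the below-diagonal condition $i_{\ell}>j_{\ell}$ for every $\ell$, while $\mathcal{L}'(i,j)$ is left unrestricted. The outer binomial factor $\binom{i-1+n-j}{i-1}$ appearing in both groups of (\ref{M10}) is exactly $|\mathcal{L}'(i,j)|$: under the correspondence of Figure~1 a sequence of type $\mathcal{L}'$ running from $(1,j)$ to $(i,n)$ is a monotone lattice path made of $i-1$ vertical and $n-j$ horizontal unit moves, and such paths number $\binom{i-1+n-j}{i-1}$. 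The missing indices $i=1$ and $j=n$ contribute nothing, since a below-diagonal chain can neither start in row $1$ nor end in column $n$; this is why the outer summation runs over $2\le i\le n$ and $1\le j\le n-1$.

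The heart of the argument is the evaluation of $|\mathcal{L}^{+}(i,j)|$, which I claim is the inner brace of (\ref{M10}). First I would stratify the below-diagonal chains by their length $r$ and by three auxiliary statistics: the number $k_{1}$ of moves lying in the rows above the starting level $i$ (counted after reflection, whence $\binom{i-1}{k_{1}}$), the number $k_{2}$ of moves past the terminal column $j$ (whence $\binom{n-j}{k_{2}}$), and a middle parameter $s$ recording the moves inside the band of width $j-i$ between the two boundary lines (whence $\binom{j-i}{s}$ and the range $r-k_{2}-1\le s\le 2r-k_{1}-k_{2}-2$). The constraint $i_{\ell}>j_{\ell}$ for all $\ell$ is then imposed by the reflection principle, i.e. by the inclusion-exclusion method of the paper: removing the chains that touch the main diagonal produces the ballot-type correction factors $\frac{(k_{1}-k_{2}+1)}{(s-r+k_{1}+2)}$ and $\frac{(k_{2}-k_{1}+1)}{(s-r+k_{2}+2)}$, which are the discrete analogues of the classical ballot ratio $\frac{p-q}{p+q}$; the surviving binomial $\binom{2s-2r+k_{1}+k_{2}+2}{s-r+k_{2}+1}$ then counts the unconstrained remainder of the chain.

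The splitting of (\ref{M10}) into the sum over $1\le j\le i-1$ and the sum over $i\le j\le n-1$, together with the replacement of the last binomial $\binom{2s-2r+k_{1}+k_{2}+2}{s-r+k_{2}+1}$ by $\binom{2s-2r+k_{1}+k_{2}+2}{s-r+k_{1}+1}$, records whether the terminal column $j$ lies strictly to the left of the starting row index $i$ or on/beyond it; in the two regimes the chain meets the diagonal in opposite senses, so the roles of the two reflections, and hence of $k_{1}$ and $k_{2}$, are interchanged. I expect the principal obstacle to be exactly this constrained count: because both free endpoints of the chain, the start column $j_{1}$ and the end row $i_{r}$, range over intervals rather than being fixed, the below-diagonal restriction must be enforced by a double reflection, and pinning down the exact bounds on $s$, the two ballot ratios and the final binomial without sign or off-by-one errors is the delicate core of the proof. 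Once $|\mathcal{L}^{+}(i,j)|$ has been identified with the inner brace, the theorem follows by multiplying by $|\mathcal{L}'(i,j)|=\binom{i-1+n-j}{i-1}$ and summing over the two ranges of $(i,j)$.
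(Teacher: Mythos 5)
Your overall skeleton coincides with the paper's: you factor out $|\mathcal{L}'(i,j)|=\binom{i-1+n-j}{i-1}$ exactly as in the decomposition (\ref{3}), you exclude $i=1$ and $j=n$ for the same reason, and your stratification of the below-diagonal chains by $r$, $k_{1}$, $k_{2}$, $s$ with the factors $\binom{i-1}{k_{1}}\binom{j-i}{s}\binom{n-j}{k_{2}}\binom{s}{2r-s-k_{1}-k_{2}-2}$ and a ballot-type count of the interleavings is precisely what the paper does in Lemma \ref{Lmma2}, where the ``reflection'' ingredient is Feller's path-counting formula (\ref{5}). However, there is a genuine gap: the delicate core --- proving that the number $\Psi$ of admissible interleavings of the $2s-2r+k_{1}+k_{2}+2$ distinct middle-band values equals $\Phi(s-r+k_{1}+1,\,s-r+k_{2}+1)$, i.e. actually producing the ratios $\frac{k_{1}-k_{2}+1}{s-r+k_{1}+2}$, $\frac{k_{2}-k_{1}+1}{s-r+k_{2}+2}$ and the final binomials --- is exactly the step you defer (``I expect the principal obstacle to be exactly this constrained count''), so what you have is a plan, not a proof of the statement.

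Moreover, your structural reading of the formula would derail the execution. The two terms with $k_{1}$ and $k_{2}$ interchanged do not encode the two ranges $j\le i-1$ and $j\ge i$: in the paper's derivation (formulas (\ref{8})--(\ref{9})) both terms occur for the \emph{same} pair $(i,j)$ and correspond to the dichotomy $k_{1}\ge k_{2}$ versus $k_{2}\ge k_{1}$, which is forced because Feller's $\Phi(X,Y)$ applies only when $X\ge Y$. And in the range $j\le i-1$ no reflection is needed at all: there every column index is $\le j<i\le$ every row index, so the below-diagonal condition is vacuous, and the paper's Lemma \ref{Lmma1} counts these chains by a plain Vandermonde convolution, $|\overline{\overline{\mathcal{L}}}^{(n-1)}(i-1,j)|=\binom{n-i+j-1}{j-1}$; this simple form is what the paper actually uses downstream ($T_{1}$ in (\ref{25})). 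Your uniform ballot treatment cannot even be set up in that range: all $r$ row indices then exceed $j$, so the statistic $k_{2}$ equals $r$, which violates the stated bound $k_{2}\le k_{1}\le r-1$. So the first double sum must be handled by the separate elementary argument, not by a ``double reflection'' with the roles of $k_{1},k_{2}$ swapped.
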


For the proof of the theorem we give a number of definitions, lemmas and theorems.
%Для доказательства теоремы приведем ряд определений, лемм и теорем.

Let $\overline{\mathcal{L}}^{(n)}(i,j)$ be the set of all sequences
$\{(i_{1},j_{1}),\{(i_{2},j_{2}),\ldots ,\{(i_{r},j_{r})\},$ $r\geq 1$, of
%type (\ref{staircase-1}) which $i_{1}=i,$ $j_{r}=j$, and $j_{t}\leq i_{t}$
type (\ref{M8}) which $i_{1}=i,$ $j_{r}=j$, and $j_{t}\leq i_{t}$
from all $t=0,1,\ldots ,r$.

Let $\overline{\overline{\mathcal{L}}}%
_{r}^{(n)}(i,j)$ be the set of all sequences$\{\{(i_{1},j_{1}),%
\{(i_{2},j_{2}),\ldots ,\{(i_{r},j_{r})\}$, $r\geq 1$, of type (\ref{M8})
 by fixed $r,\,r\geq 1$.

Let $\overline{\overline{\mathcal{L}}%
}_{r}^{(n)}(i,j)$ be the set of all sequences $\{(i_{1},j_{1}),%
\{(i_{2},\,j_{2}),\ldots ,\{(i_{r},\,j_{r})\}$ of type (\ref{M8}) by
%fixed $r,\,r\geq 1$, of type (\ref{staircase-1}) which $i_{1}=i,\,j_{r}=j$, and
fixed $r,\,r\geq 1$, of type (\ref{M9}) which $i_{1}=i,\,j_{r}=j$, and
$j_{t}\leq i_{t}$ from all $t=0,1,\ldots ,r$. It is easy to note that%
\begin{equation}
\overline{\mathcal{L}}^{(n)}(i,j)=
\overline{\overline{\mathcal{L}}}^{(n-1)}(i-1,j)=
\label{1}
\end{equation}%
\begin{equation}
=\bigcup_{r\geq 1}\overline{\overline{\mathcal{L}}}_{r}^{(n-1)}(i-1,r).
\label{2}
\end{equation}%
By the formulas (\ref{M8}), (\ref{M9}) and (\ref{2}) %(Lemma 5$_{1}$)
 we have
\[
\Omega^{+}(n)=\sum_{i,j=1}^{n}|\overline{\mathcal{L}}^{(n)}(i,j)|\cdot
\left|\mathcal{L}'(i,\,j)\right| =
\]%
\[
=\sum_{i=2}^{n}\sum_{j=1}^{n-1}|\overline{\overline{\mathcal{L}}}^{(n-1)}(i-1,j)| \cdot \binom{i-1+n-j}{i-1}=
\]%
\begin{equation*}
=\sum_{i=2}^{n}\sum_{j=1}^{i-1}\left|\overline{\overline{\mathcal{L}}}^{(n-1)}(i-1,j)\right|
\cdot\binom{i-1+n-j}{i-1}+
\end{equation*}
\begin{equation}
+\sum_{i=2}^{n}\sum_{j=i}^{n-1}\left|\overline{\overline{\mathcal{L}}}^{(n-1)}(i-1,j)\right|
\cdot \binom{i-1+n-j}{i-1}.
\label{3}
\end{equation}

Let $A=\{j_{1},j_{2},\ldots ,j_{r-1},j\}$, $B=\{i,i_{2},\ldots ,i_{r}\},$ be
two sequences of positive integers of type $(1)$ which $1\leq i_{1}=i-1,$ $%
j_{r}=j\leq n-1$ and $j_{t}\leq i_{t}$ from all $t=0,1,\ldots ,r$.

Let us consider two cases: $n-1\geq i-1\geq j\geq 0$ or $0\leq i-1<j\leq n-1.$.

\textit{1 case}: $n-1\geq i-1\geq j\geq 0$.

%\textbf{Lemma 1.} \textit{If }$i-1\geq j$, $i=1,2,\ldots ,n$, \textit{then the following formula is valid}%
\begin{lemma}
\label{Lmma1}
If $i-1\geq j$, $i=1,2,\ldots ,n$, then
the following formula is valid
\begin{equation}
\left|\overline{\overline{\mathcal{L}}}^{(n-1)}(i-1,j)\right|=\binom{n-i+j-1}{j-1}.
\label{4}
\end{equation}
\end{lemma}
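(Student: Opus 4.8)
The plan is to unwind the definition of $\overline{\overline{\mathcal{L}}}^{(n-1)}(i-1,j)$ and reduce its cardinality to an unconstrained count. By (\ref{M8}), an element is a sequence of pairs $(i_1,j_1),\ldots,(i_r,j_r)$ of some length $r\geq 1$ with $1\le i_1<\cdots<i_r\le n-1$ and $1\le j_1<\cdots<j_r\le n-1$, here pinned by $i_1=i-1$ and $j_r=j$ and further required to satisfy $j_t\le i_t$ for all $t$. First I would record the observation that is the real content, and where the hypothesis $i-1\ge j$ enters: since the $i_t$ increase and the $j_t$ increase, one has $i_t\ge i_1=i-1$ and $j_t\le j_r=j$ for every $t$, so $i-1\ge j$ forces $i_t\ge i-1\ge j\ge j_t$. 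Hence the diagonal constraint $j_t\le i_t$ holds automatically and may be dropped.

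With the constraint vacuous, the two coordinate chains are chosen independently. For fixed length $r$ the increasing sequence $i_1=i-1<i_2<\cdots<i_r$ amounts to choosing the $r-1$ values $i_2,\ldots,i_r$ from $\{i,i+1,\ldots,n-1\}$, a set of $n-i$ elements, giving $\binom{n-i}{r-1}$ choices; likewise $j_1<\cdots<j_{r-1}<j_r=j$ amounts to choosing $r-1$ values from $\{1,\ldots,j-1\}$, giving $\binom{j-1}{r-1}$. Summing over $r$ (equivalently over $k=r-1\ge 0$, the binomials vanishing once $k$ exceeds $n-i$ or $j-1$) yields
\begin{equation*}
\left|\overline{\overline{\mathcal{L}}}^{(n-1)}(i-1,j)\right|=\sum_{k\ge 0}\binom{n-i}{k}\binom{j-1}{k}.
\end{equation*}

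It remains to evaluate this Vandermonde-type sum, which I would do by the method of coefficients of Section 2.1. Writing $\binom{j-1}{k}=\mbox{\bf res}_{y}\{(1+y)^{j-1}y^{-k-1}\}$, interchanging $\sum_k$ and $\mbox{\bf res}$ by \textbf{Rule 2}, and summing the resulting finite geometric series $\sum_{k}\binom{n-i}{k}y^{-k}=(1+y)^{n-i}y^{-(n-i)}$, I obtain
\begin{equation*}
\sum_{k\ge 0}\binom{n-i}{k}\binom{j-1}{k}=\mbox{\bf res}_{y}\{(1+y)^{n-i+j-1}y^{-(n-i)-1}\}=\binom{n-i+j-1}{n-i}=\binom{n-i+j-1}{j-1},
\end{equation*}
which is the asserted formula (\ref{4}).

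The only genuine obstacle is the first paragraph: correctly reading the heavily abbreviated definitions and noticing that the hypothesis $i-1\ge j$ renders the diagonal restriction $j_t\le i_t$ redundant, so that the set degenerates into a product of two independent choices of increasing chains. Once this is seen, the remaining step is a standard Vandermonde convolution, squarely within the $\mbox{\bf res}$-calculus, and the only care needed is bookkeeping of the ranges of $r$ (equivalently $k$), which is handled automatically by the vanishing of the binomial coefficients.
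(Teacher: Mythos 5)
Your proof is correct and follows essentially the same route as the paper's: observe that the hypothesis $i-1\geq j$ makes the diagonal constraint $j_{t}\leq i_{t}$ vacuous, count the two independent chains for fixed $r$ as $\binom{n-i}{r-1}\binom{j-1}{r-1}$, and sum over $r$ by Vandermonde to get $\binom{n-i+j-1}{j-1}$. The only difference is cosmetic: the paper states the final Vandermonde summation without justification, while you carry it out explicitly with the $\mbox{\bf res}$ calculus (and your reading of why the constraint drops out is actually stated more carefully than the paper's brief remark that $A\cap B=\varnothing$).
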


%\textit{Proof.} As $i-1\geq j$ then $A\cap B=\varnothing $ and we have
\begin{proof}

 As $i-1\geq j$ then $A\cap B=\varnothing $ and we have

\[
\left|\overline{\overline{\mathcal{L}}}_{r}^{(n-1)}(i-1,j)\right|=
\binom{n-i}{r-1}\binom{j-1}{r-1}.
\]
%By the formula (\ref{staircase-2}) we get
By the formula for (\ref{2}) we get
\[
\left|\overline{\overline{\mathcal{L}}}^{(n-1)}(i-1,j)\right|
=\sum_{r\geq 1}\left|\overline{\overline{\mathcal{L}}}_{r}^{(n-1)}(i-1,j)\right|=\sum_{r\geq 1}\binom{n-i}{r-1}
\binom{j-1}{r-1}=
\]
\[
=\binom{n-i+j-1}{j-1}.
\]

\textit{2 case}: $0\leq i-1<j\leq n-1.$
\end{proof}

We use the known combinatorial result %(\textit{Feller, Ch. 3, paragraph 1,Theorem 1})
which in terms of rectangular paths in a rectangular lattice may
be formulated as

%\textbf{Lemma 2} ({\it{Feller, Ch. 3, paragraph 1, Theorem 1}}).
\begin{lemma} ({\it{Feller, Ch. 3, paragraph 1, Theorem 1}}).
\label{Lmma1a}
%{\it{The number of all increasing rectangular paths in a rectangular lattice }}
%({\it{without diagonal steps}}) \textit{from the origin to the point}
%$(X,Y),\, X\geq Y$, {\it{is equal to}}%
The number of all increasing rectangular paths in a rectangular  lattice ( without diagonal steps)
from the origin to the point $(X,Y),\, X\geq Y$, is equal to
\begin{equation}
\Phi (X,Y)=\frac{X-Y+1}{X+1}\binom{X+Y}{Y}.
\label{5}
\end{equation}
\end{lemma}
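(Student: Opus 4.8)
The plan is to prove this classical ballot-type identity by the reflection principle. First I would fix the combinatorial model implicit in the statement: the admissible paths are the monotone lattice paths built from unit rightward and upward steps that join the origin $(0,0)$ to $(X,Y)$ and stay weakly below the main diagonal, i.e.\ satisfy $y\le x$ at every lattice point visited (this is the constraint that makes the answer the ballot number rather than the full binomial; endpoints on the diagonal are permitted, matching the weak inequality $j_t\le i_t$ from the definitions above). The number of \emph{unrestricted} monotone paths is $\binom{X+Y}{Y}$, since such a path is determined by choosing which $Y$ of the $X+Y$ steps are upward.

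Next I would identify the \emph{bad} paths, those that violate $y\le x$ somewhere, as precisely the paths that at some point touch the line $y=x+1$, the first lattice line strictly above the diagonal. For each such path, reflecting its initial segment up to the first contact with $y=x+1$ across that line gives a bijection between bad paths from $(0,0)$ to $(X,Y)$ and \emph{all} monotone paths from the reflected source $(-1,1)$ to $(X,Y)$. The latter require $X+1$ rightward and $Y-1$ upward steps, hence number $\binom{X+Y}{Y-1}$.

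Then the count of good paths is $\binom{X+Y}{Y}-\binom{X+Y}{Y-1}$, and a one-line simplification using $\binom{X+Y}{Y-1}=\frac{Y}{X+1}\binom{X+Y}{Y}$ gives
\[
\binom{X+Y}{Y}-\binom{X+Y}{Y-1}=\binom{X+Y}{Y}\cdot\frac{X+1-Y}{X+1}=\frac{X-Y+1}{X+1}\binom{X+Y}{Y}=\Phi(X,Y).
\]
The hypothesis $X\ge Y$ guarantees nonnegativity (so that $\binom{X+Y}{Y-1}\le\binom{X+Y}{Y}$) and that the reflected source $(-1,1)$ genuinely lies on the far side of the boundary.

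The main obstacle is purely the bookkeeping of the reflection: pinning down that the forbidden boundary corresponding to the \emph{weak} inequality $y\le x$ is the line $y=x+1$ (rather than $y=x$), and checking that reflection across it sends $(0,0)$ to $(-1,1)$ and not to a neighboring point, since an off-by-one here would replace $\binom{X+Y}{Y-1}$ by the wrong term and destroy the final simplification. Should one prefer to avoid reflection, an equivalent route is the cycle lemma of Dvoretzky--Motzkin applied to the cyclic shifts of each step sequence, or a kernel/generating-function extraction in the spirit of the method of coefficients used throughout this paper; but the reflection argument is the most self-contained and is the one I would carry out.
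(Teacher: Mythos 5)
Your reflection-principle proof is correct, but it is worth noting that the paper does not prove this lemma at all: the statement is quoted as a known classical result (Feller, Ch.~3, \S 1, Theorem~1) and used as a black box in the subsequent counting of $\Psi(s-r+k_{1}+1,s-r+k_{2}+1)$ in Lemma~\ref{Lmma2}. Your argument supplies exactly the missing classical proof, and its details check out: the admissible paths are the monotone paths from $(0,0)$ to $(X,Y)$ staying weakly below the diagonal ($y\le x$), the unrestricted count is $\binom{X+Y}{Y}$, and the violating paths are precisely those touching $y=x+1$; reflection across that line sends $(a,b)$ to $(b-1,a+1)$, hence $(0,0)\mapsto(-1,1)$, so the bad paths are in bijection with all monotone paths from $(-1,1)$ to $(X,Y)$, of which there are $\binom{X+Y}{Y-1}$. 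Since $\binom{X+Y}{Y-1}=\frac{Y}{X+1}\binom{X+Y}{Y}$, the difference is $\frac{X+1-Y}{X+1}\binom{X+Y}{Y}=\Phi(X,Y)$, as required. What the paper's citation buys is brevity for a standard fact; what your proof buys is self-containedness (and it is essentially the proof found in Feller), so it is a legitimate, indeed canonical, way to establish the lemma rather than a deviation that loses anything.
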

%\begin{proof}

%\end{proof}
%\end{proof}

%\textbf{Lemma 3}. {\it{If $i-1<j,\, i=1,2,\ldots ,n$, than the following formula is valid}}%
\begin{lemma}
\label{Lmma2}
If $i-1<j,\, i=1,2,\ldots ,n$, than the following formula is valid
\[
\left|\overline{\overline{\mathcal{L}}}_{r}^{(n-1)}(i-1,j)\right|
=\sum_{r\geq1}\left\{\sum_{k_{1}=0}^{r-1}\sum_{k_{2}=0}^{k_{1}}
\sum_{s=r-k_{2}-1}^{2r-k_{1}-k_{2}-2}
\binom{i-1}{k_{1}}\binom{j-i}{s}%
\binom{n-j}{k_{2}}\right.\times
\]%
\[
\times\binom{s}{2r-s-k_{1}-k_{2}-2}\cdot\frac{k_{1}-k_{2}+1}{s-r+k_{1}+2}
\cdot\binom{2s-2r+k_{1}+k_{2}+2}{s-r+k_{2}+1}+
\]%
\[
+\sum_{k_{1}=0}^{r-1}\sum_{k_{2}=0}^{k_{1}}%
\sum_{s=r-k_{2}-1}^{2r-k_{1}-k_{2}-2}\binom{i-1}{k_{1}}\binom{j-i}{s}%
\binom{n-j}{k_{2}}\binom{s}{2r-s-k_{1}-k_{2}-2}\times
\]%
\begin{equation}
\left.\times\frac{k_{2}-k_{1}+1}{s-r+k_{2}+2}\cdot\binom{2s-2r+k_{1}+k_{2}+2}{s-r+k_{1}+1}\right\},
\label{6}
\end{equation}%
where as usual the binomial coefficients $\tbinom{a}{b}$ are equal zero, if
the integers $a,b$ such that $a,b\geq a\geq 0$ or $b<0$.
\end{lemma}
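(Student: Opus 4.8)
The plan is to reduce the constrained enumeration to the ballot count of Lemma~\ref{Lmma1a}. The first observation is that the defining condition $j_t\le i_t$ for two strictly increasing sequences is equivalent to a dominance (ballot) condition on the merged value set: for every threshold $v$ one must have $|\{t:j_t\le v\}|\ge|\{t:i_t\le v\}|$. Equivalently, scanning the positions $1,2,\ldots,n-1$ from the bottom up and recording an ``$i$'' or a ``$j$'' (or both) at each occupied value, every prefix must contain at least as many $j$-marks as $i$-marks. This turns the problem into counting weighted lattice paths that stay weakly on one side of the diagonal, which is exactly the regime governed by the Feller formula (\ref{5}), in contrast to the separated situation of Lemma~\ref{Lmma1}.

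Next I would exploit the two fixed endpoints. Since we are in the case $i-1<j$, the value $i_1=i-1$ and the value $j_r=j$ split $\{1,\ldots,n-1\}$ into the three consecutive blocks $B_1=\{1,\ldots,i-1\}$, $B_2=\{i,\ldots,j-1\}$ and $B_3=\{j,\ldots,n-1\}$, of sizes $i-1$, $j-i$ and $n-j$. Because every $i$-value is at least $i-1$ and every $j$-value is at most $j$, the only $i$-value in $B_1$ is $i_1$ and the only $j$-value in $B_3$ is $j_r$, so the interleaving actually constrained by the diagonal takes place inside $B_2$. I then introduce the three parameters recording the distribution: let $k_1$ count the $j$-values lying in $B_1$, let $k_2$ count the $i$-values lying in $B_3$, and let $s$ count the positions of $B_2$ occupied by at least one of the sequences. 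The positions themselves are chosen freely, producing the factors $\binom{i-1}{k_1}$, $\binom{n-j}{k_2}$ and $\binom{j-i}{s}$.

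A bookkeeping step fixes the remaining multiplicities. The $r-1$ free $i$-values split as $k_2$ in $B_3$ and $r-1-k_2$ in $B_2$, and symmetrically the $r-1$ free $j$-values split as $k_1$ in $B_1$ and $r-1-k_1$ in $B_2$, so the number of positions of $B_2$ carrying both an $i$- and a $j$-value equals $(r-1-k_2)+(r-1-k_1)-s=2r-s-k_1-k_2-2$, chosen by $\binom{s}{2r-s-k_1-k_2-2}$. The leftover single positions of $B_2$ then consist of $X=s-r+k_1+1$ carrying only an $i$-value and $Y=s-r+k_2+1$ carrying only a $j$-value; requiring $X,Y\ge0$ and a nonnegative number of double positions forces exactly the ranges $0\le k_2\le k_1\le r-1$ and $r-k_2-1\le s\le 2r-k_1-k_2-2$ appearing in (\ref{6}). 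Counting the admissible interleavings of these $X$ and $Y$ single steps subject to the prefix (ballot) condition is, after substituting into (\ref{5}), precisely $\Phi(X,Y)=\frac{k_1-k_2+1}{s-r+k_1+2}\binom{2s-2r+k_1+k_2+2}{s-r+k_2+1}$, the factor of the first summand; the second summand, carrying $\Phi(Y,X)$ with $k_1$ and $k_2$ interchanged, is the reflected, signed ballot term (the inclusion-exclusion correction) accounting for the complementary placement of the diagonal boundary. Summing over $k_1,k_2,s$ and over $r$ then yields (\ref{6}).

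I expect the crux of the proof to be this middle step: verifying that the global dominance condition $j_t\le i_t$ localizes to a genuine prefix-ballot condition on the single positions of $B_2$ once the doubly occupied positions and the two fixed endpoints are removed, and then justifying the precise two-term (reflection / inclusion-exclusion) form, including checking that the boundary contributions from $i_1=i-1$ and $j_r=j$ are correctly absorbed and that no configuration is double-counted. The remaining manipulations — identifying the ballot factors with $\Phi$ via (\ref{5}) and confirming the index ranges — are routine once this reduction is in place.
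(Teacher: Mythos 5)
Your proposal follows essentially the same route as the paper's own proof: the paper uses the identical block decomposition (its intervals $I_1,\ldots,I_5$, i.e.\ your three blocks with the two endpoints $i-1$ and $j$ split off as singletons), the same parameters $k_1=|A\cap I_1|$, $k_2=|B\cap I_5|$, $s=|(A\cup B)\cap I_3|$, the same choice factor $\binom{i-1}{k_1}\binom{j-i}{s}\binom{n-j}{k_2}\binom{s}{2r-s-k_1-k_2-2}$, the same counts $s-r+k_1+1$ and $s-r+k_2+1$ of singly occupied middle positions, and then Feller's formula (\ref{5}) of Lemma \ref{Lmma1a} for the number of admissible interleavings, exactly as you describe.

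One correction, however: the second summand of (\ref{6}) is not a ``signed'' reflection term or an inclusion--exclusion correction. In the paper both summands are plain positive counts arising from the elementary case split $k_1\ge k_2$ versus $k_2\ge k_1$: since $\Phi(X,Y)$ in (\ref{5}) is stated only for $X\ge Y$, the interleaving count is $\Phi(s-r+k_1+1,\,s-r+k_2+1)$ when $k_1\ge k_2$ and $\Phi(s-r+k_2+1,\,s-r+k_1+1)$ when $k_2\ge k_1$ (formulas (\ref{8})--(\ref{9}), hence (\ref{10})--(\ref{11})), and (\ref{6}) is just the corresponding splitting of the sum over pairs $(k_1,k_2)$; no signs occur, and reading the second term as a subtracted reflection would contradict the formula itself. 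As for the step you flag as the crux --- that the global condition $j_t\le i_t$ localizes to a ballot condition on the singly occupied positions of the middle block, doubly occupied positions being neutral and the endpoint contributions of $i_1=i-1$ and $j_r=j$ being correctly absorbed --- you should be aware that the paper does not verify it either: its path construction simply asserts that the interleavings are counted by $\Phi(X,Y)$. So on that point your outline is no less complete than the printed proof, and the verification you call for is genuinely absent from both.
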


\begin{proof}

%\textit{Proof. }
Indeed, let $0\leq i-1<j\leq n-1$ and $A=\{j_{1},j_{2},%
\ldots ,j_{r-1},j\}$ and $B=\{i,i_{2},\ldots ,i_{r}\}$ be two sequences of
positive integers of type $(1)$ which $1\leq i_{1}=i-1,$ $j_{r}=j\leq n-1$
and $j_{t}\leq i_{t}$, from all $t=0,1,\ldots ,r$.

%Let us 
Let  
$I_{1}=\{0,1,\ldots ,i-2\}$, $I_{2}=\{i-1\},\,I_{3}=\{i,i+1,\ldots,j-1\}$,
$I_{4}=\{4\},$ $I_{5}=\{j+1,j+2,\ldots ,n\},$ $\left|A\cap I_{1}\right|=k_{1}$,
$0\leq k_{1}\leq r-1$, $\left|B\cap I_{5}\right|=k_{2}$, $0\leq k_{2}\leq r-1.$

%Let us 
Let  
$\left| A\cap I_{3}\right|=r-k_{1}-1,$ $\left|B\cap I_{3}\right|=r-k_{2}-1,$
$\left| (A\cup B)\cap I_{3}\right|=s,$ $\max (r-k_{1}-1,r-k_{2}-1)\leq s$ $\leq 2r-k_{1}-k_{2}-2.$

Then $A\cap B\subseteq I_{3},$ and $\left|A\cap B\right|=\left| A\cap I_{3}\right|
+\left|B\cap I_{3}\right|-
\left|(A\cup B)\cap I_{3}\right|=$ $2r-s-k_{1}-k_{2}-2.$ The number of different
points of the sets $A\cap I_{3}$ and $B\cap I_{3}$ is equal to $\left|A\cap I_{3}\right|
+\left| B\cap I_{3}\right|-2\left|A\cap B\right|=(r-k_{1}-1)+(r-k_{2}-1)-2(2r-s-k_{1}-k_{2}-2)=$
$2s-2r+k_{1}+k_{2}+2$ including $s-r+k_{2}+1$ points from the set $A\cap
I_{3} $ and $s-r+k_{1}+1$ points from the set $B\cap I_{3}$. It is clear
that the number of possible choices of $A$ and $B$ from fixed integers $%
k_{1},k_{2}$ and $s$, where $k_{1}=\left\vert A\cap I_{1}\right\vert $, $%
s=\left\vert (A\cup B)\cap I_{3}\right\vert $, $k_{2}=\left\vert B\cap
I_{5}\right\vert ,$ $0\leq k_{1}\leq r-1,$ $0\leq k_{2}\leq r-1$, $\max
(r-k_{1}-1,r-k_{2}-1)s2r-k_{1}-k_{2-2,}$ is equal to%
\begin{equation*}
N_{r,n}(k_{1},k_{2},s)=\binom{i-1}{k_{1}}\binom{j-i}{s}\binom{n-j}{k_{2}}\binom{s}{2r-s-k_{1}-k_{2}-2}\times
\end{equation*}%
\begin{equation}
\times
\binom{n-j}{k_{2}}\binom{s}{2r-s-k_{1}-k_{2}-2}\Psi (s-r+k_{1}+1,s-r+k_{2}+1).
\label{7}
\end{equation}%
Here $\Psi (s-r+k_{2}+1,s-r+k_{1}+1)$ is the number of possible partitions
of the $2s-2r+k_{1}+k_{2}+2$ distinct elemens of the set $(A\cup B)\cap
I_{3} $ among which $X=s-r+k_{2}+1$ elements belong to the set $A\cap I_{3}$
and $Y=s-r+k_{1}+1$ elements belong to the set $B\cap I_{3}$, while the
elements of the sets $A$ and $B$ satisfy to the condition $(1_{1})$.

It is
clear that the number $N_{r,n}(k_{1},k_{2},s)=0$ if the integer $s\geq 0$
satisfies the inequalities $j-i<s<2r-s-k_{1}-k_{2}-2$ as in this case the
corresponding binomial coefficient $\tbinom{j-i}{s}$ or
$\tbinom{s}{2r-s-k_{1}-k_{2}-2}$ is equal to zero.

Determine the number $\Psi (s-r+k_{2}+1,s-r+k_{1}+1)$. To do this let us
arrange selecting integers $l_{1},l_{2},\ldots ,l_{2s-2r+k_{1}k_{2}+2}$ of
different elements of the sets $A\cap I_{3}$ and $B\cap I_{3}$ in increasing
order.

%Let us
Let   
$k_{1}\geq k_{2}$.

Clearly, $l_{1}\in A\cap I_{2}$.
Correspondingly, we take a step from the point $(0,\,0)$ to the point $(1,\,0)$.
If $l_{2}\in A\cap I_{3}$, then the next step is the $(2,\,0)$, while if
$l_{2}\in B\cap I_{3}$, then it is to the point $(1,\,1)$, and so on.

The
result is one possible path of the indicated form from the point $(0,0)$ to
the point $(X,Y)$. The total number of such paths is $\Phi (X,Y),$ if $X\geq
Y$, and it is $\Phi (Y,X),$ if $X\leq Y$. Thus by the formula (\ref{5}) we have%
\[
\Psi (s-r+k_{1}+1,s-r+k_{2}+1)=\Phi (s-r+k_{1}+1,s-r+k_{2}+1)=
\]%
\begin{equation}
=\frac{k_{1}-k_{2}+1}{s-r+k_{1}+2}\binom{2s-2r+k_{1}k_{2}+2}{s-r+k_{2}+1},%
\text{ \ if }k_{1}\geq k_{2},
\label{8}
\end{equation}%
\[
\Psi (s-r+k_{1}+1,s-r+k_{2}+1)=\Phi (s-r+k_{2}+1,s-r+k_{1}+1)=
\]%
\begin{equation}
=\frac{k_{2}-k_{1}+1}{s-r+k_{2}+2}\binom{2s-2r+k_{1}+k_{2}+2}{s-r+k_{1}+1},%
\text{ \ if }k_{2}\geq k_{1}.
\label{9}
\end{equation}

By the formulas (\ref{M8}), (\ref{M9}) and (\ref{2}) we get%
\[
N_{r,n}(k_{1},k_{2},s)=\binom{i-1}{k_{1}}\binom{j-i}{s}\binom{n-j}{k_{2}}%
\binom{s}{2r-s-k_{1}-k_{2}-2}\times
\]%
\begin{equation}
\times\frac{k_{1}-k_{2}+1}{s-r+k_{1}+2}\binom{2s-2r+k_{1}+k_{2}+2}{s-r+k_{2}+1},%
\text{ \ if }k_{1}\geq k_{2},
\label{10}
\end{equation}%
\[
N_{r,n}(k_{1},k_{2},s)=\binom{i-1}{k_{1}}\binom{j-i}{s}\binom{n-j}{k_{2}}%
\binom{s}{2r-s-k_{1}-k_{2}-2}\times
\]%
\begin{equation}
\times\frac{k_{2}-k_{1}+1}{s-r+k_{2}+2}\binom{2s-2r+k_{1}+k_{2}+2}{s-r+k_{1}+1},%
\text{ \ if }k_{2}\geq k_{1}.
\label{11}
\end{equation}%
As
\[
\left|\overline{\overline{\mathcal{L}}}^{(n-1)}(i-1,j) \right |
=\sum\limits_{r\geq 1}\left |\overline{\overline{\mathcal{L}}}_{r}^{(n-1)}(i-1,r)\right|=
\]
\[
=\sum\limits_{r\geq1}\sum_{k_{1}=1}^{r-1}
\sum_{k_{2}=1}^{r-1}\sum_{s=r-k_{2}+1}^{2r-k_{1}-k_{2}-2}N_{r,n}(k_{1},k_{2},s)=
\]
\[
=\sum_{r\geq1}\left\{\sum_{k_{1}=0}^{r-1}\sum_{k_{2}=0}^{k_{1}}
\sum_{s=r-k_{2}+1}^{2r-k_{1}-k_{2}-2}N_{r,n}(k_{1},k_{2},s)\right.+
\]
\[
\left.+\sum_{k_{1}=1}^{r-1}\sum_{k_{2}=0}^{r-2}%
\sum_{s=r-k_{2}+1}^{2r-k_{1}-k_{2}-2}N_{r,n}(k_{1},k_{2},s)\right\}
\]
tnen by the
formulas (\ref{10}) and (\ref{11}) we get the formula
(\ref{6})

\[
S_{3}(r,n)=\sum_{k=1}^{r-1}\binom{i-1}{k}\binom{n-j}{k}\cdot\frac{1}{j-i+1}\cdot\binom{j-i+1}{r-k}\binom{j-i+1}{r-k-1}.
\]

Futher we will spend (\ref{6}) summation on parameteres $s,\,k_{1},\,k_{2}$ and $r$.

\end{proof}

%\textbf{Lemma 4 }(\textit{cp. Egorychev, formula} (2.36)). \textit{Let}
%$m,\,a,\,b$ \textit{are positive integers, such that} $m\geq a+b,\,a\geq b.$
%\textit{Then the following summation formula is valid}%

\begin{lemma}%(cp. Egorychev, formula (2.36)).
\label{Lmm3}
Let $m,\,a,\,b$ be positive integers, such that $m\geq a+b,\,a\geq b.$
Then the following summation formula is valid
\begin{equation*}
S=\sum_{s=a}^{a+b}\binom{m}{s}\binom{s}{a+b-s}\cdot\frac{1}{s-b+1}\binom{2s-a-b}{s-a}=
\end{equation*}
\begin{equation}
=\frac{1}{m+1}\binom{m+1}{a+1}\binom{m+1}{b};
\label{12}
\end{equation}
\end{lemma}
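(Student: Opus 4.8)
The plan is to compute $S$ by the method of coefficients, representing every factor of the summand as a formal residue and reducing the whole expression to a double residue that evaluates in closed form. The three ordinary binomials are immediate from (\ref{M13}): $\binom{m}{s}=\mbox{\bf res}_{y}(1+y)^{m}y^{-s-1}$ and $\binom{s}{a+b-s}=\mbox{\bf res}_{z}(1+z)^{s}z^{s-a-b-1}$. The real issue is the ballot-type factor $\frac{1}{s-b+1}\binom{2s-a-b}{s-a}$, which carries the denominator $s-b+1$ and is exactly a coefficient in a power of the Catalan series. First I would establish the representation $\frac{1}{s-b+1}\binom{2s-a-b}{s-a}=\frac{1}{a-b+1}\mbox{\bf res}_{w}\{(1+w)^{2s-a-b}(1-w)w^{a-s-1}\}$: the right-hand side equals $\frac{1}{a-b+1}\left(\binom{2s-a-b}{s-a}-\binom{2s-a-b}{s-a-1}\right)$, and an elementary simplification of this difference reproduces the left-hand side. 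Equivalently it is the substitution $x=w/(1+w)^2$ applied to $\frac{1}{a-b+1}\mbox{\bf res}_x\{\mathcal C(x)^{a-b+1}x^{a-s-1}\}$, where $\mathcal C=1+x\mathcal C^2$ is the Catalan generating function. Establishing this representation cleanly inside $L$ is the step I expect to require the most care.

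With all factors written as residues I would collect the $s$-dependence. Every power of $s$ gathers into a single base $U:=(1+w)^2z(1+z)/w$, so that, after commuting $\sum_s$ with $\mbox{\bf res}$ (Rule 2) and summing against $\binom{m}{s}$ by the binomial theorem $\sum_s\binom{m}{s}U^s=(1+U)^m$ (equivalently the substitution Rule 3), the triple sum collapses to
\begin{equation*}
S=\frac{1}{a-b+1}\,\mbox{\bf res}_{w,z}\left\{(1-w)(1+w)^{-a-b}w^{a-1-m}z^{-a-b-1}\big(w+(1+w)^2z(1+z)\big)^{m}\right\}.
\end{equation*}
Because the $s$-summation is performed through the binomial theorem rather than a geometric series, no convergence subtlety arises: out-of-range terms vanish automatically, since $\binom{s}{a+b-s}=0$ for $s>a+b$ and the $w$-residue vanishes for $s<a$.

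To evaluate the $z$-residue I would factor the quadratic $w+(1+w)^2z(1+z)=(1+w)^2\big(z+\tfrac{w}{1+w}\big)\big(z+\tfrac{1}{1+w}\big)$, a factorization visible from $1-4\cdot\frac{w}{(1+w)^2}=\big(\tfrac{1-w}{1+w}\big)^2$. Extracting the coefficient of $z^{a+b}$ from the product $(z+\tfrac{w}{1+w})^m(z+\tfrac1{1+w})^m$ produces $(1+w)^{a+b}\sum_i\binom{m}{i}\binom{m}{a+b-i}w^{m-i}$, after which the surviving powers of $(1+w)$ cancel. The remaining $w$-residue of $(1-w)w^{a-1-i}$ is nonzero only for $i=a$ and $i=a+1$, so
\begin{equation*}
S=\frac{1}{a-b+1}\left(\binom{m}{a}\binom{m}{b}-\binom{m}{a+1}\binom{m}{b-1}\right).
\end{equation*}

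It then remains to verify the elementary identity that this difference equals $\frac{1}{m+1}\binom{m+1}{a+1}\binom{m+1}{b}$. Writing $\binom{m}{a+1}=\binom{m}{a}\frac{m-a}{a+1}$ and $\binom{m}{b-1}=\binom{m}{b}\frac{b}{m-b+1}$ and combining over a common denominator, the numerator factors as $(m+1)(a-b+1)$, which cancels the prefactor $1/(a-b+1)$; together with $\binom{m}{a}/(a+1)=\binom{m+1}{a+1}/(m+1)$ and $\binom{m}{b}/(m-b+1)=\binom{m+1}{b}/(m+1)$ this yields (\ref{12}). The only genuinely delicate point of the argument is the ballot representation in the first paragraph; once it is in place, the rest is a routine double coefficient extraction followed by the factorial identity.
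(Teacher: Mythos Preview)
Your proof is correct, but it takes a considerably more elaborate route than the paper's. The paper does not use the method of coefficients for this lemma at all: it simply expands every binomial in factorials, observes that the product telescopes to
\[
\frac{m!}{(m-s)!\,(a+b-s)!\,(s-b+1)!\,(s-a)!}
=\frac{1}{m+1}\binom{m+1}{a+1}\binom{m-a}{s-a}\binom{a+1}{a+b-s},
\]
and then sums over $s$ by Vandermonde to obtain $\frac{1}{m+1}\binom{m+1}{a+1}\binom{m+1}{b}$. This is a three-line argument once the factorial regrouping is spotted.

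Your route---the ballot representation $\frac{1}{s-b+1}\binom{2s-a-b}{s-a}=\frac{1}{a-b+1}\mbox{\bf res}_w(1+w)^{2s-a-b}(1-w)w^{a-s-1}$, the reduction to a double residue, the factorization $w+(1+w)^2z(1+z)=(1+w)^2(z+\tfrac{w}{1+w})(z+\tfrac{1}{1+w})$, and the final factorial identity---is sound at every step (I checked the range-extension argument, the coefficient extraction, and the closing simplification). It has the virtue of being mechanical: you never need to spot the factorial cancellation, and you obtain the pleasant intermediate form $S=\frac{1}{a-b+1}\big(\binom{m}{a}\binom{m}{b}-\binom{m}{a+1}\binom{m}{b-1}\big)$, a determinant-style expression the paper does not record. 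But for this particular lemma the paper's bare-hands approach is substantially shorter.
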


%\textit{Proof.} We have%
\begin{proof}
 We have
\[
S=\sum_{s=a}^{a+b}\binom{m}{s}\binom{s}{a+b-s}\cdot\frac{1}{s-b+1}\binom{2s-a-b}{s-a}
\]%
\[
=\sum_{s=a}^{a+b}\frac{m!}{s!(m-s)!}\cdot\frac{s!}{(a+b-s)!(s-b+1)!}\cdot\frac{1}{s-b+1}
\cdot\frac{(2s-a-b)!}{(s-a)!(s-b)!}=
\]%
\[
=\sum_{s=a}^{a+b}\frac{m!}{(m-s)!(a+b-s)!(s-b+1)!(s-a)!}=
\]
\[
=\frac{1}{m+1}\binom{m+1}{a+1}\sum_{s=a}^{a+b}\binom{m-a}{s-a}\binom{a+1}{a+b-s}=
\]
\[
=\frac{1}{m+1}\binom{m+1}{a+1}\sum_{s=0}^{b}\binom{m-a}{s}\binom{a+1}{b-s}
=\frac{1}{m+1}\binom{m+1}{a+1}\binom{m+1}{b}.
\]

Having spent a summation on $s$ in right part of the (\ref{6}) of the formula (\ref{12})  we get.

\end{proof}

%\textbf{Lemma 5}. If $i-1<j,\,i=1,2,\ldots ,n,$ then the
%following summation formula is valid

\begin{lemma}.
\label{Lmm4}
If $i-1<j,\,i=1,2,\ldots ,n,$ then the
following summation formula is valid
\[
\left|\overline{\overline{\mathcal{L}}}^{(n-1)}(i-1,j)\right|=
\]%
\[
=\sum_{r\geq 1}\left\{\sum_{k_{1}=0}^{r-1}\sum_{k_{2}=0}^{k_{1}}\binom{i-1}{k_{1}}
\binom{n-j}{k_{2}}\frac{(k_1-k_2+1)}{(j-i+1)}\binom{j-i+1}{r-k_{2}}\binom{j-i+1}{r-k_{1}-1}+\right.
\]%
\[
+\left.\sum_{k_{1}=0}^{r-1}\sum_{k_{2}=0}^{r-1}\binom{i-1}{k_{1}}\binom{n-j}{k_{2}}
\frac{(k_{2}-k_{1}+1)}{(j-i+1)}\binom{j-i+1}{r-k_{1}}\binom{j-i+1}{r-k_{2}-1}\right\}.
\]%
\bigskip
\end{lemma}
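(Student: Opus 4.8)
The plan is to derive the assertion straight from formula (\ref{6}) of Lemma \ref{Lmma2} by performing the innermost summation over $s$ with the help of Lemma \ref{Lmm3}. No new idea is needed: the two triple sums in (\ref{6}) have been arranged precisely so that, once the factors depending only on $k_1,k_2$ are frozen, each inner $s$-sum is an instance of the left-hand side of (\ref{12}).

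First I would treat the group of terms in (\ref{6}) carrying $\frac{k_1-k_2+1}{s-r+k_1+2}\binom{2s-2r+k_1+k_2+2}{s-r+k_2+1}$. Putting $m=j-i$, $a=r-k_2-1$, $b=r-k_1-1$, one reads off the term-by-term dictionary
\[
\binom{j-i}{s}=\binom{m}{s},\quad \binom{s}{2r-s-k_1-k_2-2}=\binom{s}{a+b-s},\quad \frac{1}{s-r+k_1+2}=\frac{1}{s-b+1},\quad \binom{2s-2r+k_1+k_2+2}{s-r+k_2+1}=\binom{2s-a-b}{s-a},
\]
and the summation range $r-k_2-1\leq s\leq 2r-k_1-k_2-2$ is exactly $a\leq s\leq a+b$. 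Since $k_1-k_2+1=a-b+1$ is independent of $s$, it comes out of the $s$-sum; applying (\ref{12}) (its hypothesis $a\geq b$ is here $k_1\geq k_2$) replaces the $s$-sum by $\frac{1}{m+1}\binom{m+1}{a+1}\binom{m+1}{b}=\frac{1}{j-i+1}\binom{j-i+1}{r-k_2}\binom{j-i+1}{r-k_1-1}$. Together with the retained $\binom{i-1}{k_1}\binom{n-j}{k_2}(k_1-k_2+1)$ this is exactly the first summand of the claimed identity.

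The second group, carrying $\frac{k_2-k_1+1}{s-r+k_2+2}\binom{2s-2r+k_1+k_2+2}{s-r+k_1+1}$, is handled the same way with $k_1$ and $k_2$ exchanged: now $a=r-k_1-1$, $b=r-k_2-1$ (so $a\geq b$ becomes $k_2\geq k_1$), the constant $k_2-k_1+1=a-b+1$ is pulled out, and (\ref{12}) produces $\frac{1}{j-i+1}\binom{j-i+1}{r-k_1}\binom{j-i+1}{r-k_2-1}$, i.e. the second summand. Restoring the outer sum $\sum_{r\geq1}$ over both contributions then yields the stated reduction of (\ref{6}).

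The only step that is not a pure substitution, and the one I would handle carefully, is the hypothesis $m\geq a+b$ of Lemma \ref{Lmm3}, i.e. $j-i\geq 2r-k_1-k_2-2$, which fails for large $r$ in the outer sum. I would dispose of this by the usual vanishing convention for binomials: when $a+b>m$ the factor $\binom{j-i}{s}$ vanishes for every $s>j-i$, so the $s$-sum effectively runs over $a\leq s\leq\min(a+b,\,j-i)$, while on the right the factors $\binom{j-i+1}{r-k_2}$ and $\binom{j-i+1}{r-k_1-1}$ (and their counterparts in the second group) already vanish in the same regime. Hence both sides of (\ref{12}) are zero there and the identity persists term by term, so the reduction above would hold for every $r\geq1$, completing the argument.
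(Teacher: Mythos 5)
Your proposal is correct and follows exactly the paper's own route: the paper obtains Lemma \ref{Lmm4} precisely by carrying out the inner summation over $s$ in formula (\ref{6}) of Lemma \ref{Lmma2} via the summation formula (\ref{12}) of Lemma \ref{Lmm3}, with the same identification $m=j-i$, $a=r-k_{2}-1$, $b=r-k_{1}-1$ in the first group and the roles of $k_{1},k_{2}$ exchanged in the second. Your explicit substitution dictionary and the vanishing-binomial treatment of the degenerate ranges (where $m\geq a+b$ fails) only spell out details the paper leaves implicit.
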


\subsubsection{The calculation of the difficult 6-multiple combinatorial sum $\Omega^{+}(n)$ with the help of the Egorychev method of coefficients}

\textbf{Theorem 4 (\cite{Dav2009}). }\textit{For the number}$\Omega^{+}(n)$\textit{\ the following equalities are valid}:
\[
\Omega^{+}(n)=2^{2n-1}+(n-1)\binom{2n-2}{n-1}-\frac{4}{n}\binom{2n}{n-2}-\binom{2n}{n}=
\]%
\begin{equation}
=2^{2n-1}+\frac{2(2n-3)!}{(n-2)!(n+2)!}(n^{4}-2n^{3}-27n^{2}+20n-4).
\label{A73}
\end{equation}

Let $\Omega^{+}(n)=T_{1}+T_{2}+T_{3},$ where
\begin{equation}
T_{1}:=\sum_{i=2}^{n}\sum_{j=1}^{i-1}{\binom{n-i+j-1}{j-1}}{\binom{i-1+n-j}{i-1}},
\label{25}
\end{equation}%
\begin{equation*}
T_{2}:=\sum_{i=2}^{n}\sum_{j=i}^{n-1}\binom{i-1+n-j}{i-1}\left(\binom{n-i+j}{j} - \binom{n-i+j}{j+1}\right)=
\end{equation*}%
\begin{equation}
=\sum_{i=2}^{n}\sum_{j=i}^{n-1}{\binom{i-1+n-j}{i-1}}
\mbox{\bf res}_{u}\left\{\frac{(1+u)^{n+j-i}\cdot(1-u)}{u^{j+2}}\right\},
\label{26}
\end{equation}%
\begin{equation*}
T_{3}:=-\sum_{i=2}^{n}\sum_{j=i}^{n-1}\sum_{s=0}^{j-i-3}\binom{i-1+n-j}{i-1}
\binom{i-1+n-j}{s+i}\binom{2j-2i}{j-i-s-3}+
\end{equation*}%
\begin{equation}
\label{27}
+\sum_{i=2}^{n}\sum_{j=i}^{n-1}\sum_{s=0}^{j-i-1}\binom{i-1+n-j}{i-1}\binom{i-1+n-j}{s+i}\binom{2j-2i}{j-i-s+1}.
\end{equation}

%\begin{exercise}
\begin{lemma}.
\label{Lem2}
For any natural $n>2$ the following combinatorial identity is
valid%
\begin{equation}
T_{1}=\sum_{i=2}^{n}\sum_{j=1}^{i-1}{\binom{n-i+j-1}{j-1}}{\binom{n+i-j-1}{%
i-1}}{=}\text{ }\left( n-1\right) {\binom{2n-2}{n-1}}.
\label{28}
\end{equation}%
\medskip
\end{lemma}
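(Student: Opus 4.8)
The plan is to attack $T_{1}$ directly with the method of coefficients, but to first reindex so that the constrained double sum becomes a single Vandermonde convolution. I would introduce the difference $c=i-j$ of the two summation indices. Under $1\le j\le i-1$ the constraint becomes $c\ge 1$, and writing $t=j-1$ the two binomials separate cleanly as $\binom{n-i+j-1}{j-1}=\binom{n-1-c}{t}$ and $\binom{n+i-j-1}{i-1}=\binom{n-1+c}{t+c}$. Since $i\le n$ and $j\ge 1$ force $0\le t\le n-1-c$, the sum becomes $T_{1}=\sum_{c=1}^{n-1}\sum_{t=0}^{n-1-c}\binom{n-1-c}{t}\binom{n-1+c}{t+c}$, in which the outer index $c$ appears only as a parameter of the two fixed binomial parameters $n-1\mp c$.

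Next I would compute the inner sum for each fixed $c$ by the method of coefficients, exactly along the lines of Lemma \ref{lemma1}. Using representation (\ref{M13}) twice, $\binom{n-1-c}{t}=\mbox{\bf res}_{x}(1+x)^{n-1-c}x^{-t-1}$ and $\binom{n-1+c}{t+c}=\mbox{\bf res}_{y}(1+y)^{n-1+c}y^{-t-c-1}$, I would collect the $t$-dependent factors, sum the resulting geometric series in $t$ under the residue sign (steps 5--6 of the scheme), and then apply the substitution rule (Rule 3) with the change $x=y$. This collapses the bracket to a single residue $\mbox{\bf res}_{y}(1+y)^{2n-2}y^{-n}=\binom{2n-2}{n-1}$; equivalently it is the Vandermonde convolution $\sum_{t}\binom{n-1-c}{t}\binom{n-1+c}{n-1-t}=\binom{2n-2}{n-1}$.

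The key observation that makes everything collapse is that this value $\binom{2n-2}{n-1}$ is independent of $c$. Since $c$ runs over the $n-1$ values $1,2,\ldots,n-1$, the remaining outer summation merely multiplies by $n-1$, giving $T_{1}=(n-1)\binom{2n-2}{n-1}$, which is (\ref{28}).

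The main obstacle will be the bookkeeping of the coupled summation bounds. Because the original range is triangular ($1\le j\le i-1$), I must verify that after the reindexing $c=i-j$ the inner sum is a \emph{complete} Vandermonde, i.e. that the admissible range $0\le t\le n-1-c$ captures all terms of $\sum_{t}\binom{n-1-c}{t}\binom{n-1+c}{n-1-t}$ with nonzero summand and that no boundary term is lost or double counted. In the residue formulation this amounts to checking that the geometric summation over $t$ produces no spurious endpoint residue: the factor $(1+y)/y$ raised to the excluded powers must vanish under $\mbox{\bf res}_{y}$, which it does because the corresponding monomials have nonnegative order. Once these ranges are handled correctly, the reduction to a single binomial and the final multiplication by $n-1$ are immediate.
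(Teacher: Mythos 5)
Your proof is correct, but it takes a genuinely different route from the paper's. Your reindexing $c=i-j$, $t=j-1$ is carried out correctly: the constraints $2\le i\le n$, $1\le j\le i-1$ translate exactly into $c\ge 1$, $t\ge 0$, $t+c\le n-1$, and the two binomials become $\binom{n-1-c}{t}$ and $\binom{n-1+c}{t+c}=\binom{n-1+c}{n-1-t}$, so each inner sum is the Vandermonde convolution $\sum_{t}\binom{n-1-c}{t}\binom{n-1+c}{n-1-t}=\binom{2n-2}{n-1}$; your completeness check is also right, since $\binom{n-1-c}{t}=0$ for $t>n-1-c$ kills all terms outside the triangular range. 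The decisive observation that the inner sum is independent of $c$, so that the outer sum merely contributes the factor $n-1$, is exactly what makes your argument short. The paper instead attacks the coupled double sum head-on with the method of coefficients: it writes both binomials as residues in two variables (using representations of the type (\ref{M3}) rather than (\ref{M13})), sums the geometric progression over the inner index $j$, which produces a kernel $1/(y-x)$ and forces a careful handling of the domain $|x|\gg|y|$, then performs the change of variables $X=x/(1-x)$, $Y=y/(1-y)$ and a differentiation step (Rule 6 in disguise) to evaluate the resulting double residue, arriving at $(n-1)\binom{2n-1}{n-1}-(n-1)\binom{2n-2}{n-2}=(n-1)\binom{2n-2}{n-1}$. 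Your route is more elementary and exposes the combinatorial reason for the factor $n-1$ (the sum is constant along the diagonals $i-j=c$), whereas the paper's computation, though heavier, serves its expository purpose of demonstrating the full residue machinery (series summation under the residue sign, change of variables, and pole analysis) that is reused in the harder sums $T_2$ and $T_3$ of the same section.
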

%\end{exercise}

\begin{proof}
Using the known relations and integral representations for the
corresponding binomial coefficients, we obtain
\begin{equation*}
T_{1}=\sum_{i=2}^{n}\sum_{j=1}^{i-1}\binom{n-i+j-1}{j-1}\binom{n+i-j-1}{n-j}
=\sum_{i=2}^{n}\sum_{j=0}^{i-2}\binom{n-i+j}{j}\binom{n+i-j}{n-j-1}
\end{equation*}%
\begin{equation*}
=\sum_{i=2}^{n}\sum_{j=0}^{i-2}\{\mbox{\bf res}_{x}\frac{(1-x)^{-n+i-1}}{x^{j+1}}
\mbox{\bf res}_{y}\frac{(1-y)^{-i-2}}{y^{n-j}}\}.
\end{equation*}%
Using the formula of a geometric progression in the sum on the index $j$, we
get
\begin{equation*}
\sum_{i=2}^{n}\mbox{\bf res}_{xy}\{[\frac{(1-x)^{-n+i-1}(1-y)^{-i-2}}{xy^{n}}%
\frac{1-(y/x)^{i-1}}{1-y/x}]_{|x|\gg |y|=\rho ,\rho \ll 1}\}
\end{equation*}%
\begin{equation*}
(the\, rule\, of\, linearity)
\end{equation*}
\begin{equation*}
=\sum_{i=2}^{n}\mbox{\bf res}_{xy}\{[\frac{(1-x)^{-n+i-1}(1-y)^{-i-2}}{y^{n}}%
\cdot \frac{1}{y-x}]_{|x|\gg |y|=\rho ,\rho \ll 1}\}
\end{equation*}%
\begin{equation*}
-\sum_{i=2}^{n}\mbox{\bf res}_{xy}\{[\frac{(1-x)^{-n+i-1}(1-y)^{-i-2}}
{x^{i-1}y^{n-i+1}(y-x)}]_{|x|\gg |y|=\rho ,\rho \ll 1}\}.
\end{equation*}%
\begin{equation*}
(change\,\, of\,\, variables\,\,X=x/(1-x),\,Y=y/(1-y)\,\,under\,\,the\,\,sign
\end{equation*}%
\begin{equation*}
of\,\,the\,\,operator\,\,\mbox{\bf res}\,\,in\,\,the\,\,first\,\,and\,\,second\,\,sums)
\end{equation*}%
\begin{equation*}
T_{1}=\sum_{i=2}^{n}\mbox{\bf res}_{xy}\{[\frac{(1+x)^{n-i}(1+y)^{n+i-1}}{y^{n}(x-y)}]_{|x|\gg |y|}\}
\end{equation*}%
\begin{equation*}
-\sum_{i=2}^{\infty }\mbox{\bf res}_{xy}\{[\frac{(1+x)^{n-1}(1+y)^{n}}
{x^{i-1}y^{n-i+1}(x-y)}]_{|x|\gg |y|}\}=
\end{equation*}%
\begin{equation*}
=\sum_{i=2}^{n}\mbox{\bf res}_{y}\{\frac{(1+y)^{n-i}(1+y)^{n+i-1}}{y^{n}}\}
\end{equation*}%
\begin{equation*}
-\mbox{\bf res}_{xy}\{[\frac{(1+x)^{n-1}(1+y)^{n}}{xy^{n-1}(x-y)}\cdot \frac{%
x}{(x-y)}]_{|x|\gg |y|}\}
\end{equation*}%
\begin{equation*}
=\sum_{i=2}^{n}\mbox{\bf res}_{y}\frac{(1+y)^{2n-1}}{y^{n}}-\mbox{\bf res}%
_{xy}\{[\frac{(1+x)^{n-1}(1+y)^{n}}{y^{n-1}(x-y)^{2}}]_{|x|\gg |y|}\}.
\end{equation*}%
Proceeding similarly, we take the first sum res by $y$, and the second by $x$
\begin{equation*}
T_{1}=\sum_{i=2}^{n}{\binom{2n-1}{n-1}}-\mbox{\bf res}_{y}\{[\frac{d}{dx}(%
\frac{(1+x)^{n-1}(1+y)^{n}}{y^{n-1}})]_{x=y}\}
\end{equation*}%
\begin{equation*}
=(n-1){\binom{2n-1}{n-1}}-(n-1)\mbox{\bf res}_{y}\{[\frac{%
(1+x)^{n-2}(1+y)^{n}}{y^{n-1}})]_{x=y}\}
\end{equation*}%
\begin{equation*}
=(n-1){\binom{2n-1}{n-1}}-(n-1)\mbox{\bf res}_{y}\frac{(1+y)^{2n-2}}{y^{n-1}}
\end{equation*}%
\begin{equation*}
=(n-1){\binom{2n-1}{n-1}}-(n-1){\binom{2n-2}{n-2}}=(n-1){\binom{2n-2}{n-1}}.
\end{equation*}%
%\hfill
\end{proof}

Let%
\begin{equation}
T_{2}:=\sum_{i=2}^{n}\sum_{j=i}^{n-1}\binom{i-1+n-j}{n-j}\left(\binom{n-i+j}{j}-\binom{n-i+j}{j+1}\right)=
\label{29}
\end{equation}%
\begin{equation}
=\sum_{i=2}^{n}(\sum_{j=i}^{n}...-\sum_{j=n}...)=S_{1}+S_{2},
\label{30}
\end{equation}%
where%
\begin{equation}
S_{1}:=\sum_{i=2}^{n}\left( \binom{2n-i}{n+1}-\binom{2n-i}{n}\right) ,
\label{31}
\end{equation}%
\begin{equation}
S_{2}:=\sum_{i=2}^{n}\sum_{j=i}^{n}{\binom{i-1+n-j}{n-j}}
\left(\binom{n-i+j}{j}-\binom{n-i+j}{j+1}\right).
 \label{32}
\end{equation}

\begin{lemma}.
\label{Lem3}
For $n=3,4,\ldots $ the following combinatorial identity is
valid
\begin{equation}
\label{33}
S_{1}=\sum_{i=2}^{n}\left( \binom{2n-i}{n+1}-\binom{2n-i}{n}\right)=
\binom{2n-1}{n+2}-\binom{2n-1}{n+1}.
\end{equation}%
\end{lemma}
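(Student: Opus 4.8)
The plan is to evaluate $S_1$ by the method of coefficients, in the same style as Lemmas~\ref{lemma1}--\ref{Lem2}. First I would replace both binomial coefficients by their integral representations via (\ref{M13}),
\[
\binom{2n-i}{n+1}=\mbox{\bf res}_{w}(1+w)^{2n-i}w^{-n-2},\qquad
\binom{2n-i}{n}=\mbox{\bf res}_{w}(1+w)^{2n-i}w^{-n-1}.
\]
Writing $(1+w)^{2n-i}=(1+w)^{2n}(1+w)^{-i}$ and using linearity (Rule~2) to interchange $\sum$ and $\mbox{\bf res}$, I obtain
\[
S_1=\mbox{\bf res}_{w}\Big\{(1+w)^{2n}\,(w^{-n-2}-w^{-n-1})\sum_{i=2}^{n}(1+w)^{-i}\Big\},
\]
so that the whole $i$-dependence is confined to a single finite geometric progression.

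The one genuinely computational step is the closed form of that progression. Since $1-(1+w)^{-1}=w/(1+w)$, a short calculation gives
\[
\sum_{i=2}^{n}(1+w)^{-i}=\frac{(1+w)^{-1}-(1+w)^{-n}}{w},
\]
and this is the place where the index bookkeeping (the accumulation of the exponent $-i$ against the fixed $-n-2$ and $-n-1$, and the extra factor $w^{-1}$) must be tracked carefully. Substituting back and absorbing the $w^{-1}$ collapses everything to
\[
S_1=\mbox{\bf res}_{w}\Big\{\big[(1+w)^{2n-1}-(1+w)^{n}\big]\,(w^{-n-3}-w^{-n-2})\Big\}.
\]

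Finally I would expand this into four residues and read each off by (\ref{M13}): the two terms carrying $(1+w)^{2n-1}$ give $\binom{2n-1}{n+2}$ and $-\binom{2n-1}{n+1}$, while the two terms carrying $(1+w)^{n}$ equal $\binom{n}{n+2}=0$ and $\binom{n}{n+1}=0$ and hence drop out. Collecting the survivors yields exactly $S_1=\binom{2n-1}{n+2}-\binom{2n-1}{n+1}$, as claimed.

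The argument is routine once set up; the only real obstacle is the geometric-sum evaluation together with the observation that the $(1+w)^{n}$ contributions vanish identically. Equivalently, after the substitution $m=2n-i$ the sum $S_1$ is a difference of two column sums in Pascal's triangle, and one may invoke the hockey-stick identity $\sum_{m=r}^{M}\binom{m}{r}=\binom{M+1}{r+1}$ directly; the residue computation above is merely the method-of-coefficients incarnation of that identity, consistent with the rest of the paper.
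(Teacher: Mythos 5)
Your proposal is correct and follows essentially the same route as the paper's own proof: both replace the binomial coefficients by residues via (\ref{M13}), interchange $\sum$ and $\mbox{\bf res}$ by linearity, sum the finite geometric progression in $i$, and read off the result as residues, with the $(1+w)^{n}$ contributions vanishing because $\binom{n}{n+1}=\binom{n}{n+2}=0$. The only differences are cosmetic (the paper first combines $w^{-n-2}-w^{-n-1}$ into a factor $(1-x)/x^{n+2}$, and your closing remark about the hockey-stick identity is an aside, not a change of method).
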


\begin{proof}
We have%
\begin{equation*}
S_{1}=\sum_{i=2}^{n}\mbox{\bf res}_{x}\left\{(1+x)^{2n-i}/x^{n+2}-(1+x)^{2n-i}/x^{n+1}\right\}=
\end{equation*}%
\begin{equation*}
=\mbox{\bf res}_{x}\sum_{i=2}^{n}\left\{(1+x)^{2n-i}(1-x)/x^{n+2}\right\}=
\end{equation*}%
\begin{equation*}
=\mbox{\bf res}_{x}\left\{\frac{(1+x)^{2n-2}(1-x)}{x^{n+2}}\cdot
\left(\frac { 1-(1+x)^{-(n-1)}}{1-(1+x)^{-1}}\right)
\right\}
\end{equation*}%
\begin{equation*}
=-\mbox{\bf res}_{x}\{(1+x)^{n}(1-x)/x^{n+3}\}+\mbox{\bf res}_{x}\{(1+x)^{2n-1}(1-x)/x^{n+3}\}=
\end{equation*}%
\begin{equation*}
=-0+\mbox{\bf res}_{x}(1+x)^{2n-1}(1-x)/x^{n+3}=
\end{equation*}%
\begin{equation*}
={\binom{2n-1}{n+2}}-{\binom{2n-1}{n+1}}.
\end{equation*}%
%thus,%
%\begin{equation*}
%S_{1}=\sum_{i=2}^{n}\left( {\binom{2n-i}{n+1}}-{\binom{2n-i}{n}}\right) ={\
%\binom{2n-1}{n+2}}-{\binom{2n-1}{n+1}}.%\cdot   \label{33}
%\end{equation*}%
%\hfill
\end{proof}

The following statement establishes the proof of Lemma \ref{Lem3}.

\begin{lemma}.
\label{lem4}
For $n=3,4,\ldots $ the following identity is valid
\begin{equation*}
S_{2}=\sum_{i=2}^{n}\sum_{j=i}^{n}\binom{i-1+n-j}{n-j}\left(\binom{n-i+j}{j}-\binom{n-i+j}{j+1}\right)=
\end{equation*}%
\begin{equation}
=-(n-1) -\binom{2n}{n}+2\binom{2n}{n+1}.
 \label{34}
\end{equation}%
\end{lemma}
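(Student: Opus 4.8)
The plan is to evaluate $S_2$ by the method of coefficients, following the same pattern as the computation of $T_1$ in Lemma~\ref{Lem2}. First I would represent both binomial factors by the $\mbox{\bf res}$ operator via $(\ref{M13})$, writing
$$\binom{i-1+n-j}{n-j}=\mbox{\bf res}_{x}\left\{(1+x)^{i-1+n-j}x^{j-n-1}\right\},$$
and recognizing, exactly as in $(\ref{26})$, that the inner difference collapses to a single residue
$$\binom{n-i+j}{j}-\binom{n-i+j}{j+1}=\mbox{\bf res}_{y}\left\{(1+y)^{n-i+j}(y-1)y^{-j-2}\right\}.$$
Multiplying these and using the commutativity of $\sum$ and $\mbox{\bf res}$ (Rule~2), $S_2$ becomes a double residue $\mbox{\bf res}_{x,y}$ of the kernel $K(x,y)=(1+x)^{n-1}(1+y)^{n}(y-1)x^{-n-1}y^{-2}$ times the purely combinatorial factor $\sum_{i=2}^{n}\sum_{j=i}^{n}A^{i}B^{j}$, where the dependence on the summation indices has been collected into $A=\frac{1+x}{1+y}$ and $B=\frac{x(1+y)}{y(1+x)}$.

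The key algebraic simplification is that $AB=\frac{x}{y}$, so the nested geometric progressions sum cleanly. Summing first over $j$ from $i$ to $n$ and then over $i$ from $2$ to $n$ produces four boundary contributions, governed by the elementary factorizations $1-A=\frac{y-x}{1+y}$, $\,1-B=\frac{y-x}{y(1+x)}$ and $1-AB=\frac{y-x}{y}$. After this summation each of the four pieces is a rational kernel carrying a double pole along $x=y$ through the factor $(y-x)^{-2}$; two of them have a $(1+y)^{n}$ numerator and the remaining two a higher power $(1+y)^{2n}$ or $(1+y)^{n+1}$, in agreement with the central-binomial terms $\binom{2n}{n}$ and $\binom{2n}{n+1}$ expected in $(\ref{34})$.

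To finish I would compute each double residue by iterated residues, fixing the expansion region $|x|\gg|y|=\rho$, $\rho\ll1$, so that $(y-x)^{-2}$ is expanded consistently (precisely as in the $T_1$ computation). The simple poles are read off directly, while the double pole at $x=y$ is handled by the differentiation rule (Rule~6): taking $\frac{d}{dx}$ of the analytic part and evaluating at $x=y$ introduces the linear factor that yields the term $-(n-1)$, just as differentiating $(1+x)^{n-1}$ produced the factor $(n-1)$ in the proof of Lemma~\ref{Lem2}. Collecting the surviving contributions then gives $S_2=-(n-1)-\binom{2n}{n}+2\binom{2n}{n+1}$.

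The main obstacle is the bookkeeping in the last two steps: the nested geometric summation generates four separate double residues, and each must be evaluated with the correct choice of which variable to integrate first and the correct expansion of $(y-x)^{-2}$ in the region $|x|\gg|y|$; mishandling the orientation or the double pole is the likeliest source of error. In particular, verifying that the linear term $-(n-1)$ emerges with the right sign from the differentiation at $x=y$, rather than being absorbed into the binomial terms, is the delicate point, and a numerical check at small $n$ (for instance $n=3$, where $S_2=8$) is a useful safeguard.
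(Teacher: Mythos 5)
You should first be aware that the paper never proves this lemma: Lemma \ref{lem4} is stated without proof, sandwiched between the proof of Lemma \ref{Lem3} and the assembling Lemma \ref{Lem5}, so your argument fills a genuine gap rather than shadowing a printed one. Your plan is correct, and it is precisely the method-of-coefficients pattern the paper applies to $T_{1}$ (Lemma \ref{Lem2}) and $S_{1}$ (Lemma \ref{Lem3}); I checked the details. Your residue representations are right --- in particular your kernel factor $(y-1)$ is the correct one, the factor $(1-u)$ printed in the paper's (\ref{26}) being a sign slip. With your $K$, $A$, $B$ and $AB=x/y$, the two finite geometric summations give exactly four kernels carrying $(y-x)^{-2}$, namely
\begin{equation*}
\frac{(1+x)^{n}(1+y)^{n}(y-1)}{x^{n-1}y^{2}(y-x)^{2}},\qquad
-\,\frac{(1+x)^{n}(1+y)^{n}(y-1)}{y^{n+1}(y-x)^{2}},
\end{equation*}
\begin{equation*}
-\,\frac{(1+x)(1+y)^{2n}(y-1)}{y^{n+2}(y-x)^{2}},\qquad
\frac{(1+x)^{n}(1+y)^{n+1}(y-1)}{y^{n+2}(y-x)^{2}}.
\end{equation*}
In the region $|x|\gg|y|=\rho$ the bookkeeping comes out as follows. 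The first kernel is the only one with a pole at the origin in $x$; its residue is best taken in $y$ first, where the only small pole is the double pole at $y=0$, and the differentiation there (not at $x=y$, as your outline asserts) produces the constant $1-n$. The second kernel, evaluated at the double pole $x=y$, gives $-n[\binom{2n-1}{n-1}-\binom{2n-1}{n}]=0$. The third and fourth, also evaluated at $x=y$ --- this is the standard double-pole residue, the same device as in the paper's proof of Lemma \ref{Lem2}, and not the paper's Rule 6, which is a different differentiation rule --- give $\binom{2n}{n+1}-\binom{2n}{n}$ and $n[\binom{2n}{n}-\binom{2n}{n+1}]$ respectively. Altogether
\begin{equation*}
S_{2}=(1-n)+(n-1)\left[\binom{2n}{n}-\binom{2n}{n+1}\right],
\end{equation*}
which coincides with the right-hand side of (\ref{34}) only after the further identity $(n+1)\binom{2n}{n+1}=n\binom{2n}{n}$; record this last reduction explicitly, since the raw residue output is not literally $-(n-1)-\binom{2n}{n}+2\binom{2n}{n+1}$. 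These are presentation corrections, not gaps: your approach is sound, your check value $S_{2}=8$ at $n=3$ is right, and the computation above completes the proof the paper omitted.
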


From the formulas(\ref{30}), (\ref{33}) and (\ref{34}) follows

\begin{lemma}.
\label{Lem5}
 For $n=3,4,\ldots $ the following identity is valid
%\emph{\
\begin{equation*}
T_{2}:=\sum_{i=2}^{n}\sum_{j=i}^{n-1}\binom{i-1+n-j}{n-j}\left(\binom{n-i+j}{j}-\binom{n-i+j}{j+1}\right)=
\end{equation*}%
\begin{equation}
=\binom{2n-1}{n+2}-\binom{2n-1}{n+1}-(n-1) - \binom{2n}{n}+2\binom{2n}{n+1}.
\label{35}
\end{equation}
\end{lemma}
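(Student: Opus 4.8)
The plan is to obtain (\ref{35}) as an immediate consequence of the splitting (\ref{30}) together with the two evaluations already established in Lemma \ref{Lem3} and Lemma \ref{lem4}; no new summation technique is needed for the present statement, only a careful accounting of the boundary term at $j=n$.

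First I would justify the decomposition $T_{2}=S_{1}+S_{2}$ recorded in (\ref{30}). Starting from the definition (\ref{29}) and using the symmetry $\binom{i-1+n-j}{i-1}=\binom{i-1+n-j}{n-j}$, I extend the inner summation index from the range $i\leq j\leq n-1$ to the full range $i\leq j\leq n$, writing $\sum_{j=i}^{n-1}=\sum_{j=i}^{n}-(\text{the }j=n\text{ term})$. The completed double sum is exactly $S_{2}$ as defined in (\ref{32}), so it remains to identify the subtracted boundary contribution. At $j=n$ one has $\binom{i-1+n-j}{n-j}=\binom{i-1}{0}=1$, hence the $j=n$ term equals $\binom{2n-i}{n}-\binom{2n-i}{n+1}$. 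Summing $-1$ times this over $i$ from $2$ to $n$ reproduces precisely $S_{1}=\sum_{i=2}^{n}\bigl(\binom{2n-i}{n+1}-\binom{2n-i}{n}\bigr)$ of (\ref{31}), which confirms (\ref{30}).

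Having fixed the decomposition, I would simply substitute the closed forms: Lemma \ref{Lem3} gives $S_{1}=\binom{2n-1}{n+2}-\binom{2n-1}{n+1}$, and Lemma \ref{lem4} gives $S_{2}=-(n-1)-\binom{2n}{n}+2\binom{2n}{n+1}$. Adding $S_{1}$ and $S_{2}$ yields exactly the right-hand side of (\ref{35}), completing the proof.

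The only genuine subtlety here is bookkeeping: one must be sure that the index shift $\sum_{j=i}^{n-1}\to\sum_{j=i}^{n}$ produces the boundary term with the correct sign and that this term matches the definition of $S_{1}$ rather than contributing an extra stray summand. The real analytic difficulty of the whole computation lies not in Lemma \ref{Lem5} but upstream, in the evaluation of $S_{2}$ carried out in Lemma \ref{lem4}, where the method of coefficients and a geometric-progression summation are required; once that value is in hand, the present lemma is pure arithmetic.
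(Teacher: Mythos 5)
Your proposal is correct and follows exactly the paper's own route: the paper proves Lemma \ref{Lem5} precisely by combining the decomposition (\ref{30}) with the evaluations (\ref{33}) of $S_{1}$ (Lemma \ref{Lem3}) and (\ref{34}) of $S_{2}$ (Lemma \ref{lem4}). Your verification of the $j=n$ boundary term and its sign is the only bookkeeping the paper leaves implicit, and you carry it out correctly.
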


%\paragraph{The calculation of the sum $T_{3}$}
\subsubsection{The calculation of the sum $T_{3}$}
\begin{lemma}.
\label{lem6}
For\ $n=3,4,\ldots $ the following identity is valid
\begin{equation*}
F_{1}=\sum_{i=2}^{n}\sum_{j=n}\binom{i-1+n-j}{i-1}\mbox{\bf res}_{yu}
\frac{(1+y)^{n-j+i-1}(1+u)^{2j-2i}(1-u^4)}{y^{i}u^{j-i+2}(y-u)}=0.
\end{equation*}%
\end{lemma}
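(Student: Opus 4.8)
The plan is to read $F_1$ as a single-term sum in $j$: the index set $\{j=n\}$ collapses the outer summation, and substituting $j=n$ trivializes most factors. Indeed $\binom{i-1+n-j}{i-1}=\binom{i-1}{i-1}=1$, $(1+y)^{n-j+i-1}=(1+y)^{i-1}$, $(1+u)^{2j-2i}=(1+u)^{2n-2i}$ and $u^{j-i+2}=u^{n-i+2}$, so that
\[
F_1=\sum_{i=2}^{n}\mbox{\bf res}_{yu}\,\frac{(1+y)^{i-1}(1+u)^{2n-2i}(1-u^{4})}{y^{i}u^{n-i+2}(y-u)}.
\]
I would then fix the expansion of the kernel $1/(y-u)$ according to the domain-deformation convention already used in the proof of Lemma \ref{Lem2} (the computation of $T_1$), namely $|y|\gg|u|$, giving the geometric expansion $1/(y-u)=\sum_{k\geq 0}u^{k}y^{-k-1}$.

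Next I would evaluate the double residue by integrating in $y$ first. Only the factor $(1+y)^{i-1}y^{-i}(y-u)^{-1}$ depends on $y$; the $u$-part is a constant for this step. The key is a degree count: $(1+y)^{i-1}y^{-i}$ is a Laurent polynomial whose highest-order term is $y^{-1}$ (because $\deg(1+y)^{i-1}=i-1$), while in the region $|y|\gg|u|$ the expansion of $1/(y-u)$ also has top $y$-order $y^{-1}$. Hence every monomial of the product has $y$-order at most $-2$, the coefficient of $y^{-1}$ vanishes, and $\mbox{\bf res}_{y}\{(1+y)^{i-1}/[y^{i}(y-u)]\}=0$ identically in $u$. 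Equivalently, the coefficient extraction produces $\binom{i-1}{i+k}$ for $k\geq 0$, which is zero since $i+k>i-1$. Taking $\mbox{\bf res}_{u}$ of the zero function then shows that every summand, and hence $F_1$, equals $0$.

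The only real subtlety I expect lies in the choice of expansion region, and it must be pinned down carefully. If one instead expands in the region $|u|\gg|y|$, the inner residue no longer vanishes: one finds $\mbox{\bf res}_{yu}=-\bigl[\binom{2n-i-1}{n+1}-\binom{2n-i-1}{n-3}\bigr]$, and a hockey-stick summation in $i$ collapses $F_1$ to $-\bigl[\binom{2n-2}{n+2}-\binom{2n-2}{n-2}+(n-1)\bigr]$, which is not identically zero. So the vanishing asserted in Lemma \ref{lem6} genuinely depends on selecting $|y|\gg|u|$, the region consistent with the geometric series that was summed to produce the kernel $1/(y-u)$. Since formal residues in disjoint variables commute once the expansion region is fixed, the order of integration is immaterial, and I would remark that integrating in $u$ first yields the same conclusion. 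Thus the proof reduces to the elementary degree argument above, and no further computation is needed.
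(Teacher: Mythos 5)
Your proof is correct and follows essentially the same route as the paper's: collapse the outer sum at $j=n$, expand $1/(y-u)$ as a geometric series in the region $|y|\gg|u|$, and observe that the inner residue in $y$ vanishes by the degree count $\binom{i-1}{i+k}=0$, $k\geq 0$. Your additional remark pinning down why the region $|y|\gg|u|$ (and not $|u|\gg|y|$) is forced is a worthwhile clarification of a point the paper leaves implicit in its bracket notation $[\,\cdot\,]_{|y|\gg|u|=\rho}$, but the core argument is identical.
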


\begin{proof}.
We obtain with the method of coefficients successively
\begin{equation*}
F_{1}=\sum_{i=2}^{n}\sum_{j=n}\binom{i-1+n-j}{i-1}\mbox{\bf res}_{yu}
\frac{(1+y)^{n-j+i-1}(1+u)^{2j-2i}(1-u^4)}{y^{i}u^{j-i+2}(y-u)}=
\end{equation*}%
\begin{equation*}
=\sum_{i=2}^{n}\mbox{\bf res}_{yu}\frac{(1+y)^{i-1}(1+u)^{2n-2i}(1-u^4)}
{y^{i}u^{n-i+2}(y-u)}=
\end{equation*}%
\begin{equation*}
=\sum_{i=2}^{n}\mbox{\bf res}_{yu}\{[\frac{(1+y)^{i-1}(1+u)^{2(n-i)}(1-u^4)}
{y^{i+1}u^{n-i+2}(1-u/y)}]_{|y|\gg |u| =\rho }\}=
\end{equation*}%
\begin{equation*}
\mbox{\bf res}_{u}\{\frac{(1+u)^{2(n-i) }(1-u^4)}{u^{n-i+2}} \{\sum_{i=2}^{n}\mbox{\bf res}_{y}
\frac{(1+y)^{i-1}}{y^{i+1}}\left(1+\sum_{s=0}^{\infty}\left( u/y\right)^{s}\right) \}\}=0
\end{equation*}%
by the definition of $\mbox{\bf res}_{u}.$
\end{proof}

\begin{lemma}.
\label{lem7}
 For $n=3,4,\ldots $ the following identity is valid
%\emph{\
\begin{equation*}
T_{3}=-\sum_{i=2}^{n}\sum_{j=i}^{n-1}\sum_{s=0}^{j-i-3}\binom{i-1+n-j}{i-1}
\binom{i-1+n-j}{s+i}\binom{2j-2i}{j-i-s-3}+
\end{equation*}%
\begin{equation*}
+\sum_{i=2}^{n}\sum_{j=i}^{n-1}\sum_{s=0}^{j-i-1}\binom{i-1+n-j}{i-1}
\binom{i-1+n-j}{s+i}\binom{2j-2i}{j-i-s+1}
\end{equation*}%
\begin{equation}
=\sum_{i=2}^{n}\sum_{j=i}^{n}\binom{i-1+n-j}{n-j}
\mbox{\bf res}_{yu}\frac{(1+y)^{n-j+i-1}(1+u)^{2j-2i}(1-u^{4})}{y^{i}u^{j-i+2}(y-u)}
\label{36}
\end{equation}%
\end{lemma}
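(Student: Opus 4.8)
The first equality in the statement is just the definition (\ref{27}) of $T_{3}$, so the whole content sits in the second equality, which re-packages the two triple sums as a single double sum of residues. The plan is to read each inner summation over $s$ as the expansion of a geometric series and to absorb it into the kernel $1/(y-u)$, thereby reversing exactly the manipulation by which (\ref{27}) would have been produced. Throughout I would first pass to the symmetric form $\binom{i-1+n-j}{i-1}=\binom{i-1+n-j}{n-j}$ of the common outer factor, so that it already matches the prefactor on the right.

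For fixed $i,j,s$ I would replace the two remaining binomial coefficients by the integral representations (\ref{M13}) in two auxiliary variables: $\binom{i-1+n-j}{s+i}=\mbox{\bf res}_{y}\{(1+y)^{n-j+i-1}y^{-s-i-1}\}$ and $\binom{2j-2i}{j-i-s+1}=\mbox{\bf res}_{u}\{(1+u)^{2j-2i}u^{s-(j-i+2)}\}$ (respectively $\binom{2j-2i}{j-i-s-3}=\mbox{\bf res}_{u}\{(1+u)^{2j-2i}u^{s-(j-i-2)}\}$ for the first triple sum). The crucial observation is that in the resulting product the only $s$-dependence is the factor $y^{-s-1}u^{s}$, so that, summing over $s\ge 0$ in the region $|y|\gg|u|$ used in the earlier computation of $T_{1}$, one gets the geometric series
\[
\sum_{s\ge 0}y^{-s-1}u^{s}=\frac{1}{y-u}.
\]

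After this summation the first triple sum becomes $-\sum_{i,j}\binom{i-1+n-j}{n-j}\mbox{\bf res}_{yu}\{(1+y)^{n-j+i-1}(1+u)^{2j-2i}y^{-i}u^{-(j-i-2)}(y-u)^{-1}\}$ and the second the identical expression with $u^{-(j-i+2)}$ in place of $u^{-(j-i-2)}$. Since the two $u$-exponents differ by exactly $4$, factoring out $u^{-(j-i+2)}$ turns the first kernel into $-u^{4}$ times, and the second into $+1$ times, the common kernel, so the two pieces assemble into the single numerator $(1-u^{4})$ — precisely the kernel on the right. Finally, the definition lets $j$ run only to $n-1$ while the target runs to $n$; the two agree because the extra $j=n$ layer is exactly the quantity $F_{1}$ of Lemma \ref{lem6}, which vanishes, so it may be adjoined at no cost.

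The routine but genuinely delicate part is the bookkeeping of the summation limits. To replace the finite upper bounds in (\ref{27}) by the full range $s\ge 0$ — which is what produces the closed kernel $1/(y-u)$ — one must check that these bounds already exhaust the indices for which the factors $\binom{2j-2i}{\,\cdot\,}$ and $\binom{n-j+i-1}{s+i}$ are nonzero, so that the passage to the full series introduces no spurious terms; for the $\binom{2j-2i}{j-i-s-3}$ sum the effective top is indeed $s=j-i-3$, and the corresponding verification for the other sum is where I would concentrate the argument. This is the only place where the equivalence of the two sides is not purely formal; apart from it, the proof is a direct application of the substitution rule and the geometric-series step of the method of coefficients.
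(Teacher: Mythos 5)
Your mechanism is exactly the one the paper implicitly relies on: the paper never actually carries out the passage from the triple sums (\ref{27}) to the residue form (\ref{36}) --- the proof of Lemma \ref{lem8} simply starts from the right-hand side of (\ref{36}) --- and the intended derivation is precisely yours: represent $\binom{i-1+n-j}{s+i}$ and $\binom{2j-2i}{\cdot}$ by residues in $y$ and $u$ via (\ref{M13}), sum the geometric series $\sum_{s\ge 0}u^{s}y^{-s-1}=1/(y-u)$ in the region $|y|\gg|u|$, note that the two $u$-exponents differ by $4$ so the two sums merge into the numerator $(1-u^{4})$, and adjoin the $j=n$ layer at no cost by Lemma \ref{lem6}. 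All of these steps are correctly reproduced in your proposal.

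However, the point you explicitly postpone (``the corresponding verification for the other sum is where I would concentrate the argument'') is not a routine check: as stated, it fails. In the sum containing $\binom{2j-2i}{j-i-s+1}$ the printed upper limit in (\ref{27}) is $s=j-i-1$, while that binomial survives up to $s=j-i+1$; the two extra terms are $(2j-2i)\binom{i-1+n-j}{j}$ and $\binom{i-1+n-j}{j+1}$, and the factor $\binom{i-1+n-j}{s+i}$ does not annihilate them in general. Concretely, for $n=4$ the sums (\ref{27}) give $T_{3}=2$ (only $(i,j,s)=(2,3,0)$ contributes), whereas the residue side of (\ref{36}) equals $5$: the $(i,j)=(2,2)$ term alone contributes $\binom{3}{2}\binom{3}{3}=3$, coming exactly from the spurious index $s=j-i+1=1$. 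So (\ref{36}) is false with the limits as printed, and no argument can close the gap you left open in that literal form. What your (correctly flagged) difficulty actually uncovers is a typo in (\ref{27}): the upper limit of the second sum must be $j-i+1$ (equivalently, $s$ unrestricted, the binomials cutting the sum off by themselves), and with that correction your passage to $\sum_{s\ge 0}$ is legitimate and your proof is complete. That this corrected reading is what the paper means is confirmed downstream: the closed form (\ref{72}) gives $T_{3}=5$ at $n=4$, consistent with the residue expression and with $\Omega^{+}(4)=90$ from (\ref{A73}), not with (\ref{27}) as printed. In short: right method, the same as the paper's, but the one verification you deferred is precisely where the statement itself --- not your strategy --- needs repair, and a finished proof must say so explicitly.
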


The proof will be divided into a series of statements.

\begin{lemma}.
\label{lem8}
Let $|t|=\rho =0,01,\,|u|=10, \,|y|=10^{4}.$ Then%
\begin{equation}
T_{3}=\mbox{\bf res}_{tyu}\left\{\left[\frac{t^{-n+1}(1+y)^{n}(1-u^4)(1-t)^{-2}}{%
\left(y-\left(-1+t\frac{(1+u)^2}{u}\right)\right) yu^{2}(y-u)
\left( y-\frac{t}{1-t} \right) }\right]_{|y|\gg |u| \gg |t|=\rho}\right\}
 \label{38}
\end{equation}%
\begin{equation}
=J_{1}+J_{2}+J_{3}+J_{4},
\label{39}
\end{equation}%
where the integrals are
\begin{equation}
J_{1}:=\mbox{\bf res}_{tu}\{[\frac{(1-u^4)}{t^{n}(1-t)u^{2}\left(u-t(1+u)^2\right)}]_{|u|\gg |t|=\rho }\},
\label{41}
\end{equation}%
\begin{equation}
J_{2}:=\mbox{\bf res}_{tu}\{[\frac{(1-u^4)(1+u)^{n-1}} {t^{n-1}(1-t)
u^{2}\left(u-t(1+u)\right)^{2}}]_{|u|\gg |t|=\rho }\},
\label{42}
\end{equation}%
\begin{equation}
J_{3}:=\mbox{\bf res}_{tu}\{[\frac{1}{t^{n+1}u}\cdot\frac{(1-u)(1-t)^{-n-1}}
{\left(u-\frac{t}{1-t}\right)^{2}\left(u-\frac{1-t}{t}\right)}]_{|u|\gg|t| =\rho },
\label{43}
\end{equation}%
\begin{equation}
J_{4}:=\mbox{\bf res}_{tu}\{[\frac{(1-u^4) (1+u)^{2n-1}u^{-n+1}(1-t)^{-2}}
{(u-t(1+u)^{2}) \left(u-\frac{t}{1-t}\right)^{2}\left(u-\frac{1-t}{t}\right)}]_{|u|\gg |t|=\rho }\}.
\label{44}
\end{equation}
\end{lemma}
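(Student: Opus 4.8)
\noindent The plan is to obtain the lemma by two successive applications of the method of coefficients, starting from the double residue representation of $T_3$ established in Lemma~\ref{lem7}. First I would encode the one surviving binomial coefficient by the inversion representation (\ref{M3}),
\[
\binom{i-1+n-j}{n-j}=\mbox{\bf res}_{t}\left\{\frac{(1-t)^{-i}}{t^{\,n-j+1}}\right\},
\]
turning $T_3$ into the triple residue
\[
T_3=\sum_{i=2}^{n}\sum_{j=i}^{n}\mbox{\bf res}_{tyu}\left\{\frac{(1-t)^{-i}(1+y)^{n-j+i-1}(1+u)^{2j-2i}(1-u^4)}{t^{\,n-j+1}\,y^{i}\,u^{\,j-i+2}\,(y-u)}\right\}.
\]
I would then pull out the $n$-dependent, $(i,j)$-independent prefactor $\frac{(1+y)^{n-1}(1-u^4)}{t^{n+1}u^{2}(y-u)}$ and collect the rest of the $i$- and $j$-dependence into two geometric ratios
\[
\gamma=\frac{(1+y)u}{(1-t)(1+u)^{2}y},\qquad \alpha=\frac{t(1+u)^{2}}{(1+y)u},
\]
so that the summand equals the prefactor times $\gamma^{i}\alpha^{j}$.

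Next I would carry out the double geometric summation $\sum_{i=2}^{n}\gamma^{i}\sum_{j=i}^{n}\alpha^{j}$, which produces four boundary terms. The decisive simplifications are
\[
1-\alpha=\frac{y-\bigl(-1+t(1+u)^{2}/u\bigr)}{1+y},\qquad \gamma\alpha=\frac{t}{(1-t)y},\qquad 1-\gamma\alpha=\frac{y-t/(1-t)}{y},
\]
whose two numerators are exactly the distinctive denominators of (\ref{38}). Of the four terms only the one coming from the two lower limits $i=2$, $j=i$ survives: in each of the other three a factor $\alpha^{n+1}$ or $(\gamma\alpha)^{n+1}$ supplies $t^{n+1}$, which together with the prefactor $t^{-n-1}$ leaves an integrand holomorphic in $t$ at the origin, hence with vanishing $\mbox{\bf res}_{t}$ --- in the same spirit as the vanishing of $F_1$ in Lemma~\ref{lem6}. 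Inserting the surviving term, together with $(\gamma\alpha)^{2}=t^{2}/\bigl((1-t)^{2}y^{2}\bigr)$ and the prefactor, reproduces the integrand of (\ref{38}).

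To pass from (\ref{38}) to (\ref{39}) I would compute $\mbox{\bf res}_{y}$ of the single integrand of (\ref{38}). As a rational function of $y$ it has the simple poles $y=0$, $y=u$, $y=t/(1-t)$ and $y=-1+t(1+u)^{2}/u$, and since $|y|$ is the largest variable the residue equals the sum of the residues at these four finite poles. The pole $y=0$ gives directly the integrand of $J_1$ in (\ref{41}); the pole $y=u$ gives $J_2$ in (\ref{42}), where the double $u$-pole appears through the factorization $u-t(1+u)=(1-t)\bigl(u-t/(1-t)\bigr)$. The poles $y=t/(1-t)$ and $y=-1+t(1+u)^{2}/u$ give $J_3$ and $J_4$ in (\ref{43})--(\ref{44}) after using
\[
t(1-t)(1+u)^{2}-u=t(1-t)\bigl(u-t/(1-t)\bigr)\bigl(u-(1-t)/t\bigr),
\]
whose reciprocal roots $t/(1-t)$ and $(1-t)/t$ account for the squared factor $\bigl(u-t/(1-t)\bigr)^{2}$ and the factor $\bigl(u-(1-t)/t\bigr)$ seen in $J_3$ and $J_4$. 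Taking $\mbox{\bf res}_{tu}$ of the four pieces then yields $T_3=J_1+J_2+J_3+J_4$.

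The main obstacle is the algebraic bookkeeping in the two reductions: in the first, isolating the two ratios $\gamma,\alpha$ and verifying the three displayed identities (and hence that only the lowest corner term survives); in the second, tracking how the simple $y$-poles at $y=u$, $y=t/(1-t)$ and $y=-1+t(1+u)^{2}/u$ collapse, through the two factorization identities, into the double $u$-poles recorded in $J_2,J_3,J_4$. Once these factorizations are identified, the remaining manipulations are routine.
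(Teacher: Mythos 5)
Your proposal is correct and follows essentially the same route as the paper: the paper likewise obtains the triple-residue formula (\ref{38}) by a double geometric summation (it uses the representation $\binom{i-1+n-j}{n-j}=\mbox{\bf res}_{z}(1+z)^{i-1+n-j}z^{-(n-j)-1}$ and afterwards substitutes $z=t/(1-t)$, where you build that substitution into the representation $(1-t)^{-i}t^{-(n-j)-1}$ from the start and discard the upper-limit boundary terms rather than extending the sums to infinity), and then splits (\ref{38}) into $J_{1}+J_{2}+J_{3}+J_{4}$ by the residue theorem in $y$ at the same four poles $y=0$, $y=u$, $y=t/(1-t)$, $y=-1+t(1+u)^{2}u^{-1}$, using the same quadratic factorizations of $u-t(1+u)$ and $t(1-t)(1+u)^{2}-u$. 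The only mismatches are typographical slips already present in the paper's own statement (e.g.\ $(1-u)$ in place of $(1-u^{4})$ in (\ref{43})), not defects of your argument.
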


\begin{proof}
We have
\begin{equation*}
T_{3}=\sum_{i=2}^{n}\sum_{j=i}^{n}
\left(\mbox{\bf res}_{z}(1+z)^{i-1+n-j}z^{-n+j-1}\times\right.
\end{equation*}%
\begin{equation*}
\left.\times \mbox{\bf res}_{yu}[\frac{(1+y)^{n-j+i-1}(1+u)^{2j-2i}(1-u^4)}
{y^{i}u^{j-i+2}(y-u)}]_{|y|\gg |u|=\rho }\right)=
\end{equation*}%
\begin{equation*}
=\sum_{i=2}^{n}\mbox{\bf res}_{zyu}\left\{\sum_{j=i}^{\infty}
(1+z)^{i-1+n-j}z^{-n+j-1}\times\right.
\end{equation*}%
\begin{equation*}
\left.\times [\frac{(1+y)^{n-j+i-1}(1+u)^{2j-2i}(1-u^4)}{y^{i}u^{j-i+2}(y-u)}]_{|y|\gg |u|=\rho }\right\}
\end{equation*}%
\begin{equation*}
=\sum_{i=2}^{n}\mbox{\bf res}_{zyu}\left\{(1+z)^{n-1}z^{-n+i-1}
\left(1-\frac{z(1+u)^{2}}{(1+z)(1+y)u}\right)^{-1} \cdot\frac{(1+y)^{n-1}(1-u^4)}
{y^{i}u^{2}(y-u)}\right\}
\end{equation*}%
\begin{equation*}
=\mbox{\bf res}_{zyu}\left\{\sum_{i=2}^{\infty }\frac{(1+z)^{n}z^{-n+i-1}}
{(1+z)(1+y)u-z(1+u)^{2}}\cdot \frac{(1+y)^{n}(1-u^{4}}{y^{i}u(y-u)}\right\}=
\end{equation*}%
\begin{equation*}
=\mbox{\bf res}_{zyu}\left\{\frac{(1+z)^{n}z^{-n+1}}{(1+z)(1+y)u-z(1+u)^{2}}
\cdot\frac{(1+y)^{n}(1-u^{4})}{y^{2}u(y-u)(1-z/y)}\right\}=
\end{equation*}%
\begin{equation*}
=\mbox{\bf res}_{zyu}\left\{[\frac{(1+z)^{n}z^{-n+1}(1+y)^{n}(1-u^{4})}
{\left((1+z)(1+y)u -z(1+u)^{2}\right)yu(y-u)(y-z)}]_{|y|\gg |u|\gg|z|=\rho}\right\}.
\end{equation*}

If you make a change here $t=z(1+z)^{-1}\in H_{1}$,
$ z=t(1-t)^{-1}, \, dz/dt=(1-t)^{-2} $, $ y-z=y-t(1-t)^{-1} $, then
\begin{equation*}
T_{3}=\mbox{\bf res}_{zyu}\left\{\frac{(z/(1+z))^{-n+1}(1-u^{4})(1+y)^{n}}
{(u(1+y) -z(1+u)^{2}/(1+z))yu(y-u)(y-z)}\right\}=
\end{equation*}%
\begin{equation*}
=\mbox{\bf res}_{tyu}\left\{\frac{t^{-n+1}(1-u^{4})(1+y)^{n}}
{(1-t)^{2}(u(1+y)-t(1+u)^{2}) yu(y-u)(y-t(1-t)^{-1})}\right\}=
\end{equation*}%
\begin{equation*}
=\mbox{\bf res}_{tyu}\left\{[\frac{t^{-n+1}(1+y)^{n}(1-u^{4})(1-t)^{-2}}
{(y-(-1+t(1+u)^{2}u^{-1})) yu^{2}(y-u)(y-t(1-t)^{-1})}]_{|y|\gg |u| \gg|t|=\rho }\right\}=
\end{equation*}%
\begin{equation*}
=\mbox{\bf res}_{tu}\left\{\frac{(1-u^{4})t^{-n+1}}{(1-t)^{2}u^{2}} %
\mbox{\bf res}_{y}[\frac{(1+y)^{n}(y-u)^{-1}}
{ y \left(y-\frac{t}{1-t}\right) (y-(-1+t(1+u)^{2}u^{-1}))}]_{|u|\gg |t|=\rho}\right\}.
\end{equation*}%
We compute the last integral by the residue theorem, in which the choice is
$|t|=\rho =0,1,\,|u|=10,\,|y|=10^{3}$ we have poles at the points
$y=0,\,y=u,\,y=t/(1-t)$ and $y=-1+t(1+u)^{2}u^{-1}.$ Then
\begin{equation*}
T_{3}=\mbox{\bf res}_{tu}\{\frac{(1-u^{4})t^{-n+1}}{(1-t)^{2}u^{2}}\cdot
\left\{[\frac{(1+y)^{n}\left(y-t(1-t)^{-1}\right)^{-1}}
{y\left(y-(1+t(1+u)^{2}u^{-1})\right)}]_{|u| \gg |t|=\rho ,y=u}\right.+
\end{equation*}%
\begin{equation*}
+[\frac{(1+y)^{n}(y-u)^{-1}}{(y-t(1-t)^{-1})(y-(-1+t(1+u)^{2}u^{-1}))}]_{|u|\gg |t|=\rho ,y=0}+
\end{equation*}%
%\begin{equation*}
%+[\frac{\left( 1+y\right) ^{n}}{y\left( y-t\left( 1-t\right) ^{-1}\right)
%(y-(-1+t\left( 1+u\right) ^{2}u^{-1}))}]_{\left\vert u\right\vert \gg
%\left\vert t\right\vert =\rho ,y=u}
%\end{equation*}%
\begin{equation*}
+[\frac{(1+y)^{n}}{y(y-u)(y-(-1+t(1+u)^{2}u^{-1}))}]_{|u| \gg |t| =\rho ,y=t(1-t)^{-1}}+
\end{equation*}%
\begin{equation*}
\left.+[\frac{(1+y)^{n}}{y(y-u)\left( y-t(1-t)^{-1}\right)}]_{|u| \gg |t| =\rho,y=-1+t(1+u)^{2}u^{-1})}\right\}=
\end{equation*}%
\begin{equation*}
=\mbox{\bf res}_{tu}\{\frac{(1-u^{4})}{t^{n-1}(1-t)^{2}u^{2}}\cdot
\left\{[\frac{1-t}{t(u-t(1+u)^{2})}]_{|u| \gg |t|=\rho }+\right.
\end{equation*}
\begin{equation*}
+[\frac{(1-t)(1+u)^n}{\left(u(1-t)-t\right)(u^{2}+u-t(1+u)^{2})}]_{|u|\gg |t|=\rho}+
\end{equation*}%
\begin{equation*}
+[\frac{u(1-t)^{3-n}}{t(t-u(1-t))(ut+(1-t)u-(1-t)t(1+u)^{2})}]_{|u|\gg |t| =\rho}+
\end{equation*}%
\begin{equation*}
\left.+\left[\frac{(1-t)u^{3}(t(1+u)^{2}u^{-1})^{n}(-u+t(1+u)^{2})^{-1}}
{((u+t(1+u)^{2}-u^{2})(-(1-t)u+(1-t)t(1+u)^{2}-ut)}\right]_{|u|\gg |t| =\rho }\}\right\}=
\end{equation*}%
\begin{equation*}
=J_{1}+J_{2}+J_{3}+J_{4}.
\end{equation*}%
\end{proof}

\begin{lemma}
\label{lem9}
For $|u| \gg |t| =\rho $
the following identity is valid
\begin{equation}
J_{1}=\mbox{\bf res}_{tu}\{[\frac{(1-u^{4})}{(1-t)t^{n}u^{2}(u-t(1+u)^{2})}]_{|u|\gg |t|=\rho }\}=0.
\label{66}
\end{equation}%
\end{lemma}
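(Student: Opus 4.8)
The plan is to treat $t$ as the small variable and $u$ as the large one, exactly as dictated by the region $|u|\gg|t|=\rho$, and to expand the only non-polynomial factor of the integrand as a geometric progression. Writing $u-t(1+u)^2=u\bigl(1-t(1+u)^2/u\bigr)$ and noting that for $\rho$ sufficiently small (as in the numerical choice $|t|=\rho$, $|u|=10$ of Lemma~\ref{lem8}, where $|t|\,|1+u|^{2}<|u|$) the quantity $t(1+u)^{2}/u$ has modulus less than $1$, I would expand
\begin{equation*}
\frac{1}{u-t(1+u)^2}=\frac{1}{u}\sum_{k\geq0}\left(\frac{t(1+u)^2}{u}\right)^{k}=\sum_{k\geq0}t^{k}(1+u)^{2k}u^{-k-1},
\end{equation*}
a series that converges uniformly on the chosen contours and therefore may be integrated term by term.

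Substituting this expansion into $J_{1}$ turns the integrand into
\begin{equation*}
\frac{1}{(1-t)\,t^{n}}\sum_{k\geq0}t^{k}\,\frac{(1-u^{4})(1+u)^{2k}}{u^{k+3}}.
\end{equation*}
By the linearity of $\mbox{\bf res}$ and the commutation of $\sum$ and $\mbox{\bf res}$ (\textbf{Rule 2}), I would interchange summation with the residue in $u$, pulling the $t$-dependent scalar $t^{k-n}/(1-t)$ out of $\mbox{\bf res}_{u}$. Each inner residue is then read off as a coefficient of a Laurent polynomial:
\begin{equation*}
\mbox{\bf res}_{u}\frac{(1-u^{4})(1+u)^{2k}}{u^{k+3}}=\binom{2k}{k+2}-\binom{2k}{k-2},
\end{equation*}
the two terms arising from the $1$ and from the $-u^{4}$ in the factor $(1-u^{4})$ respectively.

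The proof then closes by the elementary symmetry of binomial coefficients: since $(k+2)+(k-2)=2k$, we have $\binom{2k}{k+2}=\binom{2k}{k-2}$, so the inner residue vanishes for every $k\geq0$. Hence $\mbox{\bf res}_{u}\{\cdots\}=\sum_{k\geq0}0=0$, and therefore $J_{1}=\mbox{\bf res}_{t}\,0=0$, as claimed. The one genuinely delicate point is the very first step: selecting the correct direction of the geometric expansion so that it is valid on the region $|u|\gg|t|$ and so that the term-by-term application of $\mbox{\bf res}$ is legitimate. Once the factor $u-t(1+u)^2$ is expanded in powers of $t$ (rather than, say, being resolved into poles in $u$), the whole computation collapses immediately to the binomial identity, and no residual summation over $t$ or cancellation among poles is required.
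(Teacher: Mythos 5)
Your proof is correct and follows essentially the same route as the paper: the paper's own proof is a one-line assertion that, since $|t(1+u)^{2}/u|<1$ on the chosen contours, the residues in $u$ and $t$ are computed by the method of coefficients, i.e. by exactly the geometric expansion in powers of $t(1+u)^{2}/u$ that you carry out. Your write-up merely supplies the details the paper omits, in particular the explicit cancellation $\binom{2k}{k+2}=\binom{2k}{k-2}$ coming from the factor $(1-u^{4})$, which makes each $u$-residue vanish termwise.
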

\begin{proof}
As $|t|=\rho = 0,01,\,|u|=10$ и $\left\vert t(1+u)^{2}/u\right\vert <1$,
the calculation of residues $u$ and $t$ is carried
out using the method of coefficients similar analogously in Lemma \ref{lem7}.
\end{proof}

\begin{lemma}
\label{lem10} For
$|u| \gg |t| =\rho $ and $n=3,4,\ldots $ the following identity is valid
\begin{equation*}
J_{2}=\mbox{\bf res}_{tu}\{[\frac{t^{-n+1}(1-u^{4})(1+u)^{n}}
{(1-t) u^{2}(u(1-t)-t)(u^{2}+u-t(1+u)^{2})}]_{|u| \gg |t| =\rho }\}=
\end{equation*}%
\begin{equation}
=(n-1)-\frac{2}{n}\binom{2n}{n-2}.
 \label{67}
\end{equation}%
\end{lemma}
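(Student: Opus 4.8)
The plan is to pass to the compact form (\ref{42}), evaluate the inner residue $\mbox{\bf res}_{u}$ by summing residues at the finite poles, and then read off the outer residue $\mbox{\bf res}_{t}$ as Taylor coefficients of $(1-t)^{-m}$. First I would simplify the denominator. Since $u(1-t)-t=u-t(1+u)$ and $u^{2}+u-t(1+u)^{2}=(1+u)\bigl(u-t(1+u)\bigr)$, the product of the two quadratic factors equals $(1+u)\bigl(u-t(1+u)\bigr)^{2}$; one factor $(1+u)$ cancels against $(1+u)^{n}$, giving
\begin{equation*}
J_{2}=\mbox{\bf res}_{tu}\left\{\left[\frac{(1-u^{4})(1+u)^{n-1}}{t^{n-1}(1-t)\,u^{2}\bigl(u-t(1+u)\bigr)^{2}}\right]_{|u|\gg|t|=\rho}\right\},
\end{equation*}
which is precisely (\ref{42}).

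Next I would compute the inner residue in $u$. Because the expansion is taken for $|u|\gg|t|=\rho$, the sought coefficient of $u^{-1}$ equals the sum of the residues of the $u$-factor at its two finite poles, namely the double pole $u=0$ arising from $u^{2}$ and the double pole $u_{0}=t/(1-t)$, the squared simple root of $u-t(1+u)=(1-t)\bigl(u-t/(1-t)\bigr)$. Both are evaluated by the derivative formula for a second-order pole. The residue at $u=0$ is elementary and equals $2t^{-3}+(n-3)t^{-2}$. For the residue at $u_{0}$ I would use $1+u_{0}=(1-t)^{-1}$ together with $1-u_{0}^{4}=\bigl((1-t)^{4}-t^{4}\bigr)(1-t)^{-4}$; after collecting powers of $(1-t)$ one obtains a rational function of $t$ of the shape $\bigl[-2t-(n-1)t^{2}\bigr](1-t)^{-n-2}+\bigl[(n-1)t^{-2}-2t^{-3}\bigr](1-t)^{2-n}$.

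Then I would take the outer residue $\mbox{\bf res}_{t}$. Multiplying each piece by $t^{-(n-1)}(1-t)^{-1}$ and extracting the coefficient of $t^{-1}$ reduces to reading off coefficients of $(1-t)^{-m}$, i.e. the binomials $\binom{m+k-1}{k}$. The $u=0$ contribution gives immediately $2+(n-3)=n-1$, the first term of the answer. The $u_{0}$ contribution, after using $(1-t)^{4}-t^{4}=(1-2t)(1-2t+2t^{2})$ to split the numerator, yields the four-term combination
\begin{equation*}
-2\binom{2n-1}{n-3}-(n-1)\binom{2n-2}{n-4}+(n-1)\binom{2n-2}{n}-2\binom{2n-1}{n+1}.
\end{equation*}

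The main obstacle is the final step: collapsing this four-term combination into the single closed form $-\tfrac{2}{n}\binom{2n}{n-2}$. This is a finite binomial identity, which can itself be established by the method of coefficients (or by Vandermonde convolution after using the symmetry $\binom{2n-1}{k}=\binom{2n-1}{2n-1-k}$); a check at $n=3$ gives $-2\binom{5}{0}+2\binom{4}{3}-2\binom{5}{4}=-2+8-10=-4=-\tfrac{2}{3}\binom{6}{1}$, and at $n=4$ it gives $-14=-\tfrac{1}{2}\binom{8}{2}$, confirming the target. Adding the two contributions produces $J_{2}=(n-1)-\tfrac{2}{n}\binom{2n}{n-2}$, which is (\ref{67}).
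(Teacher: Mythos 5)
Your proof is correct and follows exactly the route the paper intends but omits: for this lemma the paper says only that the proof is ``analogical to Lemma~\ref{lem8}'', i.e.\ reduce to (\ref{42}), sum the $u$-residues at the two finite double poles $u=0$ and $u_{0}=t/(1-t)$ lying inside $|u|=\rho_2$, then extract the coefficient of $t^{-1}$, which is precisely what you do, and all your intermediate expressions (the residue $(n-3)t^{-2}+2t^{-3}$ at $u=0$, the two-block expression at $u_{0}$, and the four binomial terms) check out. The one step you flag as the ``main obstacle'' closes routinely and identically in $n$, not merely at $n=3,4$: dividing your four terms and the target $-\tfrac{2}{n}\binom{2n}{n-2}$ by the common factor $(2n-2)!/\bigl[(n+2)!\,(n-2)!\bigr]$ turns the claimed identity into the polynomial identity $-2(2n-1)(n-2)-(n-1)(n-2)(n-3)+(n-1)(n+1)(n+2)-2(2n-1)(n+2)=-4(2n-1)$, whose left side collapses to $-4n(2n-1)+4(n-1)(2n-1)$, so the proof is complete.
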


The proof is analogical to Lemma \ref{lem8}.
\begin{equation*}
J_{3}=\mbox{\bf res}_{tu}\{[\frac{1}{(1-t)^{n+1}t^{n+1}}\cdot
\frac{(1-u^{4})}{u(u-t/(1-t))^{2}(u-(1-t)/t)}]_{|u|\gg |t|=\rho},\}
\end{equation*}%
\begin{equation}
J_{4}=\mbox{\bf res}_{tu}\{[\frac{(1-u^{4})(1+u)^{2n-1}u^{-n+1}(1-t)^{-2}}
{(u-t(1+u)^{2})(u-t/(1-t))^{2}(u-(1-t)/t)}]_{|u|\gg |t|=\rho }\}.
\label{48}
\end{equation}

%\begin{exercise}

\begin{lemma}
\label{lem11}
For $n=3,4,\ldots $ and meeting the conditions
$|u|\gg |t|=\rho $ the following identity is valid
\begin{equation*}
J_{3}=\mbox{\bf res}_{tu}\{[\frac{1}{(1-t)^{n+1}t^{n+1}}\cdot \frac{(1-u^{4})%
}{u\left(u-t/(1-t)\right)^{2}\left(u-(1-t)/t\right)}]_{|u|\gg |t|=\rho }=
\end{equation*}%
\begin{equation}
=2^{2n-1}-{\binom{2n}{n+1}}-{\binom{2n+1}{n}}+{\binom{2n}{n}}.
\label{68}
\end{equation}
\end{lemma}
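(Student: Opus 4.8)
The plan is to evaluate the double residue by iterated integration, taking the inner residue in $u$ first (as dictated by the ordering $|u|\gg|t|$) and then the residue in $t$. Writing $a=t/(1-t)$, the $u$-integrand has a double pole at $u=a$ and a simple pole at $u=(1-t)/t=1/a$, and the key observation is that these two nonzero poles are reciprocal. Since $|a|\approx|t|$ is small while $|1/a|$ is large, only $u=0$ and $u=a$ lie inside the contour $|u|\gg|t|$, so $\mbox{\bf res}_u$ is the sum of the residues at just these two poles. The residue at $u=0$ is $-1/a$, and the residue at the double pole is $\left.\frac{d}{du}\left[\frac{1-u^{4}}{u(u-1/a)}\right]\right|_{u=a}$; when these are added, the reciprocal relation $b=1/a$ forces the factor $(a^{2}-1)$ into the numerator, and after cancellation one is left with the compact expression $\frac{2a^{3}}{1-a^{2}}$, which in terms of $t$ reads $\frac{2t^{3}}{(1-t)(1-2t)}$.

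Substituting this back, the prefactor $\frac{1}{(1-t)^{n+1}t^{n+1}}$ collapses the problem to a single one-dimensional residue,
\[
J_{3}=2\,\mbox{\bf res}_{t}\frac{1}{t^{n-2}(1-t)^{n+2}(1-2t)}=2[t^{n-3}]\frac{1}{(1-t)^{n+2}(1-2t)}.
\]
Expanding $\frac{1}{1-2t}=\sum_{k}2^{k}t^{k}$ and reading off the coefficient of $t^{n-3}$ turns this into the finite sum
\[
J_{3}=2\sum_{k=0}^{n-3}2^{k}\binom{2n-2-k}{n+1}.
\]

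The remaining, and genuinely nontrivial, step is to put this sum in closed form, and I would do it by applying the method of coefficients a second time: write $\binom{2n-2-k}{n+1}=\mbox{\bf res}_{x}(1+x)^{2n-2-k}x^{-n-2}$, pull $\mbox{\bf res}_{x}$ outside the $k$-sum, and sum the finite geometric progression with ratio $2/(1+x)$. This gives
\[
\sum_{k=0}^{n-3}2^{k}\binom{2n-2-k}{n+1}=\mbox{\bf res}_{x}\frac{(1+x)^{2n-1}-2^{n-2}(1+x)^{n+1}}{x^{n+2}(x-1)}.
\]
Writing $\frac{1}{x-1}=-\sum_{j\geq0}x^{j}$ and extracting the coefficient of $x^{n+1}$ yields $2^{2n-1}-\sum_{i=0}^{n+1}\binom{2n-1}{i}$, where the term $2^{n-2}\sum_{i=0}^{n+1}\binom{n+1}{i}=2^{2n-1}$ comes out immediately. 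Finally the symmetry $\sum_{i=0}^{n-1}\binom{2n-1}{i}=2^{2n-2}$ together with Pascal's rule $\binom{2n-1}{n}+\binom{2n-1}{n+1}=\binom{2n}{n+1}$ collapses the tail to $2^{2n-2}+\binom{2n}{n+1}$, so the whole sum equals $2^{2n-2}-\binom{2n}{n+1}$. Hence $J_{3}=2^{2n-1}-2\binom{2n}{n+1}$, and rewriting $2\binom{2n}{n+1}=\binom{2n}{n+1}+\binom{2n+1}{n}-\binom{2n}{n}$ (using $\binom{2n+1}{n}=\binom{2n}{n}+\binom{2n}{n+1}$) recovers the stated four-term form.

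I expect the main obstacle to be this last evaluation rather than the algebra of the residues: the $2^{2n-1}$ surviving in the answer is not produced cleanly by a single pole but arises from a partial cancellation of the two powers of two (equivalently, the pole at $t=1/2$ interacting with the high-order pole at $t=1$), and only the geometric summation followed by the binomial-symmetry and Pascal identities makes this cancellation and the combination $\binom{2n}{n+1}$ visible. The double-pole differentiation in the $u$-step is routine but must be done carefully, since it is precisely the reciprocal relation $b=1/a$ that supplies the factor $(a^{2}-1)$ needed for the simplification to $\frac{2t^{3}}{(1-t)(1-2t)}$.
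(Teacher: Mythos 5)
Your proof is correct, and after the first (shared) step it follows a genuinely different route from the paper. Both arguments begin identically: the contour condition $|u|\gg|t|$ forces the inner $u$-residue to be the sum of the residues at $u=0$ (simple pole) and $u=t/(1-t)$ (double pole), with $u=(1-t)/t$ excluded. From there you diverge: you combine the two $u$-residues algebraically, and the reciprocity of the nonzero poles indeed collapses the sum to $\frac{2a^{3}}{1-a^{2}}=\frac{2t^{3}}{(1-t)(1-2t)}$ (I checked this computation; it is right), which reduces the whole lemma to the single coefficient extraction $J_{3}=2[t^{n-3}](1-t)^{-n-2}(1-2t)^{-1}$; you then finish with a second application of the method of coefficients, the half-sum symmetry $\sum_{i=0}^{n-1}\binom{2n-1}{i}=2^{2n-2}$, and Pascal's rule. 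The paper instead keeps the two residue contributions separate — the pole at $u=0$ immediately yields the term $-\binom{2n}{n+1}$ — and evaluates the remaining $t$-residue of $\frac{2t^{4}-2t^{3}+5t^{2}-4t+1}{(1-t)^{n+2}t^{n+2}(1-2t)}$ by the change of variables $t=(1-(1-4w)^{1/2})/2$ (i.e.\ $w=t(1-t)$, Rule 5), expanding via $(1-4w)^{-1/2}=\sum_{s}\binom{2s}{s}w^{s}$, so that $2^{2n-1}$ arises from $\frac18\,\mbox{\bf res}_{w}(1-4w)^{-1}w^{-n-2}$ and the terms $-\binom{2n+1}{n}$, $\binom{2n}{n}$ appear directly as central-binomial coefficients. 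What each approach buys: yours avoids the square-root substitution entirely and stays within rational Laurent-series manipulations, at the cost of a final Pascal-identity rewriting of $2^{2n-1}-2\binom{2n}{n+1}$ into the stated four-term form; the paper's substitution produces the four terms of \eqref{68} in their given shape but requires handling half-integer powers of $(1-4w)$. Your intermediate and final expressions agree numerically with the paper's (e.g.\ $J_{3}=2,16,92$ for $n=3,4,5$), so the argument is sound as it stands.
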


%\end{exercise}
\begin{proof}
Let $|t|=\rho _{1}=0,01,\,|u|=\rho_{2}=10$ and
$|t|/|(1-t)|<1<\rho_{2}$ and $|(1-t)|/|t|\approx 100>10=\rho_{2},$ then compute
the last integral as a residue for $u$ at $u=0,u=t/(1-t) $,
except for the point $u=(1-t)/t$, we obtain
\begin{equation*}
J_{3}=\mbox{\bf res}_{t}\{\frac{1}{t^{n+1}(1-t)^{n+1}}
([\frac{(1-u^{4})}{(u-t/(1-t))^{2}(u-(1-t)/t)}]_{u=0}+
\end{equation*}%
\begin{equation*}
+\frac{d}{du}[\frac{(1-u^{4})}{u(u-(1-t)/t)}]_{u=t/(1-t)})\}=
\end{equation*}%
\begin{equation*}
=\mbox{\bf res}_{t}\{\frac{1}{(1-t)^{n+1}t^{n+1}}\cdot
\frac{1}{(-t/(1-t))^{2}(-( 1-t)/t)}\}+
\end{equation*}%
\begin{equation*}
+\mbox{\bf res}_{t}\{\frac{t^{-(n+1)}1}{(1-t)^{n+1}}[\frac{-4u^{3}}
{u(u-(1-t)/t)}- \frac{(1-u^{4})(2u-(1-t)/t)}{u^{2}(u-(1-t)/t)^{2}}]_{u=t/(1-t) }\}.
\end{equation*}%
As%
\begin{equation*}
[(1-u^{4})]_{u=t/(1-t)}=[(1-u^{2}) (1+u^{2})]_{u=t/(1-t)}=
\end{equation*}
\begin{equation*}
(1-2t) (2t^{2}-2t+1)/(1-t)^{4},
\end{equation*}%
then
\begin{equation*}
J_{3}=-\mbox{\bf res}_{t}\frac{(1-t)^{-n}}{t^{n+2}}-
\mbox{\bf res}_{t}\frac{4\left( t/(1-t)\right)^{2}}
{(1-t)^{n+1}t^{n+1}(t/(1-t)-(1-t)/t)}=
\end{equation*}%
\begin{equation*}
=-\mbox{\bf res}_{t}\{\frac{1}{(1-t)^{n+1}t^{n+1}}\cdot
\frac{(1-2t)(2t^{2}-2t+1)(2t/(1-t)-(1-t)/t)}
{(1-t)^{4}(t/(1-t))^{2}((t/(1-t)-(1-t)/t)^{2}}\}=
\end{equation*}%
\begin{equation*}
=-\binom{2n}{n+1}+
\mbox{\bf res}_{t}\frac{2t^{4}-2t^{3}+5t^{2}-4t+1}{(1-t)^{n+2}t^{n+2}(1-2t)}.
\end{equation*}%
Thus%
\begin{equation}
J_{3}=-\binom{2n}{n+1}+\mbox{\bf res}_{t}\frac{2t^{4}-2t^{3}+5t^{2}-4t+1}{(1-t)^{n+2}t^{n+2}(1-2t)}.
\label{69}
\end{equation}%
If under the integral sign (\ref{69}) we make the substitution
$t=(1-(1-4w)^{1/2})/2\in H_{1}$, then
\begin{equation*}
w=t(1-t) \in H_{1},\,t=(1-(1-4w)^{1/2})/2
\in H_{1},\,1-2t=(1-4w)^{1/2},
\end{equation*}%
\begin{equation*}
dt/dw=(1-4w)^{-1/2},\,
%(1-4w)^{1/2}=-2\sum_{s=0}^{\infty}\frac{1}{s+1}\binom{2s}{s}w^{s+1},
(1-4w)^{-1/2}=\sum_{s=0}^{\infty }\binom{2s}{s}w^{s}
\end{equation*}%
as a result of a simple calculation gives us the following expression for
$J_{3}$.
\begin{equation*}
J_{3}=-\binom{2n}{n+1}+\mbox{\bf res}_{t}[\frac{2t^{4}-2t^{3}+5t^{2}-4t+1}
{(1-t)^{n+2}t^{n+2}(1-2t)}]_{t=(1-(1-4w)^{1/2})/2}=
\end{equation*}%
\begin{equation*}
=-\binom{2n}{n+1}+\mbox{\bf res}_{w}
\frac{1+10(1-4w)-2(1-4w)^{1/2}-2(1-4w)^{3/2}+(1-4w)^{2}}{8(1-4w)w^{n+2}}=
\end{equation*}%
\begin{equation*}
=-\binom{2n}{n+1}+\frac{1}{8}\mbox{\bf res}_{w}\frac{1}{(1-4w)w^{n+2}}
+\frac{5}{4}\mbox{\bf res}_{w}\frac{1}{w^{n+2}}
\end{equation*}%
\begin{equation*}
-\frac{1}{4}\mbox{\bf res}_{w}\frac{(1-4w)^{-1/2}}{w^{n+2}}-
\frac{1}{4}\mbox{\bf res}_{w}\frac{(1-4w)^{1/2}}{w^{n+2}}+
\mbox{\bf res}_{w}\frac{(1-4w)}{w^{n+2}}=
\end{equation*}%
\begin{equation*}
=-\binom{2n}{n+1}+\frac{1}{8}4^{n+1}+0-
\frac{1}{4}\mbox{\bf res}_{w}\frac{(1-4w)^{-1/2}\left( 1+(1-4w)\right)}{w^{n+2}}+0=
\end{equation*}%
\begin{equation*}
=-\binom{2n}{n+1}+2^{2n-1}-\frac{1}{4}\mbox{\bf res}_{w}
\frac{(1-4w)^{-1/2}(2-4w)}{w^{n+2}}=
\end{equation*}%
\begin{equation}
=-\binom{2n}{n+1}+2^{2n-1}-\frac{1}{2}\mbox{\bf res}_{w}\frac{(1-4w)^{-1/2}}{w^{n+2}}+
\mbox{\bf res}_{w}\frac{(1-4w)^{-1/2}}{w^{n+1}}.
\label{70}
\end{equation}
Using the well-known expansion
%$(1-4w)^{1/2}=-2\sum_{s=0}^{\infty } \frac{1}{s+1}\binom{2s}{s}w^{s+1}$
$(1-4w)^{-1/2}=\sum_{s=0}^{\infty }\binom{2s}{s}w^{s}$
for (\ref{70}), we obtain
\begin{equation*}
J_{3}=-\binom{2n}{n+1}+2^{2n-1}-\frac{1}{2}\binom{2n+2}{n+1}+\binom{2n}{n}=
\end{equation*}%
\begin{equation*}
=2^{2n-1}-\binom{2n}{n+1}-\binom{2n+1}{n}+\binom{2n}{n}.
\end{equation*}
\end{proof}

\begin{lemma}
\label{lem12}
The following identity is valid
\begin{equation}
J_{4}=\mbox{\bf res}_{tu}\{[\frac{(1-u^{4})(1+u)^{2n-1}u^{-n+1}(u-t(1+u)^{2})}
{(u-t/(1-t))^{2}(u-(1-t)/t))(1-t)^{2}}]_{|u| \gg |t| =\rho }\}=0.
\label{71}
\end{equation}
\end{lemma}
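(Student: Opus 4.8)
The plan is to evaluate the double residue in (\ref{71}) as an iterated one, carrying out the inner $\mbox{\bf res}_t$ first and showing that it vanishes identically in $u$; then $J_4=\mbox{\bf res}_u\bigl(\mbox{\bf res}_t(\cdots)\bigr)=\mbox{\bf res}_u(0)=0$. This is exactly the mechanism that already forced $J_1=0$ in Lemma \ref{lem9} and $F_1=0$ in Lemma \ref{lem6}: with the expansion region chosen as prescribed, the integrand becomes a power series in the inner variable, so the coefficient of its $(-1)$-st power is automatically zero.

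Concretely, I would keep the contours $|t|=\rho=0{,}01$ and $|u|=10$ used in Lemma \ref{lem11}, and expand each factor of the integrand in the region $|u|\gg|t|$ as a Laurent series in $t$ whose coefficients depend on $u$. The numerator $(1-u^{4})(1+u)^{2n-1}u^{-n+1}\bigl(u-t(1+u)^2\bigr)$ is a polynomial in $t$, while $(1-t)^{-2}$ and $(u-t/(1-t))^{-2}$ are manifestly power series in $t$ (their $t$-singularities sit at $t=1$ and $t=u/(1+u)$, both of modulus far larger than $\rho$ on $|u|=10$). The only factor that looks dangerous at $t=0$ is $1/(u-(1-t)/t)$ — precisely the pole $u=(1-t)/t$ that was deliberately discarded from the $u$-residue in Lemma \ref{lem11}.

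The key step is to rewrite this factor as
\[
\frac{1}{\,u-\dfrac{1-t}{t}\,}=\frac{t}{t(1+u)-1}=-t\sum_{k\ge0}\bigl(t(1+u)\bigr)^{k},
\]
which, despite its apparent blow-up as $t\to0$, is a power series in $t$ beginning at order $t^{1}$ (the series converges since $|t(1+u)|\le 0{,}01\cdot 11<1$ on the chosen torus). Hence every factor of the integrand is a power series in $t$ with only non-negative powers, so their product has no $t^{-1}$ term; consequently $\mbox{\bf res}_t(\cdots)=0$ for each $u$ on $|u|=10$, and $J_4=0$ follows. Equivalently, on this torus the integrand has no pole in $t$ inside $|t|\le\rho$, so the inner contour integral $\oint_{|t|=\rho}(\cdots)\,dt$ vanishes by Cauchy's theorem.

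The main obstacle is one of recognition rather than computation: one must see that $1/(u-(1-t)/t)$, though $(1-t)/t$ genuinely diverges at $t=0$, in fact contributes a simple zero at $t=0$ once the fraction is cleared, and one must check that the radii $\rho=0{,}01$, $|u|=10$ keep all the actual $t$-poles of the integrand — at $t=u/(1+u)$, $t=1/(1+u)$ and $t=1$ — strictly outside the disk $|t|\le\rho$, so that integrating in $t$ first is legitimate and yields $0$.
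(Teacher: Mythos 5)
Your proof is correct and takes essentially the same route as the paper: the paper also evaluates the inner residue in $t$ first, clearing denominators to rewrite the integrand as $\dfrac{t}{\left(t-u/(1+u)^{2}\right)\left(t-1/(1+u)\right)\left(t-u/(1+u)\right)^{2}}$ — the numerator $t$ being precisely your observation that $1/\bigl(u-(1-t)/t\bigr)$ has a zero rather than a pole at $t=0$ — and then notes that all $t$-poles lie far outside $|t|=\rho=0{,}01$ when $|u|=10$, so the inner integral vanishes and $J_{4}=0$. The only (immaterial) difference is that the paper argues from the form of $J_{4}$ in (\ref{44}), where the factor $\bigl(u-t(1+u)^{2}\bigr)$ sits in the denominator and contributes one additional pole at $t=u/(1+u)^{2}$, which is likewise outside the contour.
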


\begin{proof}
Let
\begin{equation*}
J_{4}=\mbox{\bf res}_{u}\{\frac{(1-u^{4})(1+u)^{2n-6}}{u^{n-1}}\times
\end{equation*}%
\begin{equation*}
\times \mbox{\bf res}_{t}[\frac{t}{\left( t-u/(1+u)^{2}\right)
\left(t-1/(1+u)\right)(t-u/(1+u))^{2}}]\},
\end{equation*}%
where $|u| \gg |t|=\rho $.

If now, in accordance with the condition $|u|\gg |t| =\rho $ for example,
$|t|=\rho_{1}=0,01,\,|u|=\rho_{2}=10$ and
$\left|u/(1+u)^{2}\right|\approx 10\gg \rho _{1}=0,01,$
$\left|u/(1+u) \right| \approx 1\gg \rho _{1}=0,01,$
$\left|1/(1+u) \right| \approx 0,1\gg \rho _{1}=0,01,$ then the
integrand in $t$ and the integral (\ref{43}) has no singularities inside
$|t|=\rho_{1}=0,01 $, and therefore the integral
(\ref{71}) is zero.
\end{proof}

\begin{remark}
Calculating the sum of $J_{1},J_{2}$ $J_{3} $ in closed form was held
up by appropriate residue theorem, and the results were confirmed by the
numerical test.

Two of them were calculated with the well-known theorem of
residues, while the integrals $J_{1}=J_{4}=0$ are trivial by Theorem
difficult to assess the full amount of the deduction, as the corresponding
residue at infinity is zero.
\end{remark}

From the formulas (\ref{66}) -- (\ref{68}) and (\ref{71}) we immediately
obtain the following formula for $T_{3}=J_{1}+J_{2}+J_{3}+J_{4}.$

\begin{lemma}
\label{lem13} For $n=3,4,\ldots $ the following identity is valid
\begin{equation}
T_{3}=2^{2n-1}+(n-1)-\frac{2}{n}\binom{2n}{n-2}-\binom{2n}{n+1}
-\binom{2n+1}{n}+\binom{2n}{n}.
\label{72}
\end{equation}
\end{lemma}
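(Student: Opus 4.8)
The plan is to assemble the closed form for $T_{3}$ directly from the four partial evaluations already in hand, since by Lemma \ref{lem8} (formulas (\ref{38})--(\ref{39})) the sum has been split as $T_{3}=J_{1}+J_{2}+J_{3}+J_{4}$ with the individual contour integrals $J_{1},\ldots,J_{4}$ given by (\ref{41})--(\ref{44}). Once each $J_{i}$ is known in closed form, nothing remains but substitution and bookkeeping.

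First I would collect the four evaluations. Lemmas \ref{lem9} and \ref{lem12} supply $J_{1}=0$ and $J_{4}=0$: in each case, after carrying out the inner $\mbox{\bf res}$ in $u$, the remaining integrand in $t$ has no pole inside the small circle $|t|=\rho$, so its residue vanishes. Lemma \ref{lem10} gives $J_{2}=(n-1)-\frac{2}{n}\binom{2n}{n-2}$, and Lemma \ref{lem11} gives $J_{3}=2^{2n-1}-\binom{2n}{n+1}-\binom{2n+1}{n}+\binom{2n}{n}$.

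Substituting these into $T_{3}=J_{1}+J_{2}+J_{3}+J_{4}$ then gives
\begin{equation*}
T_{3}=\left[(n-1)-\frac{2}{n}\binom{2n}{n-2}\right]+\left[2^{2n-1}-\binom{2n}{n+1}-\binom{2n+1}{n}+\binom{2n}{n}\right],
\end{equation*}
and a trivial regrouping of the right-hand side yields exactly the asserted identity (\ref{72}).

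The real work lies not in this final combination but in the lemmas that produce the $J_{i}$, and the hard part will be Lemma \ref{lem11}. Extracting $J_{3}$ in closed form hinges on the substitution $t=(1-(1-4w)^{1/2})/2$ (equivalently $w=t(1-t)$), which turns the awkward factor $1-2t$ into $(1-4w)^{1/2}$, followed by the expansion $(1-4w)^{-1/2}=\sum_{s\geq 0}\binom{2s}{s}w^{s}$ used to read off the central binomial coefficients. The only other delicate point is confirming that the residues defining $J_{1}$ and $J_{4}$ genuinely vanish, i.e. that the nested radius ordering $|y|\gg|u|\gg|t|=\rho$ really forces all the spurious poles to lie outside the $t$-contour; granting that, the assembly above is immediate.
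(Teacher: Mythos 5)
Your proposal is correct and follows exactly the paper's own route: the paper likewise obtains Lemma \ref{lem13} by invoking the decomposition $T_{3}=J_{1}+J_{2}+J_{3}+J_{4}$ from Lemma \ref{lem8} and summing the evaluations (\ref{66})--(\ref{68}) and (\ref{71}) supplied by Lemmas \ref{lem9}--\ref{lem12}. Your identification of where the real work lies (the substitution $t=(1-(1-4w)^{1/2})/2$ in Lemma \ref{lem11} and the vanishing of $J_{1}$ and $J_{4}$) matches the structure of the paper's argument as well.
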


From the formulas (\ref{28}), (\ref{35}) and (\ref{72}) follows the validity
of the Main theorem.

As a result, using the fundamental theorem, we get the general formula
%(\ref{Main}) for enumeration of $\mathcal{D}$-invariant ideals of ring
(\ref{M1a}) for enumeration of $\mathcal{D}$-invariant ideals of ring
$R_{n}(K,J).$

%\subsection{The Part 2. Applications of the method coefficients in the
%theory of holomorphic functions in $\mathbb{C}^{n} $ }

%\subsection{Applications of the method coefficients in the
%theory of holomorphic functions in $\mathbb{C}^{n} $ and in the theory
%of cubature formulas}

%\subsubsection{4. The new short calculation of the Krivokolesko
%combinatorial sum and its some applications\protect\medskip}

\section{Calculation of multiple combinatorial sums in the theory of
golomorphic functions in $\mathbb{C}^{n} $}

In section 4 we solved the several summation problems. In sections 4.2, 4.3 and section 4.4 we
found
the simple new proof and generalization of the several multiple
combinatorial sums,
which originally arose in the theory of holomorphic functions in
$\mathbb{C}^{n} $ \cite{th, th1, EDK2011}.

\subsection{Introduction}

\begin{definition}
\label{D1}
Domain $G\subset C^{n}$ is called a linearly convex (\cite {AizenYuzh83},\,\S 8),
if for every point $z_{0}$\ its boundary $\partial G$\ there is complex
$(n-1)$-dimensional analytic plane passing through $z_{0}$\ and does not intersect $G$.
\end{definition}

Let in the space $\mathbb{C}^{n}$ set linearly convex polyhedron, ie,
bounded linearly convex domain $G=\{\,z:g^{l}(z,\overline{z})<0,\quad
l=1,\ldots ,N\},$ where the functions $g^{l}(z,\overline{z})$ are twice
continuously differentiable in a neighborhood of this area. The boundary
$\partial G$ of $G$ consists of faces
\begin{equation*}
S^{l}=\{\,z\in \overline{G}:g^{l}(z,\overline{z})=0\},\quad l=1,\ldots ,N.
\end{equation*}

\begin{definition}
\label{D2}
If at any non-empty edge
\begin{equation*}
S^{j_{1}\ldots j_{k}}=S^{j_{1}}\cap \ldots \cap S^{j_{k}}=
\{\,\zeta \in \partial G:g^{j_{1}}(\zeta ,\overline{\zeta })=0,\ldots ,g^{j_{k}}
(\zeta, \overline{\zeta })=0\}
\end{equation*}%
following inequality holds $\overline{\partial }g^{j_{1}}\wedge \ldots
\wedge \overline{\partial }g^{j_{k}}\not=0$ or, that the same,
\begin{equation*}
rank
\begin{pmatrix}
\frac{\partial g^{j_{1}}}{\partial \overline{\zeta }_{1}}
& \ldots & \frac{\partial g^{j_{1}}}{\partial \overline{\zeta }_{n}} \\
\ldots & \ddots & \ldots \\
\frac{\partial g^{j_{k}}}{\partial \overline{\zeta }_{1}}
& \ldots & \frac{\partial g^{j_{k}}}{\partial \overline{\zeta }_{n}}
\end{pmatrix}
=k.
\end{equation*}%
then $G$ has a piecewise regular boundary.
\end{definition}

In \cite{th} we obtained a new integral representation for holomorphic
functions on linear convex domains with piecewise regular boundary of a
bounded linear convex domain.

We received a number of identities, when we considered an example of
integration of holomorphic monomials of a piecewise regular boundary of a
bounded linear convex domain\newline
$G=\{z=(z_{1},z_{2},z_{3})\in \mathbb{C}^{3}:g^{1}(|z|)=-|z_{1}|+1<0,$
$g^{2}(|z|)=-|z_{2}|+1<0,$
$g^{3}(|z|)=-|z_{3}|+1<0,$ $g^{4}(|z|)=a_{1}|z_{1}|+a_{2}|z_{2}|+a_{3}|z_{3}|-r<0,$
$a_{i}>0,$ $i=1,2,3,$ $a_{1}+a_{2}+a_{3}<r\}$\newline
the most difficult of which in the notation $\alpha _{i}=\frac{a_{i}}{r},$ $%
i=1,2,3$ for integers $s_{1},s_{2},s_{3}\geq 0$ has the form:

\begin{theorem}
\label{T1}
For $0<\alpha _{j}<1,j=1,2,3;s_{j}\in Z_{+}$\ following identity
is valid%
\begin{equation*}
(1-\alpha_{2}-\alpha_{3})^{s_{1}+1}
\sum_{k=0}^{s_{2}}\sum_{l=0}^{s_{3}}\frac{(s_{1}+k+l)!}{s_{1}!k!l!}\alpha _{2}^{k}\alpha_{3}^{l}+
\end{equation*}%
\begin{equation*}
+(1-\alpha_{1}-\alpha_{3})^{s_{2}+1}\sum_{k=0}^{s_{1}}
\sum_{l=0}^{s_{3}}\frac{(s_{2}+k+l)!}{s_{2}!k!l!}\alpha_{1}^{k}\alpha_{3}^{l} +
\end{equation*}
\begin{equation*}
+(1-\alpha_{1}-\alpha_{2})^{s_{3}+1}\sum_{k=0}^{s_{1}}
\sum_{l=0}^{s_{2}}\frac{(s_{3}+k+l)!}{s_{3}!k!l!}\alpha_{1}^{k}\alpha_{2}^{l}+
\end{equation*}%
\begin{equation*}
-\frac{(s_{1}+s_{2}+1)!}{s_{1}!s_{2}!}\sum_{m=0}^{s_{2}}
\frac{(-1)^{m}\binom{s_{2}}{m}}{s_{1}+m+1}\left(\left(1-\frac{\alpha_{2}}{1-\alpha_{3}}\right)^{s_{1}+m+1}-
\left(\frac{\alpha_{1}}{1-\alpha _{3}}\right)^{s_{1}+m+1}\right)\times
\end{equation*}%
\begin{equation*}
\times (1-\alpha_{3})^{s_{1}+s_{2}+2}
\sum_{k=0}^{s_{3}}\binom{s_{1}+s_{2}+k+1}{k}\alpha_{3}^{k}+
\end{equation*}%
\begin{equation*}
-\frac{(s_{2}+s_{3}+1) !}{s_{2}!s_{3}!}\sum_{m=0}^{s_{3}}\frac{(-1)^{m}%
\binom{s_{3}}{m}}{s_{2}+m+1} \left( \left( 1-\frac{\alpha _{3}}{1-\alpha _{1}%
}\right) ^{s_{2}+m+1}-\left( \frac{\alpha _{2}}{1-\alpha _{1}}\right)
^{s_{2}+m+1}\right) \times
\end{equation*}%
\begin{equation*}
\times \left( 1-\alpha _{1}\right) ^{s_{2}+s_{3}+2}\sum_{k=0}^{s_{1}}\binom{%
s_{2}+s_{3}+k+1}{k}\alpha _{1}^{k}+
\end{equation*}%
\begin{equation*}
-\frac{\left( s_{1}+s_{3}+1\right) !}{s_{1}!s_{3}!}\sum_{m=0}^{s_{3}}\frac{%
\left( -1\right) ^{m}\binom{s_{3}}{m}}{s_{1}+m+1}\left( \left( 1-\frac{%
\alpha _{3}}{1-\alpha _{2}}\right) ^{s_{1}+m+1}-\left( \frac{\alpha _{1}}{%
1-\alpha _{2}}\right) ^{s_{1}+m+1}\right) \times
\end{equation*}%
\begin{equation*}
\times \left( 1-\alpha _{2}\right) ^{s_{1}+s_{3}+2}\sum_{k=0}^{s_{2}}\binom{%
s_{1}+s_{3}+k+1}{k}\alpha _{2}^{k}+
\end{equation*}%
\begin{equation}
+\frac{\left( s_{1}+s_{2}+s_{3}+2\right) !}{s_{1}!s_{2}!s_{3}!}\int_{\alpha
_{1}}^{1-\alpha _{2}-\alpha _{3}}\int_{\alpha _{2}}^{1-\alpha
_{3}-x}x^{s_{1}}y^{s_{2}}\left( 1-x-y\right) ^{s_{3}}dx\wedge dy\equiv 1.
\label{2F}
\end{equation}
\end{theorem}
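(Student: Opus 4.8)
The seven summands are precisely the contributions of the three ``coordinate'' faces $|z_j|=1$, the three edges along which the plane face $S^4$ meets a coordinate face, and the plane face $S^4$ itself, obtained when the integral representation of \cite{th} is evaluated on the monomial $z_1^{s_1}z_2^{s_2}z_3^{s_3}$; the constant $1$ on the right is the reproduced value at the centre. To prove the identity by the method of coefficients I would first normalise each summand by completing its truncated series (or its Beta integral) to the full range, using three elementary generating-function identities: the multinomial expansion $(1-x-y)^{-(s+1)}=\sum_{k,l\geq0}\frac{(s+k+l)!}{s!\,k!\,l!}x^{k}y^{l}$, the binomial expansion $(1-\alpha)^{-(p+1)}=\sum_{k\geq0}\binom{p+k}{k}\alpha^{k}$, and the termwise-integrated binomial $\sum_{m=0}^{q}\frac{(-1)^m\binom{q}{m}}{p+m+1}\,a^{p+m+1}=\int_0^{a}t^{p}(1-t)^{q}\,dt$, together with the Dirichlet value $\frac{(s_1+s_2+s_3+2)!}{s_1!s_2!s_3!}\int_{\Delta}x^{s_1}y^{s_2}(1-x-y)^{s_3}\,dx\,dy=1$ on the standard simplex $\Delta=\{x,y\geq0,\ x+y\leq1\}$.

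Carried out summand by summand, this gives a uniform decomposition ``full value minus defect''. Each of the first three summands is $(1-\alpha_2-\alpha_3)^{s_1+1}$ (and its cyclic analogues) times a truncation of the multinomial series, hence equals $1$ minus the tail beyond $k\leq s_2,\ l\leq s_3$. In each of summands four, five and six the inner $m$-sum collapses, by the integrated-binomial identity, to the incomplete Beta integral $\int_{\alpha_1/(1-\alpha_3)}^{1-\alpha_2/(1-\alpha_3)}t^{s_1}(1-t)^{s_2}\,dt$, while the $k$-sum is a truncated binomial series; completing both to full range and using $\frac{(s_1+s_2+1)!}{s_1!s_2!}\int_0^1 t^{s_1}(1-t)^{s_2}\,dt=1$ shows that each of these summands equals $-1$ plus endpoint and tail corrections. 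The last summand, by the Dirichlet value, equals $1$ minus the normalised integral of the monomial over $\Delta$ with the sub-triangle $T_0=\{x\geq\alpha_1,\ y\geq\alpha_2,\ 1-x-y\geq\alpha_3\}$ removed, where $T_0$ is the region occurring in its limits of integration.

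Adding the seven leading constants gives $3-3+1=1$, so the theorem reduces to showing that all the defect terms cancel. I would establish this by the inclusion--exclusion decomposition $\Delta\setminus T_0=\{x<\alpha_1\}\cup\{y<\alpha_2\}\cup\{1-x-y<\alpha_3\}$, in which the triple overlap is empty because $x+y+(1-x-y)=1$ cannot be less than $\alpha_1+\alpha_2+\alpha_3<1$; the three single-slab integrals match the three face tails and the three pairwise-overlap integrals match the three edge corrections, with signs exactly as dictated by inclusion--exclusion. Equivalently, and more in the style of the preceding sections, I would replace every incomplete Beta integral and every truncation tail by a $\mbox{\bf res}$, so that Rule 2 and the substitution rule collapse the entire collection of defects into a single coefficient extraction that vanishes.

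The main obstacle is exactly this last cancellation. Under the $\alpha_3$-expansion a single face tail splits into pieces that have to be reapportioned among two different edge corrections, and the endpoint terms of the Beta integrals must be matched against the boundaries of the slabs; keeping track of which piece belongs where, and with which sign, is the delicate part. Passing to residues early is what makes the argument manageable, since then the required cancellations become the vanishing of explicit coefficients rather than a comparison of several truncated double series, and the emptiness of the triple overlap is what guarantees the inclusion--exclusion terminates after exactly the six correction terms present in the statement.
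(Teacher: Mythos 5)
Your opening reduction is sound: completing the truncated multinomial and binomial series, collapsing each $m$-sum to an incomplete Beta integral via $\sum_{m=0}^{q}\frac{(-1)^m\binom{q}{m}}{p+m+1}\,a^{p+m+1}=\int_0^a t^p(1-t)^q\,dt$, and invoking the Dirichlet integral over the simplex do reduce the theorem to showing that the ``defect'' terms cancel, and the leading constants indeed give $3-3+1=1$. The gap is the cancellation itself: the specific matching you assert is false. Already at $s_1=s_2=s_3=0$ the three face tails are $\alpha_2+\alpha_3$, $\alpha_1+\alpha_3$, $\alpha_1+\alpha_2$, while the three normalized slab integrals are $2\alpha_1-\alpha_1^2$, $2\alpha_2-\alpha_2^2$, $2\alpha_3-\alpha_3^2$; no assignment of slabs to face tails equates these polynomials. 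What a slab integral actually equals is the truncated-binomial tail inside a $T$-summand: for all $s$ one has, by the standard incomplete-Beta identity,
\begin{equation*}
\frac{(s_1+s_2+s_3+2)!}{s_1!s_2!s_3!}\int_{\Delta\cap\{1-x-y<\alpha_3\}}x^{s_1}y^{s_2}(1-x-y)^{s_3}\,dx\,dy
=1-(1-\alpha_3)^{s_1+s_2+2}\sum_{k=0}^{s_3}\binom{s_1+s_2+k+1}{k}\alpha_3^{k},
\end{equation*}
so the slabs pair with the $k$-sum tails, not with the face tails. Once this pairing is made, what remains is to show that the face tails cancel against the Beta-endpoint pieces $L_i,U_i$, the cross products $(L_i+U_i)G_i$ arising from expanding each $T_i=(1-L_i-U_i)(1-G_i)$ (these have no slab counterpart at all), and the pairwise-overlap integrals. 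You flag exactly this reapportionment as ``the delicate part,'' but the proposal never carries it out, and since the constants are trivial, this cancellation \emph{is} the theorem; ``passing to residues early'' is named as a remedy, but no coefficient identity whose vanishing would finish the proof is ever written down.

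Your route is also genuinely different from the paper's, which never works at fixed $(s_1,s_2,s_3)$. After recasting the identity as (\ref{3F}), the paper computes closed rational generating functions in auxiliary variables $u_1,u_2,u_3$ for all seven summands (Lemmas \ref{L1}, \ref{L2}, \ref{L5}, \ref{L7}, \ref{R1}), proves the single rational-function identity $\mathbf{S}_1+\mathbf{S}_2+\mathbf{S}_3-(\mathbf{T}_1+\mathbf{T}_2+\mathbf{T}_3)+\mathbf{R}=\prod_i(1-u_i)^{-1}$ of Lemma \ref{L8} by reducing it to the polynomial identity $M=ABC$, and then reads off the coefficient of $u_1^{s_1}u_2^{s_2}u_3^{s_3}$, which is identically $1$. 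All the cancellations you would have to track piece by piece are absorbed into that one algebraic identity; to complete your argument you would need, for arbitrary $s$, the slab-equals-tail identity displayed above together with an explicit proof that the remaining defects sum to zero.
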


If $\alpha _{1}+\alpha _{2}+\alpha _{3}=1,$ then (\ref{2F}) is equivalent to
the identity
%{\footnotesize
\begin{equation*}
\alpha_{1}^{s_{1}+1}\sum_{k=0}^{s_{2}}\sum_{l=0}^{s_{3}}\frac{(s_{1}+k+l)!}{%
s_{1}!k!l!}\alpha_{2}^{k}\alpha_{3}^{l}+
\end{equation*}
\begin{equation*}
+\alpha_{2}^{s_{2}+1}\sum_{k=0}^{s_{1}}\sum_{l=0}^{s_{3}}\frac{(s_{2}+k+l)!}{%
s_{2}!k!l!}\alpha_{1}^{k}\alpha_{3}^{l}+
\end{equation*}%
\begin{equation*}
+\alpha_{3}^{s_{3}+1}\sum_{k=0}^{s_{1}}\sum_{l=0}^{s_{2}}\frac{(s_{3}+k+l) !%
}{s_{3}!k!l!}\alpha_{1}^{k}\alpha_{2}^{l}=1,
\end{equation*}
which allows us to formulate the following theorem:

\begin{theorem}
\label{T2}
If the complex parameters $z_{1},\ldots ,z_{n}$ satisfy the
relation%
\begin{equation}
z_{1}+\ldots +z_{n}=1,
\label{K1}
\end{equation}
\textit{\ then for any values {}{} }$s_{1},\ldots ,s_{n}=0,1,2,\ldots $ the
following identity is valid%
\begin{equation*}
z_{1}^{s_{1}+1}\sum_{j_{2}=0}^{s_{2}}\ldots \sum_{j_{n}=0}^{s_{n}}
\binom{s_{1}+\sum_{i\neq 1}j_{i}}{s_{1},j_{2},\ldots ,j_{n}}z_{2}^{j_{2}}\cdots
z_{n}^{j_{n}}+\ldots +
\end{equation*}%
\begin{equation}
+z_{n}^{s_{n}+1}\sum_{j_{1}=0}^{s_{1}}\ldots \sum_{j_{n-1}=0}^{s_{n-1}}%
\binom{s_{n}+\sum_{i\neq n}j_{i}}{s_{n},j_{1},\ldots ,j_{n-1}}%
z_{1}^{j_{1}}\cdots z_{n-1}^{j_{n-1}}=1.
\label{K2}
\end{equation}
\end{theorem}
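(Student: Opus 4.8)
The plan is to read each of the $n$ truncated multiple sums in (\ref{K2}) as a first-passage probability for a multinomial process, and then to pass from the real simplex to the complex hyperplane (\ref{K1}) by analytic continuation. Before doing so it is worth recording the \emph{untruncated} companion identity: by the negative-multinomial expansion (the $n$-fold analogue of (\ref{M3})),
\[
\sum_{j_2,\ldots,j_n\ge 0}\binom{s_1+\sum_{i\ne 1}j_i}{s_1,j_2,\ldots,j_n}z_2^{j_2}\cdots z_n^{j_n}=\Bigl(1-\sum_{i\ne 1}z_i\Bigr)^{-s_1-1},
\]
so under (\ref{K1}) the untruncated $r$-th term would collapse to $z_r^{s_r+1}z_r^{-s_r-1}=1$ and the total would be $n$, not $1$. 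Hence the upper bounds $j_i\le s_i$ carry the entire content of the theorem, and the proof must exploit them rather than expand them away.

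The key observation is that these bounds have a probabilistic meaning. Fix $z_1,\ldots,z_n$ real and positive with $\sum z_i=1$, and run independent trials, each taking value $i$ with probability $z_i$. Stop at the trial producing the $(s_r+1)$-st occurrence of value $r$; then the event ``at that moment value $i$ has occurred exactly $j_i$ times for every $i\ne r$'' has probability $z_r^{s_r+1}\binom{s_r+\sum_{i\ne r}j_i}{s_r,(j_i)}\prod_{i\ne r}z_i^{j_i}$, since the last trial is value $r$ and the preceding $s_r+\sum_{i\ne r}j_i$ trials may be ordered freely. Summing over $0\le j_i\le s_i$, the $r$-th term of (\ref{K2}) is exactly the probability $p_r$ that value $r$ is the \emph{first} to reach its threshold $s_r+1$ (the bound $j_i\le s_i$ says that no other value has reached its own threshold yet).

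Because each trial increments a single count, the thresholds are crossed at distinct times, and since every value occurs infinitely often (as $z_i>0$) some value crosses first almost surely; thus the events $\{r\text{ first}\}$, $r=1,\ldots,n$, are pairwise disjoint and exhaustive, so $\sum_r p_r=1$. This proves (\ref{K2}) on the open simplex. To remove the positivity and reality assumptions, note that the left-hand side of (\ref{K2}) is a polynomial $P(z_1,\ldots,z_n)$; substituting $z_n=1-z_1-\cdots-z_{n-1}$ turns $P-1$ into a polynomial in $z_1,\ldots,z_{n-1}$ that vanishes on the nonempty open set $\{z_i>0,\ \sum_{i<n}z_i<1\}\subset\mathbb{R}^{n-1}$, hence vanishes identically over $\mathbb{C}$. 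This gives (\ref{K2}) for all complex $z_i$ satisfying (\ref{K1}) and recovers the $\sum\alpha_i=1$ specialization of Theorem \ref{T1} when $n=3$.

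The main obstacle is conceptual rather than computational: one must resist collapsing the sums by the negative-multinomial formula and instead keep the truncations, which are precisely the ``first to threshold'' constraint. A purely algebraic route is also available and better matches the method of this paper. One verifies the Pascal-type recurrence $P(\mathbf{s})=\sum_m z_m\,P(\mathbf{s}-e_m)$ obtained by conditioning on the first trial, together with the base case $\mathbf{s}=0$ (where the $r$-th term is $z_r$ and $\sum_r z_r=1$); induction on $s_1+\cdots+s_n$ then yields $P\equiv 1$ because $\sum_m z_m=1$. The recurrence itself can be established by the method of coefficients, representing each $\binom{s_r+k}{k}$ through (\ref{M3}) and the inner truncated multinomial through finite geometric sums in auxiliary variables; the delicate point in that approach is the correct bookkeeping of boundary terms when some $s_m=0$.
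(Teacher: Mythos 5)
Your proof is correct, but it proceeds along a genuinely different route from the paper's. The paper stays entirely inside the method of coefficients: each truncated sum $R_k$ is rewritten (after the index reversal $j_i\mapsto s_i-j_i$) as a residue of $(1-\sum_i z_ix_i)^{-1}\prod_{i\neq k}(1-x_i)^{-1}\prod_i x_i^{-s_i-1}$, the generating function over all $s_1,\ldots,s_n$ is computed by the substitution rule to be
\begin{equation*}
T(t,z)=\frac{\sum_i z_i(1-t_i)}{1-\sum_i z_it_i}\prod_i(1-t_i)^{-1},
\end{equation*}
and the constraint (\ref{K1}) collapses this to $\prod_i(1-t_i)^{-1}$, whose coefficients are all equal to $1$. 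You instead read the $r$-th truncated sum as the probability that value $r$ wins a negative-multinomial race (first to reach $s_r+1$ occurrences), observe that these events are disjoint and exhaustive, and then pass from the open real simplex to the complex hyperplane (\ref{K1}) by the polynomial-identity (Zariski density) argument. Both proofs are complete; the trade-off is real. Your argument is more conceptual and explains \emph{why} the truncations matter (your opening remark that the untruncated sums give $n$ is a nice sanity check), and it is in fact the natural interpretation underlying Zeilberger's identity and the Chaundy--Bullard identity cited in the paper; its costs are the detour through positivity plus continuation, and the fact that it does not by itself produce anything beyond the identity. The paper's computation works directly over $\mathbb{C}$ (indeed over formal series), and—more importantly—the same residue calculation with $(1-\sum_i z_ix_i)^{-\alpha-1}$ immediately yields the one-parameter generalization (\ref{A6}) of Theorem \ref{theo1} and the recurrences of Lemmas \ref{le3} and \ref{le4}, which your race interpretation does not obviously extend to. One caveat: your closing sketch of a ``purely algebraic'' route via the recurrence $P(\mathbf{s})=\sum_m z_mP(\mathbf{s}-e_m)$ is not complete as stated—the recurrence is justified only by the same probabilistic conditioning, and the boundary conventions at $s_m=0$ are exactly where a direct algebraic verification would need work—so it should be regarded as a remark rather than a second proof; your main argument does not depend on it.
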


In the article of D. Zeilberger \cite{Zeil83} provides the following
identity {\footnotesize
\begin{equation*}
\sum_{i=1}^{k}\underset{j\neq i}{\sum_{0\leq \alpha _{j}\leq n-1}}
\frac{(\alpha _{1}+\ldots +\alpha _{i-1}+(n-1)+\alpha _{i+1}+\ldots \alpha _{k})!}
{\alpha _{1}!\ldots \alpha _{i-1}!(n-1)!\alpha _{i+1}!\ldots \alpha _{k}!}%
p_{1}^{\alpha _{1}}\ldots p_{i-1}^{\alpha _{i-1}}p_{i}^{n}p_{i+1}^{\alpha
_{i+1}}\ldots p_{k}^{\alpha _{k}}=1,
\end{equation*}%
} for $p_{1}+\ldots +p_{k}=1.$

For $n=2$ the identity (\ref{K2}) in a somewhat altered form can be proved
in \cite{Shell82} (V. Shelkovich, 1982).

02 May 2015 professor S. LJ. Damjnovic from Belgrad paid attention of the authors to the articles 
\cite{Koornwinder1} and \cite{Koornwinder2}. From which it follows that identity (\ref{K2}) under 
$n=2$ and real values of parametres $z_1, z_2$ from 1960 was known as identity of Chaundy and Bullard \cite{Chaundy}.
We note that in \cite{Koornwinder2} a detailed story connected with this identity is given.  
%02 мая 2015 года профессор S.LJ. Damjnovic из Белграда обратил внимание одного из авторов на 
%статьи \cite{Koornwinder1} и \cite{Koornwinder2} из которых следует, что тождество (\ref{K2}) при $n=2$ и 
%вещественных значениях параметров $z_1, z_2$ c 1960 известно как тождество Chaundy and Bullard \cite{Chaundy}.
 
%\textbf{Proof of} \textit{Theorem \ref{T1}}
\subsection{Lemmas}

\begin{lemma}
\label{L3}
The following integral representation is valid%
\begin{equation*}
\sum_{k=0}^{s_{3}}\binom{s_{1}+s_{2}+k+1}{k}\alpha ^{k}=\mathbf{res}_{z}%
\frac{(1-z^{-s_{3}-1})}{(z-1)(1-\alpha z)^{s_{1}+s_{2}+2}}=
\end{equation*}
\begin{equation}
=\mathbf{res}_{z}\frac{(1-\alpha z)^{-s_{1}-s_{2}-2}}{z^{s_{3}+1}(1-z)}.
\label{21F}
\end{equation}
\end{lemma}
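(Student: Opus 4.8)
The plan is to start from the summand on the left of (\ref{21F}) and apply the coefficient representation of the binomial together with the geometric-progression trick that is characteristic of the method of coefficients. The key observation is that, from the generating function $(1-\alpha z)^{-s_1-s_2-2}=\sum_{k\geq 0}\binom{s_1+s_2+k+1}{k}\alpha^{k}z^{k}$, which is nothing but (\ref{M3}) with $n=s_1+s_2+2$ and the scalar $\alpha^{k}$ absorbed into the kernel, one gets $\binom{s_1+s_2+k+1}{k}\alpha^{k}=\mbox{\bf res}_{z}\{(1-\alpha z)^{-s_1-s_2-2}z^{-k-1}\}$. This already packages the $\alpha^{k}$ into the integrand, which is exactly what is needed to reach the stated closed forms.

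First I would substitute this representation into $\sum_{k=0}^{s_3}\binom{s_1+s_2+k+1}{k}\alpha^{k}$ and interchange the finite summation with $\mbox{\bf res}_{z}$ by the linearity Rule 2, obtaining $\mbox{\bf res}_{z}\{(1-\alpha z)^{-s_1-s_2-2}\sum_{k=0}^{s_3}z^{-k-1}\}$. Summing the finite geometric progression $\sum_{k=0}^{s_3}z^{-k-1}=\frac{1-z^{-s_3-1}}{z-1}$ then produces precisely the first integral representation in (\ref{21F}), so that half of the lemma is immediate.

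To pass to the second representation I would rewrite $\frac{1-z^{-s_3-1}}{z-1}=\frac{z^{-s_3-1}-1}{1-z}$ and split the residue into two pieces. The piece carrying $z^{-s_3-1}$ is exactly $\mbox{\bf res}_{z}\frac{(1-\alpha z)^{-s_1-s_2-2}}{z^{s_3+1}(1-z)}$, the desired closed form; the remaining piece $\mbox{\bf res}_{z}\frac{(1-\alpha z)^{-s_1-s_2-2}}{1-z}$ vanishes, because its integrand is an ordinary power series in $z$ (an element of $L_0$, containing only nonnegative powers of $z$), so its coefficient of $z^{-1}$ is zero. This yields the equality of the two representations.

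The computation is short, and the only points requiring care are the bookkeeping of the sign flip between $(z-1)$ and $(1-z)$ and the observation that the auxiliary residue is that of a function holomorphic at the origin, hence contributing nothing; there is no genuine obstacle. As an independent consistency check one may verify the second representation directly: expanding $(1-\alpha z)^{-s_1-s_2-2}(1-z)^{-1}$ as a double power series and reading off the coefficient of $z^{s_3}$ reproduces $\sum_{k=0}^{s_3}\binom{s_1+s_2+k+1}{k}\alpha^{k}$ term by term.
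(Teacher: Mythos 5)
Your proof is correct and follows essentially the same route as the paper: the coefficient representation $\binom{s_1+s_2+k+1}{k}\alpha^{k}=\mathbf{res}_{z}\{(1-\alpha z)^{-s_1-s_2-2}z^{-k-1}\}$ combined with the finite geometric progression $\sum_{k=0}^{s_3}z^{-k-1}=\frac{1-z^{-s_3-1}}{z-1}$ yields the first representation exactly as in the paper's proof. The only cosmetic difference is that the paper obtains the second representation by an independent direct expansion (the Cauchy-product computation you relegate to a consistency check), whereas you derive it from the first by splitting off $\mathbf{res}_{z}\,(1-\alpha z)^{-s_1-s_2-2}(1-z)^{-1}$, which vanishes because that function lies in $L_{0}$; both arguments are sound.
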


\begin{proof}
We have directly
\begin{equation*}
\mathbf{res}_{z}(1-\alpha z)^{-s_{1}-s_{2}-2}\frac{z^{-s_{3}-1}}{1-z}=
\end{equation*}
\begin{equation*}
=\mathbf{res}_{z}\left((\sum_{k=0}^{\infty }\alpha^{k}\binom{s_{1}+s_{2}+k+1}{k}
z^{k})(\sum_{k=0}^{\infty }z^{k})z^{-s_{3}-1})\right)
\end{equation*}%
\begin{equation*}
=\mathbf{res}_{z}\sum_{n=0}^{\infty }z^{n}\left(\sum_{k=0}^{n}\alpha^{k}
\binom{s_{1}+s_{2}+k+1}{k}\right) z^{-s_{3}-1}=
\sum_{k=0}^{s_{3}}\binom{s_{1}+s_{2}+k+1}{k}\alpha^{k}.
\end{equation*}%
On the other side%
\begin{equation*}
\sum_{k=0}^{s_{3}}\binom{s_{1}+s_{2}+k+1}{k}\alpha^{k}=
\sum_{k=0}^{s_{3}}\mathbf{res}_{z}\{(1-\alpha z)^{-s_{1}-s_{2}-2}z^{-k-1}\}=
\end{equation*}
\begin{equation*}
=\mathbf{res}_{z}(1-\alpha z)^{-s_{1}-s_{2}-2}\frac{(1-z^{-s_{3}-1})}{z-1}.
\end{equation*}
\end{proof}

We consider (\ref{2F}). It is easy to see that due to the symmetry of
similar terms in the sum of its parameters relative to the latter identity
can be represented in the form:%
\begin{equation*}
S(s_{1},s_{2},s_{3};\alpha_{2},\alpha_{3})+S(s_{2},s_{1},s_{3};\alpha_{1},\alpha_{3})
+S(s_{3},s_{1},s_{2};\alpha_{1},\alpha_{2})+
\end{equation*}
%\textit{\ }%
\begin{equation*}
-(T(s_{1},s_{2},s_{3};\alpha_{1},\alpha_{2},\alpha_{3})
+T(s_{1},s_{3},s_{2};\alpha_{2},\alpha_{3},\alpha_{1})
+T(s_{1},s_{3},s_{2};\alpha_{1},\alpha_{3},\alpha_{2}))+
\end{equation*}%
\begin{equation}
+R(s_{1},s_{2},s_{3})=1,
\label{3F}
\end{equation}%
where%
\begin{equation}
S(s_{1},s_{2},s_{3};\alpha_{2},\alpha_{3})=
(1-\alpha_{2}-\alpha_{3})^{s_{1}+1}\sum_{k=0}^{s_{2}}\sum_{l=0}^{s_{3}}
\frac{(s_{1}+k+l)!}{s_{1}!k!l!}\alpha_{2}^{k}\alpha_{3}^{l},
\label{4F}
\end{equation}%
\begin{equation*}
T(s_{1},s_{2},s_{3};\alpha_{1},\alpha_{2},\alpha_{3})=
(1-\alpha_{3})^{s_{1}+s_{2}+2}\frac{(s_{1}+s_{2}+1)!}{s_{1}!s_{2}!}
\sum_{m=0}^{s_{2}}\frac{(-1)^{m}\binom{s_{2}}{m}}{s_{1}+m+1}\times
\end{equation*}%
{\footnotesize
\begin{equation}
\times \left(\left(1-\frac{\alpha_{2}}{1-\alpha_{3}}\right)^{s_{1}+m+1}-
\left(\frac{\alpha_{1}}{1-\alpha_{3}}\right)^{s_{1}+m+1}\right)
\sum_{k=0}^{s_{3}}\binom{s_{1}+s_{2}+k+1}{k}\alpha _{3}^{k},
\label{7F}
\end{equation}
} {\footnotesize
\begin{equation}
R(s_{1},s_{2},s_{3})=\frac{(s_{1}+s_{2}+s_{3}+2)!}{s_{1}!s_{2}!s_{3}!}
\int_{\alpha_{1}}^{1-\alpha_{2}-\alpha_{3}}
\int_{\alpha_{2}}^{1-\alpha_{3}-x}x^{s_{1}}y^{s_{2}}(1-x-y)^{s_{3}}dx\wedge dy.
\label{10F}
\end{equation}%
} %\medskip \bigskip

%\paragraph{Calculating the sum $S_{1}(s_{1},s_{2},s_{3};\protect\alpha _{2},%
%\protect\alpha _{3}).$}
\subsubsection{Calculation the sum $S_{1}(s_{1},s_{2},s_{3};\alpha_{2},
\alpha_{3}).$}
By direct verification it is easy to verify the validity of the following
formulas.

\begin{lemma}
\label{L1}
The following identities are valid{\footnotesize
\begin{equation*}
S_{1}(s_{1},s_{2},s_{3})=(1-\alpha_{2}-\alpha_{3})
\mathbf{res}_{z_{1},z_{2},z_{3}}
\frac{\left(1-(1-\alpha_{2}-\alpha _{3})z_{1}-\alpha_{2}z_{2}-\alpha_{3}z_{3}\right)^{-1}}
{z_{1}^{s_{1}+1}z_{2}^{s_{2}+1}z_{3}^{s_{3}+1}(1-z_{2})(1-z_{3})}.
\end{equation*}%
}
\begin{equation*}
{\mathbf{S}_{1}}(u_{1},u_{2},u_{3}):=
\sum_{s_{1},s_{2},s_{3}=0}^{\infty }S_{1}(s_{1},s_{2},s_{3})
u_{1}^{s_{1}}u_{2}^{s_{2}}u_{3}^{s_{3}}=
\end{equation*}%
\begin{equation}
=\frac{(1-\alpha_{2}-\alpha_{3})}{(1-u_{2})\cdot
(1-u_{3})\cdot \left(1-(1-\alpha_{2}-\alpha_{3})u_{1}-\alpha_{2}u_{2}-\alpha_{3}u_{3}\right) }.
\label{L1.2}
\end{equation}
\end{lemma}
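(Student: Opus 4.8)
The plan is to establish the integral representation directly and then to read off the generating function $\mathbf{S}_1$ as the integrand itself, since the two assertions are linked by the coefficient--extraction formula \eqref{M12}: once one shows
\[
S_1(s_1,s_2,s_3)=\mathbf{res}_{z_1,z_2,z_3}\,\mathbf{S}_1(z_1,z_2,z_3)\,z_1^{-s_1-1}z_2^{-s_2-1}z_3^{-s_3-1}
\]
with $\mathbf{S}_1$ equal to the claimed closed form, the summation identity \eqref{L1.2} is immediate from the definition of a generating function. Throughout I abbreviate $\beta:=1-\alpha_2-\alpha_3$, so that \eqref{4F} reads $S_1=\beta^{\,s_1+1}\sum_{k=0}^{s_2}\sum_{l=0}^{s_3}\frac{(s_1+k+l)!}{s_1!\,k!\,l!}\alpha_2^{k}\alpha_3^{l}$.

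First I would remove the two finite summations using the elementary fact that division by $1-z$ produces partial sums: if $A(z_2,z_3)=\sum_{k,l\ge 0}a_{k,l}z_2^{k}z_3^{l}$, then $\sum_{k=0}^{s_2}\sum_{l=0}^{s_3}a_{k,l}=[z_2^{s_2}z_3^{s_3}]\,A(z_2,z_3)/\big((1-z_2)(1-z_3)\big)$. With $a_{k,l}=\frac{(s_1+k+l)!}{s_1!\,k!\,l!}\alpha_2^{k}\alpha_3^{l}$ this reduces the inner double sum to extracting a coefficient of $A(z_2,z_3)=\sum_{k,l\ge0}\frac{(s_1+k+l)!}{s_1!\,k!\,l!}(\alpha_2 z_2)^{k}(\alpha_3 z_3)^{l}$.

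The next step is to sum $A$ in closed form. The key sub-identity is the negative-binomial (multinomial) expansion
\[
\sum_{k,l\ge0}\frac{(s_1+k+l)!}{s_1!\,k!\,l!}\,x^{k}y^{l}=(1-x-y)^{-s_1-1},
\]
which follows by grouping terms with $k+l=m$ and recognizing $\sum_{m\ge0}\binom{s_1+m}{m}(x+y)^m$. Hence $A(z_2,z_3)=(1-\alpha_2 z_2-\alpha_3 z_3)^{-s_1-1}$, and so
\[
S_1=\beta^{\,s_1+1}\,[z_2^{s_2}z_3^{s_3}]\,\frac{(1-\alpha_2 z_2-\alpha_3 z_3)^{-s_1-1}}{(1-z_2)(1-z_3)}.
\]
It remains to fold the dependence on $s_1$ into a third variable $z_1$ by the geometric--series step
\[
\beta^{\,s_1+1}(1-\alpha_2 z_2-\alpha_3 z_3)^{-s_1-1}=[z_1^{s_1}]\,\frac{\beta}{1-\beta z_1-\alpha_2 z_2-\alpha_3 z_3},
\]
valid because the right--hand side equals $\frac{\beta}{D}\sum_{s_1\ge0}(\beta z_1/D)^{s_1}$ with $D=1-\alpha_2 z_2-\alpha_3 z_3$. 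Substituting and rewriting the coefficient functional as a residue gives exactly the asserted integral representation, whose integrand is $\mathbf{S}_1$; the generating--function formula \eqref{L1.2} then drops out at once.

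The computation is routine, and the only places that need care are (i) the legitimacy of interchanging the summations with the coefficient functional and of the partial--sum / geometric manipulations, which is guaranteed inside the field $L$ of formal Laurent series by linearity and the substitution rule (\textbf{Rules 2} and \textbf{3}), and (ii) the bookkeeping of the factor $\beta^{\,s_1+1}$ as it is absorbed into the $z_1$--geometric series. I expect the multinomial summation of $A$ to be the one genuinely substantive step, everything else being formal manipulation of residues.
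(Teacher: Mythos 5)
Your proof is correct, and it arrives at the paper's integral representation within the same method-of-coefficients framework, but the decomposition of the computation is genuinely different. The paper reverses the summation indices ($k\mapsto s_2-j_2$, $l\mapsto s_3-j_3$), writes each summand at once as a three-dimensional residue of the full trinomial kernel $\left(1-(1-\alpha_2-\alpha_3)z_1-\alpha_2z_2-\alpha_3z_3\right)^{-1}$, extends the finite sums to infinite ones under $\mathbf{res}$ (the added terms contribute zero residue), and sums the resulting geometric progressions to produce the factors $(1-z_2)^{-1}(1-z_3)^{-1}$; the generating function is then obtained by summing the residue representation against $u_1^{s_1}u_2^{s_2}u_3^{s_3}$ and applying the substitution rule (\textbf{Rule 3}). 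You instead build the kernel in stages: the finite sums are absorbed first by the partial-sum fact (division by $(1-z_2)(1-z_3)$), the inner double series is summed in closed form by the negative-binomial expansion to $(1-\alpha_2z_2-\alpha_3z_3)^{-s_1-1}$, and only afterwards is the $s_1$-dependence folded into a third variable through the geometric series in $\beta z_1/D$; finally you read off $\mathbf{S}_1$ as the integrand by uniqueness of power-series coefficients rather than by re-summing through \textbf{Rule 3}. The two routes are equivalent in content: your partial-sum trick is exactly what the paper's index reversal plus geometric progression accomplishes, and ``the generating function is the integrand'' is precisely what the substitution rule formalizes. What your version buys is that the single substantive identity (the negative-binomial summation) is isolated and proved explicitly, with everything else reduced to formal manipulation; what the paper's version buys is a uniform run through the res-calculus -- table representation, linearity, geometric progression, substitution -- which is the template it then repeats verbatim for $\mathbf{S}_2$, $\mathbf{S}_3$, the $\mathbf{T}_i$ and $\mathbf{R}$ in the proof of Lemma \ref{L8}.
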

\begin{proof}
The integral representation for $S_{1}(s_{1},s_{2},s_{3})$ is
\begin{equation*}
S_{1}(s_{1},s_{2},s_{3};\alpha_{2},\alpha_{3})=
\end{equation*}
\begin{equation*}
=(1-\alpha_{2}-\alpha_{3})^{s_{1}+1}\sum_{j_{2}=0}^{s_{2}}\sum_{j_{3}=0}^{s_{3}}
\frac{\left(s_{1}+(s_{2}-j_{2})+(s_{3}-j_{3})\right)!}{s_{1}!(s_{2}-j_{2})!(s_{3}-j_{3})!}
\alpha_{2}^{(s_{2}-j_{2})}\alpha_{3}^{(s_{3}-j_{3})}=
\end{equation*}%
\begin{equation*}
=(1-\alpha_{2}-\alpha_{3}) \mathbf{res}_{z_{1},z_{2},z_{3}}
\sum_{j_{2}=0}^{s_{2}}\sum_{j_{3}=0}^{s_{3}}
\frac{\left(1-(1-\alpha_{2}-\alpha_{3})z_{1}-\alpha_{2}z_{2}-\alpha_{3}z_{3}\right)^{-1}}
{z_{1}^{s_{1}+1}z_{2}^{s_{2}-j_{2}+1}z_{3}^{s_{3}-j_{3}+1}}=
\end{equation*}%
{\footnotesize
\begin{equation*}
=(1-\alpha _{2}-\alpha_{3}) \mathbf{res}_{z_{1},z_{2},z_{3}}
[\frac{\left(1-(1-\alpha_{2}-\alpha_{3})z_{1}-\alpha_{2}z_{2}-\alpha _{3}z_{3}\right)^{-1}}
{z_{1}^{s_{1}+1}z_{2}^{s_{2}+1}z_{3}^{s_{3}+1}}
\sum_{j_{2}=0}^{\infty}z_{2}^{j_{2}}\sum_{j_{3}=0}^{\infty }z_{3}^{j_{3}}]
\end{equation*}
}
\begin{equation}
=(1-\alpha_{2}-\alpha_{3}) \mathbf{res}_{z_{1},z_{2},z_{3}}
\frac{\left(1-(1-\alpha_{2}-\alpha_{3})z_{1}-\alpha_{2}z_{2}-\alpha_{3}z_{3}\right)^{-1}}
{z_{1}^{s_{1}+1}z_{2}^{s_{2}+1}z_{3}^{s_{3}+1}(1-z_{2})(1-z_{3})}.
\label{L1.1}
\end{equation}%
We find the generating function for $S_{1}(s_{1},s_{2},s_{3})$
using the formula (\ref{L1.1})
\begin{equation*}
{\mathbf{S}_{1}}(u_{1},u_{2},u_{3}):=
\sum_{s_{1},s_{2},s_{3}=0}^{\infty }
S_{1}(s_{1},s_{2},s_{3})u_{1}^{s_{1}}u_{2}^{s_{2}}u_{3}^{s_{3}}=
\end{equation*}%
{\footnotesize
\begin{equation*}
=\sum_{s_{1},s_{2},s_{3}=0}^{\infty }(1-\alpha_{2}-\alpha_{3})
\mathbf{res}_{z_{1},z_{2},z_{3}}
\frac{\left(1-(1-\alpha_{2}-\alpha_{3})z_{1}-\alpha_{2}z_{2}-\alpha_{3}z_{3}\right)^{-1}}
{z_{1}^{s_{1}+1}z_{2}^{s_{2}+1}z_{3}^{s_{3}+1}(1-z_{2})(1-z_{3})}
u_{1}^{s_{1}}u_{2}^{s_{2}}u_{3}^{s_{3}}=
\end{equation*}
} (rule of substitution $z_{i}=u_{i})$%
\begin{equation}
=\frac{(1-\alpha _{2}-\alpha _{3})}{(1-u_{2})\cdot
(1-u_{3})\cdot \left(1-( 1-\alpha_{2}-\alpha_{3})u_{1}-\alpha_{2}u_{2}-\alpha _{3}u_{3}\right)}.
\label{L2.0}
\end{equation}
\end{proof}

\begin{lemma}
\label{L2}The following relations are valid
%\frac{(s_{2}+s_{3}+1) !(1-\alpha_{1})}{s_{2}!s_{3}!}\cdot
{\footnotesize
\begin{equation}
{\mathbf{S}_{2}}(u_{1},u_{2},u_{3})=\frac{(1-\alpha_{1}-\alpha_{3})}
{(1-u_{1})\cdot (1-u_{3})\cdot
\left(1-\alpha_{1}u_{1}-(1-\alpha_{1}-\alpha_{3})u_{2}-\alpha_{3}u_{3}\right)},
\label{L2.1}
\end{equation}%
\begin{equation}
{\mathbf{S}_{3}}(u_{1},u_{2},u_{3})=\frac{(1-\alpha_{1}-\alpha_{2})}
{(1-u_{1})\cdot (1-u_{2})\cdot
\left( 1-\alpha_{1}u_{1}-\alpha_{2}u_{2}-(1-\alpha_{1}-\alpha_{2})u_{3}\right)}.
\label{L2.2}
\end{equation}
}
\end{lemma}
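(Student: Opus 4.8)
The plan is to derive both identities (\ref{L2.1}) and (\ref{L2.2}) directly from Lemma \ref{L1} by exploiting the symmetry of the construction in (\ref{3F})--(\ref{4F}), rather than repeating the full residue computation. Recall that the three summands are instances of the single template (\ref{4F}): namely $S_1 = S(s_1,s_2,s_3;\alpha_2,\alpha_3)$, $S_2 = S(s_2,s_1,s_3;\alpha_1,\alpha_3)$ and $S_3 = S(s_3,s_1,s_2;\alpha_1,\alpha_2)$. Thus $S_2(s_1,s_2,s_3)$ is obtained from $S_1(s_1,s_2,s_3)$ by the transposition $\tau$ that simultaneously interchanges the index pair $s_1 \leftrightarrow s_2$ and the parameter pair $\alpha_1 \leftrightarrow \alpha_2$ (leaving $s_3,\alpha_3$ fixed), while $S_3$ is obtained from $S_1$ by the cyclic permutation $\sigma=(1\,3\,2)$ acting simultaneously on the indices $s_i$ and the parameters $\alpha_i$.

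First I would verify these two relabelings at the level of the summands, by direct inspection of (\ref{4F}): applying $\tau$ to $S_1$ turns $(1-\alpha_2-\alpha_3)^{s_1+1}$ into $(1-\alpha_1-\alpha_3)^{s_2+1}$ and the double sum $\sum_{k\le s_2}\sum_{l\le s_3}\alpha_2^k\alpha_3^l$ into $\sum_{k\le s_1}\sum_{l\le s_3}\alpha_1^k\alpha_3^l$, which is exactly $S_2$; applying $\sigma$ to $S_1$ produces $(1-\alpha_1-\alpha_2)^{s_3+1}\sum_{k\le s_1}\sum_{l\le s_2}\alpha_1^k\alpha_2^l$, which is $S_3$. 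Since each generating function $\mathbf{S}_i$ is assembled from the $S_i(s_1,s_2,s_3)$ by attaching the monomial $u_1^{s_1}u_2^{s_2}u_3^{s_3}$ and summing over all triples, the same permutation $\tau$ (resp.\ $\sigma$), acting now also on the formal variables $u_i$, carries $\mathbf{S}_1$ into $\mathbf{S}_2$ (resp.\ $\mathbf{S}_3$).

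Therefore the closed form (\ref{L1.2}) for $\mathbf{S}_1$ immediately yields the asserted formulas. Applying $\tau$ (swap $u_1\leftrightarrow u_2$, $\alpha_1\leftrightarrow\alpha_2$) to the right-hand side of (\ref{L1.2}) sends the numerator to $1-\alpha_1-\alpha_3$, the factors $(1-u_2)(1-u_3)$ to $(1-u_1)(1-u_3)$, and the third denominator factor $1-(1-\alpha_2-\alpha_3)u_1-\alpha_2 u_2-\alpha_3 u_3$ to $1-\alpha_1 u_1-(1-\alpha_1-\alpha_3)u_2-\alpha_3 u_3$, which is precisely (\ref{L2.1}); applying $\sigma$ analogously sends the cubic factor to $1-\alpha_1 u_1-\alpha_2 u_2-(1-\alpha_1-\alpha_2)u_3$ and the remaining factors to $(1-u_1)(1-u_2)$, giving (\ref{L2.2}). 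The only point requiring care is to apply each permutation consistently to indices, parameters, and formal variables together; once the correspondences $S_1\mapsto S_2$ and $S_1\mapsto S_3$ are checked on the summands, no new residue computation is needed, since Lemma \ref{L1} has already performed the one essential calculation: expressing the multinomial coefficient as the residue of $(1-(1-\alpha_2-\alpha_3)z_1-\alpha_2 z_2-\alpha_3 z_3)^{-1}$, summing the two inner geometric series, and collapsing the three outer sums by the substitution rule $z_i=u_i$.

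For a self-contained alternative one would instead repeat the derivation of Lemma \ref{L1} verbatim, with the distinguished triple $(s_1,\alpha_2,\alpha_3)$ replaced by $(s_2,\alpha_1,\alpha_3)$ and by $(s_3,\alpha_1,\alpha_2)$ respectively; this reproduces (\ref{L2.1}) and (\ref{L2.2}) line for line. I expect no genuine obstacle here, as the entire content reduces to a consistent relabeling of variables, the substantive work having been carried out in Lemma \ref{L1}.
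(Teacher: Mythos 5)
Your proposal is correct and is essentially the paper's approach: the paper's entire proof of this lemma is the single line ``The proof is analogous Lemma~\ref{L1}'', and your permutation argument is precisely the rigorous content of that remark --- a consistent relabeling of indices $s_i$, parameters $\alpha_i$, and formal variables $u_i$ that carries the summand $S_1$, the generating function $\mathbf{S}_1$, and the closed form (\ref{L1.2}) into $\mathbf{S}_2$, $\mathbf{S}_3$ and the asserted formulas (\ref{L2.1})--(\ref{L2.2}). There is no gap: the transposition and the cycle you identify are exactly the right ones, and your fallback (repeating the residue computation of Lemma~\ref{L1} verbatim with $(s_1,\alpha_2,\alpha_3)$ replaced by $(s_2,\alpha_1,\alpha_3)$ and $(s_3,\alpha_1,\alpha_2)$) is exactly what the paper intends by ``analogous''.
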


The proof is analogous Lemma \ref{L1}.

%\paragraph{Integral representation of and calculation of sums%
%\protect\begin{equation*}

%\subsubsection{Integral representation of and calculation of sums
%$T_{i}(s_{1},s_{2},s_{3};\alpha)$, $i=1,\,2,3.$}
\subsubsection{Calculation of sums
$T_{i}(s_{1},s_{2},s_{3};\alpha)$, $i=1,\,2,3.$}

\begin{lemma}
Integral representation for $T_{1}(s_{1},s_{2},s_{3};\alpha_{1},\alpha_{2},\,\alpha_{3})$ is equal%
{\small
\begin{equation*}
T_{1}=\frac{(s_{2}+s_{3}+1)!(1-\alpha_{1})}{s_{2}!s_{3}!}\cdot\{(1-\alpha_{1}-\alpha_{3})^{s_{2}+1}\int_{0}^{1} t^{s_{2}}
(1-\alpha_{1}-t(1-\alpha_{1}-\alpha_{3}))^{s_{3}}dt+
\end{equation*}
\begin{equation}
\label{L3.1}
-\alpha_{2}^{s_{2}+1}\int_{0}^{1}t^{s_{2}} (1-\alpha_{1}-\alpha_{2}t)^{s_{3}}dt\}
\cdot\mathbf{res}_{z_{1}}(1-\alpha _{1}z_{1})^{-s_{2}-s_{3}-2}
\frac{z_{1}^{-s_{1}-1}}{1-z_{1}}.
\end{equation}}
\end{lemma}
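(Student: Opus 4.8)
The plan is to read off $T_1(s_1,s_2,s_3;\alpha_1,\alpha_2,\alpha_3)$ as the $T$-summand of (\ref{2F}) carrying the prefactor $(s_2+s_3+1)!/(s_2!\,s_3!)$, that is
\begin{equation*}
T_1=(1-\alpha_1)^{s_2+s_3+2}\,\frac{(s_2+s_3+1)!}{s_2!\,s_3!}\,A\,B,
\end{equation*}
with the two independent factors
\begin{equation*}
A=\sum_{m=0}^{s_3}\frac{(-1)^m\binom{s_3}{m}}{s_2+m+1}\Bigl(\bigl(1-\tfrac{\alpha_3}{1-\alpha_1}\bigr)^{s_2+m+1}-\bigl(\tfrac{\alpha_2}{1-\alpha_1}\bigr)^{s_2+m+1}\Bigr),\qquad B=\sum_{k=0}^{s_1}\binom{s_2+s_3+k+1}{k}\alpha_1^k.
\end{equation*}
I would treat $A$ and $B$ separately and recombine at the very end.

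The factor $B$ is disposed of at once by Lemma \ref{L3}: applying (\ref{21F}) under the cyclic relabelling $s_1\mapsto s_2,\ s_2\mapsto s_3,\ s_3\mapsto s_1$ and $\alpha\mapsto\alpha_1$ gives $B=\mathbf{res}_{z_1}(1-\alpha_1 z_1)^{-s_2-s_3-2}z_1^{-s_1-1}/(1-z_1)$, which is precisely the residue factor appearing in (\ref{L3.1}).

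For $A$ the key device is the elementary identity $\tfrac{1}{s_2+m+1}=\int_0^1 t^{s_2+m}\,dt$. Writing $u=\tfrac{1-\alpha_1-\alpha_3}{1-\alpha_1}$ and $v=\tfrac{\alpha_2}{1-\alpha_1}$, I would substitute this integral for the denominator $s_2+m+1$, interchange the finite sum over $m$ with the integral, and collapse the two resulting sums by the binomial theorem $\sum_{m=0}^{s_3}\binom{s_3}{m}(-wt)^m=(1-wt)^{s_3}$ (with $w=u$ and $w=v$), obtaining
\begin{equation*}
A=u^{s_2+1}\int_0^1 t^{s_2}(1-ut)^{s_3}\,dt-v^{s_2+1}\int_0^1 t^{s_2}(1-vt)^{s_3}\,dt.
\end{equation*}

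It then remains to multiply by $(1-\alpha_1)^{s_2+s_3+2}$ and push the powers of $(1-\alpha_1)$ into the integrands. Using $u^{s_2+1}=(1-\alpha_1-\alpha_3)^{s_2+1}(1-\alpha_1)^{-s_2-1}$, $1-ut=(1-\alpha_1-(1-\alpha_1-\alpha_3)t)/(1-\alpha_1)$, and the analogous identities for $v$, the exponent of $(1-\alpha_1)$ attached to each integral drops from $s_2+s_3+2$ to $1$, so that $(1-\alpha_1)^{s_2+s_3+2}A$ equals $(1-\alpha_1)$ times the brace in (\ref{L3.1}). Multiplying back by $B$ and the constant $(s_2+s_3+1)!/(s_2!\,s_3!)$ produces the asserted representation. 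The only point requiring care is this exponent bookkeeping together with the verification that $1-ut$ and $1-vt$ rewrite as advertised; there is no genuine obstacle, since the whole statement is an exact algebraic rearrangement of the defining double sum.
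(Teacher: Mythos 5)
Your proposal is correct and follows essentially the same route as the paper: the paper likewise replaces the factor $\sum_{k=0}^{s_1}\binom{s_2+s_3+k+1}{k}\alpha_1^{k}$ by the residue of Lemma \ref{L3}, writes $\tfrac{1}{s_2+m+1}=\int_0^1 t^{s_2+m}\,dt$, interchanges the finite sum with the integral, and collapses the $m$-sum by the binomial theorem to produce the two integrands $(1-\alpha_1-t(1-\alpha_1-\alpha_3))^{s_3}$ and $(1-\alpha_1-\alpha_2 t)^{s_3}$. The only difference is bookkeeping order — the paper distributes the powers of $(1-\alpha_1)$ into the $m$-sum before integrating, whereas you normalize by $u,v$ and restore those powers at the end — which is an immaterial rearrangement of the same computation.
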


\begin{proof}
\begin{equation*}
T_{1}=\sum_{m=0}^{s_{3}}\frac{(-1)^{m}
\left((1-\alpha_{1}-\alpha_{3})^{s_2+m+1}(1-\alpha_{1})^{s_{3}-m}-\alpha_{2}^{s_{2}+m+1}(1-\alpha_{1})^{s_{3}-m}\right)}
{s_{2}+m+1}
 \times
\end{equation*}%
\begin{equation*}
\times
\binom{s_{3}}{m}\cdot\frac{(s_{2}+s_{3}+1)!(1-\alpha_1)}{s_{2}!s_{3}!}
\sum_{k=0}^{s_{1}}\binom{s_{2}+s_{3}+k+1}{k}\alpha_{1}^{k}=
\end{equation*}%
{\footnotesize
\begin{equation*}
=\frac{(s_{2}+s_{3}+1)!(1-\alpha_{1})}{s_{2}!s_{3}!}\cdot
\left\{\int_{0}^{1}\sum_{m=0}^{s_{3}}\binom{s_{3}}{m}(-1)^{m}t^{s_{2}+m}(1-\alpha_{1}-\alpha_{3})^{s_{2}+1+m}
(1-\alpha_{1})^{s_{3}-m}dt+\right.
\end{equation*}%
\begin{equation*}
-\left.\int_{0}^{1}\sum_{m=0}^{s_{3}}\binom{s_{3}}{m}(-1)^{m}\alpha_{2}^{s_{2}+1+m}t^{s_{2}+m}
(1-\alpha_{1})^{s_{3}-m}dt\right\}\cdot \mathbf{res}_{z_{1}}(1-\alpha_{1}z_{1})^{-s_{2}-s_{3}-2}
\frac{z_{1}^{-s_{1}-1}}{1-z_{1}}=
\end{equation*}
\begin{equation*}
=\frac{(s_{2}+s_{3}+1)!(1-\alpha_{1})}{s_{2}!s_{3}!}\cdot \left\{(1-\alpha_{1}-\alpha_{3})^{s_{2}+1}
\int_{0}^{1}t^{s_{2}}\left(1-\alpha_{1}-t(1-\alpha_{1}-\alpha_{3})\right)^{s_{3}}dt+\right.
\end{equation*}}
\begin{equation*}
-\left.\alpha_{2}^{s_{2}+1}\int_{0}^{1}t^{s_{2}}(1-\alpha_{1}-\alpha_{2}t)^{s_{3}}dt\right\}
\cdot \mathbf{res}_{z_{1}}(1-\alpha _{1}z_{1})^{-s_{2}-s_{3}-2}\frac{z_{1}^{-s_{1}-1}}{1-z_{1}}.
\end{equation*}
\end{proof}

Analogously we find integral representations for
$T_{2}(s_{1},s_{2},s_{3};\alpha_{1},\alpha_{2},\alpha_{3}) $ and
$T_{3}(s_{1},s_{2},s_{3};\alpha_{1},\alpha_{2},\alpha_{3})$:%
\begin{equation*}
T_{2}=\frac{(s_{1}+s_{3}+1)!}{s_{1}!s_{3}!}\{(1-\alpha_{2}-\alpha_{3})^{s_{1}+1}\int_{0}^{1}t^{s_{1}}
(1-\alpha_{2}-t(1-\alpha_{2}-\alpha_{3}))^{s_{3}}dt
\end{equation*}
\begin{equation}
-\alpha_{1}^{s_{1}+1}\int_{0}^{1}t^{s_{1}}(1-\alpha_{2}-t\alpha_{1})^{s_{3}}dt\}\cdot(1-\alpha_{2})\cdot
 \mathbf{res}_{z_{2}}(1-\alpha _{2}z_{2})^{-s_{1}-s_{3}-2}\frac{z_{2}^{-s_{2}-1}}{1-z_{2}}.
\label{L3.2}
\end{equation}
\begin{equation*}
T_{3}=\frac{(s_{1}+s_{2}+1)!}{s_{1}!s_{2}!}(1-\alpha_{3})\{(1-\alpha_{3}-\alpha_{2})^{s_{1}+1}
\int_{0}^{1}t^{s_1}(1-\alpha_{3}-t(1-\alpha_{3}-\alpha_{2}))^{s_{2}}dt
\end{equation*}
\begin{equation}
-\alpha_{1}^{s_{1}+1}\int_{0}^{1}t^{s_{1}}(1-\alpha_{3}-\alpha_{1}t)^{s_{2}}dt\}
\times \mathbf{res}_{z_{3}}(1-\alpha_{3}z_{3})^{-s_{1}-s_{2}-2}\frac{z_{3}^{-s_{3}-1}}{1-z_{3}}.
\label{L3.3}
\end{equation}

\begin{lemma}
\label{L5}
The following identity is valid%
\begin{equation*}
\mathbf{T}_{1}\mathbf{(}u_{1},u_{2},u_{3})=\sum_{s_{1},s_{2},s_{3}=0}^{\infty }T_{1}(s_{1},s_{2},s_{3})
u_{1}^{s_{1}}u_{2}^{s_{2}}u_{3}^{s_{3}}=\frac{(1-\alpha_{1})}{(1-u_{1})}\times
\end{equation*}
{\footnotesize
\begin{equation}
\label{L7.0}
=\times \frac{(1-\alpha_{1}-\alpha_{2}-\alpha_{3})}
{1-\alpha_{1}u_{1}-(1-\alpha_{1}-\alpha_{3})u_{2}-\alpha _{3}u_{3}}\cdot
\frac{1}{1-\alpha_{1}u_{1}-\alpha_{2}u_{2}-u_{3}(1-\alpha_{1}-\alpha_{2})}
\end{equation}
}
\end{lemma}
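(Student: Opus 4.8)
The plan is to start from the integral representation for $T_1(s_1,s_2,s_3)$ established in the preceding lemma and to form the generating series $\mathbf{T}_1(u_1,u_2,u_3)$ by carrying out the three summations in a convenient order. First I would sum over $s_1$: since the only $s_1$-dependence sits inside $\mathbf{res}_{z_1}\{(1-\alpha_1 z_1)^{-s_2-s_3-2}z_1^{-s_1-1}/(1-z_1)\}$, the substitution rule (\textbf{Rule 3}) gives
\[
\sum_{s_1\geq0}u_1^{s_1}\,\mathbf{res}_{z_1}\left\{\frac{(1-\alpha_1 z_1)^{-s_2-s_3-2}}{1-z_1}z_1^{-s_1-1}\right\}=\frac{(1-\alpha_1 u_1)^{-s_2-s_3-2}}{1-u_1}.
\]
This produces the overall prefactor $(1-\alpha_1)/(1-u_1)$ together with the weight $A^{-s_2-s_3-2}$, where $A:=1-\alpha_1 u_1$.

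Next I would interchange summation and integration in the two remaining $t$-integrals (legitimate for $u_i$ in a small polydisc, where all geometric series converge), and collapse the $s_2,s_3$ sums using the elementary identity
\[
\sum_{s_2,s_3\geq0}\frac{(s_2+s_3+1)!}{s_2!\,s_3!}\,x^{s_2}y^{s_3}=\frac{1}{(1-x-y)^2},
\]
which follows by grouping terms with $s_2+s_3=n$ and recognizing $(n+1)(x+y)^n$. Applying this to the first bracketed term with $x=(1-\alpha_1-\alpha_3)u_2 t/A$, $y=(1-\alpha_1-t(1-\alpha_1-\alpha_3))u_3/A$, and to the second with $x=\alpha_2 u_2 t/A$, $y=(1-\alpha_1-\alpha_2 t)u_3/A$, turns each double sum into $(1-x-y)^{-2}$. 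Clearing the factor $A^2$ renders both denominators linear in $t$, of the common shape $a-bt$ with the same $a=1-\alpha_1 u_1-(1-\alpha_1)u_3$ in both integrals.

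Then I would evaluate the two elementary integrals via $\int_0^1(a-bt)^{-2}\,dt=1/(a(a-b))$. A short computation of $a-b$ in each case identifies the two resulting denominators as exactly $P:=1-\alpha_1 u_1-(1-\alpha_1-\alpha_3)u_2-\alpha_3 u_3$ and $Q:=1-\alpha_1 u_1-\alpha_2 u_2-(1-\alpha_1-\alpha_2)u_3$, the two factors appearing in the claimed formula. At this stage $\mathbf{T}_1$ has the form $\frac{1-\alpha_1}{1-u_1}\big(\frac{1-\alpha_1-\alpha_3}{aP}-\frac{\alpha_2}{aQ}\big)$.

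The final step, and the one requiring the most care, is to verify the algebraic identity
\[
(1-\alpha_1-\alpha_3)\,Q-\alpha_2\,P=(1-\alpha_1-\alpha_2-\alpha_3)\,a,
\]
which, after combining the two terms over $aPQ$, turns the bracket into the single product $(1-\alpha_1-\alpha_2-\alpha_3)/(PQ)$ and yields the stated closed form. Checking this reduces to comparing constant and $u_1,u_2,u_3$ coefficients: the $u_2$ terms cancel identically, while the $u_3$ coefficient relies on the factorization $(1-\alpha_1-\alpha_3)(1-\alpha_1-\alpha_2)-\alpha_2\alpha_3=(1-\alpha_1)(1-\alpha_1-\alpha_2-\alpha_3)$. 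I expect this last cancellation to be the only non-mechanical point; everything upstream is bookkeeping once the generating-function identity and the substitution rule are in place.
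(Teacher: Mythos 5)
Your proposal is correct and follows essentially the same route as the paper: summation over $s_1$ via the substitution rule to produce the factor $(1-\alpha_1 u_1)^{-s_2-s_3-2}/(1-u_1)$, collapse of the $s_2,s_3$ sums into $(1-x-y)^{-2}$ (the paper does this via the residue representation $\mathbf{res}_{z_2,z_3}(1-z_2-z_3)^{-2}z_2^{-s_2-1}z_3^{-s_3-1}$ plus substitution, which is exactly your bivariate generating-function identity), and then evaluation of the two elementary $t$-integrals. The only difference is that you spell out the final steps the paper leaves implicit — the evaluation $\int_0^1(a-bt)^{-2}dt=1/(a(a-b))$, the identification of the denominators $P$ and $Q$, and the key cancellation $(1-\alpha_1-\alpha_3)Q-\alpha_2 P=(1-\alpha_1-\alpha_2-\alpha_3)a$ — all of which check out.
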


\begin{proof}
The generating function for $T_{1}(s_{1}s_{2},s_{3})$
\begin{equation*}
\mathbf{T}_{1}(u_{1}, u_{2}, u_{3})=\sum_{s_{1},s_{2},s_{3}=0}^{\infty }T_{1}(s_{1},s_{2},s_{3})
u_{1}^{s_{1}}u_{2}^{s_{2}}u_{3}^{s_{3}}=
\end{equation*}%
\begin{equation*}
=(1-\alpha _{1}) \sum_{s_{1},s_{2},s_{3}=0}^{\infty }\frac{(s_{2}+s_{3}+1)!}{s_{2}!s_{3}!}
\mathbf{res}_{z_{1}}[(1-\alpha_{1}z_{1})^{-s_{2}-s_{3}-2}\frac{z_{1}^{-s_{1}-1}}{1-z_{1}}]\times
\end{equation*}%
{\footnotesize
\begin{equation*}
\times \int_{0}^{1}\{(1-\alpha_{1}-\alpha_{3})^{s_{2}+1}t^{s_{2}}
(1-\alpha_{1}-t(1-\alpha_{1}-\alpha_{3}))^{s_{3}}-\alpha_{2}^{s_{2}+1}t^{s_{2}}
(1-\alpha_{1}-\alpha_{2}t)^{s_{3}}\}dt\cdot u_{1}^{s_{1}}u_{2}^{s_{2}}u_{3}^{s_{3}}=
\end{equation*}%
}
\begin{equation*}
by\text{ }the\text{ }rule\text{ }of\text{ }substitution)
\end{equation*}%
\begin{equation*}
=(1-\alpha_{1})\sum_{s_{2},s_{3}=0}^{\infty }\frac{(s_{2}+s_{3}+1)!}{s_{2}!s_{3}!}\cdot
\frac{(1-\alpha_{1}u_{1})^{-s_{2}-s_{3}-2}}{1-u_{1}}\times
\end{equation*}%
{\small
\begin{equation*}
\times \int_{0}^{1}\{(1-\alpha_{1}-\alpha_{3})^{s_{2}+1}t^{s_{2}}
(1-\alpha_{1}-t(1-\alpha_{1}-\alpha_{3}))^{s_{3}}-\alpha_{2}^{s_{2}+1}t^{s_{2}}
(1-\alpha_{1}-\alpha_{2}t)^{s_{3}}\}dt\cdot u_{2}^{s_{2}}u_{3}^{s_{3}}=
\end{equation*}%
\begin{equation*}
=\frac{(1-\alpha_{1})}{(1-u_{1})(1-\alpha_{1}u_{1})^{2}}
\sum_{s_{2},s_{3}=0}^{\infty}\mathbf{res}_{z_{2},z_{3}}\frac{(1-z_{2}-z_{3})^{-2}}
{z_{2}^{s_{2}+1}z_{3}^{s_{3}+1}}\cdot
\frac{u_{2}^{s_{2}}u_{3}^{s_{3}}}{(1-\alpha_{1}u_{1})^{s_{2}}(1-\alpha_{1}u_{1})^{s_{3}}}\times
\end{equation*}%
\begin{equation*}
\times \int_{0}^{1}\{(1-\alpha_{1}-\alpha_{3})^{s_{2}+1}t^{s_{2}}
(1-\alpha_{1}-t(1-\alpha_{1}-\alpha_{3}))^{s_{3}}-\alpha_{2}^{s_{2}+1}t^{s_{2}}
(1-\alpha_{1}-\alpha_{2}t)^{s_{3}}\}dt=
\end{equation*}%
\begin{equation*}
=\frac{(1-\alpha_{1})(1-\alpha_{1}-\alpha_{3})}
{(1-u_{1})(1-\alpha_{1}u_{1})^{2}}\int_{0}^{1}\left(1-\frac{u_{2}t(1-\alpha_{1}-\alpha_{3})}
{(1-\alpha_{1}u_{1})}-\frac{u_{3}(1-\alpha_{1}-t(1-\alpha_{1}-\alpha_{3}))}
{(1-\alpha_{1}u_{1})}\right)^{-2}dt+
\end{equation*}%
\begin{equation*}
-\frac{(1-\alpha_{1})\alpha_{2}}{(1-u_{1})(1-\alpha_{1}u_{1})^{2}}
\int_{0}^{1}\left(1-\frac{u_{2}\alpha _{2}t}{(1-\alpha _{1}u_{1})}-
\frac{u_{3}(1-\alpha_{1}-\alpha_{2}t)}{(1-\alpha_{1}u_{1})}\right)^{-2}dt=
\end{equation*}%
\begin{equation*}
=\frac{(1-\alpha_{1})}{(1-u_{1})}\cdot \frac{(1-\alpha_{1}-\alpha_{2}-\alpha_{3})}
{1-\alpha_{1}u_{1}-(1-\alpha_{1}-\alpha_{3})u_{2}-\alpha _{3}u_{3}}\cdot
\frac{1}{1-\alpha_{1}u_{1}-\alpha_{2}u_{2}-u_{3}(1-\alpha_{1}-\alpha_{2})}.  %\label{L7.0}
\end{equation*}%
}
\end{proof}

\begin{lemma}
\label{L7}The following relations are valid
{\footnotesize
\begin{equation}
\mathbf{T}_{2}=%\frac{(1-\alpha_{2})}{(1-u_{2})}\cdot
\frac{(1-\alpha_{2})(1-u_{2})^{-1}(1-\alpha_{1}-\alpha_{2}-\alpha_{3})}
{1-(1-\alpha_{2}-\alpha_{3})u_{1}-\alpha_{2}u_{2}-\alpha_{3}u_{3}}
\cdot \frac{1}{1-\alpha_{1}u_{1}-\alpha_{2}u_{2}-u_{3}(1-\alpha_{1}-\alpha_{2})},
\label{L7.1}
\end{equation}%
\begin{equation}
\mathbf{T}_{3}=%\frac{(1-\alpha_{3})}{(1-u_{3})}\cdot
\frac{(1-\alpha_{3})(1-u_{3})^{-1}(1-\alpha_{1}-\alpha_{2}-\alpha_{3})}
{1-(1-\alpha_{2}-\alpha_{3}) u_{1}-\alpha_{2}u_{2}-\alpha_{3}u_{3}}\cdot
\frac{1}{1-\alpha_{1}u_{1}-(1-\alpha_{1}-\alpha_{3}) u_{2}-\alpha_{3}u_{3}}.
\label{L7.2}
\end{equation}
}
\end{lemma}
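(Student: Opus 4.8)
The plan is to mirror the computation carried out for $\mathbf{T}_1$ in Lemma \ref{L5}, since the integral representations (\ref{L3.2}) and (\ref{L3.3}) for $T_2$ and $T_3$ have exactly the same shape as (\ref{L3.1}) for $T_1$, with the three summation indices and the three parameters playing permuted roles. First I would record the precise symmetry: representation (\ref{L3.2}) is obtained from (\ref{L3.1}) by the transposition $1\leftrightarrow 2$, i.e.\ swapping $\alpha_1\leftrightarrow\alpha_2$, the indices $s_1\leftrightarrow s_2$, and the outer residue variable $z_1\leftrightarrow z_2$ while fixing the third slot; whereas (\ref{L3.3}) is obtained from (\ref{L3.1}) by the $3$-cycle in which the outer residue index $s_1$ becomes $s_3$, the integration-power index $s_2$ becomes $s_1$, and the exponent index $s_3$ becomes $s_2$, together with $\alpha_1\to\alpha_3,\ \alpha_2\to\alpha_1,\ \alpha_3\to\alpha_2$. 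Matching these correspondences term by term, the desired generating functions follow from $\mathbf{T}_1$ (\ref{L7.0}) by applying the same permutation to the pairs $(\alpha_i,u_i)$, which yields precisely (\ref{L7.1}) and (\ref{L7.2}).

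To be sure the permutation is tracked correctly, I would also run the computation directly, say for $T_2$. Form $\mathbf{T}_2(u_1,u_2,u_3)=\sum_{s_1,s_2,s_3\ge 0}T_2(s_1,s_2,s_3)u_1^{s_1}u_2^{s_2}u_3^{s_3}$ and insert (\ref{L3.2}). The factor $\mathbf{res}_{z_2}(1-\alpha_2 z_2)^{-s_1-s_3-2}z_2^{-s_2-1}/(1-z_2)$ carries all the $s_2$-dependence, so summing $\sum_{s_2}u_2^{s_2}(\cdots)$ and applying the substitution rule (Rule 3, with $z_2=u_2$) collapses the outer index and produces the prefactor $(1-\alpha_2 u_2)^{-s_1-s_3-2}/(1-u_2)$.

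Next I would represent the factorial coefficient by a double residue, $\dfrac{(s_1+s_3+1)!}{s_1!s_3!}=\mathbf{res}_{z_1,z_3}\dfrac{(1-z_1-z_3)^{-2}}{z_1^{s_1+1}z_3^{s_3+1}}$, exactly as $(s_2+s_3+1)!/(s_2!s_3!)$ was handled in Lemma \ref{L5}. Pushing the sums over $s_1$ and $s_3$ inside the $t$-integral, the powers $t^{s_1}$, $(1-\alpha_2-t(1-\alpha_2-\alpha_3))^{s_3}$ (first term) and $t^{s_1}$, $(1-\alpha_2-\alpha_1 t)^{s_3}$ (second term), together with the geometric factors in $u_1,u_3$, are again summed by the substitution rule; this replaces $z_1,z_3$ by the appropriate linear-in-$t$ expressions and turns the kernel $(1-z_1-z_3)^{-2}$ into an integrand of the form $(1-a(t)u_1-b(t)u_3)^{-2}$. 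The final step is the elementary integral $\int_0^1(C-Dt)^{-2}\,dt=\tfrac{1}{D}\big(\tfrac{1}{C-D}-\tfrac{1}{C}\big)$ applied to each of the two terms, after which they combine into the clean product (\ref{L7.1}). The computation of $\mathbf{T}_3$ from (\ref{L3.3}) is identical with $z_3$ as the outer residue variable and the pair $(s_1,s_2)$ in the role that $(s_1,s_3)$ plays for $T_2$, giving (\ref{L7.2}).

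The step demanding the most care is the bookkeeping of the index/parameter permutation: since (\ref{L3.2}) and (\ref{L3.3}) are related to (\ref{L3.1}) not by a single naive relabelling of subscripts but by a transposition and a $3$-cycle respectively, one must verify term by term which of $s_1,s_2,s_3$ is the outer residue index, which is the integration power, and which is the exponent, and match the parameters accordingly; an error here yields a formula with the wrong denominators. The analytic content—the two applications of the substitution rule and the single $t$-integration—is routine once the correspondence is fixed. As a consistency check one can confirm that each of $\mathbf{T}_1,\mathbf{T}_2,\mathbf{T}_3$ is symmetric in precisely the two non-distinguished variables, in agreement with (\ref{L7.0})–(\ref{L7.2}).
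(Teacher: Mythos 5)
Your proposal is correct and is essentially the paper's own argument: the paper disposes of Lemma \ref{L7} with the single remark that the proof is analogous to Lemma \ref{L5}, i.e.\ one repeats the computation for $\mathbf{T}_{1}$ (substitution rule on the outer residue variable, double-residue representation of the factorial coefficient, substitution rule on the remaining two indices, then the elementary $t$-integration) with the indices and parameters permuted exactly as you describe. Your explicit bookkeeping of the transposition $1\leftrightarrow 2$ for (\ref{L3.2}) and the $3$-cycle for (\ref{L3.3}), and the check that these permutations carry (\ref{L7.0}) into (\ref{L7.1}) and (\ref{L7.2}), is precisely the verification the paper leaves implicit.
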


The proof is analogous Lemma \ref{L5}.

%\paragraph{Calculating the sum $R\left( s_{1},s_{2},s_{3}\right) $}
\subsubsection{Calculation the sum $R(s_{1},s_{2},s_{3}) $}

\begin{lemma}
\label{R1}
The following identities are valid
\begin{equation}
\label{r11}
R(s_{1},s_{2},s_{3})=2\int_{\alpha_{1}}^{1-\alpha_{2}-\alpha_{3}}
\int_{\alpha_{2}}^{1-\alpha_{3}-x}\mathbf{res}_{t_{1},t_{2},t_{3}}
\frac{x^{s_{1}}y^{s_{2}}(1-x-y)^{s_{3}}dxdy}
{(1-t_{1}-t_{2}-t_{3})^{-3} t_{1}^{s_{1}+1}t_{2}^{s_{2}+1}t_{3}^{s_{3}+1}}.
%\frac{(1-t_{1}-t_{2}-t_{3})^{-3}}{t_{1}^{s_{1}+1}t_{2}^{s_{2}+1}t_{3}^{s_{3}+1}}
%x^{s_{1}}y^{s_{2}}(1-x-y)^{s_{3}}dxdy.
\end{equation}
\begin{equation*}
\mathbf{R(}u_{1},u_{2},u_{3})=%\frac{1}{(1-u_{1}(1-\alpha_{2}-\alpha_{3})-\alpha_{2}u_{2}-\alpha_{3}u_{3})}
\frac{1}{(1-\alpha_{1}u_{1}-u_{2}(1-\alpha_{1}-\alpha_{3})-\alpha_{3}u_{3})}
\times
\end{equation*}
\begin{equation}
\label{T2.1}
\times\!\!\frac{(\alpha_{1}+\alpha_{2}+\alpha_{3}-1)^{2}}
{\!(1-u_{1}(1-\alpha_{2}-\alpha_{3})-\alpha_{2}u_{2}-\alpha_{3}u_{3})
(1-\alpha_{1}u_{1}-\alpha_{2}u_{2}-u_{3}(1-\alpha_{1}-\alpha_{2})}.
\end{equation}
\end{lemma}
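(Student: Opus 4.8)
The plan is to prove the two displayed formulas in turn: first the residue form (\ref{r11}), then the generating function (\ref{T2.1}), the latter by collapsing the triple sum to a two-dimensional integral that admits a closed form. For (\ref{r11}) I would rewrite the factorial prefactor in the definition (\ref{10F}) as a triple residue. Expanding $(1-t_1-t_2-t_3)^{-3}=\sum_{m\ge0}\binom{m+2}{2}(t_1+t_2+t_3)^m$ and extracting the coefficient of $t_1^{s_1}t_2^{s_2}t_3^{s_3}$ by the multinomial theorem gives
\begin{equation*}
\mathbf{res}_{t_1,t_2,t_3}\frac{(1-t_1-t_2-t_3)^{-3}}{t_1^{s_1+1}t_2^{s_2+1}t_3^{s_3+1}}=\frac{(s_1+s_2+s_3+2)!}{2\,s_1!\,s_2!\,s_3!}.
\end{equation*}
Substituting $\frac{(s_1+s_2+s_3+2)!}{s_1!s_2!s_3!}=2\,\mathbf{res}_{t_1,t_2,t_3}\{(1-t_1-t_2-t_3)^{-3}/(t_1^{s_1+1}t_2^{s_2+1}t_3^{s_3+1})\}$ into (\ref{10F}) and carrying the constant under the integral sign yields (\ref{r11}), with the factor $(1-t_1-t_2-t_3)^{-3}$ understood as multiplying the numerator of the residuand.

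Next, to obtain $\mathbf{R}(u_1,u_2,u_3)=\sum_{s_1,s_2,s_3\ge0}R(s_1,s_2,s_3)u_1^{s_1}u_2^{s_2}u_3^{s_3}$, I would multiply (\ref{r11}) by $u_1^{s_1}u_2^{s_2}u_3^{s_3}$ and sum. After interchanging the summation with the residue and the integral, the powers recombine as $(xu_1)^{s_1}(yu_2)^{s_2}((1-x-y)u_3)^{s_3}$, and three iterated applications of the substitution rule \textbf{(Rule 3)}, with $t_1=xu_1$, $t_2=yu_2$, $t_3=(1-x-y)u_3$, collapse the residue to
\begin{equation*}
\mathbf{R}(u_1,u_2,u_3)=2\int_{\alpha_1}^{1-\alpha_2-\alpha_3}\int_{\alpha_2}^{1-\alpha_3-x}\bigl(1-xu_1-yu_2-(1-x-y)u_3\bigr)^{-3}\,dy\,dx .
\end{equation*}
One should check here that \textbf{Rule 3} applies, treating the $u_j$ as formal variables of positive order while $x,y$ range over the triangle.

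The final and main step is evaluating this integral. Writing $P(x,y)=1-xu_1-yu_2-(1-x-y)u_3$, which is affine in $(x,y)$, the domain is the right triangle with vertices $(\alpha_1,\alpha_2)$, $(\alpha_1,1-\alpha_1-\alpha_3)$, $(1-\alpha_2-\alpha_3,\alpha_2)$; a direct substitution shows that $P$ takes there exactly the values $C=1-\alpha_1u_1-\alpha_2u_2-(1-\alpha_1-\alpha_2)u_3$, $A=1-\alpha_1u_1-(1-\alpha_1-\alpha_3)u_2-\alpha_3u_3$, and $B=1-(1-\alpha_2-\alpha_3)u_1-\alpha_2u_2-\alpha_3u_3$, i.e.\ the three denominator factors of (\ref{T2.1}). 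Passing to barycentric coordinates, so that $P=\lambda_1 A+\lambda_2 B+\lambda_3 C$ and $dx\,dy=2\,\mathrm{Area}\,d\lambda_1 d\lambda_2$, the Dirichlet-type identity $\int_{\Delta}(\lambda_1 A+\lambda_2 B+\lambda_3 C)^{-3}\,d\lambda_1 d\lambda_2=\tfrac{1}{2ABC}$ reduces everything to $\mathbf{R}=2\,\mathrm{Area}/(ABC)$. Since the triangle is right-angled with both legs equal to $1-\alpha_1-\alpha_2-\alpha_3$, its area is $\tfrac12(1-\alpha_1-\alpha_2-\alpha_3)^2=\tfrac12(\alpha_1+\alpha_2+\alpha_3-1)^2$, which gives precisely (\ref{T2.1}).

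I expect the integral evaluation to be the only real obstacle: carried out as a brute-force iterated integral of $P^{-3}$ it produces several boundary contributions whose recombination into the single product $ABC$ is tedious and error-prone, so I would instead use the affine-over-a-simplex (barycentric) computation above, whose entire content is the identification of the three corner values of $P$ with $A,B,C$ together with the elementary area computation.
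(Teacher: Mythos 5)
Your proof is correct, and its first two steps coincide with the paper's own: you represent the prefactor $(s_1+s_2+s_3+2)!/(s_1!\,s_2!\,s_3!)$ in (\ref{10F}) as twice the triple residue of $(1-t_1-t_2-t_3)^{-3}$ (thereby also correctly repairing the misprint in (\ref{r11}), where the power $-3$ must be read as sitting in the numerator of the residuand), and you collapse the generating function by the substitution rule to $\mathbf{R}(u_1,u_2,u_3)=2\int\int_{T}\bigl(1-xu_1-yu_2-(1-x-y)u_3\bigr)^{-3}dx\,dy$, exactly as in the paper. Where you genuinely part ways is the evaluation of this double integral. The paper computes it head-on as an iterated integral: integrating first in $y$ leaves a difference of inverse squares of affine forms evaluated at the two boundary curves, and the subsequent integration in $x$ produces boundary terms that must be recombined by hand into the product of the three factors of (\ref{T2.1}). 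You instead observe that the integrand is the $(-3)$-rd power of a function $P$ affine in $(x,y)$, that the domain is the triangle with vertices $(\alpha_1,\alpha_2)$, $(\alpha_1,1-\alpha_1-\alpha_3)$, $(1-\alpha_2-\alpha_3,\alpha_2)$, and that $P$ takes at these vertices precisely the three denominator factors $C$, $A$, $B$ of (\ref{T2.1}); passing to barycentric coordinates and invoking the Dirichlet--Feynman identity $\int_{\Delta}(\lambda_1A+\lambda_2B+\lambda_3C)^{-3}d\lambda_1d\lambda_2=1/(2ABC)$ then yields $\mathbf{R}=2\,\mathrm{Area}(T)/(ABC)$, with $\mathrm{Area}(T)=\tfrac12(1-\alpha_1-\alpha_2-\alpha_3)^2$, which is (\ref{T2.1}). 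Your route is shorter and structurally more transparent: it explains \emph{why} the answer is the squared defect $(\alpha_1+\alpha_2+\alpha_3-1)^2$ divided by the product of the three corner values of the affine form, and it would generalize with no extra effort to the $n$-variable analogue (where the paper's iterated integration becomes increasingly unwieldy). What the paper's computation buys in exchange is self-containedness: it uses nothing beyond antiderivatives of powers of affine functions, with no appeal to the simplex identity.
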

\begin{proof}
We find an integral representation for $R(s_{1},s_{2},s_{3})$%
\begin{equation*}
R(s_{1},s_{2},s_{3}):=\frac{(s_{1}+s_{2}+s_{3}+2)!}
{s_{1}!s_{2}!s_{3}!}\int\limits_{\alpha_{1}}^{1-\alpha_{2}-\alpha_{3}}
\int\limits_{\alpha_{2}}^{1-\alpha_{3}-x}\!x^{s_{1}}y^{s_{2}}(1-x-y)^{s_{3}}dxdy=
\end{equation*}
\begin{equation*}
=2\cdot \int_{\alpha _{1}}^{1-\alpha_{2}-\alpha_{3}}
\int_{\alpha_{2}}^{1-\alpha _{3}-x}\mathbf{res}_{t_{1},t_{2},t_{3}}
\frac{x^{s_1}y^{s_2}(1-x-y)^{s_3}dx\,dy}
{(1-t_{1}-t_{2}-t_{3})^3 t_{1}^{s_{1}+1}t_{2}^{s_{2}+1}t_{3}^{s_{3}+1}}%
\end{equation*}%
Generating function for $R(s_{1},s_{2},s_{3}) $ is
\begin{equation*}
\mathbf{R(}u_{1},u_{2},u_{3}):=\sum_{s_{1},s_{2},s_{3}=0}^{\infty }
R(s_{1},s_{2},s_{3}) u_{1}^{s_{1}}u_{2}^{s_{2}}u_{3}^{s_{3}}=
\end{equation*}
\begin{equation*}
=2\int_{\alpha_{1}}^{1-\alpha_{2}-\alpha_{3}}
\int_{\alpha_{2}}^{1-\alpha_{3}-x}
\mathbf{res}_{t_{1},t_{2},t_{3}}\sum_{s_{1},s_{2},s_{3}=0}^{\infty }
\frac{x^{s_{1}}y^{s_{2}}(1-x-y)^{s_{3}}u_{1}^{s_{1}}u_{2}^{s_{2}}u_{3}^{s_{3}}dxdy}
{(1-t_{1}-t_{2}-t_{3})^{3}\,t_{1}^{s_{1}+1}t_{2}^{s_{2}+1}t_{3}^{s_{3}+1}}=
\end{equation*}
\begin{equation*}
=2\int_{\alpha_{1}}^{1-\alpha_{2}-\alpha_{3}}
\int_{\alpha_{2}}^{1-\alpha_{3}-x}\frac{dxdy}{(1-xu_{1}-yu_{2}-(1-x-y)u_{3})^{3}}=
\end{equation*}%
\begin{equation*}
=\frac{-1}{(u_{3}-u_{2})}\cdot \int_{\alpha_{1}}^{1-\alpha_{2}-\alpha_{3}}%
\left.\frac{1}{(1-u_{3}+x(u_{3}-u_{1})+y(u_{3}-u_{2}))^{2}}\right|_{\alpha_{2}}^{1-\alpha _{3}-x}dx=
\end{equation*}
\begin{equation*}
=\frac{1}{(1-u_{1}(1-\alpha_{2}-\alpha_{3})-\alpha_{2}u_{2}-\alpha_{3}u_{3})}\times
\end{equation*}
{\footnotesize
\begin{equation*}
\times \frac{(\alpha_{1}+\alpha_{2}+\alpha_{3}-1)^{2}}
{(1-\alpha_{1}u_{1}-u_{2}(1-\alpha_{1}-\alpha_{3})-\alpha_{3}u_{3})
(1-\alpha_{1}u_{1}-\alpha_{2}u_{2}-u_{3}(1-\alpha_{1}-\alpha_{2}))}.
\end{equation*}
}
\end{proof}

\begin{lemma}
\label{L8}
The following formula is valid%
\begin{equation}
\mathbf{S}_{1}+\mathbf{S}_{2}+\mathbf{S}_{3}-(\mathbf{T}_{1}+\mathbf{T}_{2}+%
\mathbf{T}_{3})+\mathbf{R}=\frac{1}{(1-u_{1})(1-u_{2})(1-u_{3})}.
\label{SR}
\end{equation}
\end{lemma}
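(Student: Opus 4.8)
The plan is to treat (\ref{SR}) as an identity between the explicit rational functions already obtained in Lemmas \ref{L1}, \ref{L2}, \ref{L5}, \ref{L7} and \ref{R1}, and to reduce it to a single polynomial identity. First I would set $v_i=1-u_i$ and $\delta=1-\alpha_1-\alpha_2-\alpha_3$, and abbreviate the three trilinear denominators occurring throughout as $A=1-(1-\alpha_2-\alpha_3)u_1-\alpha_2u_2-\alpha_3u_3$, $B=1-\alpha_1u_1-(1-\alpha_1-\alpha_3)u_2-\alpha_3u_3$ and $C=1-\alpha_1u_1-\alpha_2u_2-(1-\alpha_1-\alpha_2)u_3$. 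The decisive observation is that, with $W:=\alpha_1v_1+\alpha_2v_2+\alpha_3v_3$, these forms collapse to $A=W+\delta v_1$, $B=W+\delta v_2$, $C=W+\delta v_3$, because the three coefficients in each of $A,B,C$ sum to $1$; in particular each of $A,B,C$ is \emph{homogeneous} of degree one in $v_1,v_2,v_3$ with no constant term. Writing $\beta_i:=1-\alpha_j-\alpha_k=\alpha_i+\delta$ for $\{i,j,k\}=\{1,2,3\}$ and noting $(\alpha_1+\alpha_2+\alpha_3-1)^2=\delta^2$, the seven generating functions become $\mathbf{S}_1=\beta_1/(v_2v_3A)$ (and cyclically), $\mathbf{T}_1=(1-\alpha_1)\delta/(v_1BC)$ (and cyclically), and $\mathbf{R}=\delta^2/(ABC)$.

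Next I would clear the common denominator $v_1v_2v_3ABC$, reducing (\ref{SR}) to the single polynomial identity which I will call $(\star)$:
\[
\begin{aligned}
&\beta_1 v_1 BC+\beta_2 v_2 AC+\beta_3 v_3 AB-(1-\alpha_1)\delta\, v_2 v_3 A\\
&\qquad{}-(1-\alpha_2)\delta\, v_1 v_3 B-(1-\alpha_3)\delta\, v_1 v_2 C+\delta^2 v_1 v_2 v_3=ABC.
\end{aligned}
\]
Since $A,B,C$ and every monomial on the left are homogeneous of degree one in the $v_i$, both sides of $(\star)$ are homogeneous of degree three, so it suffices to prove $\Phi:=\mathrm{LHS}(\star)-ABC\equiv 0$. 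Moreover $(\star)$ is invariant under the simultaneous cyclic permutation of $(v_i,\alpha_i,\beta_i)$ together with $A\to B\to C\to A$, so it is enough to control $\Phi$ on a single one of the hyperplanes $\{A=0\},\{B=0\},\{C=0\}$.

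The heart of the argument is the restriction of $\Phi$ to $\{A=0\}$. There $W=-\delta v_1$, hence $B=\delta(v_2-v_1)$ and $C=\delta(v_3-v_1)$, and every term carrying a factor $A$ (as well as $ABC$) drops out, leaving $\Phi|_{A=0}=\delta^2 v_1\bigl[\beta_1(v_2-v_1)(v_3-v_1)-(1-\alpha_2)v_3(v_2-v_1)-(1-\alpha_3)v_2(v_3-v_1)+v_2v_3\bigr]$. A one-line expansion, using $1-\alpha_2=\beta_1+\alpha_3$ and $1-\alpha_3=\beta_1+\alpha_2$, collapses the bracket to $v_1(\beta_1v_1+\alpha_2v_2+\alpha_3v_3)=v_1A$, which vanishes on $\{A=0\}$; thus $\Phi$ vanishes identically on that hyperplane and is therefore divisible by the linear form $A$. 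By the cyclic symmetry $B$ and $C$ divide $\Phi$ as well, so $ABC\mid\Phi$; since $\deg\Phi\le 3=\deg(ABC)$ this forces $\Phi=c\,ABC$ for a constant $c$. Evaluating at $(v_1,v_2,v_3)=(1,0,0)$ gives $A=\beta_1$, $B=C=\alpha_1$, so that $\mathrm{LHS}(\star)=\beta_1\alpha_1^2=ABC$ and hence $c=0$. This yields $\Phi\equiv 0$, i.e. $(\star)$, and therefore (\ref{SR}). I expect the only genuine labor to be that short expansion showing the bracket equals $v_1A$ (and tracking signs across the two cyclic images); the degenerate loci $\delta=0$ or $A,B,C$ mutually proportional require no separate treatment, since $(\star)$ is a polynomial identity in the $\alpha_i$ as well and so holds for all values once it holds generically.
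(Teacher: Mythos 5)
Your proof is correct, and it diverges from the paper's in a genuinely useful way after the (shared) first step. Like the paper, you substitute the closed forms of $\mathbf{S}_{i}$, $\mathbf{T}_{j}$, $\mathbf{R}$ from Lemmas \ref{L1}, \ref{L2}, \ref{L5}, \ref{L7}, \ref{R1} into (\ref{3F}) and clear the common denominator $(1-u_{1})(1-u_{2})(1-u_{3})ABC$, so both arguments must verify the same cubic polynomial identity: the paper states it as $M=ABC$, and your $(\star)$ is exactly that identity rewritten in the variables $v_{i}=1-u_{i}$. The paper then proves $M=ABC$ by direct expansion, organized around the splittings $A=(1-u_{1})+A_{1}$, $B=(1-u_{2})+B_{1}$, $C=(1-u_{3})+C_{1}$ and three unobvious auxiliary identities (first $\alpha_{1}A_{1}+\alpha_{2}B_{1}+\alpha_{3}C_{1}=0$, then a quadratic relation collapsing to $\alpha_{1}\alpha_{2}\alpha_{3}(u_{1}-u_{2}+u_{2}-u_{3}+u_{3}-u_{1})^{2}=0$, and finally a cubic relation whose right side is $A_{1}B_{1}C_{1}$). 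You instead exploit the structural observation that in the $v$-variables the three denominators are translates of a single linear form, $A=W+\delta v_{1}$, $B=W+\delta v_{2}$, $C=W+\delta v_{3}$ with $W=\alpha_{1}v_{1}+\alpha_{2}v_{2}+\alpha_{3}v_{3}$: the difference $\Phi$ is then homogeneous cubic and cyclically symmetric, its restriction to $\{A=0\}$ equals $\delta^{2}v_{1}\cdot(v_{1}A)\vert_{A=0}=0$ (your bracket-equals-$v_{1}A$ expansion checks out), so $A$, and by symmetry $B$ and $C$, divide $\Phi$; a degree count and one evaluation at $(v_{1},v_{2},v_{3})=(1,0,0)$ then force $\Phi\equiv 0$, with genericity in the $\alpha_{i}$ legitimately covering the degenerate loci since $\Phi$ is polynomial in the $\alpha_{i}$ as well. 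What each buys: your route explains \emph{why} the identity holds and replaces the paper's three guessed groupings by a single short expansion, at the cost of invoking divisibility by linear forms and a Zariski-density argument (both elementary here); the paper's route is purely mechanical algebra with no such apparatus, but gives no insight and its intermediate identities would be hard to reconstruct without knowing them in advance. A minor strengthening of your last remark: pairwise non-proportionality of $A,B,C$ is automatic whenever $\delta\neq 0$ (if $A=\lambda B$ then $A-B=\delta(v_{1}-v_{2})$ would force $B$ proportional to $v_{1}-v_{2}$, giving $\alpha_{3}=0$ and $1-\alpha_{3}=0$ simultaneously), so the only genericity you truly need is $\delta\neq 0$, $\alpha_{1}\neq 0$, $\beta_{1}\neq 0$.
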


\begin{proof}
Substitute in (\ref{3F}) formulas (\ref{L2.0})--(\ref{L2.2}),
(\ref{L7.0})--(\ref{L7.2}), (\ref{T2.1}) and get a true equality. We note that methods were
used to overcome the technical difficulties encountered in the proof. Denote%
{\footnotesize
\begin{equation*}
A:=1-(1-\alpha_{2}-\alpha_{3}) u_{1}-\alpha_{2}u_{2}-\alpha_{3}u_{3}=
1-u_{1}+\alpha_{2}(u_{1}-u_{2})+\alpha_{3}(u_{1}-u_{3})=1-u_{1}+A_{1};
\end{equation*}%
\begin{equation*}
B:=1-\alpha_{1}u_{1}-(1-\alpha_{1}-\alpha_{3})u_{2}-\alpha_{3}u_{3}=
1-u_{2}-\alpha_{1}(u_{1}-u_{2})+\alpha_{3}(u_{2}-u_{3})=1-u_{2}+B_{1};
\end{equation*}
\begin{equation*}
C:=1-\alpha_{1}u_{1}-\alpha_{2}u_{2}-(1-\alpha_{1}-\alpha_{2})u_{3}=
1-u_{3}-\alpha_{1}(u_{1}-u_{3})-\alpha_{2}(u_{2}-u_{3})=1-u_{3}+C_{1}.
\end{equation*}}
Then%
\begin{equation*}
\mathbf{S}_{1}+\mathbf{S}_{2}+\mathbf{S}_{3}-(\mathbf{T}_{1}+\mathbf{T}_{2}+%
\mathbf{T}_{3})+\mathbf{R=}\frac{M}{(1-u_{1})(1-u_{2})(1-u_{3})ABC},
\end{equation*}%
where after some simple calculations we get
\begin{equation*}
M=ABC-A_{1}B_{1}C_{1}+
\end{equation*}%
\begin{equation*}
+((-\alpha_{1}-\alpha_{2}-\alpha_{3})(\alpha_{1}(1-u_{2})(1-u_{3})A_{1}+\alpha_{2}(1-u_{1})(1-u_{3})B_{1}+
\alpha_{3}(1-u_{1})(1-u_{2})C_{1})+
\end{equation*}%
\begin{equation*}
+(-\alpha_{2}-\alpha_{3}) (1-u_{1})B_{1}C_{1}+(-\alpha_{1}-\alpha_{3}) (1-u_{2})A_{1}C_{1}+(-\alpha_{1}-
\alpha_{2}) (1-u_{3})A_{1}B_{1}.
\end{equation*}%
It is easy to show that%
\begin{equation*}
\alpha_{1}A_{1}+\alpha_{2}B_{1}+\alpha_{3}C_{1}=0.
\end{equation*}%
Somewhat more complicated was to see that%
\begin{equation*}
(\alpha_{1}+\alpha_{2}+\alpha_{3})((\alpha_{1}(u_{2}+u_{3})A_{1}+\alpha_{2}(u_{1}+u_{3})B_{1}+
\alpha_{3}(u_{1}+u_{2})C_{1})
\end{equation*}%
\begin{equation*}
-(\alpha_{2}+\alpha_{3})
B_{1}C_{1}-(\alpha_{1}+\alpha_{3})A_{1}C_{1}-(\alpha_{1}+\alpha_{2})
A_{1}B_{1}=
\end{equation*}%
\begin{equation*}
=\alpha_{1}\alpha_{2}\alpha_{3}(u_{1}-u_{2}+u_{2}-u_{3}+u_{3}-u_{1})^{2}=0.
\end{equation*}%
After that, representing
\begin{equation*}
A_{1}=u_{1}(\alpha {2}+\alpha_{3})-\alpha_{2}u_{2}-\alpha_{3}u_{3}=pu_{1}-\alpha _{2}u_{2}-\alpha _{3}u_{3};
\end{equation*}%
\begin{equation*}
B_{1}=-\alpha_{1}u_{1}+u_{2}(\alpha_{1}+\alpha_{3})-\alpha_{3}u_{3}=-\alpha_{1}u_{1}+qu_{2}-\alpha_{3}u_{3};
\end{equation*}%
\begin{equation*}
C_{1}=-\alpha_{1}u_{1}-\alpha_{2}u_{2}+u_{3}(\alpha_{1}+\alpha_{2})=-\alpha_{1}u_{1}-\alpha_{2}u_{2}+ru_{3}
\end{equation*}
after some calculations we find that
\begin{equation*}
(-\alpha_{1}-\alpha_{2}-\alpha_{3})(\alpha_{1}u_{2}u_{3}A_{1}+\alpha_{2}u_{1}u_{3}B_{1}+\alpha_{3}u_{1}u_{2}C_{1})+
\end{equation*}%
\begin{equation*}
(\alpha_{2}+\alpha_{3}) u_{1}B_{1}C_{1}+(\alpha_{1}+\alpha_{3}) u_{2}A_{1}C_{1}+(\alpha_{1}+\alpha_{2})
u_{3}A_{1}B_{1}=A_{1}B_{1}C_{1},
\end{equation*}%
which implies that
\begin{equation*}
M=ABC.
\end{equation*}
\end{proof}

\begin{remark}
This lemma allows us to assert a further conclusions about our results that
we did not only check hard combinatorial identities, but also demonstrated a
new multiple combinatorial identity is from the theory of functions in
$\mathbb{C}^{n}.$
\end{remark}

\subsection{Proof of Theorems \ref{T1} and \ref{T2}}

\begin{theorem}
\label{T1.1}
The following formula is valid
\begin{equation*}
R+S_{1}(s_{1},s_{2},s_{3};\alpha_{2},\alpha_{3}) +S_{1}(s_{2},s_{1},s_{3};\alpha_{1},\alpha_{3})+
S_{1}(s_{3},s_{1},s_{2};\alpha_{1},\alpha_{2}) -
\end{equation*}%
\begin{equation*}
-(T_{1}(s_{1},s_{2},s_{3};\alpha_{1},\alpha_{2},\alpha_{3})
+ T_{1}(s_{1},s_{3},s_{2};\alpha_{2},\alpha_{3},\alpha_{1})+
  T_{1}(s_{1},s_{3},s_{2};\alpha_{1},\alpha_{3},\alpha_{2}))=1
\end{equation*}
\end{theorem}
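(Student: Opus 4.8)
The plan is to read off the claimed identity as the coefficient-by-coefficient version of the single generating-function identity already established in Lemma~\ref{L8}. Indeed, the whole machinery of Lemmas~\ref{L1}--\ref{R1} was set up precisely so that the seven quantities $S_1,S_2,S_3,T_1,T_2,T_3,R$ appearing in the rewriting~(\ref{3F}) of~(\ref{2F}) become the Taylor coefficients of the explicit rational functions $\mathbf{S}_i,\mathbf{T}_i,\mathbf{R}$ in the variables $u_1,u_2,u_3$. Since $(1-u_1)^{-1}(1-u_2)^{-1}(1-u_3)^{-1}=\sum_{s_1,s_2,s_3\ge 0}u_1^{s_1}u_2^{s_2}u_3^{s_3}$ is the generating function of the constant sequence $1$, the desired identity is nothing but the statement that the coefficients of $u_1^{s_1}u_2^{s_2}u_3^{s_3}$ on the two sides of~(\ref{SR}) agree.

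Concretely, first I would extract $[u_1^{s_1}u_2^{s_2}u_3^{s_3}]$ from each side of~(\ref{SR}). By the uniqueness of coefficients of formal power series (\textbf{Rule~1}), two series in $L$ are equal if and only if all their coefficients coincide, so this extraction is legitimate and loses no information. On the right-hand side the coefficient equals $1$ for every $(s_1,s_2,s_3)$. On the left-hand side, by the definitions $\mathbf{S}_i(u_1,u_2,u_3)=\sum S_i(s_1,s_2,s_3)u_1^{s_1}u_2^{s_2}u_3^{s_3}$, and likewise for $\mathbf{T}_i$ and $\mathbf{R}$, the coefficient is exactly $S_1(s_1,s_2,s_3)+S_2(s_1,s_2,s_3)+S_3(s_1,s_2,s_3)-(T_1+T_2+T_3)(s_1,s_2,s_3)+R(s_1,s_2,s_3)$.

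It remains only to rewrite the summands $S_2,S_3$ and $T_2,T_3$ back in terms of $S_1$ and $T_1$ with permuted arguments. This is immediate from the way they were introduced in~(\ref{3F}): by the symmetry of the defining sums one has $S_2(s_1,s_2,s_3)=S_1(s_2,s_1,s_3;\alpha_1,\alpha_3)$ and $S_3(s_1,s_2,s_3)=S_1(s_3,s_1,s_2;\alpha_1,\alpha_2)$, and correspondingly $T_2,T_3$ are the values of $T_1$ at the permuted argument lists displayed in the statement. Substituting these identifications yields precisely the claimed formula, with the right-hand side equal to $1$.

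Finally, I would stress that the genuine work has already been done: the hard part is not this coefficient extraction but the verification, carried out in Lemma~\ref{L8}, that after clearing denominators the numerator $M$ collapses to $ABC$. That reduction rests on the two auxiliary cancellations $\alpha_1A_1+\alpha_2B_1+\alpha_3C_1=0$ and the vanishing of the quadratic term equal to $\alpha_1\alpha_2\alpha_3(u_1-u_2+u_2-u_3+u_3-u_1)^2=0$. Granting Lemma~\ref{L8}, the present theorem is a routine reading-off of coefficients and requires no further estimates.
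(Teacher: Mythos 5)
Your proposal is correct and follows essentially the same route as the paper: the paper's own proof also takes the generating-function identity (\ref{SR}) from Lemma \ref{L8} and extracts the coefficient of $u_{1}^{s_{1}}u_{2}^{s_{2}}u_{3}^{s_{3}}$ (via the $\mathbf{res}$ operator and the geometric-series expansion of $\prod_i(1-u_i)^{-1}$), obtaining the value $1$ for every $(s_1,s_2,s_3)$. Your additional remark identifying $S_2,S_3,T_2,T_3$ as permuted-argument instances of $S_1,T_1$ is exactly the identification implicit in the paper's decomposition (\ref{3F}), so nothing is missing.
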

\begin{proof}[Proof Therem \ref{T1}]
Using the generating function (\ref{SR}) we have
\begin{equation*}
\left( \sum_{i=1}^{3}S_{i}-\sum_{j=1}^{3}T_{j}+R\right)_{s}(\alpha )=%
\mathbf{res}_{u_{1}u_{2}u_{3}}\frac{u_{1}^{-s_{1}-1}u_{2}^{-s_{2}-1}u_{3}^{-s_{3}-1}}{(1-u_{1})(1-u_{2})(1-u_{3})}=
\end{equation*}%
\begin{equation*}
=\mathbf{res}_{u_{1}u_{2}u_{3}}\left( 1+\sum_{j_{1}=1}^{\infty}u_{1}^{j_{1}}\right)
\left( 1+\sum_{j_{2}=1}^{\infty }u_{2}^{j_{3}}\right)
\left( 1+\sum_{j_{3}=1}^{\infty }u_{3}^{j_{3}}\right)
u_{1}^{-s_{1}-1}u_{2}^{-s_{2}-1}u_{3}^{-s_{3}-1}=1.
\end{equation*}%
\medskip \medskip
\end{proof}
%\subsection{Proof of Theorem} \ref{T2} and}
%\textbf{Proof of Theorem} \ref{T2}

\begin{theorem}
If the complex parameters $z_{1},\ldots ,z_{n}$ satisfy the relation
\begin{equation*}
z_{1}+\ldots +z_{n}=1,
\end{equation*}%
then for any values\textit{\ }$s_{1},\ldots ,s_{n}=0,1,2,\ldots $ the
following identity is valid%
\begin{equation*}
z_{1}^{s_{1}+1}\sum_{j_{2}=0}^{s_{2}}\ldots \sum_{j_{n}=0}^{s_{n}}\binom{s_{1}+\sum_{i\neq 1}j_{i}}
{s_{1},j_{2},\ldots ,j_{n}}z_{2}^{j_{2}}\cdots
z_{n}^{j_{n}}+\ldots +
\end{equation*}
\begin{equation*}
+z_{n}^{s_{n}+1}\sum_{j_{1}=0}^{s_{1}}\ldots \sum_{j_{n-1}=0}^{s_{n-1}}%
\binom{s_{n}+\sum_{i\neq n}j_{i}}{s_{n},j_{1},\ldots ,j_{n-1}}%
z_{1}^{j_{1}}\cdots z_{n-1}^{j_{n-1}}=1.
\end{equation*}
\end{theorem}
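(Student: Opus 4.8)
The plan is to prove the identity directly by the method of coefficients, forming the full generating function of the left-hand side in auxiliary variables $u_1,\dots,u_n$ dual to the parameters $s_1,\dots,s_n$, and showing it collapses to $\prod_{\ell=1}^{n}(1-u_\ell)^{-1}$, whose coefficient of $u_1^{s_1}\cdots u_n^{s_n}$ is $1$. Write the $i$-th summand as
\[
A_i(s_1,\dots,s_n)=z_i^{s_i+1}\sum_{\substack{0\le j_\ell\le s_\ell\\ \ell\ne i}}\binom{s_i+\sum_{\ell\ne i}j_\ell}{s_i,(j_\ell)_{\ell\ne i}}\prod_{\ell\ne i}z_\ell^{j_\ell},
\]
and set $\mathbf{A}_i(u)=\sum_{s_1,\dots,s_n\ge0}A_i\,u_1^{s_1}\cdots u_n^{s_n}$, all manipulations being understood as identities of formal power series in $u_1,\dots,u_n$ with the $z_\ell$ as parameters.

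First I would sum over each outer index $s_\ell$ with $\ell\ne i$. Since the inner summation constrains $0\le j_\ell\le s_\ell$, the geometric sum $\sum_{s_\ell\ge j_\ell}u_\ell^{s_\ell}=u_\ell^{j_\ell}(1-u_\ell)^{-1}$ frees the index $j_\ell$ to range over all of $\mathbb{Z}_{\ge0}$ and produces a factor $(1-u_\ell)^{-1}$. The key step is then the multinomial generating identity
\[
\sum_{(j_\ell)_{\ell\ne i}\ge0}\binom{s_i+\sum_{\ell\ne i}j_\ell}{s_i,(j_\ell)_{\ell\ne i}}\prod_{\ell\ne i}w_\ell^{j_\ell}=\Bigl(1-\sum_{\ell\ne i}w_\ell\Bigr)^{-s_i-1},
\]
applied with $w_\ell=z_\ell u_\ell$. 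After this the remaining sum over $s_i$ is geometric with ratio $u_iz_i\bigl(1-\sum_{\ell\ne i}z_\ell u_\ell\bigr)^{-1}$, and summing it gives the compact closed form
\[
\mathbf{A}_i(u)=\frac{z_i}{D\,\prod_{\ell\ne i}(1-u_\ell)},\qquad D:=1-\sum_{\ell=1}^{n}z_\ell u_\ell,
\]
the denominator $D$ being the same for every $i$ because $1-\sum_{\ell\ne i}z_\ell u_\ell-u_iz_i=D$.

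Finally I would add the $\mathbf{A}_i$. Using $\prod_{\ell\ne i}(1-u_\ell)^{-1}=(1-u_i)\prod_{\ell=1}^n(1-u_\ell)^{-1}$, the sum becomes $\bigl(D\prod_\ell(1-u_\ell)\bigr)^{-1}\sum_{i=1}^n z_i(1-u_i)$. This is exactly where the hypothesis $z_1+\dots+z_n=1$ enters: it gives $\sum_i z_i(1-u_i)=\sum_i z_i-\sum_i z_iu_i=1-(1-D)=D$, so the factor $D$ cancels and $\sum_{i=1}^n\mathbf{A}_i(u)=\prod_{\ell=1}^n(1-u_\ell)^{-1}$. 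Extracting the coefficient of $u_1^{s_1}\cdots u_n^{s_n}$ yields $\sum_{i=1}^n A_i=1$ for every choice of the $s_\ell$, and since each $A_i$ is a polynomial in $z_1,\dots,z_n$ the identity holds for all complex $z_\ell$ on the hyperplane $\sum_\ell z_\ell=1$.

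I would expect the only genuinely delicate point to be bookkeeping in the first reduction, namely correctly interchanging the outer sums over $s_\ell$ with the truncated inner sum so that the truncation dissolves into the factors $(1-u_\ell)^{-1}$; the collapse in the last paragraph, and in particular the role of the normalization $\sum_\ell z_\ell=1$, is then immediate. For $n=3$ this reproduces Theorem~\ref{T1} (equivalently Theorem~\ref{T1.1}) through a single uniform computation rather than through the separate evaluations of $\mathbf{S}_i$, $\mathbf{T}_j$ and $\mathbf{R}$.
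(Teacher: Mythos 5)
Your proposal is correct and follows essentially the same route as the paper's proof: both form the generating function of the left-hand side in dual variables, arrive at the identical closed form $z_i\bigl(1-\sum_\ell z_\ell u_\ell\bigr)^{-1}\prod_{\ell\neq i}(1-u_\ell)^{-1}$ for each summand, and use $\sum_i z_i(1-u_i)=1-\sum_i z_iu_i$ to collapse the total to $\prod_\ell(1-u_\ell)^{-1}$. The only difference is bookkeeping: the paper reverses the inner indices $j_\ell\mapsto s_\ell-j_\ell$ and works through the $\mbox{\bf res}$-operator calculus (integral representation of the multinomial coefficient plus the substitution rule), whereas you dissolve the truncation $j_\ell\le s_\ell$ by summing the outer geometric series in $s_\ell$ first — the underlying computation is the same.
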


\begin{proof}[Proof Therem \ref{T2}]
Denoting the sum on the left side (\ref{K2}) by
$S_{s}(z):=S_{s_1 \ldots s_n}(z_{1},\ldots ,z_{n}),$ we find
in closed form the generating function for the sequence
$\{S_{s}(z) \}_{s_{i}\geq 0}$
\begin{equation}
T(t, z):=\sum_{s_{i}\geq 0}S_{s}(z)t_{1}^{s_{1}}\ldots t_{n}^{s_{n}}.
\label{K3}
\end{equation}%
To do this, using the coefficients we find the beginning of integral
representation for each summand of the left side (\ref{K2}).
For example,%
\begin{equation*}
R_{1}:=z_{1}^{s_{1}+1}\sum_{j_{2}=0}^{s_{2}}\ldots \sum_{j_{n}=0}^{s_{n}}
\binom{s_{1}+\sum_{i\neq 1}j_{i}}{s_{1},j_{2},\ldots ,j_{n}}
z_{2}^{j_{2}}\cdots z_{n}^{j_{n}}=
\end{equation*}%
\begin{equation*}
=z_{1}^{s_{1}+1}\sum_{j_{2}=0}^{s_{2}}\ldots \sum_{j_{n}=0}^{s_{n}}
\binom{s_{1}+\sum_{i\neq 1}(s_{i}-j_{i})}{s_{1},s_{2}-j_{2},\ldots, s_{n}-j_{n}}z_{2}^{s_{2}-j_{2}}\cdots z_{n}^{s_{n}-j_{n}}=
\end{equation*}%
\begin{equation*}
=z_{1}\sum_{j_{2}=0}^{\infty }\ldots \sum_{j_{n}=0}^{\infty }
\mathbf{res}_{x_{1}\ldots x_{n}}\frac{\left( 1-\sum_{i}z_{i}x_{i}\right)^{-1}}
{x_{1}^{s_{1}+1}x_{2}^{s_{2}-j_{2}+1}\ldots x_{n}^{s_{n}-j_{n}+1}}=
\end{equation*}%
(summation over the indices $j_{2},\ldots ,j_{n}$ by the formula of the sum of a
geometric progression)%
\begin{equation}
=z_{1}\mathbf{res}_{x_{1}\ldots x_{n}}\frac{1}{1-\sum_{i}z_{i}x_{i}}%
\prod\nolimits_{i\neq 1}(1-x_{i})^{-1}\times\prod\nolimits_{i}x_{i}^{-s_{i}-1}.
\label{K4}
\end{equation}%
Then%
\begin{equation*}
P_{1}(t) :=\sum_{s_{i}\geq 0}R_{1}t_{1}^{s_{1}}\ldots t_{n}^{s_{n}}=
\end{equation*}%
\begin{equation*}
=z_{1}\sum_{s_{i}\geq 0}\mathbf{res}_{x_{1}\ldots x_{n}}\frac{1}{1-\sum_{i}z_{i}x_{i}}
\prod\nolimits_{i\neq 1}(1-x_{i})^{-1}\times \prod\nolimits_{i}x_{i}^{-s_{i}-1}t_{i}^{s_{i}}=
\end{equation*}%
(summation over the indices $s_{1},\ldots ,s_{n}:$ rule of substitution,
replace $x_{1}=t_1,\ldots,\,x_{n}=t_{n}$)
\begin{equation}
=\frac{z_{1}}{1-\sum_{i}z_{i}t_{i}}\prod\nolimits_{i\neq 1}(1-t_{i})^{-1}.
\label{K5}
\end{equation}%
Thus%
\begin{equation*}
T(t) =\frac{\{z_{1}(1-t_{1}) +\ldots +z_{n}(1-t_{n})\}}
{1-\sum_{i}z_{i}t_{i}}\prod\nolimits_{i}(1-t_{i})^{-1}
\end{equation*}%
\begin{equation*}
=\frac{\sum_{i}z_{i}-\sum_{i}z_{i}t_{i}}{1-\sum_{i}t_{i}z_{i}}\prod\nolimits_{i}(1-t_{i})^{-1}
\end{equation*}%
\begin{equation*}
(considering\text{ }(\ref{K1}))
\end{equation*}%
\begin{equation}
=\frac{1-\sum_{i}z_{i}t_{i}}{1-\sum_{i}z_{i}t_{i}}\prod\nolimits_{i}(1-t_{i})^{-1}=
\prod\nolimits_{i}(1-t_{i})^{-1}.
\label{K6}
\end{equation}%
Thus, in accordance with (\ref{K2}) and (\ref{K6}) we have for all $s_{i}\geq 0,$ $i=1,\ldots ,n,$
\begin{equation*}
S_{s}(z):=\mathbf{res}_{t_{1}\ldots t_{n}}T( t,z)t_{1}^{-s_{1}-1}\ldots t_{n}^{-s_{n}-1}=
\mathbf{res}_{t_{1}\ldots t_{n}}\prod\nolimits_{i}(1-t_{i})^{-1}t_{1}^{-s_{i}-1}=
\end{equation*}%
\begin{equation*}
=\mathbf{res}_{t_{1}\ldots t_{n}}\left( 1+\sum_{j_{1}=1}^{\infty}t_{1}^{j_{1}}\right) \ldots
\left( 1+\sum_{j_{n}=1}^{\infty}t_{n}^{j_{n}}\right) \left( 1+\sum_{j_{1}=1}^{\infty }t_{1}^{j_{1}}\right)
\prod\nolimits_{i}t_{1}^{-s_{i}-1}=1.\cdot
\end{equation*}
\end{proof}
\begin{remark}
%\noindent
The Theorem \ref{T1} is a special case of the Theorem \ref{T2} for $n=3.$%
\end{remark}
\bigskip

\subsection{The new short calculation of Zeilberger and Krivokolesko combinatorial sums and its some applications}

\noindent

In \cite{th} a theory of integral representations of holomorphic functions
in a linearly convex domains $D\subset \mathbb{C}^{n}$ with a piecewise
regular boundary, which allowed V.P. Krivokolesko in \cite{th1} to find a
series of combinatorial identities for a certain family of integral
parameters of $D.$ For example,%
\begin{equation*}
\alpha _{1}^{s_{1}+1}\sum_{k=0}^{s_{2}}\sum_{l=0}^{s_{3}}\frac{(s_{1}+k+l)!}{%
s_{1}!k!l!}\alpha _{2}^{k}\alpha _{3}^{l}+\alpha
_{2}^{s_{2}+1}\sum_{k=0}^{s_{1}}\sum_{l=0}^{s_{3}}\frac{(s_{2}+k+l)!}{%
s_{2}!k!l!}\alpha _{1}^{k}\alpha _{3}^{l}+
\end{equation*}%
\begin{equation}
+\alpha _{3}^{s_{3}+1}\sum_{k=0}^{s_{1}}\sum_{l=0}^{s_{2}}\frac{(s_{3}+k+l)!%
}{s_{3}!k!l!}\alpha _{1}^{k}\alpha _{2}^{l}=1,
\label{A1}
\end{equation}%
where numeric parameters $\alpha _{1},\alpha _{2},\,\alpha _{3}$ satisfy
equality $\alpha _{1}+\alpha _{2}+\alpha _{3}=1.$ V.P. Krivokolesko raised
the question of finding a simple proof of the following identity (\ref{A2}),
which generalizes the identity (\ref{A1}):
\begin{equation*}
z_{1}^{s_{1}+1}\sum_{j_{2}=0}^{s_{2}}\ldots \sum_{j_{n}=0}^{s_{n}}\binom{%
s_{1}+\sum_{i\neq 1}j_{i}}{s_{1},j_{2},\ldots ,j_{n}}z_{2}^{j_{2}}\cdots
z_{n}^{j_{n}}+\ldots
\end{equation*}%
\begin{equation}
+z_{n}^{s_{n}+1}\sum_{j_{1}=0}^{s_{1}}\ldots \sum_{j_{n-1}=0}^{s_{n-1}}%
\binom{s_{n}+\sum_{i\neq n}j_{i}}{s_{n},j_{1},\ldots ,j_{n-1}}%
z_{1}^{j_{1}}\cdots z_{n-1}^{j_{n-1}}=1,
\label{A2}
\end{equation}%
for all values {}{}of the parameters\textit{\ }$s_{1},\ldots
,s_{n}=0,1,2,\ldots ,$ where the numerical parameters $z_{1},\ldots ,z_{n}$
satisfy the equation%
\begin{equation}
z_{1}+\ldots +z_{n}=1.
 \label{A3}
\end{equation}%
Particular cases (\ref{A2}) is the identity of V.L. Shelkovich in quantum
field theory \cite{Shell82} and probability of identity in the theory of D.
Zeilberger dilations (the wavelet theory) \cite{Zeil83}.

Here in Theorem 1 using the coefficients we found elegant formula integral
representation (generating function) for the sum of (\ref{A4}), which is a
natural generalization of the sum in the left-hand side of (\ref{A2}). It is
possible to find a short analytic proof of the desired identity (\ref{A2})
(Lemma \ref{le2}). Moreover found some interesting recurrence relations for the
calculation of multiple combinatorial sums of different types (Lemma \ref{le3} and
\ref{le4}).

\begin{theorem}
\label{theo1}
Let
\begin{equation*}
S_{s}(z;\alpha):=z_{1}^{s_{1}+1}\sum_{j_{2}=0}^{s_{2}}\ldots
\sum_{j_{n}=0}^{s_{n}}\binom{\alpha +s_{1}+\sum_{i\neq 1}j_{i}}
{\alpha,s_{1},j_{2},\ldots ,j_{n}}z_{2}^{j_{2}}\cdots z_{n}^{j_{n}}+\ldots +
\end{equation*}%
\begin{equation}
+z_{n}^{s_{n}+1}\sum_{j_{1}=0}^{s_{1}}\ldots \sum_{j_{n-1}=0}^{s_{n-1}}%
\binom{\alpha +s_{n}+\sum_{i\neq n}j_{i}}
{\alpha ,s_{n},j_{1},\ldots ,j_{n-1}}z_{1}^{j_{1}}\cdots z_{n-1}^{j_{n-1}},
\label{A4}
\end{equation}%
where\, the\, real\, parameter $\alpha \geq 0$ and\, the\, complex\, parameters\,
$z_{1},\ldots ,z_{n}$\, satisfy\, the\, condition.
(\ref{A3})
Then the generating function
\begin{equation}
T_{\alpha }(t) :=\sum_{s_{i}\geq 0}S_{s}(z;\alpha)t_{1}^{s_{1}}\ldots
t_{n}^{s_{n}}
\label{A5}
\end{equation}
for the sequence $\{S_{s}(z;\alpha)\}_{s_{i}\geq 0}$ has the form
\begin{equation}
T_{\alpha }(t)
=(1-\sum_{i}z_{i}t_{i})^{-\alpha}\prod\limits_{i}(1-t_{i})^{-1}.
\label{A6}
\end{equation}
\end{theorem}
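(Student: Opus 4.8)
The plan is to follow the proof of Theorem~\ref{T2} essentially verbatim, replacing the kernel $(1-\sum_{i}z_{i}x_{i})^{-1}$ everywhere by its $\alpha$-shifted analogue $(1-\sum_{i}z_{i}x_{i})^{-\alpha-1}$. The single new ingredient I need is the generalized multinomial expansion
\begin{equation*}
\left(1-\sum_{i}z_{i}x_{i}\right)^{-\alpha-1}=\sum_{k_{1},\ldots,k_{n}\geq 0}\binom{\alpha+\sum_{i}k_{i}}{\alpha,k_{1},\ldots,k_{n}}\prod_{i}(z_{i}x_{i})^{k_{i}},
\end{equation*}
which follows by expanding $(1-w)^{-\alpha-1}=\sum_{m\geq 0}\binom{\alpha+m}{m}w^{m}$ with $w=\sum_{i}z_{i}x_{i}$, applying the ordinary multinomial theorem to $w^{m}$, and using the identity $\binom{\alpha+m}{m}\binom{m}{k_{1},\ldots,k_{n}}=\binom{\alpha+\sum_{i}k_{i}}{\alpha,k_{1},\ldots,k_{n}}$ with $m=\sum_{i}k_{i}$. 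Reading off coefficients then gives, for the typical summand, the integral representation
\begin{equation*}
\binom{\alpha+s_{1}+\sum_{i\neq 1}m_{i}}{\alpha,s_{1},m_{2},\ldots,m_{n}}z_{1}^{s_{1}}\prod_{i\neq 1}z_{i}^{m_{i}}=\mathbf{res}_{x_{1}\ldots x_{n}}\frac{(1-\sum_{i}z_{i}x_{i})^{-\alpha-1}}{x_{1}^{s_{1}+1}\prod_{i\neq 1}x_{i}^{m_{i}+1}},
\end{equation*}
the exact analogue of the kernel in (\ref{K4}).

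Next I would reverse each inner summation index, $j_{i}\mapsto s_{i}-j_{i}$, exactly as in the passage producing (\ref{K4}); then $x_{i}^{-(s_{i}-j_{i})-1}=x_{i}^{-s_{i}-1}x_{i}^{j_{i}}$, and summing the geometric series $\sum_{j_{i}\geq 0}x_{i}^{j_{i}}=(1-x_{i})^{-1}$ under the $\mathbf{res}$ sign turns the first summand $R_{1}$ into
\begin{equation*}
R_{1}=z_{1}\,\mathbf{res}_{x_{1}\ldots x_{n}}\frac{(1-\sum_{i}z_{i}x_{i})^{-\alpha-1}}{x_{1}^{s_{1}+1}}\prod_{i\neq 1}\frac{(1-x_{i})^{-1}}{x_{i}^{s_{i}+1}}.
\end{equation*}
Forming the generating function $P_{1}(t)=\sum_{s_{i}\geq 0}R_{1}t_{1}^{s_{1}}\cdots t_{n}^{s_{n}}$ and summing over every $s_{i}$ by the substitution rule (\textbf{Rule 3}), which replaces $x_{i}$ by $t_{i}$, yields $P_{1}(t)=z_{1}(1-\sum_{i}z_{i}t_{i})^{-\alpha-1}\prod_{i\neq 1}(1-t_{i})^{-1}$. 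By symmetry the $k$-th summand contributes $P_{k}(t)=z_{k}(1-\sum_{i}z_{i}t_{i})^{-\alpha-1}\prod_{i\neq k}(1-t_{i})^{-1}$, so that
\begin{equation*}
T_{\alpha}(t)=\sum_{k=1}^{n}P_{k}(t)=\left(1-\sum_{i}z_{i}t_{i}\right)^{-\alpha-1}\prod_{i}(1-t_{i})^{-1}\sum_{k=1}^{n}z_{k}(1-t_{k}).
\end{equation*}

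Finally I would invoke the hypothesis (\ref{A3}), $\sum_{i}z_{i}=1$, to collapse the trailing factor: $\sum_{k}z_{k}(1-t_{k})=\sum_{k}z_{k}-\sum_{k}z_{k}t_{k}=1-\sum_{k}z_{k}t_{k}$, which cancels one power of $(1-\sum_{i}z_{i}t_{i})$ and produces (\ref{A6}), namely $T_{\alpha}(t)=(1-\sum_{i}z_{i}t_{i})^{-\alpha}\prod_{i}(1-t_{i})^{-1}$. The only genuinely delicate point is the generalized multinomial expansion of the first paragraph: for non-integer $\alpha$ the coefficients must be read through the Gamma function and the factorial identity must be checked to survive, and one must justify, as in the $\alpha=0$ case, both the extension of the summation ranges to infinity and the interchange of the infinite summations with the $\mathbf{res}$ operator inside the field of formal Laurent series. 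Everything after that is the routine bookkeeping already rehearsed in the proof of Theorem~\ref{T2}.
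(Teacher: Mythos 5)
Your proposal is correct and follows essentially the same route as the paper's own proof: integral representation of each summand with the kernel $(1-\sum_{i}z_{i}x_{i})^{-\alpha-1}$ after the index reversal $j_{i}\mapsto s_{i}-j_{i}$, geometric-series summation under the $\mbox{\bf res}$ sign, the substitution rule $x_{i}=t_{i}$ to form $P_{k}(t)$, and the final collapse $\sum_{k}z_{k}(1-t_{k})=1-\sum_{k}z_{k}t_{k}$ using (\ref{A3}). The only difference is that you spell out the generalized multinomial expansion justifying the kernel, a step the paper leaves implicit.
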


\begin{proof}
Following the scheme of computation method of integral first find an
integral representation for each summand $R_{k}(z),k=1,...,n$ в
(\ref{A4}). For example,
\begin{equation*}
R_{1}(z):=z_{1}^{s_{1}+1}\sum_{j_{2}=0}^{s_{2}}\ldots \sum_{j_{n}=0}^{s_{n}}
\binom{\alpha +s_{1}+\sum_{i\neq 1}j_{i}}{\alpha ,s_{1},j_{2},\ldots ,j_{n}}
z_{2}^{j_{2}}\cdots z_{n}^{j_{n}}=
\end{equation*}%
\begin{equation*}
=z_{1}^{s_{1}+1}\sum_{j_{2}=0}^{s_{2}}\ldots \sum_{j_{n}=0}^{s_{n}}
\binom{\alpha +s_{1}+\sum_{i\neq 1}(s_{i}-j_{i})}
{\alpha,s_{1},s_{2}-j_{2},\ldots ,s_{n}-j_{n}}z_{2}^{s_{2}-j_{2}}\cdots
z_{n}^{s_{n}-j_{n}}=
\end{equation*}%
\begin{equation*}
=z_{1}\sum_{j_{2}=0}^{s_{2}}\ldots \sum_{j_{n}=0}^{s_{n}}
\mbox{\bf res}_{x}\frac{(1-\sum_{i}z_{i}x_{i})^{-\alpha -1}}
{x_{1}^{s_{1}+1}x_{2}^{s_{2}-j_{2}+1}\ldots x_{n}^{s_{n}-j_{n}+1}}=
\end{equation*}%
\begin{equation*}
(the\,rule\,linearly\,\mbox{\bf res}_{x}:\mathbf{\ }\,entry\, under\, the \,sign\, of\, the\,
sum\, sign\, \mbox{\bf res}_{x})
\end{equation*}%
\begin{equation*}
=z_{1}\mbox{\bf res}_{x}\{\sum_{j_{2}=0}^{\infty }\ldots
\sum_{j_{n}=0}^{\infty }\frac{(1-\sum_{i}z_{i}x_{i})^{-\alpha -1}}
{x_{1}^{s_{1}+1}x_{2}^{s_{2}-j_{2}+1}\ldots x_{n}^{s_{n}-j_{n}+1}}\}=
\end{equation*}%
\begin{equation*}
(summation\, over\, indices\,j_{2},\ldots ,j_{n}\,formula\, of\, the\, sum\,of\, a \,geometric\, progression)
\end{equation*}%
\begin{equation*}
=z_{1}\mbox{\bf res}_{x}(1-\sum_{i}z_{i}x_{i})^{-\alpha
-1}\prod\limits_{i\neq 1}(1-x_{i})^{-1}\prod\limits_{i}x_{i}^{-s_{i}-1}.
\end{equation*}
Thence%
\begin{equation*}
P_{1}(t,z):=\sum_{s_{i}\geq 0}R_{1}(z)t_{1}^{s_{1}}\ldots t_{n}^{s_{n}}=
\end{equation*}%
\begin{equation*}
=z_{1}\sum_{s_{i}\geq 0}\mbox{\bf res}_{x}(1-\sum_{i}z_{i}x_{i})^{-\alpha
-1}\prod\nolimits_{i\neq
1}(1-x_{i})^{-1}\prod\nolimits_{i}x_{i}^{-s_{i}-1}t_{i}^{s_{i}}=
\end{equation*}%
\begin{equation*}
(summation\, over\, indices\, s_{1},\ldots ,s_{n}:
\end{equation*}%
\begin{equation*}
rule\,of\, substitution\, for\,the\,operator\,
\mbox{\bf res}\,change\, x_{i}=t_{i},\,i=1,...,n)
\end{equation*}%
\begin{equation*}
=z_{1}(1-\sum_{i}z_{i}t_{i})^{-\alpha -1}\prod\nolimits_{i\neq 1}(1-t_{i})^{-1}
=\frac{z_{1}(1-t_{1})}{1-\sum_{i}z_{i}t_{i}}%
\prod\nolimits_{i}(1-t_{i})^{-1}.
\end{equation*}%
Thus,%
\begin{equation*}
T(t,z):=\sum_{i}P_{i}(t,z)=\frac{z_{1}(1-t_{1})+\ldots+z_{n}(1-t_{n})}
{(1-\sum_{i}z_{i}t_{i})^{\alpha +1}}\prod\nolimits_{i}(1-t_{i})^{-1}=
\end{equation*}%
\begin{equation}
=\frac{\sum_{i}z_{i}-\sum_{i}z_{i}t_{i}}{(1-\sum_{i}z_{i}t_{i})^{\alpha +1}}
\prod\nolimits_{i}(1-t_{i})^{-1},
\label{A6a}
\end{equation}%
and subject to (\ref{A3}) we obtain the required formula (\ref{A6}):
\begin{equation*}
T_{\alpha }(t):=[T(t,z)]_{z_{1}+\ldots z_{n}=1}=\frac{1-\sum_{i}z_{i}t_{i}}
{(1-\sum_{i}z_{i}t_{i})^{\alpha +1}}\prod\nolimits_{i}(1-t_{i})^{-1}=
\end{equation*}%
\begin{equation*}
=(1-\sum_{i}z_{i}x_{i})^{-\alpha }\prod\nolimits_{i}(1-t_{i})^{-1}.
\end{equation*}
\end{proof}

\begin{lemma}
\label{le2}
The identity (\ref{A2}) is valid.
\end{lemma}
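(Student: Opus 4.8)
The plan is to obtain identity (\ref{A2}) as the degenerate case $\alpha = 0$ of Theorem \ref{theo1}, whose proof has already supplied the decisive closed-form generating function. First I would observe that when $\alpha = 0$ the generalized multinomial coefficient $\binom{\alpha + s_1 + \sum_{i \neq 1} j_i}{\alpha, s_1, j_2, \ldots, j_n}$ appearing in the summand (\ref{A4}) has a vanishing first part and therefore collapses to $\binom{s_1 + \sum_{i \neq 1} j_i}{s_1, j_2, \ldots, j_n}$, since $0! = 1$. Consequently $S_s(z;0)$ coincides term by term with the left-hand side of (\ref{A2}), and the hypothesis (\ref{A3}) on the $z_i$ is exactly the standing assumption of (\ref{A2}). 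So it suffices to show $S_s(z;0) = 1$ for all $s_i \geq 0$.

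Next I would substitute $\alpha = 0$ into the closed-form expression (\ref{A6}) provided by Theorem \ref{theo1}. Since $\left(1 - \sum_i z_i t_i\right)^{-\alpha}$ reduces to $1$ at $\alpha = 0$, the generating function simplifies to
$$T_0(t) = \prod\nolimits_i (1 - t_i)^{-1}.$$
Finally I would extract the coefficients by applying the res operator, writing $S_s(z;0) = \mathbf{res}_{t_1 \ldots t_n}\, T_0(t)\, t_1^{-s_1-1} \cdots t_n^{-s_n-1}$ and expanding each factor as the geometric series $(1 - t_i)^{-1} = 1 + \sum_{j_i \geq 1} t_i^{j_i}$, all of whose coefficients equal $1$. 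The coefficient of $t_1^{s_1} \cdots t_n^{s_n}$ in the product is then $1$ for every choice of nonnegative exponents, so $S_s(z;0) = 1$, which is precisely identity (\ref{A2}).

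I expect essentially no genuine obstacle, because all the analytic labour — constructing the integral representations of the individual summands $R_k(z)$, performing the geometric summations over the indices $j_i$, and carrying out the substitution $x_i = t_i$ under the res sign — has already been discharged in the proof of Theorem \ref{theo1}. The only point requiring care is the bookkeeping of the first step: one must confirm that the multinomial coefficient at $\alpha = 0$ truly reproduces the coefficients of (\ref{A2}) without an off-by-one or sign discrepancy, and that no hidden dependence on $\alpha$ survives in the summation ranges. Once this is checked, the lemma follows immediately from the $\alpha = 0$ specialization and a one-line coefficient extraction.
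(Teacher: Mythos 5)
Your proposal is correct and follows exactly the paper's own route: specializing Theorem \ref{theo1} to $\alpha=0$ so that $T_{0}(t)=\prod_{i}(1-t_{i})^{-1}$, then extracting the coefficient of $t_{1}^{s_{1}}\cdots t_{n}^{s_{n}}$ by geometric-series expansion under the $\mbox{\bf res}$ operator to get $S_{s}(z;0)=1$. The only difference is that you make explicit the (routine) check that the multinomial coefficient at $\alpha=0$ reduces to that of (\ref{A2}), which the paper leaves implicit.
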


\begin{proof}
If we denote the left-hand side of the identity(\ref{A2}) after$S_{s}(z),$
then $S_{s}(z):=S_{s}(z;0)$, and according to$(\ref{A6}) $ we have for all $
s_{i}\geq 0,\,i=1,\ldots ,n,$
\begin{equation*}
S_{s}(z):=\mbox{\bf res}_{t}T_{0}(t)t_{1}^{-s_{1}-1}\ldots t_{n}^{-s_{n}-1}=%
\mbox{\bf res}_{t} \prod\nolimits_{i}(1-t_{i})^{-1}t_{1}^{-s_{i}-1}=
\end{equation*}%
\begin{equation*}
=\mbox{\bf res}_{t}(1+\sum_{j_{1}=1}^{\infty}t_{1}^{j_{1}})\cdot
\ldots\cdot(1+\sum_{j_{n}=1}^{\infty}t_{n}^{j_{n}})
\prod\nolimits_{i}t_{1}^{-s_{i}-1}=1.
\end{equation*}
\end{proof}

\begin{lemma}
\label{le3}
The following identity is valid for $S_{s}(z;t):$
\begin{equation*}
S_{s}(z;\alpha +1)-z_{1}S_{s_{1}-1,s_{2},\ldots ,s_{n}}(z;\alpha+1)-
\ldots -z_{n}S_{s_{1},s_{2},\ldots ,s_{n}-1}(z;\alpha +1)=
\end{equation*}
\begin{equation}
=S_{s}(z;\alpha) ,\,\alpha=0,1,2,\ldots .
\label{A7}
\end{equation}
In particular, (\ref{A7}) for $\alpha =0$ by \ref{le2}
and relation $S_{s}(z;0):=S_{s}(z) =1$ we have the following recursive
formula\textit{:}%
\begin{equation}
S_{s}(z;1)=1+z_{1}S_{s_{1}-1,s_{2},\ldots ,s_{n}}(z;1) +\ldots
+z_{n}S_{s_{1},s_{2},\ldots ,s_{n}-1}(z;1).
\label{A8}
\end{equation}
\end{lemma}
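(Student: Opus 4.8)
The plan is to prove the recurrence at the level of generating functions and then extract coefficients via the Removal rule (Rule~1). Recall from Theorem~\ref{theo1} that the generating function of the sequence $\{S_{s}(z;\alpha)\}$ is
\[
T_{\alpha}(t)=\Bigl(1-\sum_{i}z_{i}t_{i}\Bigr)^{-\alpha}\prod_{i}(1-t_{i})^{-1}.
\]
The key observation is that replacing $s_{i}$ by $s_{i}-1$ inside a multiple generating function amounts to multiplication by $t_{i}$, provided we adopt the convention that $S_{\ldots,s_{i}-1,\ldots}(z;\alpha+1)=0$ whenever $s_{i}=0$. Indeed, under this convention,
\[
\sum_{s\geq 0}S_{s_{1},\ldots,s_{i}-1,\ldots,s_{n}}(z;\alpha+1)\,t_{1}^{s_{1}}\cdots t_{n}^{s_{n}}=t_{i}\,T_{\alpha+1}(t).
\]

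First I would form the generating function of the entire left-hand side of (\ref{A7}). By linearity of coefficient extraction it equals
\[
T_{\alpha+1}(t)-\sum_{i}z_{i}t_{i}\,T_{\alpha+1}(t)=\Bigl(1-\sum_{i}z_{i}t_{i}\Bigr)T_{\alpha+1}(t).
\]
Substituting the explicit formula (\ref{A6}) for $T_{\alpha+1}(t)$ collapses one factor of $\bigl(1-\sum_{i}z_{i}t_{i}\bigr)^{-1}$, giving
\[
\Bigl(1-\sum_{i}z_{i}t_{i}\Bigr)\Bigl(1-\sum_{i}z_{i}t_{i}\Bigr)^{-\alpha-1}\prod_{i}(1-t_{i})^{-1}=\Bigl(1-\sum_{i}z_{i}t_{i}\Bigr)^{-\alpha}\prod_{i}(1-t_{i})^{-1}=T_{\alpha}(t).
\]
Thus the generating functions of the two sides of (\ref{A7}) coincide, and the Removal rule (Rule~1) yields the desired coefficient-wise identity (\ref{A7}) for every $\alpha=0,1,2,\ldots$.

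Finally, (\ref{A8}) follows by specializing $\alpha=0$: by Lemma~\ref{le2} together with the defining relation $S_{s}(z;0):=S_{s}(z)=1$, the right-hand side of (\ref{A7}) equals $1$, and rearranging the resulting equation produces the stated recursion for $S_{s}(z;1)$.

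The main obstacle I anticipate is purely bookkeeping: one must verify that the index shift $s_{i}\mapsto s_{i}-1$ introduces no spurious boundary contributions, i.e.\ that the terms carrying a negative index indeed vanish, so that the multiplication-by-$t_{i}$ identity holds exactly as stated. Once this convention is fixed, the algebraic cancellation of the factor $1-\sum_{i}z_{i}t_{i}$ is immediate, and the remainder is a direct appeal to the completeness of coefficient extraction encoded in Rule~1.
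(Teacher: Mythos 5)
Your proposal is correct and is essentially the paper's own argument run in the opposite direction: the paper writes $S_{s}(z;\alpha)=\mbox{\bf res}_{t}\,T_{\alpha}(t)\prod_{i}t_{i}^{-s_{i}-1}$, factors $T_{\alpha}(t)=\bigl(1-\sum_{i}z_{i}t_{i}\bigr)T_{\alpha+1}(t)$, and expands by linearity of $\mbox{\bf res}$ to recognize the shifted terms, which is exactly your identity $(1-\sum_{i}z_{i}t_{i})T_{\alpha+1}(t)=T_{\alpha}(t)$ read at the coefficient level (and your vanishing convention for negative indices is what the paper's $\mbox{\bf res}$ formalism enforces automatically, since the residue of a power series times $t_{i}^{0}$ is zero). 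The deduction of (\ref{A8}) from $\alpha=0$ via Lemma \ref{le2} is likewise the same.
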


\begin{proof}
According to $(\ref{A5})$ and $(\ref{A6})$ for any $s_{i}=0,1.2,\ldots $ we
have
\begin{equation*}
S_{s}(z;\alpha ):=\mbox{\bf res}_{t}T_{\alpha }(t)\left(\prod\nolimits_{i}t_{i}^{-s_{i}-1}\right)=
\end{equation*}%
\begin{equation}
=\mbox{\bf res}_{t_{1},\ldots ,t_{n}}(1-\sum_{i}z_{i}t_{i})^{-\alpha}
\prod\nolimits_{i}(1-t_{i})^{-1}t_{i}^{-s_{i}-1}=
\label{A9}
\end{equation}
\begin{equation*}
=\mbox{\bf res}_{t_{1},\ldots ,t_{n}}\{(1-\sum_{i}z_{i}t_{i})^{-\alpha-1}
\prod\nolimits_{i}(1-t_{i})^{-1}\}
(1-\sum_{i}z_{i}t_{i})(\prod\nolimits_{i}t_{i}^{-s_{i}-1})=
\end{equation*}
\begin{equation*}
=\mbox{\bf res}_{t_{1},\ldots ,t_{n}}T_{\alpha +1}(t)
(1-\sum_{i}z_{i}t_{i})(\prod\nolimits_{i}t_{i}^{-s_{i}-1})=
\end{equation*}%
\begin{equation*}
(by\,\, linearity\,\,of\,\,the\,\,operator\,\,\mbox{\bf res}_{t})
\end{equation*}
\begin{equation*}
=\mbox{\bf res}_{t_{1},\ldots ,t_{n}}T_{\alpha+1}(t)
\prod\nolimits_{i}t_{i}^{-s_{i}-1}-z_{1}\mbox{\bf res}_{t_{1},\ldots,t_{n}}
T_{\alpha +1}(t)t_{1}^{-(s_{1}-1)-1}
(\prod\nolimits_{i\neq 1}t_{i}^{-s_{i}-1})-\ldots -
\end{equation*}%
\begin{equation*}
-z_{n}\mbox{\bf res}_{t_{1},\ldots ,t_{n}}T_{\alpha+1}(t)t_{n}^{-(s_{n}-1)-1}
(\prod\nolimits_{i\neq n}t_{i}^{-s_{i}-1})=
\end{equation*}%
\begin{equation*}
(according\text{ }to\text{ }the\text{ }definition\text{ }(\ref{A9}))
\end{equation*}%
\begin{equation*}
=S_{s}(z;\alpha +1)-z_{1}S_{s_{1}-1,s_{2},\ldots ,s_{n}}(z;\alpha +1)-\ldots
-z_{1}S_{s_{1},s_{2},\ldots ,s_{n}-1}(z;\alpha +1).
\end{equation*}%
\smallskip
\end{proof}

\begin{lemma}
\label{le4}
\textit{Let }$S_{s}(z;\alpha ,\beta) $\textit{\ be the numbers
determined by the formula (\ref{A4}), where the parameter }
$\alpha \geq 0$%
\textit{, and the complex parameters }$z_{1},\ldots ,z_{n},\beta $\textit{\
satisfy the condition}%
\begin{equation}
z_{1}+\ldots +z_{n}=\beta .
\label{A10a}
\end{equation}%
\textit{Obviously,}%
\begin{equation}
S_{s}(z;\alpha ,1):=S_{s}(z;\alpha) ,\text{ }%
S_{s}(z;0,1):=S_{s}(z),
\label{A11}
\end{equation}%
\begin{equation}
S_{s}(z_{1},\ldots z_{n};\alpha ,\beta)=S_{s}
(z_{1}/\beta,\ldots z_{n}/\beta ;\alpha ),\text{ }if\text{ }\beta \neq 0.
\label{A12}
\end{equation}%
\textit{Then the generating function }%
\begin{equation}
T_{\alpha ,\beta }(t):=\sum_{s_{i}\geq 0}S_{s}
(z;\alpha,\beta) t_{1}^{s_{1}}\ldots t_{n}^{s_{n}}
\label{A13}
\end{equation}%
\textit{for the sequence }$\{S_{s}(z;\alpha ,\beta)\}_{s_{i}\geq 0}$\textit{\ has the form}%
\begin{equation}
T_{\alpha ,\beta }(t) =(\beta-\sum_{i}z_{i}t_{i})
(1-\sum_{i}z_{i}t_{i})^{-\alpha-1}\prod\nolimits_{i}(1-t_{i})^{-1},
\label{A14}
\end{equation}%
\textit{and the following formula for the number of }$S_{s}(z;\alpha,\beta) :$
\begin{equation}
S_{s}(z;\alpha ,\beta)=(\beta -1)S_{s}(z;\alpha +1,1)
+S_{s}(z;\alpha ,1) ,\text{ }\alpha=1,2,\ldots .
\label{A15}
\end{equation}
\textit{In other words, if }$\beta \notin \{0,1\},$\textit{\ then in view of
(\ref{A11}) and (\ref{A12}) the following recursive formula for the number of}
$S_{s}(z;\alpha)$\textit{\ is valid:}
\begin{equation}
S_{s}(\beta z;\alpha +1)=\frac{1}{\beta -1}\left(S_{s}(z;\alpha)-S_{s}(\beta z;\alpha)\right),
\text{ }\alpha =1,2,\ldots .
\label{A16}
\end{equation}%
\smallskip
\end{lemma}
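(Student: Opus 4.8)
The plan is to route everything through the generating function already computed inside the proof of Theorem~\ref{theo1}, rather than wrestling with the multiple sum (\ref{A4}) directly. The key observation is that formula (\ref{A6a}) was derived \emph{without} using the normalization (\ref{A3}): it gives the generating function of the numbers defined by (\ref{A4}) for arbitrary $z$ as
$$T(t,z)=\frac{\sum_i z_i-\sum_i z_i t_i}{(1-\sum_i z_i t_i)^{\alpha+1}}\prod\nolimits_i(1-t_i)^{-1}.$$
Hence, once we impose only the weaker condition (\ref{A10a}), i.e. $\sum_i z_i=\beta$, the numerator collapses to $\beta-\sum_i z_i t_i$, and (\ref{A14}) is obtained at once as $T_{\alpha,\beta}(t)=T(t,z)\big|_{\sum_i z_i=\beta}$. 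So the first, essentially cost-free, step is simply to re-read (\ref{A6a}) and specialize $\sum_i z_i=\beta$ instead of $\sum_i z_i=1$.

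For (\ref{A15}) the plan is a one-line algebraic splitting of this numerator. Writing $w:=\sum_i z_i t_i$ we have $\beta-w=(\beta-1)\cdot 1+(1-w)$, whence
$$T_{\alpha,\beta}(t)=(\beta-1)(1-w)^{-\alpha-1}\prod\nolimits_i(1-t_i)^{-1}+(1-w)^{-\alpha}\prod\nolimits_i(1-t_i)^{-1}=(\beta-1)T_{\alpha+1,1}(t)+T_{\alpha,1}(t).$$
Extracting the coefficient of $t_1^{s_1}\cdots t_n^{s_n}$ on both sides via (\ref{M12}) then yields (\ref{A15}) directly. This uses only the linearity of $\mathbf{res}$, the same mechanism that produced the recursion (\ref{A7}) in Lemma~\ref{le3}; the case $\beta=1$ is trivial, so the content is carried entirely by the split above.

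The remaining identities are bookkeeping built on top of these two. Relations (\ref{A11}) hold by definition, since setting $\beta=1$ in (\ref{A10a}) recovers (\ref{A3}), so $S_s(z;\alpha,1)$ is literally $S_s(z;\alpha)$ and the case $\alpha=0$ gives $S_s(z;0,1)=S_s(z)$; the scaling relation (\ref{A12}) I would obtain from the substitution $z_i\mapsto z_i/\beta$ in (\ref{A4}), which carries the constraint $\sum_i z_i=\beta$ to $\sum_i(z_i/\beta)=1$. Finally (\ref{A16}) follows from (\ref{A15}) by using (\ref{A11}) to rename the two right-hand terms, solving for $S_s(z;\alpha+1)$, applying (\ref{A12}) to rewrite $S_s(z;\alpha,\beta)$ as $S_s(z/\beta;\alpha)$, and substituting $z\mapsto\beta z$. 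The main obstacle is precisely (\ref{A12}) together with the meaning of the symbol $S_s(\cdot;\gamma,1)$ off the hyperplane $\sum_i z_i=1$: the numbers fixed by (\ref{A4}) admit two distinct extensions away from that hyperplane — the raw polynomial value, and the coefficient of the kernel $(1-\sum_i z_i t_i)^{-\gamma}\prod_i(1-t_i)^{-1}$ — which agree only when $\sum_i z_i=1$. Making the rescalings in (\ref{A12}) and (\ref{A16}) rigorous thus requires fixing once and for all which extension is intended and checking that the substitution $z\mapsto\beta z$ is compatible with it; this consistency check, not the generating-function computation, is where the genuine care is required.
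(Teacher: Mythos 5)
Your handling of (\ref{A14}) and (\ref{A15}) is exactly the paper's proof: the paper also notes that (\ref{A6a}) was derived before the normalization (\ref{A3}) was used, specializes $\sum_i z_i=\beta$ (i.e.\ (\ref{A10a})) to get (\ref{A14}), then splits the numerator, $\beta-\sum_i z_it_i=(\beta-1)+(1-\sum_i z_it_i)$, to obtain $T_{\alpha,\beta}(t)=(\beta-1)T_{\alpha+1}(t)+T_{\alpha}(t)$ (its formula (\ref{A17})), finishing by equating coefficients of $t_1^{s_1}\cdots t_n^{s_n}$. For the two assertions the paper actually proves, your argument and the paper's coincide.

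Your closing caveat, however, is not a routine rigor check to be postponed: it is decisive, and the consistency check you defer would in fact fail. The paper offers no proof of (\ref{A12}) or (\ref{A16}) (they are declared obvious, resp.\ immediate from (\ref{A11})--(\ref{A12})), and (\ref{A12}) is false as stated under both extensions you name. Take $n=2$, $s=(0,0)$ and any $\alpha$: formula (\ref{A4}) gives $S_{0,0}(z;\alpha)=z_1+z_2$, so the left-hand side of (\ref{A12}) equals $z_1+z_2=\beta$, while the right-hand side equals $z_1/\beta+z_2/\beta=1$. Likewise (\ref{A16}) fails at $n=2$, $s=(0,0)$, $\alpha=1$: its left side is $\beta$ (polynomial values) or $1$ (kernel coefficients), while its right side is $-1$, respectively $0$. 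What the coefficient extraction from (\ref{A17}) genuinely proves is (\ref{A15}) with $S_s(z;\gamma,1)$ read as the coefficient of $t_1^{s_1}\cdots t_n^{s_n}$ in $(1-\sum_i z_it_i)^{-\gamma}\prod_i(1-t_i)^{-1}$ (the kernel extension), not as the value of the polynomial (\ref{A4}) at a point with $\sum_i z_i=\beta\neq 1$; with that reading (\ref{A15}) is correct, and (\ref{A11}) is indeed definitional. So your plan proves everything the paper's proof proves; but no choice of extension will make the rescaling identities (\ref{A12}) and (\ref{A16}) true as written --- the polynomial (\ref{A4}) is inhomogeneous in $z$, so compensating powers of $\beta$ are unavoidable in any corrected version --- and neither your proposal nor the paper establishes them.
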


\begin{proof}
The formula (\ref{A14}) immediately from (\ref{A6a}) using the equation (\ref{A10a}).
 Thus, we have
\begin{equation*}
T_{\alpha ,\beta }(t)=(\beta-\sum_{i}z_{i}t_{i})
(1-\sum_{i}z_{i}t_{i})^{-\alpha-1}\prod\nolimits_{i}(1-t_{i})^{-1}=
\end{equation*}%
\begin{equation*}
=(\beta -1)(1-\sum_{i}z_{i}t_{i})^{-\alpha-1}
\prod\nolimits_{i}(1-t_{i})^{-1}+(1-\sum_{i}z_{i}t_{i})^{-\alpha}
\prod\nolimits_{i}(1-t_{i})^{-1}:=
\end{equation*}
(by formula (\ref{A6}))
\begin{equation}
=(\beta -1)T_{\alpha +1}(t)+T_{\alpha }(t).
\label{A17}
\end{equation}%
Equating the coefficients of the monomials $t_{1}^{s_{1}}\ldots
t_{n}^{s_{n}} $ in the right and left sides of (\ref{A17}), we obtain
(\ref{A15}).
\end{proof}

%\section{ 5. Calculation of multiple combinatorial sums in the theory of
%golomorphic functions in $\mathbb{C}^{n} $\protect\medskip}
%\section{\textit{6. Integral representation and computation a multiple sum
%in the theory of cubature formulas}\protect\bigskip }
\section{Integral representation and computation of a multiple sum
in the theory of cubature formulas}

Let $\alpha =(\alpha_{0},\alpha_{1},\ldots ,\alpha_{d}),$
$\beta=(\beta_{0},\beta_{1},\ldots ,\beta_{d})$ be vectors from
$E^{d+1}$ with integer non-negative coordinats, and the vector
$\gamma=(\gamma_{0},\gamma_{1},\ldots ,\gamma_{d})\in E^{d+1}.$
Denote
\begin{equation*}
|\mathbf{\alpha }|:=\alpha_{0}+\alpha_{1}+\ldots +\alpha_{d}=2s+1,\,
\mathbf{\alpha }!:=\alpha_{0}!\alpha_{1}!\ldots \alpha_{d}!,\,
\binom{\mathbf{\alpha }}{\mathbf{\beta }}:=
\binom{\alpha_{0}}{\beta_{0}}\ldots \binom{\alpha_{d}}{\beta_{d}},
\end{equation*}
 where
\begin{equation*}
\binom{a}{b}:=\frac{\Gamma (a+1)}{\Gamma (b+1)\,\Gamma (a-b+1)},
\text{and}\,\binom{a}{b}:=0,\,\text {if}\, b\geq a+1.
\end{equation*}

Moreover, we write $\mathbf{\alpha }-1/2:=(\alpha _{0}-1/2,\alpha_{1}-1/2,\ldots ,\alpha_{d}-1/2).$

Heo S. and Xu Y. (\cite{XeoXu(2000)}, the identity (2.9), p.~631--635) with
the help of the method of generating functions and the
difference-differential operators of various type, found not simple
proof the following multiple combinatorial identity (\cite{XeoXu(2000)}, identity (2.9)):
%{XeoXu(2000)}
\begin{equation*}
2^{2s}\mathbf{\alpha}!\binom{\alpha +\gamma}{\alpha}=
\sum_{j=0}^{s}(-1)^{j}\binom{d+\sum_{i=0}^{d}(\alpha_{i}+\gamma_{i})}{j}\times
\end{equation*}
\begin{equation}
\times\sum_{\beta_{0}+\beta_{1}+ldots+\beta_{d}=s-j}
\prod_{i=0}^{d}\binom{\beta_{i}+\gamma_{i}}{\beta_{i}}(2\beta_{i}+\gamma_{i}+1)^{\alpha_{i}}.
\label{KK1}
\end{equation}

The purpose of the given section is finding a new simple proof of identity
(\ref{KK1}) by means of the method of coefficients and multiple applying of
known theorem on the total sum of residues.

\subsection {The proof of the identity (\ref{KK1})}
%\paragraph{\textbf{The proof of the identity\ }(\protect\ref{KK1})}

It is possible to copy the identity (\ref{KK1})  in the form of
\begin{equation*}
\sum_{j=0}^{s}(-1)^{j}\binom{d+\sum_{i=0}^{d}(\alpha_{i}+\gamma_{i})}{j}
\times
\end{equation*}%
\begin{equation}
\!\times\sum_{\beta_{0}+\beta_{1}+...+\beta_{d}=s-j}\prod_{i=0}^{d}\binom{\beta_{i}+\gamma_{i}}{\beta_{i}}
\frac{(2\beta_{i}+\gamma_{i}+1)^{\alpha _{i}}}{(\alpha_{i})!}
\!=2^{2s}\prod_{i=0}^{d}\binom{\alpha_{i}+\gamma_{i}}{\alpha_{i}}.
\label{KK2}
\end{equation}%
Let's enter use following designations for the right part of identity (\ref{KK2}):
\begin{equation}
T(s; \alpha,\beta):=\sum_{j=0}^{s}(-1)^{j}\binom{\sum_{i=0}^{d}
(\alpha_{i}+\gamma_{i})+d}{j}\cdot S_{j},
\label{KK3}
\end{equation}%
where%
\begin{equation}
S_{j}:=\sum_{\beta_{0}+\beta_{1}+...+\beta_{d}=s-j}\,\prod_{i=0}^{d}
\binom{\beta_{i}+\mu_{i}}{\beta_{i}}\frac{(2\beta_{i}+\mu_{i}+1)^{\alpha_{i}}}
{(\alpha_{i})!}.
\label{KK4}
\end{equation}%
Then by means of the method of coefficients we receive%
\begin{equation*}
S_{j}=\sum_{|\beta|=s-j}\,\prod_{i=0}^{d}\binom{\beta_{i}+\gamma_{i}}
{\beta_{i}}\frac{(2\beta_{i}+\gamma_{i}+1)^{\alpha_{i}}}{(\alpha_{i})!}=
\end{equation*}%
\begin{equation*}
=\sum_{\beta_{0}=0}^{\infty}\ldots \sum_{\beta_{d}=0}^{\infty}
\mbox{\bf res}_{z_{0},\ldots,z_{d},t}(t^{-s+j-1}\prod_{i=0}^{d}
(1-tz_{i})^{-\gamma_{i}-1}z_{i}^{-\beta _{i}-1})\times
\end{equation*}%
\begin{equation*}
\times \mbox{\bf res}_{w_{0},\ldots ,w_{d}}\left(\prod_{i=0}^{d}
w_{i}^{-\alpha_{i}-1}\exp (w_{i}(2\beta_{i}+\gamma_{i}+1)\right)=
\end{equation*}%
\begin{equation*}
=\mbox{\bf res}_{w_{0},\ldots,w_{d},t}\{t^{-s+j-1}
(\prod_{i=0}^{d}w_{i}^{-\alpha_{i}-1}\exp (w_{i}(\gamma_{i}+1))\times
\end{equation*}%
\begin{equation*}
\times \prod_{i=0}^{d}\left(\sum_{\beta_{i}=0}^{\infty}\exp (\beta_{i}(2w_{i}))
\mbox{\bf res}_{z_{i}}\left(1-tz_{i})^{-\gamma_{i}-1}z_{i}^{-\beta_{i}-1}\right)\right)\}=
\end{equation*}%
\begin{equation*}
(\text{the\, summation\,by\,each}\,\beta_{i},\text{and}\,\mbox{\bf res}_{z_i}\,i=0,\ldots,\,d:
\end{equation*}%
\begin{equation*}
\text{the\,\, substitution\, of\, the\, rule\,of\,\,changes}\,\, z_{i}=\exp (2w_{i}),\,i=0,\ldots ,d)
\end{equation*}%
\begin{equation*}
=\mbox{\bf res}_{w_{0},\ldots
,w_{d},t}\{t^{-s+j-1}(\prod_{i=0}^{d}w_{i}^{-\alpha_{i}-1}\exp (w_{i}(\gamma _{i}+1))
\cdot\prod_{i=0}^{d}(1-t\exp (2w_{i}))^{-\gamma _{i}-1}\}=
\end{equation*}%
\begin{equation*}
=\mbox{\bf res}_{w_{0},\ldots, w_{d},t}\{t^{-s+j-1}
\prod_{i=0}^{d}w_{i}^{-\alpha_{i}-1}\left(\exp(-w_{i})-t\exp (w_{i})\right)^{-\gamma_{i}-1}\},
\end{equation*}%
i.e.%
\begin{equation}
S_{j}=\mbox{\bf res}_{w_{0},\ldots ,w_{d},t}\{t^{-s+j-1}
\prod_{i=0}^{d}w_{i}^{-\alpha_{i}-1}(\exp(-w_{i})-t\exp (w_{i}))^{-\gamma _{i}-1}\}.
\label{KK5}
\end{equation}%
According to (\ref{KK3})--(\ref{KK5}) we received%
\begin{equation*}
T(s;\alpha,\beta) =\sum_{j=0}^{s}\mbox{\bf res}_{w_{0},\ldots ,w_{d},t}
\{t^{-s+j-1}\prod_{i=0}^{d}w_{i}^{-\alpha_{i}-1}(\exp (-w_{i})-t\exp (w_{i}))^{-\gamma _{i}-1}\}
\times
\end{equation*}%
\begin{equation*}
\times \mbox{\bf res}_{x}\{x^{-j-1}(1-x)^{d+\sum_{i=0}^{d}(\alpha_{i}+\gamma_{i})}\}=
\end{equation*}%
\begin{equation*}
=\mbox{\bf res}_{w_{0},\ldots, w_{d},t}\{t^{-s-1}\prod_{i=0}^{d}
w_{i}^{-\alpha_{i}-1}(\exp(-w_{i})-t\exp (w_{i}))^{-\gamma_{i}-1}\times
\end{equation*}%
\begin{equation*}
\times (\sum_{j=0}^{\infty }t^{j}\mbox{\bf res}_{x}\{x^{-j-1}
(1-x)^{d+\sum_{i=0}^{d}(\alpha_{i}+\gamma_{i})})\}=
\end{equation*}%
\begin{equation*}
(the\text{ }summation\text{ }by\text{ }j,\text{ }and\text{ }\mbox{\bf res}_{x}:
the\text{ }substitution\text{ }rule,\text{ }the\text{ }change\text{ }x=t)
\end{equation*}%
\begin{equation*}
=\mbox{\bf res}_{w_{0},\ldots ,w_{d},t}\{t^{-s-1}\prod_{i=0}^{d}w_{i}^{-\alpha _{i}-1}
(\exp(-w_{i})-t\exp (w_{i}))^{-\gamma_{i}-1}(1-t)^{d+\sum_{i=0}^{d}(\alpha _{i}+\gamma _{i})}\}.
\end{equation*}%
Thus we proved

\begin{lemma}
\label{AprL1}
Let parameters $s,\alpha_{0},\alpha_{1},\ldots ,\alpha_{d},\beta_{0},\beta_{1},\ldots ,\beta_{d}$ be the integer non-negative
numbers, for which $\alpha_{0}+\ldots +\alpha_{d}=2s+1,$ \textit{and the }%
vector $(\mu_{0},\mu_{1},\ldots ,\mu_{d})\in \mathbb{R}^{d+1}.$ Then the following integral formula is valid:
\begin{equation*}
\sum_{j=0}^{s}(-1)^{j}\binom{d+\sum_{i=0}^{d}(\alpha_{i}+\gamma _{i})}
{j}\sum_{\beta_{0}+\beta_{1}\ldots +\beta_{d}=s-j}\prod_{i=0}^{d}
\binom{\beta_{i}+\gamma_{i}}{\beta _{i}}\frac{(2\beta_{i}+\gamma_{i}+1)^{\alpha_{i}}}
{(\alpha_{i})!}=
\end{equation*}%
\begin{equation*}
=\mbox{\bf res}_{w_{0},\ldots,w_{d},t}\left\{t^{-s-1}\prod_{i=0}^{d}w_{i}^{-\alpha_{i}-1}
(\exp(-w_{i})-t\exp (w_{i}))^{-\gamma _{i}-1}\right.\times
\end{equation*}%
\begin{equation}
\left.\times (1-t)^{d+\sum_{i=0}^{d}
(\alpha_{i}+\gamma_{i})}\right\}.
\label{KK6}
\end{equation}
\end{lemma}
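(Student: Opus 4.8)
The plan is to apply the method of coefficients in two stages, mirroring the two layers of summation in (\ref{KK2}): first I would turn the inner sum $S_j$ over the compositions $\beta_0+\cdots+\beta_d=s-j$ into a multiple formal residue, and only then absorb the outer alternating sum over $j$ weighted by $(-1)^j\binom{D}{j}$, where $D:=d+\sum_{i=0}^d(\alpha_i+\gamma_i)$.

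For the inner sum I would introduce two families of integral representations. Each binomial factor is written, as in (\ref{M3}), as $\binom{\beta_i+\gamma_i}{\beta_i}=\mathbf{res}_{z_i}\{(1-tz_i)^{-\gamma_i-1}z_i^{-\beta_i-1}\}$, where the auxiliary variable $t$ serves as a marker: multiplying by $t^{-s+j-1}$ and taking $\mathbf{res}_t$ enforces the constraint $\sum_i\beta_i=s-j$ automatically. Each power factor is written through an exponential generating function in the spirit of (\ref{M4}), namely $(2\beta_i+\gamma_i+1)^{\alpha_i}/\alpha_i!=\mathbf{res}_{w_i}\{w_i^{-\alpha_i-1}\exp(w_i(2\beta_i+\gamma_i+1))\}$. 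After interchanging the summations over the $\beta_i$ with the residue operators (Rule 2), the only $\beta_i$-dependent factors are $z_i^{-\beta_i-1}$ and $\exp(2w_i)^{\beta_i}$, so each inner geometric series collapses by the substitution rule (\ref{M16}) under the change $z_i=\exp(2w_i)$. A short simplification using $\exp(-w_i)-t\exp(w_i)=\exp(-w_i)(1-t\exp(2w_i))$ then merges the surviving $\exp(w_i(\gamma_i+1))$ with $(1-t\exp(2w_i))^{-\gamma_i-1}$ into $(\exp(-w_i)-t\exp(w_i))^{-\gamma_i-1}$, yielding the compact representation $S_j=\mathbf{res}_{w_0,\ldots,w_d,t}\{t^{-s+j-1}\prod_i w_i^{-\alpha_i-1}(\exp(-w_i)-t\exp(w_i))^{-\gamma_i-1}\}$, i.e. formula (\ref{KK5}).

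For the outer sum I would encode the alternating binomial weight as $(-1)^j\binom{D}{j}=\mathbf{res}_x\{x^{-j-1}(1-x)^D\}$ and observe that $S_j=0$ for $j>s$, so the sum over $j$ may be extended to infinity without changing its value. Pulling the common factor $t^{-s-1}$ out front, the remaining $j$-summation is $\sum_{j\geq0}t^j\,\mathbf{res}_x\{x^{-j-1}(1-x)^D\}$, which again collapses by the substitution rule (\ref{M16}) under the change $x=t$ to give $(1-t)^D$. Substituting this back produces exactly the claimed integral representation (\ref{KK6}).

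The routine algebra is light; the real care lies in justifying the two term-by-term interchanges of infinite summation with the $\mathbf{res}$ operators and, in particular, in invoking the substitution rule when $\gamma_i$ (equivalently the $\mu_i$) are real rather than integral, so that $(1-t\exp(2w_i))^{-\gamma_i-1}$ and $(1-x)^D$ are genuine formal power series and not polynomials. This is precisely the situation covered by the version of Rule 3 in which the inserted series belongs to $L_1$, and it is this step, rather than any individual computation, that I expect to be the main obstacle to a fully rigorous argument.
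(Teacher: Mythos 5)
Your proposal is correct and coincides with the paper's own proof of this lemma: the same integral representations (binomial factors via $\mathbf{res}_{z_i}\{(1-tz_i)^{-\gamma_i-1}z_i^{-\beta_i-1}\}$ with $t$ as the marker enforcing $\sum_i\beta_i=s-j$, power factors via $\mathbf{res}_{w_i}$ of exponentials), the same collapse of the $\beta_i$-sums by the substitution $z_i=\exp(2w_i)$ yielding (\ref{KK5}), and the same treatment of the outer sum through $(-1)^j\binom{D}{j}=\mathbf{res}_x\{x^{-j-1}(1-x)^{D}\}$ followed by the change $x=t$. There is no meaningful difference in route, so nothing further to compare.
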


\begin{remark}. It is easy to see, that a calculation of multiple integral in the right
part (\ref{KK6}) on variables $t,\,w_{0}...,w_{d}$
sequentially gives the multiple sum of the left part (\ref{KK6}).

We will spend a new proof of identity (\ref{KK2})
similarly by calculation of multiple residue of a zero point in the right part
of the formula (\ref{KK6}) it is on each variable $w_{0},\ldots,w_{d}$ and $t$
sequentially (see lemmas \ref{AprL1}--\ref{AprL3} and the theorem \ref{AprT1}).
\end{remark}

Let's enter necessary designations. Denote
\begin{equation}
f=f( w,\,t):=e^{-w}-te^{w},\,g=g(w,\,t)
:=e^{-w}+te^{w},
\label{KK7}
\end{equation}%
where $\alpha $ is the fixed integer and $\gamma \in \mathbb{R}.$ Obviously
\begin{equation}
f':=\frac{df}{dw}=-g,\,g':=\frac{dg}{dw}=-f,\,
g^{2}-f^{2}=4t,\,(f{-\gamma })'=\gamma f{-\gamma -1}g,\,
 \label{KK11}
\end{equation}%
\begin{equation}
(g^{\alpha })'=-\alpha g^{\alpha -1}f,\,f(0) =1-t,\, g(0)=1+t.
 \label{KK12}
\end{equation}
%\qquad \qquad

\begin{lemma}
\label{AprL2}
If $s$ is the integer non-negative number and $\gamma \in \mathbb{R},$
in designations (\ref{KK7}) and (\ref{KK11}) the following expansion is a derivative%
\begin{equation}
(f^{-\gamma -1})^{(\alpha) }_{w}=(\gamma +1)\cdot\ldots \cdot(\gamma +\alpha) f^{-\gamma -\alpha -1}g^{\alpha}+
\sum_{k=1}^{[\alpha/2]}c_{k}(\gamma) f^{-\gamma +2k-\alpha-1}g^{\alpha -2k},
\label{KK13}
\end{equation}%
\textit{with integer coefficients }$c_{1},c_{2},\ldots ,c_{[\alpha /2]}$
\textit{is valid.}

According (\ref{KK12})\ the formula (\ref{KK13})\ generates
the following formula%
\begin{equation*}
\mbox{\bf res}_{w}w_{i}^{-\alpha -1}(\exp (-w_{i})-t\exp(w_{i}))^{-\gamma -1}:=
[\left(\exp (-w_{i})-t\exp (w_{i})\right)^{-\gamma -1}]_{w=0}^{(\alpha) }/\alpha !=
\end{equation*}%
\begin{equation}
=\binom{\alpha +\gamma }{\alpha }(1-t)^{-\gamma -\alpha-1}
(1+t)^{\alpha }(1+\sum_{k=1}^{[\alpha /2]}h_{k}(\alpha,\gamma) (1-t)^{2k}(1+t)^{-2k}),
\label{KK14}
\end{equation}%
where the rational coefficients $h_{k}(\alpha ,\gamma) :=c_{k}(\gamma) /\alpha !,$ $k=1,\ldots ,[\alpha /2].$
\end{lemma}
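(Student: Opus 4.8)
The plan is to prove the derivative expansion (\ref{KK13}) by induction on $\alpha$, and then to read off (\ref{KK14}) by evaluating the result at $w=0$ through the standard identification of $\mathbf{res}$ with a Taylor coefficient. First I would record the structural fact that drives everything. By the relations $f'=-g$ and $g'=-f$ from (\ref{KK11}), differentiation in $w$ acts on a monomial $f^{a}g^{b}$ by
\[
(f^{a}g^{b})'=-a\,f^{a-1}g^{b+1}-b\,f^{a+1}g^{b-1},
\]
so it preserves the total degree $a+b$ and shifts the exponent of $g$ by $\pm 1$. Since $f^{-\gamma-1}=f^{-\gamma-1}g^{0}$ has total degree $-\gamma-1$ and $g$-exponent $0$, after $\alpha$ differentiations every surviving monomial has total degree $-\gamma-1$ and a $g$-exponent $j$ with $j\equiv\alpha\pmod 2$ and $0\le j\le\alpha$. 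Writing such a monomial as $f^{-\gamma-1-j}g^{j}$, the top value $j=\alpha$ gives $f^{-\gamma-\alpha-1}g^{\alpha}$ and $j=\alpha-2k$ gives $f^{-\gamma+2k-\alpha-1}g^{\alpha-2k}$, which is exactly the shape claimed in (\ref{KK13}).

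Next I would carry out the induction. Assuming $(f^{-\gamma-1})^{(\alpha)}=\sum_{j}a^{(\alpha)}_{j}\,f^{-\gamma-1-j}g^{j}$, one more differentiation together with the rule above yields the coefficient recursion
\[
a^{(\alpha+1)}_{j}=(\gamma+j)\,a^{(\alpha)}_{j-1}-(j+1)\,a^{(\alpha)}_{j+1},\qquad a^{(0)}_{0}=1.
\]
Because each step multiplies by the factor $(\gamma+j)$ or by the integer $-(j+1)$, every $a^{(\alpha)}_{j}$ is a polynomial in $\gamma$ with integer coefficients, which establishes the integrality of $c_{k}(\gamma):=a^{(\alpha)}_{\alpha-2k}$. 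For the top coefficient the term $a^{(\alpha)}_{\alpha+1}$ is absent, so $a^{(\alpha)}_{\alpha}=(\gamma+\alpha)\,a^{(\alpha-1)}_{\alpha-1}$, and hence $a^{(\alpha)}_{\alpha}=(\gamma+1)(\gamma+2)\cdots(\gamma+\alpha)$, matching the leading factor in (\ref{KK13}).

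For (\ref{KK14}) I would use that, for fixed $t$ with $f(0)=1-t\neq 0$, the function $f^{-\gamma-1}=(e^{-w}-te^{w})^{-\gamma-1}$ is a power series in $w$, so by (\ref{M12}) and Taylor's theorem
\[
\mathbf{res}_{w}\,w^{-\alpha-1}(e^{-w}-te^{w})^{-\gamma-1}=\frac{1}{\alpha!}\bigl[(f^{-\gamma-1})^{(\alpha)}\bigr]_{w=0}.
\]
Substituting $f(0)=1-t$ and $g(0)=1+t$ from (\ref{KK12}) into (\ref{KK13}) and dividing by $\alpha!$ produces the leading term $\binom{\alpha+\gamma}{\alpha}(1-t)^{-\gamma-\alpha-1}(1+t)^{\alpha}$, using $(\gamma+1)\cdots(\gamma+\alpha)/\alpha!=\binom{\alpha+\gamma}{\alpha}$. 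Factoring this leading term out of the remaining sum converts each $f(0)^{-\gamma+2k-\alpha-1}g(0)^{\alpha-2k}$ into $(1-t)^{2k}(1+t)^{-2k}$, which gives (\ref{KK14}); reading off the coefficient, $h_{k}(\alpha,\gamma)$ is $c_{k}(\gamma)/\alpha!$ divided by the leading factor $\binom{\alpha+\gamma}{\alpha}$, that is, $h_{k}(\alpha,\gamma)=c_{k}(\gamma)/\bigl((\gamma+1)\cdots(\gamma+\alpha)\bigr)$.

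The collection of the $(1-t)$ and $(1+t)$ powers in the last step is purely routine; the genuine content, and the main place needing care, is the induction for (\ref{KK13}) — keeping the parity and range of the $g$-exponent correct, checking that the recursion preserves integer-coefficient polynomiality in $\gamma$, and verifying that the top coefficient telescopes to the rising factorial $(\gamma+1)\cdots(\gamma+\alpha)$.
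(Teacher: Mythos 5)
Your proof is correct and follows essentially the same route as the paper: induction on $\alpha$ driven by the relations $f'=-g$, $g'=-f$ from (\ref{KK11}), followed by evaluation at $w=0$ via (\ref{KK12}) and the identification of $\mbox{\bf res}$ with a Taylor coefficient. Your coefficient recursion $a^{(\alpha+1)}_{j}=(\gamma+j)\,a^{(\alpha)}_{j-1}-(j+1)\,a^{(\alpha)}_{j+1}$ is a tidier bookkeeping of what the paper does by explicit term-by-term differentiation and reindexing of the sum in (\ref{KK13}), with the conservation of total degree and of the parity of the $g$-exponent made explicit rather than implicit. One substantive point where you improve on the text: the normalization you derive, $h_{k}(\alpha,\gamma)=c_{k}(\gamma)/\bigl((\gamma+1)\cdots(\gamma+\alpha)\bigr)$, is the one actually forced by (\ref{KK14}) once the factor $\binom{\alpha+\gamma}{\alpha}(1-t)^{-\gamma-\alpha-1}(1+t)^{\alpha}$ is pulled out of the evaluated expansion; the paper's stated definition $h_{k}(\alpha,\gamma):=c_{k}(\gamma)/\alpha!$ omits the division by $\binom{\alpha+\gamma}{\alpha}$ and is inconsistent with (\ref{KK14}) as written. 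This slip is harmless for the subsequent proof of the main identity, since there only the fact that the $h_{k}$ are independent of $t$ is used, but your version is the correct one.
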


\begin{proof}
The formula (\ref{KK13}) most easier to prove an induction
on parameter $\alpha $. Really with the help (\ref{KK11})
we have for initial values $\alpha =1,2,3$:%
\begin{equation*}
(f^{-\gamma -1})'=-(\gamma+1) f^{-\gamma -2}f'=(\gamma +1) f^{-\gamma -2}g.
\end{equation*}%
\begin{equation*}
(f^{-\gamma -1})^{\prime \prime }=((f^{-\gamma -1})')'=
((\gamma +1) f^{-\gamma -2}g)'=(\gamma+1) (f^{-\gamma -2})'g+(\gamma +1)f^{-\gamma -2}f=
\end{equation*}%
\begin{equation*}
=(\gamma +1) (\gamma +2) f^{-\gamma -3}g^{2}-(\gamma +1) f^{-\gamma -1}.
\end{equation*}%
\begin{equation*}
(f^{-\gamma -1})^{\prime \prime \prime}=((f^{-\gamma -1})^{^{\prime\prime }})'=
((\gamma +1)( \gamma +2)f^{-\gamma -3}g^{2}-(\gamma +1) f^{-\gamma -1})'=
\end{equation*}%
\begin{equation*}
=(\gamma +1)(\gamma +2)(\gamma +3)f^{-\gamma -4}g^{3}+
\end{equation*}%
\begin{equation*}
+(\gamma +1)(\gamma +2)f^{-\gamma -3}2gf-(\gamma +1)^{2}f^{-\gamma -2}g=
\end{equation*}%
\begin{equation*}
=(\gamma+1)(\gamma +2)(\gamma +3)f^{-\gamma -4}g^{3}-(\gamma +1)(3\gamma +2)f^{-\gamma -2}g.
\end{equation*}%
Further on an induction if the formula (\ref{KK13}) is
valid for current value $\alpha $, with the help (\ref{KK11}) we have%
\begin{equation*}
(f^{-\gamma -1})^{(\alpha +1)}=((f^{-\gamma -1})^{(\alpha) })'=((\gamma +1)\cdot \ldots \cdot
(\gamma +\alpha) f^{-\gamma -\alpha -1}g^{\alpha}+
\end{equation*}%
\begin{equation*}
+\sum_{k=1}^{[\alpha /2]}c_{k}(\gamma)(f^{-\gamma +2k-\alpha-1}g^{\alpha -2k})'=
\end{equation*}%
\begin{equation*}
=(\gamma +1) \times \ldots \times (\gamma +\alpha+1)
 f^{-\gamma -\alpha -2}g^{\alpha +1}-(\gamma +1) \times
\ldots \times (\gamma +\alpha) f^{-\gamma -\alpha }\alpha g^{\alpha -1}+
\end{equation*}%
\begin{equation*}
+\sum_{k=1}^{[\alpha /2]}c_{k}(\gamma) ( \gamma -2k+\alpha+1)f^{-\gamma +2k-\alpha -2}
g^{\alpha -2k+1}-\sum_{k=1}^{[\alpha/2]}c_{k}(\gamma) f^{-\gamma +2k-\alpha }(\alpha-2k)
g^{\alpha -2k-1}=
\end{equation*}%
\begin{equation*}
(the\text{ }replacement\text{ }in\text{ }the\text{ }first\text{ }sum\text{ }%
of\text{ }an\text{ }index\text{ }k-1\text{ }on\text{ }k)
\end{equation*}%
\begin{equation*}
=(\gamma +1) \ldots (\gamma +\alpha +1) f^{-\gamma-\alpha -2}g^{\alpha +1}+(c_{1}(\gamma)
(\gamma +\alpha-1) -\alpha ( \gamma +1) \ldots (\gamma +\alpha) )f^{-\gamma -\alpha +1}g^{\alpha -1}+
\end{equation*}%
\begin{equation*}
+\sum_{k=0}^{[\alpha /2]-1}c_{k+1}(\gamma ) ( \gamma-2k+\alpha -1)
f^{-\gamma+2k-\alpha }g^{\alpha-2k-1}-\sum_{k=1}^{[\alpha /2]}c_{k}(\gamma)
f^{-\gamma+2k-\alpha }(\alpha -2k) g^{\alpha -2k-1}=
\end{equation*}%
\begin{equation*}
=(\gamma +1)\ldots (\gamma +\alpha +1)f^{-\gamma
-\alpha -2}g^{\alpha +1}+(c_{1}(\gamma)(\gamma +\alpha-1)-c_{[\alpha /2]}\times
\end{equation*}%
\begin{equation*}
\times \left( \gamma \right) f^{-\gamma +2[\alpha /2]-\alpha }\left( \alpha
-2[\alpha /2]\right) g^{\alpha -2[\alpha /2]-1}+
\end{equation*}%
\begin{equation*}
+\sum_{k=1}^{[\alpha /2]-1}(c_{k+1}\left( \gamma \right) \left( \gamma
-2k+\alpha -1\right) -\left( \alpha -2k\right) c_{k}\left( \gamma \right)
)f^{-\gamma +2k-\alpha }g^{\alpha -2k-1},
\end{equation*}%
as was shown.
\end{proof}

\begin{lemma}
\label{AprL3}If $s$ \textit{is the integer non-negative number then the}
following formulas are valid:%
\begin{equation}
J=\mbox{\bf res}_{t}(1-t)^{-1}(1+t)^{2s+1}t^{-s-1}=2^{2s},
\label{KK16}
\end{equation}%
\begin{equation}
J_{k}=\mbox{\bf res}_{t}(1-t)^{k-1}(1+t)^{2s-k+1}t^{-s-1}=0,\text{ }\forall k=1,\ldots ,2s.
\label{KK17}
\end{equation}
\end{lemma}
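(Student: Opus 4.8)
The plan is to turn both formulas into coefficient extractions and then prove (\ref{KK16}) by a one-line binomial symmetry and (\ref{KK17}) by the theorem on the total sum of residues of a rational function. Since $\mathbf{res}_t$ returns the coefficient of $t^{-1}$, the number $J$ is the coefficient of $t^{s}$ in $(1-t)^{-1}(1+t)^{2s+1}$, and each $J_k$ is the coefficient of $t^{s}$ in the polynomial $(1-t)^{k-1}(1+t)^{2s-k+1}$, whose degree is exactly $(k-1)+(2s-k+1)=2s$.

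For (\ref{KK16}) I would expand $(1-t)^{-1}=\sum_{j\ge 0}t^{j}$, so that $J=\sum_{i=0}^{s}\binom{2s+1}{i}$. The involution $i\mapsto 2s+1-i$ maps $\{0,\dots,s\}$ bijectively onto $\{s+1,\dots,2s+1\}$ and fixes $\binom{2s+1}{i}$, so this partial sum is half of the full row sum $\sum_{i=0}^{2s+1}\binom{2s+1}{i}=2^{2s+1}$; hence $J=2^{2s}$.

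For (\ref{KK17}) the key observation is that $F(t)=(1-t)^{k-1}(1+t)^{2s-k+1}t^{-s-1}$ is a rational function whose only finite pole (for $1\le k\le 2s$) is at $t=0$, so the theorem on the total sum of residues gives $\mathbf{res}_{t=0}F+\mathbf{res}_{t=\infty}F=0$. Here $\mathbf{res}_{t=0}F=J_k$, while the change of variable $t=1/w$ yields $\mathbf{res}_{t=\infty}F=-\mathbf{res}_{w=0}\,w^{-2}F(1/w)=-[w^{s}](w-1)^{k-1}(w+1)^{2s-k+1}$, and using $(w-1)^{k-1}=(-1)^{k-1}(1-w)^{k-1}$ this is $(-1)^{k}J_k$. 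Thus $(1+(-1)^{k})J_k=0$, which forces $J_k=0$ for every even $k$. These are exactly the indices that arise in the application: in (\ref{KK14}) the correction terms enter only through even powers $(1-t)^{2k}(1+t)^{-2k}$, so inserting (\ref{KK14}) into (\ref{KK6}) produces the main term $J=2^{2s}$ together with residues of the shape $J_{2K}$, $K\ge 1$, all of which vanish, which is what the passage to (\ref{KK2}) needs.

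The main obstacle is the odd-$k$ part of (\ref{KK17}) as literally worded, and it cannot be removed: for odd $k$ the relation $(1+(-1)^{k})J_k=0$ is vacuous, and these residues are genuinely nonzero. Indeed $J_1=[t^{s}](1+t)^{2s}=\binom{2s}{s}\neq 0$ for every $s\ge 1$, and e.g. $J_3=-2$ for $s=2$, so $J_k=0$ cannot hold for all $k=1,\dots,2s$ and (\ref{KK17}) must be read with $k$ even. I would therefore state and prove (\ref{KK17}) in the corrected form $J_k=0$ for even $k$ with $2\le k\le 2s$, which the residue computation above settles at once; the elementary substitution $t\mapsto -t$ gives the relation $J_k=(-1)^{s}J_{2s-k+2}$, a convenient independent check both on the vanishing even values and on the nonzero odd ones.
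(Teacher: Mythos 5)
Your proposal is correct, and you have in fact caught a genuine defect in the paper: formula (\ref{KK17}) as printed is false for odd $k$, exactly as your counterexamples show --- $J_{1}=\mbox{\bf res}_{t}(1+t)^{2s}t^{-s-1}=\binom{2s}{s}\neq 0$ for $s\geq 1$, and $J_{3}=-2$ at $s=2$. The paper proves (\ref{KK17}) by the very device you use, the theorem on the total sum of residues applied to the two poles $t=0$ and $t=\infty$, but in transporting the residue at infinity back to the origin it replaces $(w-1)^{k-1}$ by $(1-w)^{k-1}$ without the compensating factor $(-1)^{k-1}$, so its concluding chain $J_{k}=-J_{k}\Rightarrow J_{k}=0$ is valid only for even $k$; your relation $(1+(-1)^{k})J_{k}=0$ is the honest form of the same computation. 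Your repair also matches what the application actually consumes: in the proof of Theorem \ref{AprT1} the correction terms coming from (\ref{KK14}) enter only through even powers $(1-t)^{2k}(1+t)^{-2k}$, so expanding the product inside (\ref{KK18}) produces, besides the main term $J$, only residues $J_{2m}$ with $m\geq 1$, and the even case of (\ref{KK17}) together with (\ref{KK16}) yields the theorem unchanged. For (\ref{KK16}) you take a genuinely different and more elementary route: expanding $(1-t)^{-1}$ as a geometric series reduces $J$ to the partial row sum $\sum_{i=0}^{s}\binom{2s+1}{i}$, which the involution $i\mapsto 2s+1-i$ evaluates as half of $2^{2s+1}$, whereas the paper again invokes the total-residue theorem, the pole at $t=1$ contributing $2^{2s+1}$ and the point at infinity reproducing $J$ itself, whence $J=2^{2s+1}-J$. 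Both arguments for (\ref{KK16}) are sound; yours is shorter and avoids contour bookkeeping, while the paper's treats (\ref{KK16}) and (\ref{KK17}) by one uniform mechanism --- which, as your analysis makes clear, is only uniform once the sign $(-1)^{k}$ is tracked. Your auxiliary relation $J_{k}=(-1)^{s}J_{2s-k+2}$ from the substitution $t\mapsto -t$ is also correct and is a useful independent check on both the vanishing even values and the nonvanishing odd ones.
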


\begin{proof}
We have%
\begin{equation*}
J=\mbox{\bf res}_{t}(1-t)^{-1}(1+t)^{2s+1}t^{-s-1}:=\mbox{\bf res}_{t=0}(1-t)^{-1}(1+t)^{2s+1}t^{-s-1}=
\end{equation*}%
\begin{equation*}
(the\, theorem\, of\, the\,full\, sum\, of\, residuis)
\end{equation*}%
\begin{equation*}
=-\mbox{\bf res}_{t=1}(1-t)^{-1}(1+t)^{2s+1}t^{-s-1}-\mbox{\bf res}_{t=\infty }(1-t)^{-1}(1+t)^{2s+1}t^{-s-1}=
\end{equation*}%
\begin{equation*}
(directly\text{ }by\text{ }definition\text{ }of\text{ }a\text{ }deduction%
\text{ }in\text{ }a\text{ }corresponding\text{ }point)
\end{equation*}%
\begin{equation*}
=[(1+t)^{2s+1}t^{-s-1}]_{t=1}-res_{t=0}(1-1/t)^{-1}(1+1/t)^{2s+1}(1/t)^{-s}(-1/t)^{2}=
\end{equation*}%
\begin{equation*}
=2^{2s+1}-\mbox{\bf res}_{t=0}(1-t)^{-1}(1+t)^{2s+1}t^{-s}=
2^{2s+1}-J\Leftrightarrow J=2^{2s+1}-J\Rightarrow J=2^{2s}.
\end{equation*}%
Let $k$ be the any fixed number from set $\{1,\ldots ,2s\}$. Acting just as
in the previous case, we have%
\begin{equation*}
J_{k}=\mbox{\bf res}_{t}(1-t)^{k-1}(1+t)^{2s-k+1}t^{-s-1}:=
\end{equation*}%
\begin{equation*}
\mbox{\bf res}_{t=0}\frac{(1-t)^{k-1}(1+t)^{2s-k+1}}{t^{s+1}}-
\mbox{\bf res}_{t=\infty}\frac{(1-t)^{k-1}(1+t)^{2s+1}}{t^{s+1}}=
\end{equation*}%
\begin{equation*}
(\text{ as\,\,powers\,\,of\,\,binomials} (1-t)^{k-1}\,\text{and}\,(1+t)^{2s-k+1}t^{-s-1}\,\text{non\, negative}
\end{equation*}%
\begin{equation*}
\text{at\, any}\,k\,\text{from\,\,set}\, \{1,\ldots ,2s\})
\end{equation*}%
\begin{equation*}
=0-\mbox{\bf res}_{t=0}(1-1/t)^{k-1}(1+1/t)^{5}(1/t)^{-3}(-1/t)^{2}=
\end{equation*}%
\begin{equation*}
=-\mbox{\bf res}_{t=0}(1-t)^{-s-1}(1+t)^{2s-k+1}t^{-s-1}=
-J_{k}\Longleftrightarrow J_{k}=-J_{k}\Longleftrightarrow
J_{k}=0._{{}}
\end{equation*}
\end{proof}

\begin{theorem}
\label{AprT1} The identity (\ref{KK2}) is valid .
\end{theorem}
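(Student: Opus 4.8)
The plan is to start from the integral representation (\ref{KK6}) established in Lemma \ref{AprL1}, which already reduces the left-hand side $T(s;\alpha,\gamma)$ of (\ref{KK2}) to a single multiple residue in $w_0,\ldots,w_d,t$. Since the integrand factors as a product in which each $w_i$ enters only through $w_i^{-\alpha_i-1}(\exp(-w_i)-t\exp(w_i))^{-\gamma_i-1}$, I would first evaluate the residues in $w_0,\ldots,w_d$ one variable at a time. Each of these is precisely the quantity computed in Lemma \ref{AprL2}, so I would substitute (\ref{KK14}) for every factor. This converts the $(d+1)$-fold $w$-residue into an explicit product, pulls out the constant $\prod_{i=0}^{d}\binom{\alpha_i+\gamma_i}{\alpha_i}$, and leaves a single residue in $t$.

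Next I would collect the powers of $(1-t)$ and $(1+t)$. The leading part of (\ref{KK14}) (the term ``$1$'' inside each bracket) contributes $\prod_i (1-t)^{-\gamma_i-\alpha_i-1}(1+t)^{\alpha_i}$; multiplying by the surviving factor $(1-t)^{d+\sum_i(\alpha_i+\gamma_i)}$ from (\ref{KK6}) and using $\sum_i\alpha_i=2s+1$, the exponents telescope to exactly $(1-t)^{-1}(1+t)^{2s+1}$. Hence the leading contribution is $\prod_i\binom{\alpha_i+\gamma_i}{\alpha_i}\cdot\mbox{\bf res}_{t}\,(1-t)^{-1}(1+t)^{2s+1}t^{-s-1}$, which by (\ref{KK16}) equals $2^{2s}\prod_i\binom{\alpha_i+\gamma_i}{\alpha_i}$, precisely the right-hand side of (\ref{KK2}).

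It then remains to show that every cross term arising from the correction sums $\sum_{k}h_k(\alpha_i,\gamma_i)(1-t)^{2k}(1+t)^{-2k}$ contributes zero. Choosing one correction index $k_i$ (possibly none) for each $i$ and writing $\kappa=\sum_i k_i$, such a term carries an extra factor $(1-t)^{2\kappa}(1+t)^{-2\kappa}$, so after the same telescoping it becomes a constant multiple of $\mbox{\bf res}_{t}\,(1-t)^{K-1}(1+t)^{2s-K+1}t^{-s-1}$ with $K=2\kappa$. This is exactly the residue $J_K$ of (\ref{KK17}), and I would invoke Lemma \ref{AprL3} to conclude it vanishes, provided $1\le K\le 2s$. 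Combining the single surviving leading term with the vanishing corrections yields (\ref{KK2}), which is the assertion of Theorem \ref{AprT1}.

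The main obstacle I anticipate is the bookkeeping in this last step: one must verify that the exponent $K=2\kappa$ produced by any nontrivial choice of corrections always lands in the range $\{1,\ldots,2s\}$ where $J_K=0$. The decisive arithmetic point is that $\sum_i\alpha_i=2s+1$ is \emph{odd}. Since each $k_i\le[\alpha_i/2]$, the even number $K=2\sum_i k_i$ satisfies $K\le\sum_i 2[\alpha_i/2]\le\sum_i\alpha_i=2s+1$, and being even it is in fact $\le 2s$; a nontrivial correction has $\kappa\ge 1$, so $2\le K\le 2s$. Once this range is secured, (\ref{KK17}) eliminates all corrections and only the single leading term survives, completing the proof.
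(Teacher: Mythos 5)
Your proposal is correct and follows essentially the same route as the paper: apply Lemma \ref{AprL1} to get the residue representation (\ref{KK6}), evaluate each $w_i$-residue via (\ref{KK14}) of Lemma \ref{AprL2}, telescope the $(1-t)$ and $(1+t)$ exponents using $\sum_i\alpha_i=2s+1$, and kill all correction terms with (\ref{KK16})--(\ref{KK17}) of Lemma \ref{AprL3}. Your explicit range check $2\le K=2\kappa\le 2s$ is exactly the bound the paper invokes (in the form $\sum_i[\alpha_i/2]\le s$), just spelled out more carefully.
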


\begin{proof}
By (\ref{KK6}) we have following integrated representation
for sum $T(s; \alpha,\,\beta) $ in the left part of identity (\ref{KK2}):%
\begin{equation*}
T( s;\alpha,\,\beta) =\mbox{\bf res}_{w_{0},\ldots ,w_{d},t}\left\{\frac{(1-t)^{d+\sum_{i=0}^{d}(\alpha _{i}+\gamma _{i})}}
{t^{s+1}}\prod_{i=0}^{d}
\frac{\left(\exp (-w_{i})-t\exp (w_{i})\right)^{-\mu _{i}-1}}{w_{i}^{\alpha_{i}+1}}\right\}=
\end{equation*}%
\begin{equation*}
=\mbox{\bf res}_{t}\{t^{-s-1}(1-t)^{d+\sum_{i=0}^{d}(\alpha
_{i}+\gamma _{i})}(\prod_{i=0}^{d}\mbox{\bf res}_{w_{i}}w_{i}^{-\alpha_{i}-1}
\left( \exp (-w_{i})-t\exp (w_{i})\right) ^{-\mu _{i}-1})\}.
\end{equation*}%
Calculating in last expression each of deductions on variables $w_{0},\ldots
,w_{d}$ by the formula \textbf{(}\ref{KK14}\textbf{)} we have%
\begin{equation*}
T(s;\,\alpha,\,\beta) =\mbox{\bf res}_{t}\{t^{-s-1}(1-t)^{d+\sum_{i=0}^{d}(\alpha _{i}+\gamma
_{i})}\times
\end{equation*}%
\begin{equation*}
\times \prod_{i=0}^{d}\binom{\alpha _{i}+\gamma _{i}}{\alpha_{i}}
(1-t)^{-\gamma _{i}-\alpha _{i}-1}(1+t)^{\alpha_{i}}(\sum_{k=1}^{[\alpha _{i}/2]}h_{k}(\alpha _{i},\gamma _{i})
(1-t) ^{2k}(1+t)^{-2k})\}=
\end{equation*}%
%\begin{equation*}
(trivial reductions in an $ sing $ of product
%\end{equation*}%
\begin{equation*}
\prod_{i=0}^{d}\ldots \text{ in\,\,according\,\,of\,\,the\,\,assumption}
\sum_{i=0}^{d}\alpha_{i}=2s+1)
\end{equation*}%
\begin{equation}
=\binom{\alpha +\gamma}{\alpha}\mbox{\bf res}_{t}\{\frac{(1+t)^{2s+1}}{t^{s+1}(1-t)}
\prod_{i=0}^{d}(1+\sum_{k=1}^{[\alpha _{i}/2]}h_{k}(\alpha_{i},\gamma_{i}) (1-t)^{2k}(1+t)^{-2k})\}.
\label{KK18}
\end{equation}%
As $[\alpha _{0}/2]+[\alpha _{1}/2]+\ldots +[\alpha _{d}/2]\leq \lbrack
\sum_{i=0}^{d}\alpha_{i}/2]=s,$
it is easy, that after disclosing brackets
and reduction of similar members of product%
\begin{equation*}
\prod_{i=0}^{d}(1+\sum_{k=1}^{[\alpha _{i}/2]}h_{k}(\alpha _{i},\gamma_{i}) ( 1-t)^{2k}(1+t)^{-2k}),
\end{equation*}%
in a sign of $\mbox{\bf res}_{t}$ in (\ref{KK18})\textbf{\ }is\textbf{\ }%
representable in the form of a multinominal of a kind%
\begin{equation*}
1+\sum_{k=2}^{2s}\lambda_{k}(1-t)^{k}(1+t)^{-k},
\end{equation*}%
where coefficients $\lambda_{1},\ldots ,\lambda_{2s-1}$ are some fixed
rational numbers. Thus
\begin{equation*}
T(s;\alpha,\beta)=\binom{\alpha+\gamma}{\alpha}\mbox{\bf res}_{t}
\left\{\frac{(1+t)^{2s+1}}{t^{s+1}(1-t)}
(1+\sum_{k=1}^{2s}\lambda_{k}(1-t)^{k}(1+t)^{-k})\right\}=
\end{equation*}%
\begin{equation*}
=\binom{\alpha +\gamma}{\alpha}\left(\mbox{\bf res}_{t}\frac{(1+t)^{2s+1}}{(1-t)t^{s+1}}
+\sum_{k=1}^{2s}\lambda_{k}\mbox{\bf res}_{t}\frac{(1-t)^{k-1}(1+t)^{2s-k+1}}{t^{s+1}}\right)=
\end{equation*}%
%\begin{equation*}
(calculation of residius in last expression under formulas (\ref{KK16}) and (\ref{KK17}))
%\end{equation*}%
\begin{equation*}
=\binom{\alpha +\gamma}{\alpha}\{2^{2s}+\sum_{k=1}^{2s}\lambda_{k}\times 0\}=\binom{\alpha +\gamma}{\alpha}2^{2s}.
\end{equation*}
\end{proof}

\begin{remark}
It is to our interest to answer this question
%The answer to a question is of interest, what additional information the
%knowledge of the integral representation of expression
\begin{equation*}
J=\mbox{\bf res}_{w_{0},...,w_{d},t}\left\{t^{-s-1}\prod_{i=0}^{d}w_{i}^
{-\alpha_{i}-1}\left(\exp (-w_{i})-t\exp (w_{i})\right)^{-\mu _{i}-1}\times\right.
\end{equation*}%
\begin{equation}
\left.\times(1-t)^{d+\sum_{i=0}^{d}(\alpha_{i}+\gamma_{i})}\right\}/\mathbf{\alpha}!,
\label{KK20}
\end{equation}%
in the left hand of initial identity (\ref{KK1}). For example, the
integral (\ref{KK20}) can be resulted to in the following kind%
\begin{equation}
J=\mbox{\bf res}_{t}\{(t^{-s-1}\prod_{i=0}^{d}\mbox{\bf res}%
_{w_{i}}w_{i}^{-\alpha _{i}-1}\left( \exp (-\lambda _{i}w_{i})-t\exp
(\lambda _{i}w_{i})\right)^{-\gamma _{i}-1}\}/\mathbf{\alpha }!.
\label{KK21}
\end{equation}%
The calculation of integral (\ref{KK21}) is connected with studying
hyperbolic $t$-sine \cite{FoataHan(2010)}
\begin{equation}
\sinh _{t}(x):=(\exp (-x)-t\exp (x))/2,
\label{KK22}
\end{equation}%
and the functions $\sinh_{t}^{-\gamma }(x),$ $\gamma \in \mathbb{N},$ \textit{and}
\begin{equation*}
J_{a,\gamma }(t):=\mbox{\bf res}_{z}(z^{-\alpha -1}(\exp(-z)-t\exp (z))^{-\gamma -1})/\alpha!=
\end{equation*}%
\begin{equation}
=\mbox{\bf res}_{z}(z^{-\alpha-1}(\exp (-z)-t\exp (z))^{-\gamma -1})/\alpha !.
\label{KK23}
\end{equation}%
In my opinion, the study of these functions is interesting, including
their combinatorial interpretation and various relations with them.
\end{remark}

%\subsection{References\protect\medskip}

%%\begin{thebibliography}{9}
%%\bibitem{AbrTs97} \textit{Abramov S.A. and Tsarev S.P.} (1997). On
%%peripheral factorization of linear ordinary operators, Programming \&
%%Computer Software, No.~1, p.~59--67.

%\bibitem{AizenYuzh83} \textit{Aizenberg L.A. and Yuzhakov A.P.} (1979).
%Integral representation and residues in multidimensional complex analysis.
%Nauka, Novosibirsk (in Russian).

%%\bibitem{Andrews70} \textit{Andrews G.E.} (1970). On the foundations of
%%combinatorial theory. IV. Finite vector space and Eulerian generating
%%functions. Stud. Appl. Math. \textbf{49}, p.~239--258.

%\bibitem{XeoXu(2000)} \textit{Hео S. and Хu Y.} Invariant
%cubature formulae for spheres and balls by combinatorial methods. SIAM J.
%Numer. Anal. \textbf{38}(2), 626--638.

%%\bibitem{Davl(2011)} Davletshin M.N.\textit{\ }(2011). Enumeration of the
%%D-ideals of ring $R_{n}(K,J$). Journal of Siberian Federal University 4(3),10--26.

%%\end{thebibliography}

%Mathematics \& Physics \textbf{4}(3), Krasnoyarsk, 10--26 (in Russian).
%\begin{thebibliography}{9}

\addcontentsline{toc}{section}{The bibliography}
\renewcommand{\refname}{The bibliography}

\end{document}